\documentclass[a4paper]{amsart}
\textheight = 625pt
\textwidth = 5.5in
\oddsidemargin = 0.5in
\pdfoutput=1

\usepackage[utf8]{inputenc}
\usepackage[english]{babel}
\usepackage{amssymb}
\usepackage{amsmath}
\usepackage{amsthm}
\usepackage{array}
\usepackage{eucal}
\usepackage{verbatim}
\usepackage{subfigure}
\usepackage[pdftex]{color, graphicx}
\usepackage{fancyhdr}
\usepackage{tikz}
\usepackage{tikz-cd}
\usepackage{amsfonts}
\usepackage{mathrsfs}
\usepackage{mathtools}
\usepackage{graphicx}
\usepackage[final]{pdfpages}
\usepackage{algorithm}
\usepackage{algorithmic}
\usepackage{enumitem}
\usepackage{pdflscape}
\usepackage{url}

\usepackage{hyperref}
\usepackage[all]{hypcap}        

\usetikzlibrary{arrows,shapes,decorations.pathmorphing,patterns,backgrounds,fit,positioning,matrix}
\tikzset{dot/.style={circle,fill=black,thick,inner sep=0pt,minimum size=1mm,draw}}
\tikzset{arrow/.style={semithick,>=stealth',shorten >=1pt,shorten <=1pt}}
\tikzset{equal/.style={kant,double distance=2pt}}

\numberwithin{equation}{section}
\numberwithin{figure}{section}
\numberwithin{table}{section}
\frenchspacing
\setcounter{tocdepth}{2}

\newcommand{\inputhack}{\input}                                         
\graphicspath{{pictures/}}                                              
\newcommand{\inkpic}[2][\columnwidth]{
  \def\svgwidth{#1}
  \inputhack{#2.pdf_tex}
}

\theoremstyle{plain}
\newtheorem{theorem}{Theorem}[section]
\newtheorem{theorema}{Theorem}

\newtheorem{proposition}[theorem]{Proposition}
\newtheorem{propositiona}[theorema]{Proposition}

\newtheorem{lemma}[theorem]{Lemma}
\newtheorem*{lemma*}{Lemma}

\theoremstyle{definition}
\newtheorem{definition}[theorem]{Definition}
\newtheorem{example}[theorem]{Example}
\newtheorem{discussion}[theorem]{Discussion}
\theoremstyle{remark}

\newtheorem{remark}[theorem]{Remark}
\newtheorem{notation}[theorem]{Notation}

\newenvironment{cond}
  {\par\vspace{\abovedisplayskip}\noindent\begin{tabular}{l @{ \hspace{3ex}} l}}
  {\end{tabular}\par\vspace{\belowdisplayskip}}
\theoremstyle{plain}

\newenvironment{lemmawithfixednumber}[1]
  {\innerlemma}
  {\endinnerlemma}

\definecolor{ForestGreen}{rgb}{0.134,0.545,0.134}

\DeclareMathAlphabet{\mathpzc}{OT1}{pzc}{m}{it}

\newcommand{\N}{\mathbb N}
\newcommand{\Z}{\mathbb Z}

\newcommand{\Q}{\mathbb Q}


\DeclareMathOperator{\degen}{degen}

\DeclareMathOperator{\id}{id}
\DeclareMathOperator{\imag}{im}

\DeclareMathOperator{\Symm}{Symm}

\DeclareMathOperator{\Tymm}{T}

\newcommand{\cof}{\hookrightarrow}
\newcommand{\fib}{\twoheadrightarrow}

\newcommand{\xr}[1]{ \xrightarrow{\hspace{6pt}{#1}\hspace{6pt}} }
\newcommand{\xhr}[1]{ \xhookrightarrow{\hspace{6pt}{#1}\hspace{6pt}} }

\newcommand{\xlr}[1]{ \xleftrightarrow {\hspace{6pt}{#1}\hspace{6pt}} }


\newcommand{\mspc}[2]{ \hspace{#2pt}\text{#1}\hspace{#2pt} }

\newcommand{\Fat}{\mathfrak{Fat}}

\newcommand{\Mod}{\mathrm{Mod}}
\newcommand{\Diff}{\mathit{Diff}}

\newcommand{\SD}{\mathcal{SD}}
\newcommand{\SDs}{\mathscr{SD}}

\newcommand{\cM}{\mathcal{M}} 
\newcommand{\Rad}{\mathfrak{Rad}} 
\newcommand{\bRad}{\overline{\mathfrak{Rad}}}
\newcommand{\uRad}{\mathfrak{U}\overline{\mathfrak{Rad}}}

\newcommand{\ad}{\ensuremath{\,\protect\scalebox{.7}{$\searrow$}\,}}
\newcommand{\au}{\ensuremath{\,\protect\scalebox{.7}{$\nearrow$}\,}}
\newcommand{\betterwidetilde}[1]{\smash{\widetilde{#1}}{\vphantom{#1}}} 
\newcommand{\del}{\partial}

\newcommand{\finv}{\ensuremath{C[F^{-1}]}}

\newcommand{\ov}{\overline}

\newcommand{\SDgm}{\ensuremath{\SD(g,m)}}

\mathchardef\mhyphen="2D
\newcommand{\xh}[1]{\ensuremath{#1\hspace{1pt}\mhyphen}}
\newcommand{\ph}{\ensuremath{p\hspace{1pt}\mhyphen}}

\DeclareRobustCommand\bbid{{\ensuremath{%
  \tikz[baseline={([yshift=-3.5pt]current bounding box.center)}, x=1ex, y=1ex]{%
    \draw[line width=.8pt] (0,0) -- (2,0) node[shape=circle, fill=black, inner sep=1pt] {};
    \draw node at (-1,0) {$1$};
    \draw node at (4,0) {$v_0$};
  }%
}}}

\DeclareRobustCommand\bbunit{{\ensuremath{%
  \tikz[baseline={([yshift=-3.5pt]current bounding box.center)}, x=1ex, y=1ex]{%
    \draw node[shape=circle, fill=black, inner sep=1pt] at (0,0) {};
    \draw node at (2,0) {$v_0$};
  }%
}}}

\DeclareRobustCommand\bbmult{{\ensuremath{%
  \tikz[baseline={([yshift=-3.5pt]current bounding box.center)}, x=1ex, y=1ex]{%
    \draw[line width=.8pt] (0,1)  -- (2,0) node[shape=circle, fill=black, inner sep=1pt] {} -- (4,0) node[shape=circle, fill=black, inner sep=1pt] {};
    \draw[line width=.8pt] (0,-1) -- (2,0);
    \draw node at (-1,1) {$1$};
    \draw node at (-1,-1) {$2$};
    \draw node at (6,0) {$v_0$};
  }%
}}}

\DeclareRobustCommand\bbcomult{{\ensuremath{%
  \tikz[baseline={([yshift=-3.5pt]current bounding box.center)}, x=1ex, y=1ex]{%
    \draw[line width=.8pt] (0,0) -- (2,0)  node[shape=circle, fill=black, inner sep=1pt] {} -- (4,1) node[shape=circle, fill=black, inner sep=1pt] {};
    \draw[line width=.8pt] (2,0) -- (4,-1) node[shape=circle, fill=black, inner sep=1pt] {};
    \draw node at (-1,0) {$1$};
    \draw node at (6,1) {$v_0$};
    \draw node at (6,-1) {$v_1$};
  }%
}}}

\DeclareRobustCommand\bbcomultunit{{\ensuremath{%
  \tikz[baseline={([yshift=-3.5pt]current bounding box.center)}, x=1ex, y=1ex]{%
    \draw[line width=.8pt] (2,1) node[shape=circle, fill=black, inner sep=1pt] {} to[out=180, in=90] (0,0) to[out=270, in=180] (2,-1) node[shape=circle, fill=black, inner sep=1pt] {};
    \draw node at (4,1) {$v_0$};
    \draw node at (4,-1) {$v_1$};
  }%
}}}

\DeclareRobustCommand\bbcounitmult{{\ensuremath{%
  \tikz[baseline={([yshift=-3.5pt]current bounding box.center)}, x=1ex, y=1ex]{%
    \draw[line width=.8pt] (0,1) to[out=0, in=90] (2,0) to[out=270, in=0] (0,-1);
    \draw node at (-1,1) {$1$};
    \draw node at (-1,-1) {$2$};
  }%
}}}

\DeclareRobustCommand\bbtwist{{\ensuremath{%
  \tikz[baseline={([yshift=-3.5pt]current bounding box.center)}, x=1ex, y=1ex]{%
    \draw[line width=.8pt] (0,-1) to (2,1)  node[shape=circle, fill=black, inner sep=1pt] {};
    \draw[line width=2pt, color=white] (0,1)  to (2,-1);
    \draw[line width=.8pt] (0,1)  to (2,-1) node[shape=circle, fill=black, inner sep=1pt] {};
    \draw node at (-1,1) {$1$};
    \draw node at (-1,-1) {$2$};
    \draw node at (4,1) {$v_0$};
    \draw node at (4,-1) {$v_1$};
  }%
}}}

\author{Felix Jonathan Boes and Daniela Egas Santander}
\title{On the homotopy type of the space of Sullivan Diagrams}

%
%
\begin{document}

\begin{abstract}
We study the homotopy type of the harmonic compactification of the moduli space of a $2$-cobordism S with one outgoing boundary component, or equivalently of the space of Sullivan diagrams of type $S$ on one circle.
Our results are of two types: vanishing and non-vanishing.
In our vanishing results we are able to show that the connectivity of the harmonic compactification increases with the number of incoming boundary components.
Moreover, we extend the genus stabilization maps of moduli spaces to the harmonic compactification and show that the connectivity of these maps
increases with the genus and number of incoming boundary components.
In our non-vanishing results we compute the non-trivial fundamental group of the harmonic compactification of the cobordism $S$ of any genus with two unenumerated punctures and empty incoming boundary.
Moreover, we construct five infinite families of non-trivial homology classes of the harmonic compactification, two of which correspond to non-trivial higher string topology operations.
\end{abstract}

\maketitle

\section{Introduction}

Let $S(p,g,m)$ denote an oriented $2$-cobordism with $m \ge 1$ parametrized, enumerated incoming boundary components,
genus $g$ and $p \ge 1$ parametrized, enumerated outgoing boundary components.
We denote by $\cM(p,g,m)$ the moduli space of the surface $S$ underlying the cobordism $S(p,g,m)$.
Intuitively $\cM(p,g,m)$ is a space that parametrizes all conformal structures the underlying surface $S$ can have up to a notion of equivalence.
Note in particular that the homotopy type of $\cM(p,g,m)$ depends only on the underlying surface $S$ and not on its cobordism structure.

We denote by $\Diff^+(S, \partial S)$ the space of orientation preserving self-diffeo\-morphisms of $S$ which fix the boundary point-wise.
Composition of diffeomorphisms give $\Diff^+(S, \partial S)$ the structure of a topological group.
The mapping class group of $S$, denoted $\Mod(S)$, is the group of components of the diffeomorphism group i.e.,
\[\Mod(S):= \pi_0(\Diff^+(S, \partial S)).\]
Since the underlying surface has at least one boundary component, its moduli space is a model of its corresponding mapping class group i.e., 
\[\cM(p,g,m)\simeq B\Mod(S).\]

In \cite{bodigheimer}, B\"{o}digheimer constructs a model for $\cM(p,g,m)$ which carries a natural notion of compac\-tification called the harmonic compactification.
In contrast to moduli space, the homotopy type of this compactification does not only depend on the underlying surface but also on its cobordism structure.
In this paper we study the homotopy type of the harmonic compactification for the case $p=1$.
Our results are of two types: vanishing and non-vanishing.

In our vanishing results we  are able to show that the connectivity of the harmonic compactification increases with $m$.
Moreover, we extend the genus stabilization maps of moduli spaces to the harmonic compactification and show that the connectivity of these maps 
increase with genus and number of incoming boundary components.
The proofs of our vanishing results are not based on standard methods used to prove homological stability.
Instead, we make use of B\"{o}digheimer's combinatorial description of the moduli space of surfaces introduced in \cite{bodigheimerold, bodigheimer} together with methods from discrete Morse theory.
Finally, we get similar results for the cases when the cobordism $S$ has either empty incoming boundary and $m$ punctures or $m$ unenumerated incoming boundary components.

In our non-vanishing results we compute the non-trivial fundamental group of the harmonic compactification of the cobordism $S$ of any genus, with one outgoing boundary component, no incoming boundary components and two unenumerated punctures.
Moreover, combining the work of Wahl and Westerland in \cite{wahlwesterland, wahluniversal} with new methods, we construct five infinite families of non-trivial homology classes of the harmonic compactification, two of which correspond to non-trivial higher string topology operations.

\medskip

Before stating our results more concretely, we make precise the relationship between the harmonic compactification, the moduli space of Riemann surfaces,  Sullivan diagrams and string topology.
Fix an oriented cobordism $S = S(p,g,m)$ as above.
In \cite{bodigheimer}, B\"{o}digheimer describes how any surface with a conformal structure of underlying topological type $S$ can be constructed from $p$ annuli in $p$ disjoint complex planes 
by first cutting slits into these annuli in a radial direction
and then gluing the borders of these slits together in such a way that one obtains an orientable surface with boundary.
With this in mind, he constructs the space of radial slit domains $\Rad(p,g,m)$ which is a combinatorial model for the moduli space $\cM(p,g,m)$.
This model carries a natural notion of compactification called the harmonic compactification which we denote by $\ov\cM(p,g,m)$.
Geometrically, this compactification allows handles to degenerate to intervals and finitely many points to become identified on the boundary as long as there is a path going from the incoming boundary to the outgoing boundary that does not go through a degeneration.
It is called the harmonic compactification since certain uniquely determined harmonic functions used to construct the model for $\cM(p,g,m)$ also exist on the degenerate surfaces in $\ov\cM(p,g,m)$.
In contrast to moduli space, for two fixed cobordisms $S$ and $S'$ with the same underlying surface but with different distribution of the boundaries into incoming and outgoing boundary components,
their associated harmonic compactifications need not be homotopy equivalent.

On the other hand, Godin uses the ideas of Penner and Igusa to construct a space of (metric) fat graphs $\Fat(p,g,m)$ which is also a model of the moduli space $\cM(p,g,m)$ \cite{Godinunstable, godin}.
Informally, a fat graph is a graph together with a cyclic ordering of the edges incident at each vertex; 
and a metric fat graph is a fat graph with lengths on its edges.
This space of fat graphs has a homotopy equivalent subspace, called the space of $p$-admissible fat graphs which we denote by $\Fat^{ad}(p,g,m)$.
Intuitively, a $p$-admissible fat graph is a fat graph on $p$ disjoint circles.
See Definition \ref{definition:admissible_graph} for a precise description.
The space of $p$-Sullivan diagrams of type $S$, here denoted by $\betterwidetilde{\SDs}(p,g,m)$, is a quotient of $\Fat^{ad}(p,g,m)$.
See Definition \ref{definition:space_of_SD} for a precise description of this space.
As for the case of the harmonic compactification, given two fixed cobordisms $S$ and $S'$ with the same underlying surface but with different distribution of the boundaries into incoming and outgoing boundary components,
their associated spaces of Sullivan diagrams need not be homotopy equivalent.

The space of $p$-Sullivan diagrams has a canonical CW-structure and its associated cellular chain complex is the chain complex of $p$-Sullivan diagrams.
There is a cellular homotopy equivalence between the spaces $\betterwidetilde{\SDs}(p,g,m)$ and the harmonic compactification $\bRad(p,g,m)$, see \cite{egaskupers}.
Indeed, there is a homotopy commutative diagram.
\[
  \begin{tikzcd}[row sep=2em, column sep=1em]
    \Fat^{ad}(p,g,m) \ar[twoheadrightarrow]{d} \ar[phantom]{r}{\simeq} & \cM(p,g,m) \ar[phantom]{r}{\simeq} & \Rad(p,g,m) \ar[hook]{d} \\
    \betterwidetilde{\SDs}(p,g,m) \ar{rr}{\simeq} & & \bRad(p,g,m)
  \end{tikzcd}
\]

Therefore, studying the homotopy type and in particular the homology of Sullivan diagrams is of interest in the study of the homology of the moduli space of Riemann surfaces.
We make this idea precise.
By gluing a genus one surface with two boundary components to the outgoing boundary of $S(1,g,m)$ we obtain stabilization maps
\begin{equation*}
  \cM(1,0,m) \to \cM(1,1,m) \to \ldots \to \cM(1,g,m) \to \cM(1,g+1,m) \to \ldots
\end{equation*}
In \cite{Harer_stable}, Harer showed that these maps induce isomorphisms in a range of dimensions increasing with the genus
and the stable homology is well understood by the work of Tillmann, Madsen-Weiss and Galatius in \cite{MadsenWeiss, tillmann, galatius_modp}.
However, little is known about the unstable homology of $\cM(p,g,m)$.
Explicit computations for small genus and small number of boundary components and punctures are given in \cite{ehrenfried, Godinunstable, abhau, wang, mehner, BoesHermann}.
The map from the space of $p$-admissible fat graphs onto Sullivan diagrams induces a map in homology
\[
  H_\ast(\cM(p,g,m)) \cong H_\ast(\Fat^{ad}(p,g,m)) \to H_\ast(\betterwidetilde{\SDs}(p,g,m)).
\]
In \cite{wahlwesterland}, Wahl and Westerland show that for $m\geq 2$ parametrized, enumerated incoming boundaries this map is trivial on the stable classes of the moduli space of surfaces.
In contrast, one of our non-vanishing results shows that the map is non-trivial if one relaxes the conditions on the $m$ incoming boundaries,
see Proposition \ref{intro:prop_pi1} in this introduction.
Studying the homotopy type, in particular the homology, of the space of Sullivan diagrams is a much more tractable problem than studying the homology of the moduli space of surfaces.
The hope is that understanding the homology of the former would give further insight towards understanding the unstable homology of the latter.

The study of the homotopy type of Sullivan diagrams is also of interest in the field of string topology, which studies algebraic structures on the homology of free loop spaces.
Let $LM$ be the free loop space of a manifold $M$.
In string topology one constructs operations
\begin{equation*}
  H_{*}(LM)^{\otimes m} \longrightarrow H_{\bullet}(LM)^{\otimes p}
\end{equation*}
parametrized by a \emph{space of operations} and subject to compatibility conditions such that they assemble into some sort of field theory.
Our chain complex of Sullivan diagrams agrees with the one defined by Tradler and Zeinalian in \cite{TradlerZeinalian} and by Wahl and Westerland in \cite{wahlwesterland}
to parametrize operations on the Hochschild homology of algebras with a given structure.

On the other hand, there are other versions of spaces of (Sullivan) chord diagrams appearing in the string topology literature as spaces of operations.
We briefly compare those spaces to the space of Sullivan diagrams which we study in this paper.
Whenever the cobordism $S$ has no punctures,
our space of Sullivan diagrams is homeomorphic to the underlying space of the Sullivan PROP described by Kaufmann in \cite{kaufmann_sullivan}.
Restricting further to the case of genus zero and one incoming boundary component (i.e., $m=1$) our chain complex of $p$-Sullivan diagrams is isomorphic to the chain complex of trees with spines on $p$-white vertices constructed by Ward in \cite{Ward}.
Furthemore in \cite{poirierrounds}, Poirier and Rounds construct string operations using a space of chord diagrams $\overline{SD}$ and they describe a quotient of this space $\overline{SD}/_\sim$ through which their operations factor.
This quotient space is homeomorphic to our spaces of Sullivan diagrams.
In a sequel \cite{drummond_poirier_rounds}, Drummond-Cole, Porier, and Rounds use yet another chain complex of chord diagrams.
Although there is a function from their space of chord diagrams to our version of Sullivan diagrams, it is currently unclear whether this assignment is a homotopy equivalence or even continuous.
In \cite{cohengodin}, Cohen and Godin construct string operations using yet another space of chords diagrams.
Although the concepts are very closely related, these spaces are neither homotopy equivalent nor are their corresponding chain complexes quasi-isomorphic.
Finally, a related space of chord diagrams also occur in upcoming work of Hingston and Wahl on the study of geometric string topology operations.
The concrete relationship between their space of operations and the one we study in this paper is yet to be understood.

\medskip

We now present our results more concretely.
Consider $S(1,g,m)$, the oriented cobordism with
$m \ge 1$ parametrized, enumerated incoming boundary components, genus $g$ and
$1$ parametrized outgoing boundary component.
We denote by $\betterwidetilde{\SDs}_{g,m}:=\betterwidetilde{\SDs}(1,g,m)$ its space of 1-Sullivan diagrams.
Inspired by B\"{o}digheimer's combinatorial description of the moduli space of surfaces described in \cite{bodigheimerold, bodigheimer, abhau, BoesHermann} we give an alternative presentation of Sullivan diagrams in terms of permutations with weights
which resembles the concept of stable graphs of \cite{kon92, Loo95}.
See Proposition \ref{proposition:combinatorial_1_SD}.
Exploiting this description, we prove the following two theorems.

\begin{theorema}
\label{intro:thma}\hspace{2mm}
  Let $g \ge 0$ and $m \ge 2$.
  The space $\betterwidetilde{\SDs}_{g,m}$ is $(m-2)$-connected.
\end{theorema}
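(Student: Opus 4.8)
\section*{Proof proposal}

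The plan is to work entirely with the canonical CW structure on $\betterwidetilde{\SDs}_{g,m}$ together with the combinatorial description of its cells provided by Proposition~\ref{proposition:combinatorial_1_SD}: cells are represented by weighted permutations (equivalently, by $1$-Sullivan diagrams of type $S(1,g,m)$), and the face relations are generated by collapsing chords together with the identifications of degenerate cells coming from the quotient of Definition~\ref{definition:space_of_SD}. On this face poset (passing to a regular model if necessary) I would construct an acyclic matching whose critical cells are a single $0$-cell together with cells of dimension at least $m-1$. By discrete Morse theory this exhibits $\betterwidetilde{\SDs}_{g,m}$ as homotopy equivalent to a CW complex with one vertex and all further cells in dimension $\ge m-1$; since $\betterwidetilde{\SDs}_{g,m}$ is nonempty and connected (being a quotient of the connected space $\Fat^{ad}(1,g,m) \simeq \cM(1,g,m)$), any such complex has trivial homotopy groups through degree $m-2$, which is exactly the assertion.

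The matching is built from the enumeration $1,\dots,m$ of the incoming boundary components. Given a diagram $D$, scan its incoming boundary cycles in this order and let $i(D)$ be the smallest index at which a prescribed elementary move is available: near incoming boundary $i(D)$ there is a canonically determined chord of $D$ — read off from the cyclic/weighted-permutation data at the arc of the outgoing circle immediately preceding that boundary — which can either be collapsed, producing a face one dimension lower, or, in the dual configuration, inserted. One pairs $D$ with the outcome of this move. The two points to verify here are that the move is well defined on cells, i.e.\ that it descends through the identifications of Definition~\ref{definition:space_of_SD} and is independent of the chosen representative, and that ``collapse'' and ``insert'' are mutually inverse in the precise sense a matching requires (a cell is matched downward if and only if its partner is matched upward). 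Acyclicity is then the standard Morse-path argument: every move strictly decreases a lexicographic statistic — the number of chords, refined by a complexity measure of the configuration around the first incoming boundary that admits a move — so there can be no nontrivial closed alternating path.

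It remains to classify the critical cells, namely the diagrams at which \emph{no} incoming boundary admits the move. The claim is that rigidity at incoming boundary $i$ forces at least one chord to be attached to, and ``consumed'' by, that boundary in a way that cannot simultaneously serve another incoming boundary; running this over all $m$ incoming boundaries, together with an Euler-characteristic count, then forces any critical cell other than the basepoint to carry at least $m-1$ chords and hence to have dimension $\ge m-1$. This is the step where the hypothesis $m \ge 2$ and the enumeration of the incoming boundaries are genuinely used, and it is the one I expect to be the main obstacle: turning the local rigidity condition into an honest, non-overlapping chord count requires a careful case analysis of the possible local pictures of a Sullivan diagram near an incoming boundary, degenerate configurations included, and one must check that the combinatorial move really is available in every non-rigid case and really does descend to the quotient. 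By comparison, verifying acyclicity and deducing $(m-2)$-connectivity from ``critical cells in dimension $0$ and $\ge m-1$'' are both routine.
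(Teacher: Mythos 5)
There is a genuine gap, and it is at the very first step of your plan: you want to apply discrete Morse theory \emph{at the space level}, concluding that an acyclic matching with critical cells in dimensions $0$ and $\ge m-1$ exhibits $\betterwidetilde{\SDs}_{g,m}$ as homotopy equivalent to a CW complex with only those cells. That version of the theory requires the matched face pairs to be regular, and $\betterwidetilde{\SDs}_{g,m}$ is badly non-regular precisely where any useful matching must act: for a cell with a fan of length $l$ one has $d_i(\Sigma)=d_{i+1}(\Sigma)=\cdots=d_{i+l-1}(\Sigma)$ (Lemma \ref{lem:faces_fan}), so the attaching map of $\Sigma$ hits that face several times and the elementary collapse is not a homotopy equivalence of spaces. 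Your parenthetical ``passing to a regular model if necessary'' does not repair this: the matching you describe is defined on Sullivan-diagram cells, and an acyclic matching with controlled critical cells does not transfer to a barycentric subdivision or any other regular model. What an acyclic matching on this complex honestly gives is chain-level information only (Theorem \ref{thm:dmt_homology}): the Morse complex computes $H_\ast$, so one gets $\widetilde H_\ast(\betterwidetilde{\SD}_{g,m})=0$ for $\ast<m-1$. To promote that to $(m-2)$-connectivity for $m\ge 3$ you need simple connectivity as a separate input, and your proposal contains no argument for $\pi_1=0$ --- the only source you offer is the unjustified space-level Morse conclusion. (The quotient-of-a-connected-space remark only gives path-connectedness, i.e.\ the case $m=2$.)

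This is exactly how the paper is forced to structure the argument: it builds the matching on the cellular chain complex (for the enumerated space $\betterwidetilde{\SDs}_{g,m}$ this is Lemma \ref{lemma_a_enumerated}, where collapsibility is governed by a bookkeeping of the labels rather than by fan lengths), observes via Discussion \ref{disc:faces} that a co-face changes the number of punctures or degenerate boundary cycles by at most one while the essential $0$-cell has $m-1$ of them, so all other essential cells sit in degree $\ge m-1$; and then it proves simple connectivity for $m>2$ by a separate, hands-on analysis of the $1$- and $2$-cells (Lemma \ref{lemma:1_connectedness}) before invoking Hurewicz. Your critical-cell count (``rigidity forces $\ge m-1$ chords'') is in the same spirit as this counting, but as stated it is a sketch; the substantive missing pieces are (i) a correct statement of what the matching proves (homology vanishing, not a cell-structure homotopy equivalence), and (ii) an independent proof that $\betterwidetilde{\SDs}_{g,m}$ is simply connected when $m\ge 3$. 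Without (ii) the Hurewicz step, and hence the theorem, is not reached.
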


We extend the stabilization maps of moduli spaces to their corresponding spaces of Sullivan diagrams, i.e.\, we give maps $\varphi \colon \betterwidetilde{\SDs}_{g,m} \to \betterwidetilde{\SDs}_{g+1,m}$ making the following diagram commute.
\[
  \begin{tikzcd}
    \cM(1,g,m) \ar{d} \ar{r} & \cM(1,g+1,m) \ar{d} \\
    \betterwidetilde{\SDs}_{g,m} \ar{r}{\varphi} &\betterwidetilde{\SDs}_{g+1,m}
  \end{tikzcd}
\]

\begin{theorema}
\label{intro:thmb}\hspace{2mm}
  Let $g \ge 0$ and $m > 2$.
  The stabilization map $\varphi$ is $(g + m - 2)$-connected.
\end{theorema}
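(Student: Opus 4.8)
The plan is to reduce the statement to a homological vanishing for a relative cell complex and then prove that vanishing by discrete Morse theory, in the same spirit as the proof of Theorem~\ref{intro:thma}. First observe that for $g=0$ there is nothing new: since $m>2$, Theorem~\ref{intro:thma} shows that both $\betterwidetilde{\SDs}_{0,m}$ and $\betterwidetilde{\SDs}_{1,m}$ are $(m-2)$-connected, so any map between them is automatically $(m-2)$-connected. Assume now $g\ge 1$. Using the combinatorial model of Proposition~\ref{proposition:combinatorial_1_SD}, realize $\varphi$ as a cellular inclusion $\betterwidetilde{\SDs}_{g,m}\hookrightarrow\betterwidetilde{\SDs}_{g+1,m}$ whose image consists precisely of those genus-$(g+1)$ Sullivan diagrams that display the standard genus-one configuration glued to the outgoing circle. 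Since $m>2$, Theorem~\ref{intro:thma} makes both spaces simply connected, so $\pi_1(\betterwidetilde{\SDs}_{g+1,m},\betterwidetilde{\SDs}_{g,m})=0$ and, by the relative Hurewicz theorem applied inductively, $\varphi$ is $(g+m-2)$-connected if and only if $H_k(\betterwidetilde{\SDs}_{g+1,m},\betterwidetilde{\SDs}_{g,m})=0$ for all $k\le g+m-2$. Writing $Q_\bullet$ for the quotient chain complex generated by the cells of $\betterwidetilde{\SDs}_{g+1,m}$ not lying in the subcomplex $\betterwidetilde{\SDs}_{g,m}$, it therefore suffices to show $H_k(Q_\bullet)=0$ for $k\le g+m-2$; note that $H_k(Q_\bullet)=0$ for $k\le m-2$ already follows from Theorem~\ref{intro:thma} and the long exact sequence of the pair, so the real content is the range $m-1\le k\le g+m-2$.

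Next I would build an acyclic matching on the generators of $Q_\bullet$, i.e.\ on the set of genus-$(g+1)$ combinatorial Sullivan diagrams on $m$ incoming circles that fail to contain the stabilizing handle. The matching rule is the usual \emph{first available elementary move}: scan the permutation-with-weights data in a fixed total order, locate the minimal position at which the diagram is not yet in stabilized form, and pair it with the diagram obtained by the canonical collapse there (inserting or deleting the associated vertex/weight, i.e.\ a single face–degeneracy pair of cells). Acyclicity is the standard lexicographic argument: a closed $V$-path would have to strictly decrease the chosen monovariant over its cycle, which is impossible. The crux of the proof — and the step I expect to be the main obstacle — is to show that every \emph{critical} (unmatched) cell of $Q_\bullet$ has dimension at least $g+m-1$. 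A critical cell is rigid under all the elementary moves, and one must argue combinatorially that such rigidity forces the underlying weighted permutation to be maximally spread out; combined with the constraints that the genus equals $g+1$ and that there are $m$ incoming circles, this bounds the number of edges/cells from below and yields the dimension estimate. Getting the sharp constant $g+m-1$ rather than merely $m-1$ is precisely where the extra handle of the stabilization must be exploited, and is the technically delicate point; it may require an induction on $g$, in which the critical cells of $Q_\bullet$ are themselves assembled from spaces of Sullivan diagrams of genus $\le g$ on $m$ (or more) incoming circles, whose connectivity is supplied by Theorem~\ref{intro:thma} together with the inductive hypothesis of Theorem~\ref{intro:thmb}.

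Granting the dimension bound on critical cells, the Morse complex associated to the matching is concentrated in degrees $\ge g+m-1$ and is quasi-isomorphic to $Q_\bullet$, whence $H_k(Q_\bullet)=0$ for $k\le g+m-2$. Feeding this back through the relative Hurewicz theorem shows that $\varphi\colon\betterwidetilde{\SDs}_{g,m}\to\betterwidetilde{\SDs}_{g+1,m}$ is $(g+m-2)$-connected, which is the assertion of Theorem~\ref{intro:thmb}. If the uniform bound on all critical cells proves too coarse to extract directly, the fallback is to run the homological comparison through the spectral sequence of the skeletal filtration of $Q_\bullet$, using the inductive connectivity statements to show that its $E^1$-page vanishes in total degree $\le g+m-2$.
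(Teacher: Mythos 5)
Your overall strategy is the same as the paper's: view $\varphi$ as the inclusion of a subcomplex (Lemma \ref{lemma:stabilization_map_as_inclusion_of_a_sub-complex}), use simple connectivity from Theorem \ref{theorem_a} plus relative Hurewicz to reduce to vanishing of the relative homology in degrees $\le g+m-2$, and prove that vanishing by a discrete Morse matching on the cells outside the subcomplex whose critical cells live in degree $\ge g+m-1$. The problem is that the entire mathematical content of the theorem lies in the step you defer. You describe the matching only as the ``first available elementary move'' with acyclicity by ``the standard lexicographic argument,'' and you explicitly flag the dimension bound on critical cells as the main obstacle, offering an induction or a spectral-sequence fallback without carrying either out. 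In the paper this step is not generic: one has to extend the fan matching of Theorem \ref{theorem_a} by a second, genuinely new pairing built from \emph{fences} (Definition \ref{defi:fence}), where a cell is paired with the face that merges two boundary components of a single ghost surface of positive genus not attached at position $0$. Acyclicity is not a formality; it is Lemma \ref{lemma_b_unenumerated}, proved by a careful extension of the degree of degeneracy to the tuple $(\dim,s_\Sigma,m_\Sigma,-l_\Sigma,g_\Sigma,-L_\Sigma)$ and a nontrivial case analysis (Section \ref{section:proof_AB}), with a separate argument in the enumerated case.

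Two concrete points are missing from your scheme. First, the matching must be compatible with the subcomplex: a cell outside $\betterwidetilde{\SDs}_{g,m}$ must be paired with a cell outside it, otherwise the matching does not descend to the quotient $Q_\bullet$. In the paper this is arranged by restricting fences to surfaces with foot-point $>0$ (see Remark \ref{remark:suspended_cells_and_collapsibility_B} and part (2) of Lemma \ref{lemma_b_unenumerated}); your ``canonical collapse at the first non-stabilized position'' makes no provision for this, and a naive choice will pair cells across the subcomplex. Second, the bound $\ge g+m-1$ on critical cells is not obtained by showing critical cells are ``maximally spread out''; it follows because, by Lemma \ref{lemma_b_unenumerated}, a critical cell has no punctures or degenerate boundary and no positive-genus surface away from $0$, while a cell outside the subcomplex has a genus-zero surface at $0$, so the $m-1$ extra boundaries and the $g+1$ units of genus must each be created by co-faces, and each co-face raises the degree by one (the counting argument from the proof of Theorem \ref{theorem_a}, extended to genus). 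Without an explicit matching and this counting, your argument is a plan rather than a proof, so as it stands there is a genuine gap exactly at the point you yourself identify.
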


Additionally, we consider the space of 1-Sullivan diagrams of the cobordism $S$ of genus $g$, with one parametrized outgoing  boundary component and either:
\begin{itemize}
\item $m$ parametrized and unenumerated incoming boundary components, which we denote by $\SDs_{g,m}$;
\item empty incoming boundary and $m$ enumerated punctures, which we denote by $\betterwidetilde{\SDs}_g^m$;
\item empty incoming boundary and $m$ unenumerated punctures,  which we denote by $\SDs_g^m$.
\end{itemize}
For each of these models of spaces of 1-Sullivan diagrams, there is a version of Theorem \ref{intro:thma} and Theorem \ref{intro:thmb} (see Theorems \ref{theorem_a} and \ref{theorem_b}).
Moreover, the structure of the proofs show that a huge amount of cells do not contribute to the homology.
See Proposition \ref{proposition:support_of_homology} for details.

There are forgetful maps of spaces
\begin{align*}
  \begin{tikzcd}[ampersand replacement=\&]
                                                                                      \& \betterwidetilde{\SDs}_g^m \arrow{dr}{\omega} \& \\
    \betterwidetilde{\SDs}_{g,m} \arrow{ur}{\tilde\vartheta} \arrow{dr}{\hat\omega}   \&                                               \& \SDs_g^m \\
                                                                                      \& \SDs_{g,m} \arrow{ur}{\vartheta}              \&
  \end{tikzcd}
\end{align*}
where the maps $\tilde\vartheta$ and $\vartheta$  collapse each incoming boundary component to a puncture, and the maps $\tilde\omega$ and $\omega$  forget the enumeration of the incoming boundary components or the enumeration of the punctures respectively.
In contrast to the forgetful maps of moduli spaces, none of these maps is a fibration or a covering since the homotopy type of the fibers is not constant.
However, by passing to homology, the maps $\omega$ and $\hat\omega$ that forget the enumeration admit a transfer map.
We get the result below, see Proposition \ref{proposition:transfer_maps} for details.
\begin{propositiona}\hspace{2mm}
  In homology there are maps
  \begin{align*}
    tr \colon H_\ast(\SDs_{g,m}; \Z ) \to H_\ast( \betterwidetilde{\SDs}_{g,m}; \Z ) \;\;\;\;\text{ and }\;\;\;\; tr \colon H_\ast( \SDs_g^m; \Z ) \to H_\ast( \betterwidetilde{\SDs}_g^m; \Z )
  \end{align*}
  where $\hat\omega_\ast \circ tr$ respectively $\omega_\ast \circ tr$ are the multiplication by $m!$.
\end{propositiona}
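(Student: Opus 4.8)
The plan is to exhibit both unenumerated models as quotients of the enumerated ones by a cellular action of the symmetric group $\Sigma_m$ and to take for $tr$ the norm (averaging) map on cellular chains. First I would unwind the definitions: $\Sigma_m$ acts on $\betterwidetilde{\SDs}_{g,m}$ (resp.\ $\betterwidetilde{\SDs}_g^m$) by permuting the enumeration of the $m$ incoming boundary components (resp.\ punctures); in the combinatorial picture of Proposition~\ref{proposition:combinatorial_1_SD} a cell is a weighted permutation together with an enumeration of its boundary cycles, and $\Sigma_m$ simply relabels that enumeration. This action is cellular, and by Definition~\ref{definition:space_of_SD} the quotient is precisely the corresponding unenumerated model, with $\hat\omega$ (resp.\ $\omega$) the quotient map. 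Passing, if necessary, to a $\Sigma_m$-equivariant subdivision, I may assume the action is admissible, i.e.\ no element fixes a cell setwise without fixing it pointwise, so that the cells of the quotient are exactly the $\Sigma_m$-orbits of cells upstairs and the cellular chain complex $C_\ast(\SDs_{g,m};\Z)$ is identified with the coinvariants $C_\ast(\betterwidetilde{\SDs}_{g,m};\Z)_{\Sigma_m}$, the quotient map inducing $\hat\omega_\#$.

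The transfer on chains is then the norm map $N = \sum_{\rho\in\Sigma_m}\rho$. Since $\partial$ is $\Sigma_m$-equivariant, $N$ commutes with $\partial$, and $N(x-\rho x)=0$ for all $x$ and $\rho$, so $N$ descends to a chain map $tr_\#\colon C_\ast(\betterwidetilde{\SDs}_{g,m};\Z)_{\Sigma_m}\to C_\ast(\betterwidetilde{\SDs}_{g,m};\Z)$; concretely $tr_\#$ sends an orbit-cell to the sum over $\Sigma_m$ of (any choice of) its lifts, which is independent of the lift because a different lift only permutes the summands. Using the identification above, this is a chain map $C_\ast(\SDs_{g,m};\Z)\to C_\ast(\betterwidetilde{\SDs}_{g,m};\Z)$, and $\hat\omega_\#\circ tr_\#$ sends an orbit-cell $\bar e$ to $\sum_{\rho\in\Sigma_m}\bar e = m!\,\bar e$. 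Passing to homology and letting $tr$ be the induced map gives $\hat\omega_\ast\circ tr = m!\cdot\mathrm{id}$; running the same argument with $\omega$ in place of $\hat\omega$ (and $\betterwidetilde{\SDs}_g^m$, $\SDs_g^m$ in place of $\betterwidetilde{\SDs}_{g,m}$, $\SDs_{g,m}$) produces the second transfer.

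I expect the only genuine work to be the verification underlying the admissibility reduction: in the fat-graph/Sullivan-diagram models a cell is defined only up to automorphisms of the underlying diagram, so a priori a relabeling permutation could fix a cell while acting through a nontrivial graph automorphism on its closure, which would wreck the identification of $C_\ast$ of the quotient with the coinvariants (orientation-reversal would introduce $2$-torsion). One must therefore either subdivide $\Sigma_m$-equivariantly or check directly, from the weighted-permutation description of Proposition~\ref{proposition:combinatorial_1_SD}, that no automorphism of an admissible diagram permutes the enumerated boundary cycles in an orientation-reversing way. Everything past that point is formal; in particular nothing is needed about the fibers of $\hat\omega$ and $\omega$, which is exactly why a genuine covering-space transfer is unavailable and the chain-level norm argument is used instead.
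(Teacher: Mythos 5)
Your proposal is correct, but it takes a genuinely different route from the paper. You identify the cellular chains of the unenumerated models with the coinvariants $C_\ast(\betterwidetilde{\SDs}_{g,m};\Z)_{\Symm(L)}$ (respectively for $\betterwidetilde{\SDs}_g^m$) and take the algebraic norm $N=\sum_{\rho}\rho$ as a chain-level transfer, so that $\hat\omega_\#\circ tr_\#=m!$ holds already on chains; the paper instead works only in homology: it passes to the quotients by the sub-complex $B$ of diagrams with at least two degenerate boundary cycles (where the number of preimages drops), observes that there the forgetful maps become honest $m!$-sheeted coverings of semisimplicial sets, defines the transfer as the sum of the \emph{distinct} preimages, and then lifts it to $H_\ast(\SD)$ through the split injection $H_\ast(\SD)\hookrightarrow H_\ast(\SD/B)$ supplied by the discrete Morse flow via Proposition \ref{proposition:support_of_homology}. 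Your argument is more elementary and self-contained: it needs no Morse-theoretic input and no sub-complex $B$, precisely because the norm weights a cell with nontrivial stabilizer by the order of its stabilizer rather than summing distinct preimages, and it yields the chain-level formula $tr(\bar e)=\sum_\sigma \sigma\cdot\tilde e$, which is exactly the description the paper invokes later in the proof of Proposition \ref{proposition:generators_by_operations}; the paper's route, in exchange, keeps the geometric picture of a covering away from the degenerate locus and reuses machinery it has already built. One remark on what you call the ``only genuine work'': no equivariant subdivision is needed, since by Proposition \ref{proposition:combinatorial_1_SD} the symmetric group acts by merely relabeling the decorations (the leaves, respectively the enumerating data) and this action commutes with the face maps $D_i$, so the unenumerated models are by definition the quotient semisimplicial sets, a cell fixed by a permutation is fixed together with its simplicial structure (the vertices on the admissible cycle are ordered starting from the admissible leaf), and no orientation-reversing identification can occur; your fallback direct check is thus immediate.
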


By Theorem \ref{intro:thma}, the spaces of 1-Sullivan diagrams are simply connected provided that we have at least $m \ge 3$ incoming boundaries or $m \geq 3$ punctures.
In Proposition \ref{proposition:fundamental_group_two_punctures} we show that this range is strict.
More precisely, we show the following:

\begin{propositiona}
\label{intro:prop_pi1}
\hspace{2mm}
Let $S_{g,1}^2$ denote the cobordism with no incoming boundary, genus $g$, one parametrized outgoing boundary component and two punctures and denote by $\cM(S_{g,1}^2)$ its moduli space.
The fundamental group of its corresponding space of 1-Sullivan diagrams $\SDs_{g}^2$ is
  \begin{align*}
    \pi_1( \SDs_g^2 ) \cong 
    \begin{cases}
      \mathbb Z\langle \alpha_0 \rangle & g=0 \\
      \mathbb Z / 2\mathbb Z \langle \alpha_g \rangle  & g > 0
    \end{cases}
  \end{align*}
  The homomorphism on fundamental groups induced by the stabilization map 
  \[
    \varphi \colon \pi_1(\SDs_{g}^2) \to \pi_1(\SDs_{g+1}^2)
  \] 
  sends $\alpha_g$ to $\alpha_{g+1}$.
  Furthermore,the class $\alpha_g$ is in the image of the homomorphism induced by the map of spaces.
  \[
    B\Mod(S_{g,1}^2) \to \SDs_g^2
  \]
  The source is the classifying space of the mapping class group of the surface $S_{g,1}^2$ with one boundary component, genus $g$ and two punctures;
  and the target is the space of Sullivan diagrams $\SDs_g^2$.
  After taking fundamental groups, the generator $\alpha_g$ is in the image of the Dehn (half-)twist that exchanges the two punctures inside a small disk.
\end{propositiona}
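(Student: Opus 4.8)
The plan is to reduce the computation of $\pi_1(\SDs_g^2)$ to a finite combinatorial problem by means of the description of $1$-Sullivan diagrams by weighted permutations from Proposition~\ref{proposition:combinatorial_1_SD}, and then to extract the fundamental group from an explicit small CW model produced by discrete Morse theory, in the same spirit as the proofs of Theorems~\ref{intro:thma} and~\ref{intro:thmb}. Concretely, $\SDs_g^2$ is a regular CW complex whose cells are indexed by isomorphism classes of weighted permutations of the relevant type, and it is connected (Theorem~\ref{theorem_a} with $m=2$), so $\pi_1$ is determined by its $2$-skeleton. First I would construct a discrete Morse matching on $\SDs_g^2$ admitting a single critical cell in degree $0$, a single critical cell in degree $1$ — call $\alpha_g$ the loop it represents — and for which the critical $2$-cells attach, along the induced generating loop, trivially when $g=0$ and by the word $\alpha_g^{\,2}$ when $g>0$. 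The critical cells then form a CW complex homotopy equivalent to $\SDs_g^2$ whose $1$-skeleton is a circle, so $\pi_1(\SDs_g^2)$ is cyclic on $\alpha_g$ and its only relations come from the critical $2$-cells.

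The heart of the argument is therefore to compute these attaching maps by following the gradient paths of the matching. I expect the outcome to be that, when $g>0$, some critical $2$-cell is attached along $\alpha_g^{\,2}$ (and all others attach trivially to $\alpha_g$), while for $g=0$ no critical $2$-cell involves $\alpha_g$; geometrically this reflects that the degenerate surfaces available in $\ov\cM$ allow the boundary curve of a small disc enclosing the two punctures to be collapsed precisely when there is spare genus. This yields the presentations $\pi_1(\SDs_0^2)=\langle\alpha_0\mid\ \rangle\cong\Z$ and $\pi_1(\SDs_g^2)=\langle\alpha_g\mid\alpha_g^{\,2}\rangle\cong\Z/2\Z$ for $g>0$, and in particular it simultaneously gives $\alpha_g\neq 1$, which is the non-vanishing content. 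Pinning down the exponent of $\alpha_g$ in the attaching map — i.e.\ genuinely distinguishing the cases $g=0$ and $g>0$ — is the step I expect to be the main obstacle, and it is where the weighted-permutation bookkeeping does the real work; a useful consistency check is that the cellular chain complex of the Morse model then gives $H_1(\SDs_g^2;\Z)\cong\Z$ for $g=0$ and $\cong\Z/2\Z$ for $g>0$.

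For the identification of the generator, I would trace the class through the zig-zag $B\Mod(S_{g,1}^2)\simeq\cM(S_{g,1}^2)\simeq\Fat^{ad}(1,g,2)\twoheadrightarrow\SDs_g^2$ from the homotopy commutative diagram in the introduction. The Dehn half-twist $\tau$ exchanging the two punctures inside a small disc determines, via these equivalences, an explicit loop of admissible fat graphs; its image in $\SDs_g^2$ can be written as an edge path in the $1$-skeleton, and the remaining task is to verify that, after the Morse collapse, this edge path is homotopic to the generating loop, i.e.\ represents $\alpha_g$. Since $\tau^2$ is the Dehn twist along the boundary of the disc, which bounds a twice-punctured subsurface, the relation $\alpha_g^{\,2}=1$ for $g>0$ is consistent with — and can alternatively be obtained from — the collapsibility of that curve in the harmonic compactification.

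Finally, the statement about the stabilization map follows with little extra work once $\alpha_g$ is identified as the image of $\tau$: the half-twist is supported in a disc disjoint from the region where the genus-one piece is glued on, hence is compatible with the stabilization homomorphism $\Mod(S_{g,1}^2)\to\Mod(S_{g+1,1}^2)$; since the square relating the stabilization maps of moduli spaces and of Sullivan diagrams commutes, $\varphi_\ast$ carries the image of $\tau$ to the image of the corresponding half-twist of $S_{g+1,1}^2$, that is, $\varphi_\ast(\alpha_g)=\alpha_{g+1}$.
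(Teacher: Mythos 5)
Your proposal is a strategy outline rather than a proof: every step that actually distinguishes $g=0$ from $g>0$ is deferred. You posit a discrete Morse matching on $\SDs_g^2$ with a single critical $0$-cell, a single critical $1$-cell $\alpha_g$, and critical $2$-cells attaching trivially for $g=0$ and along $\alpha_g^{\,2}$ for $g>0$, but you neither construct such a matching nor compute a single gradient path; you explicitly flag the determination of the attaching words as ``the main obstacle''. The same holds for the identification of the generator: tracing the half-twist through $B\Mod(S_{g,1}^2)\simeq\cM(S_{g,1}^2)\simeq\Fat^{ad}(1,g,2)\twoheadrightarrow\SDs_g^2$ and checking that the resulting edge path survives the Morse collapse as $\alpha_g$ is exactly the content to be proved, and it is left as ``the remaining task''. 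Note also that the flows constructed in Section \ref{section:vanishing} do not hand you this for free: for $m=2$ they only control cells with punctures or degenerate boundary (resp.\ positive genus away from the vertex $0$), and nothing in the paper, nor in your proposal, shows that the essential $1$-cells reduce to a single one with the attaching data you expect; since $\SDs_g^2$ is a semi-simplicial set rather than a regular complex, extracting $\pi_1$-presentations from a Morse collapse needs genuine bookkeeping, which is missing.

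For comparison, the paper's proof (Proposition \ref{proposition:fundamental_group_two_punctures}) avoids Morse theory for this statement altogether: it works directly with the $2$-skeleton, using the classification of $1$-cells into types $\alpha,\beta,\gamma$ and the explicit $2$-cells of Figure \ref{fig:generators_and_relations}. The cells of type $\beta,\gamma$ die as in Lemma \ref{lemma:1_connectedness}, and for $g>0$ there is an explicit $2$-cell with boundary word $\alpha\beta\alpha$, which together with $\beta=1$ gives $\alpha_g^2=1$; for $g=0$ no such cell exists, giving $\Z\langle\alpha_0\rangle$. The statement about stabilization is then immediate because $\varphi$ is a semisimplicial map carrying the cell $\alpha_g$ to the cell $\alpha_{g+1}$ — no appeal to the half-twist or to the commuting square is needed (your route would also work, but only after part (iii) is established). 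Finally, the half-twist statement is proved by exhibiting an explicit degree-one class $\Lambda_g$ in Costello's black-and-white graph complex (Subsection \ref{subsection:BW_graphs}, Figure \ref{fig:class_Lambda_g}) representing the half-twist and mapping to $\alpha_g$ under the quotient map to Sullivan diagrams. These concrete inputs — the relation cell $\alpha\beta\alpha$ and the class $\Lambda_g$ — are precisely what your proposal would still have to supply.
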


Using the methods of \cite{wahluniversal}, we extend her results and construct two infinite families of non-trivial classes of increasing dimension.
See Proposition \ref{proposition:non_trivial_op}.
The forgetful maps and transfers yield three additional infinite families of non-trivial classes.
See Proposition \ref{proposition:generators_by_operations}.
We summarize these results below.

\begin{propositiona}\hspace{2mm}
    Let $m>0$, $1\leq i \leq m$, $c_i>1$ and $c=\sum_i c_i$.
  \begin{enumerate}
    \item[$(i)$]  There are classes of infinite order $\widetilde{\Gamma}_m\in H_{4m-1}(\betterwidetilde{\SDs}_{m,m}; \mathbb Z)$ and $\widetilde{\Omega}_{(c_1, \ldots, c_m)}\in H_{2c-1}(\betterwidetilde{\SDs}_{0,c}; \mathbb Z)$.
      All these classes correspond to non-trivial higher string topology operations.
    \item[$(ii)$] There are classes of infinite order $\Gamma_m\in H_{4m-1}({\SDs}_{m,m}; \mathbb Z)$ and $\Omega_{(c_1, \ldots, c_m)}\in H_{2c-1}({\SDs}_{0,c}; \mathbb Z)$ and $\zeta_{2m} \in H_{2m-1}({\SDs}_{0,2m}; \mathbb Z)$ .
  \end{enumerate}
\end{propositiona}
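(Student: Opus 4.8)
\emph{Part (i).} The plan is to exploit that, by \cite{wahlwesterland}, the cellular chain complex of $\betterwidetilde{\SDs}(1,g,m)$ is the complex of universal operations on the Hochschild homology of symmetric Frobenius algebras, so that a class in $H_k(\betterwidetilde{\SDs}_{g,m})$ determines a natural transformation $HH_\ast(A)^{\otimes m}\to HH_{\ast+k}(A)$. Hence, to show that a class is non-trivial and of infinite order it suffices to produce a representing cycle together with a $\Z$-algebra $A$ on whose Hochschild homology the induced operation is non-zero and lands in a torsion-free group. Building on the explicit non-trivial classes of \cite{wahluniversal}, I would assemble $\widetilde{\Gamma}_m$ and $\widetilde{\Omega}_{(c_1,\dots,c_m)}$ from elementary non-trivial Sullivan-diagram cycles via the operadic/PROP composition, so that the degree and boundary count work out (giving $\widetilde{\Gamma}_m\in H_{4m-1}(\betterwidetilde{\SDs}_{m,m})$ and $\widetilde{\Omega}_{(c_1,\dots,c_m)}\in H_{2c-1}(\betterwidetilde{\SDs}_{0,c})$, with the $m$ groups of sizes $c_1,\dots,c_m$ recording the inputs). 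Non-vanishing is then verified by evaluating on a convenient test algebra --- a truncated polynomial algebra $\Z[x]/x^{N}$ with its standard Frobenius structure, or a group ring --- whose Hochschild homology is explicitly known; tracing the cycle through the dictionary of \cite{wahlwesterland} should exhibit a non-zero element in a free $\Z$-summand, which gives both infinite order and, via the chains-on-free-loop-space model, the higher string topology interpretation.

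\emph{Part (ii).} For $\Gamma_m$ and $\Omega_{(c_1,\dots,c_m)}$ the plan is to push the classes of part (i) along the forgetful map $\hat\omega\colon\betterwidetilde{\SDs}_{g,m}\to\SDs_{g,m}$, i.e.\ to set $\Gamma_m:=\hat\omega_\ast(\widetilde{\Gamma}_m)$ and $\Omega_{(c_1,\dots,c_m)}:=\hat\omega_\ast(\widetilde{\Omega}_{(c_1,\dots,c_m)})$. Since the cycles of part (i) are symmetric under the $S_m$-action permuting the incoming boundaries, the composite $tr\circ\hat\omega_\ast$ acts on them as the sum over this action, hence as multiplication by $m!$; combined with $\hat\omega_\ast\circ tr=m!$ from Proposition~\ref{proposition:transfer_maps} this yields $tr(\Gamma_m)=m!\,\widetilde{\Gamma}_m$, which is of infinite order, so $\Gamma_m$ is of infinite order --- and likewise $\Omega_{(c_1,\dots,c_m)}$. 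For the remaining family $\zeta_{2m}$, which is not a pushforward from part (i), I would write down an explicit cellular cycle in $\SDs_{0,2m}$ that generalizes the generator $\alpha_0$ of $\pi_1(\SDs_0^2)$ from Proposition~\ref{intro:prop_pi1} (the case $m=1$), built from the permutation/twist cells; its non-triviality and infinite order would again be detected by transferring to $\betterwidetilde{\SDs}_{0,2m}$ and pairing against a cohomology class, or by a commutative test algebra on whose Hochschild homology the associated operation is a non-zero iterated shuffle.

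\emph{Main obstacle.} The crux is the explicit computation that the induced operations are non-zero \emph{over $\Z$}: one must choose the test algebra so that the relevant Hochschild homology group is torsion-free in the degree in question and so that the operadic composite of the chosen cycles acts non-trivially there, and then keep control of all signs in the identification of Sullivan-diagram composition with Hochschild operations. A secondary difficulty is verifying the $S_m$-equivariance precisely enough that the transfer returns exactly $m!\,\widetilde{\Gamma}_m$ (and not a proper multiple), and exhibiting a closed-form cycle for $\zeta_{2m}$ whose boundary manifestly vanishes.
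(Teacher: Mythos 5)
Your part (i) is the paper's own strategy: the paper builds $\widetilde\Gamma_m=\widetilde\zeta_m\circ\widetilde\gamma^{\otimes m}$ and $\widetilde\Omega_{(c_1,\ldots,c_m)}=\widetilde\zeta_m\circ(\widetilde\omega_{c_1}\otimes\cdots\otimes\widetilde\omega_{c_m})$ by PROP composition of explicit building blocks, checks they are cycles by matching faces, and detects them through the induced operations on the Hochschild homology of the single graded Frobenius algebra $A=\Z[x]/(x^2)=H^\ast(S^1)$, where they send $x^{\otimes c}$ to $1\otimes x^{\otimes 2c-1}$ resp.\ $2^m(1\otimes x^{\otimes 4m-1})$, generators of free summands; non-triviality of the homology classes then follows because cycles in $\betterwidetilde\SD$ act naturally (via \cite{wahlwesterland,wahluniversal}). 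So the plan is correct in outline, but be aware that what you set aside as the ``main obstacle'' --- choosing the building blocks, verifying $d=0$ for the composites, and the explicit evaluation --- is exactly the content of the proof (Lemmas \ref{lem:cycles_building_blocks}, \ref{lem:cycles_composed_classes} and Proposition \ref{proposition:non_trivial_op}), not a peripheral difficulty.

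In part (ii) there is a genuine gap. The cycles of part (i) are \emph{not} invariant under the $\Symm(L)$-action: permuting the leaf labels of $\widetilde\Gamma_m$ or $\widetilde\Omega_{(c_1,\ldots,c_m)}$ produces genuinely different cells, so $tr(\hat\omega_\ast\xi)$ is the orbit sum $\sum_{\sigma\in\Symm(L)}\sigma.\xi$, and there is no reason for this to equal, or even be homologous to, $m!\,\xi$. Hence your key identity $tr(\Gamma_m)=m!\,\widetilde\Gamma_m$ is unjustified, and the ``equivariance'' you flag as a secondary difficulty cannot be verified as stated. The paper's argument sidesteps this: since the test element $x^{\otimes m}$ is symmetric, every $\sigma.\xi$ induces the \emph{same} value as $\xi$ on it, so the orbit sum induces $m!\,\xi_\ast$, which is non-zero of infinite order; this already forces $\hat\omega_\ast(\xi)$ to be of infinite order (Proposition \ref{proposition:generators_by_operations}). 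Your sketch for $\zeta_{2m}$ also overlooks that the enumerated lift $\widetilde\zeta_{2m}$ is \emph{not} a cycle (Lemma \ref{lem:cycles_building_blocks}), so ``transfer and pair'' needs care: the transfer of $\zeta_{2m}$ is an orbit sum of non-cycles. The paper handles this either by the Morse-complex computation identifying $H_{2m-1}(\SDs_{0,2m};\Z)\cong\Z\langle\zeta_{2m}\rangle$, or by the composition identity $tr\,\hat\omega(\widetilde\mu_{2m})=\bigl(tr\,\hat\omega(\widetilde\zeta_{2m})\bigr)\circ\bigl(\widetilde\mu_1\otimes\cdots\otimes\widetilde\mu_1\bigr)$, which reduces the claim to Wahl's non-trivial class $\widetilde\mu_{2m}$.
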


Furthermore, we compute the homology of $\SDs_{g}^m$ and $\SDs_{g,1+m}$ for small $m$ and $g$.
In particular the homology groups of $\SDs_{0}^m$, the space of 1-Sullivan diagrams corresponding to the disk with $m$ unenumerated punctures, seem to indicate a general pattern.
For $1\leq m \leq 7$ and $m\neq 6$ we have:  
\[
  H_i(\SDs_{0}^m;\mathbb{Z})=\left\lbrace
    \begin{array}{l l}
      \mathbb{Z} & \text{for } i=0,m'+1\\
      0          & \text{else}
    \end{array}
  \right.
\]
where $m'$ is the biggest even number strictly smaller than $m$ and for $m=6,8$ we have:
\[
  H_i(\SDs_{0}^m;\mathbb{Z})=\left\lbrace
    \begin{array}{l l}
      \mathbb{Z} & \text{for } i=0,m-1, m+1, m+2\\
      0          & \text{else}.
    \end{array}
  \right.
\]
In fact, we believe that $\SDs_{0}^m$ is in general homotopy equivalent to a wedge of spheres of increasing dimension.
Further results for other genera can be found in Appendix \ref{appendix:computations}.

\medskip
Our results leave several perspectives and open questions for future work, which we briefly describe now.
Theorem \ref{intro:thma} implies in particular that 1-Sullivan diagrams have homological stability with respect to the number of incoming boundary components and that the stable homology is trivial.
Moreover, the stabilization maps with respect to genus and incoming boundary components commute.
Thus, the stable homology with respect to both maps is trivial as well.
However, for a fixed number of incoming boundary components $m$ the stable homology with respect to genus is still unknown.

On the other hand, while in this paper we focus on the case of 1-Sullivan diagrams, we expect there are versions of Theorems \ref{intro:thma} and \ref{intro:thmb} for the spaces of $p$-Sullivan diagrams for any $p$.
More precisely, in future work we intend to extend our current methods in order to show that the spaces of $p$-Sullivan diagrams have homological stability with respect to the number of incoming boundary components.
Moreover, we want to address if, as in the case $p=1$, the connectivity of these spaces increases with the number of incoming boundary components.
Similarly, we believe that extensions of these tools can be used to show that the spaces of $p$-Sullivan diagrams have homological stability with respect to the genus and
that both stabilization maps commute in homology.

Finally, turning back to moduli spaces, we hope that Sullivan diagrams might be used as a ``detector" of non-trivial homology classes of moduli spaces.
More precisely, we expect the map
\[
  \cM(p,g,m) \simeq \Rad(p,g,m) \to \ov{\Rad}(p,g,m) \simeq \betterwidetilde\SDs(p,g,m)
\]
to be non-trivial in homology.
Our work shows that the homology of the harmonic compactification is much easier to compute and
understanding generators of its homology could help us detect non-trivial classes of the moduli space of surfaces.
Indeed, Proposition \ref{intro:prop_pi1} shows that this is possible,
although the non-trivial homology classes of the moduli space of surfaces which we can currently detect were already known to be non-trivial, see for example \cite{bodigheimertillmann}.

Moreover, in contrast to the case of moduli spaces, the homotopy type of the space of $p$-Sullivan diagrams does not only depend on the topological type of its underlying surface $S$ but it is also sensitive to the cobordism structure of $S$.
See Subsection \ref{subsec:asymmetry}.
By changing the cobordism structure of the underlying surface  
we would like to construct maps of spaces
\[\cM(p,g,1) \to \betterwidetilde{\SDs}(p,g,1)\to 
\betterwidetilde{\SDs}(p-1,g,2) \to
\betterwidetilde{\SDs}(p-2,g,3)\to \ldots \to
\betterwidetilde{\SDs}(1,g,p).
\]
The complexity of the space of Sullivan diagrams increases with the number of outgoing boundary components of the cobordism $S$.
Moreover, the results of \cite{kaufmann_sullivan, wahlwesterland} in genus zero suggest that this sequence of spaces could give an approximation of the homology of the moduli space $\cM(p,g,1)$ whose accuracy increases as one increases the integer $p$.
We hope that such constructions together with the homological computations described above might give further insight into the homology of the moduli space of surfaces.

\medskip
The paper is organized as follows.
In Section \ref{section:p_sullivan_diags} we define the space of $p$-Sullivan diagrams as a quotient of admissible fat graphs and define the chain complex of $p$-Sullivan diagrams.
In Section \ref{section:sd_one_circle} we give a way of describing unique representatives for the generators of the chain complex of Sullivan diagrams and describe the differential under this presentation for the case $p=1$.
In Section \ref{section:vanishing}, we present our vanishing results and give a brief review of discrete Morse theory, which is our main technical tool.
In Section \ref{section:detecting_homology}, we present our non-vanishing results.
In Section \ref{section:proof_AB}, we prove two technical lemmas used in Section \ref{section:vanishing}.
In Appendix \ref{appendix:computations} we present complete computations for the homology of Sullivan diagrams in the parametrized and unparametrized, unenumerated case for small genus and number of punctures or boundary components.

\medskip
\medskip

\emph{Acknowledgments.}
We would like to thank Carl-Friedrich B\"{o}digheimer and Nathalie Wahl for many extensive and illuminating conversations on different models of moduli spaces, homology operations and string operations as well as for reading earlier versions of this paper.
We would like to thank Oscar Randall-Williams for useful conversations on methods to prove homological stability.
We are thankful to Frank Lutz for early on conversations on how discrete Morse theory could be used to address this problem.
We would also like to thank Andrea Bianchi,  Gabriel Drummond-Cole and Alexander Kupers for insightful conversations as well as proofreading this paper.
Furthermore, we thank Elmar Vogt and Daniel L\"{u}tgehetmann for proof reading and helpful comments on the accessibility of this paper.
Finally, we are thankful to Rune Johansen, and Kasper Andersen for insights in methods on experimental mathematics.

\tableofcontents

\section{Sullivan diagrams basic definitions}
\label{section:p_sullivan_diags}

In this section we first define Sullivan diagrams as an equivalence class of certain types of fat graphs or ribbon graphs, which are graphs with additional combinatorial structure.
Then we give brief descriptions of alternative definitions of Sullivan diagrams appearing in the literature.
Thereafter, we briefly describe composition of Sullivan diagrams, which assembles them into a dg-PROP.
Finally, we describe a property of these diagrams, that is that they are ``asymmetric" with respect to their incoming and outgoing boundary components.

\subsection{Fat graphs and \texorpdfstring{$p$}{p}-Sullivan diagrams}
\label{section:p_sullivan_diags:definitions}
\begin{definition}
  A \emph{combinatorial graph} $G$ is a tuple $G=(V,H,s,i)$, with a finite set of vertices $V$,
  a finite set of \emph{half-edges} $H$, a source map $s\colon H\to V$ and an involution $i\colon H\to H$.
  The source map $s$ ties each half-edge to its source vertex and the involution $i$ attaches half-edges together.
  The \emph{valence} of a vertex $v\in V$ is the cardinality of the set $s^{-1}(v)$ and we denote it by $|v|$.
  A \emph{leaf} of a graph is a fixed point of $i$.
  An \emph{edge} of the graph is an orbit of $i$.
  Note that leaves are also edges.
  We call both leaves and edges connected to a vertex of valence one \emph{outer-edges} all other edges are called \emph{inner-edges}.
  See Figure \ref{figure:fattening_graph_ex}.
\end{definition}

\begin{definition}
  The \emph{geometric realization} of a combinatorial graph $G$ denoted $|G|$ is the space obtained by taking a copy of the half interval $[0,\frac{1}{2}]$ for each half-edge of $G$ and identifying: 
  \begin{itemize}
    \item $0\in [0,\frac{1}{2}]$ of all half edges with the same source;
    \item $\frac{1}{2}\in[0,\frac{1}{2}]$ of the half edges which are matched by the involution.
  \end{itemize}
\end{definition}

\begin{definition}
  A \emph{fat graph} $\Gamma=(G,\sigma)$ is a combinatorial graph together with a cyclic ordering $\sigma_v$ of the half edges incident at each vertex $v$.
  The cyclic orderings $\sigma_v$ define a permutation $\sigma \colon H \to H, \sigma(h) := \sigma_{s(h)}(h)$ which satisfies $s\sigma = s$.
  This permutation is called the \emph{fat structure} of the graph.
  Figure \ref{Fat_example} shows some examples of fat graphs.
\end{definition}
\begin{figure}[ht]
  \centering
  \inkpic[.6\columnwidth]{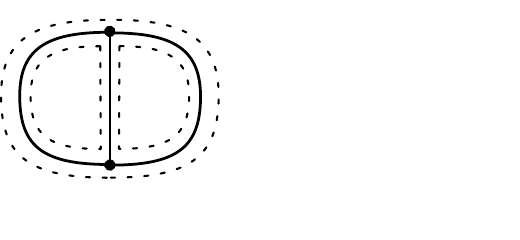}
  \caption{
    The thick lines show two different fat graphs with the same underlying combinatorial graph.
    The  fat structure is given by the clockwise orientation of the plane.
    The dotted lines indicate the boundary cycles.
    We make this graphical description precise using the names of the half edges indicated in the picture.
    (a) Fat structure: $\sigma = (ABC)(\ov A \ov C \ov B)$.
    Boundary cycles: $\omega = (A\ov C)(B\ov A)(C \ov B)$.
    (b) Fat structure: $\sigma = (ABC)(\ov A \ov B \ov C)$.
    Boundary cycles: $\omega: = (A \ov B C \ov A B \ov C)$.
  }
  \label{Fat_example}
\end{figure}

\begin{definition}
  The \emph{boundary cycles} of a fat graph $\Gamma = (G,\sigma)$ are the orbits of the permutation $\omega := \sigma\circ i \colon H \to H$.
  Let $c = (h_1, \ldots, h_k)$ be a boundary cycle of $\Gamma$.
  The \emph{boundary cycle sub-graph} of $c$ the sub-graph of $\Gamma$ uniquely defined by the set of half-edges $\{ h_1, i(h_1), h_2, i(h_2), \ldots \mid h_j \neq i(h_j) \}$.
  See Figure \ref{Fat_example} for an example.
  When clear from the context, we will refer to a boundary cycle sub-graph simply as boundary cycle.
\end{definition}

\begin{definition}[Surface with decorations]
  \label{definition:fattening}
  Given a fat graph $\Gamma = (V,H,s,i,\sigma)$, we construct a surface with marked points at the boundary by ``fattening" the geometric realization $|\Gamma|$.
  More precisely, we start with a collection of oriented disks for every vertex 
  \[\{D_v \mid v \in V\}\]
  and a collection of oriented strips for every half edge
  \[\{I_h := [0,{\textstyle\frac{1}{2}}] \times [-1,1]\mid h\in H\}.\]
  Preserving the orientations of the disks and the strips, we glue the boundary $\{0\} \times [-1,1]$ of every strip $I_h$ to the disk $D_{s(h)}$ in the cyclic order given by the fat structure of $\Gamma$.
  For each edge which is not a leaf, say $e=\{h, \overline{h}\}$, we glue $\{\frac1 2\}\times[-1,1]$ of $I_h$ to $\{\frac1 2\} \times[-1,1]$ of $I_{\overline{h}}$ via $-\id$ in the second factor (see Figure \ref{figure:fattening_graph_ex} (b)).
  
  We end our construction by collapsing each boundary component, which is not connected to a leaf or to a vertex of valence one, to a puncture.
  This procedure gives a surface with boundary, where each boundary component is connected to at least one leaf or one vertex of valence one.
  We interpret these as marked points on the boundary (see Figure \ref{figure:fattening_graph_ex} (c)).
  
  We call this surface together with the combinatorial data of the marked points at the boundary a \emph{surface with decorations} and denote it by $S_\Gamma$.
  The \emph{topological type} of a surface with decorations $S_\Gamma$ is its genus, number of boundary components and punctures 
  together with the cyclic ordering of the marked points at the boundary.
\end{definition} 
  
\begin{remark}
  \label{remark:fattening_graph}
  Note that there is a strong deformation retraction of a surface with decorations ${S_{\Gamma}}$ onto its fat graph $|\Gamma|$ so $\chi({S_{\Gamma}})=\chi(|\Gamma|)$.
  One can think of $|\Gamma|$ as the ``skeleton" of the surface $S_\Gamma$.
  Furthermore, the number of boundary components and punctures are completely determined by $\Gamma$.
  Indeed, the number of punctures of $S_\Gamma$ is equal to the number of cycles of $\omega$ which do not contain a half-edge that belongs to an outer-edge.
  The number of boundary components of $S_\Gamma$ is equal to the number of cycles of $\omega$ which contain at least one half-edge that belongs to an outer-edge.
  
  Therefore, the topological type of $S_\Gamma$ is completely determined by $\Gamma$.
  Moreover, given an inner-edge $e$ of $\Gamma$ which is not a loop, 
  collapsing $e$ gives a homotopy equivalence $|\Gamma|\stackrel{\simeq}{\to}|\Gamma/e|$ and does not change the number of boundary cycles or their decorations.
  Thus, the surfaces ${S_{\Gamma}}$ and ${S_{\Gamma/e}}$ have the same topological type.
\end{remark}

\begin{figure}[ht]
  \inkpic[.85\columnwidth]{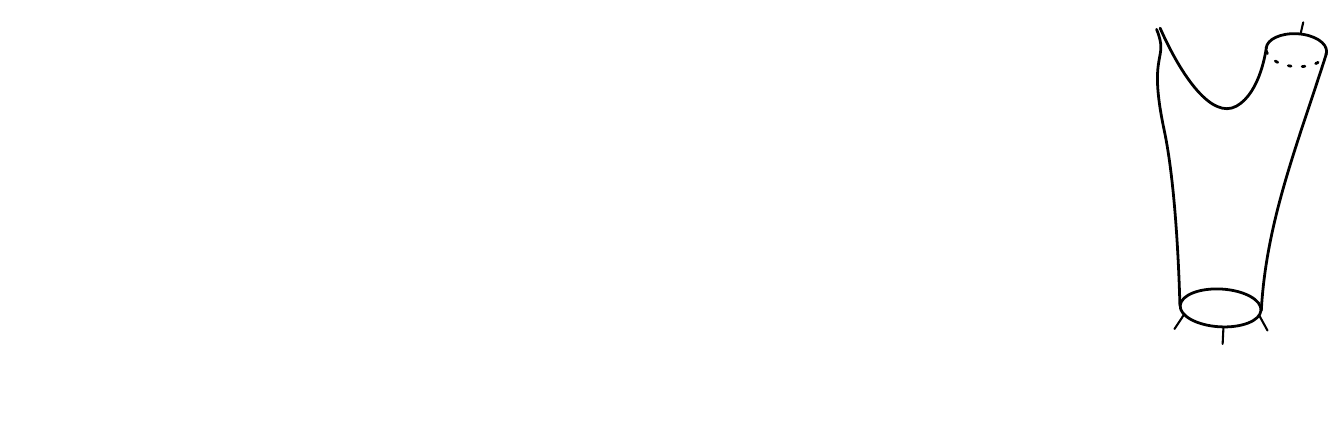}
  \caption{\label{figure:fattening_graph_ex} The picture shows (a) a fat graph with four outer-edges and six inner-edges; (b) the surface obtained from the fattening procedure;
    (c) the surface with decorations obtained from (b) by labeling points at the boundary and collapsing unmarked boundary components to a puncture.}
\end{figure}

\begin{definition}
  \label{definition:admissible_graph}
  A \emph{p-admissible fat graph} $\Gamma$ consists of:
  \begin{enumerate}
    \item an (isomorphism class) of fat graphs such that all vertices have valence at least $3$ and
    \item an enumeration of a subset of the set of leaves by $\{1,2,\ldots, k\}$ for some $k \ge p$
  \end{enumerate}
  such that:
  \begin{enumerate}
   \item each of the first $p$ leaves $l_1, \ldots, l_p$ is the only leaf on its boundary cycle $c_i$ for $1\leq i \leq p$,
   \item their corresponding boundary cycle sub-graphs 
   \[\Gamma_{c_1}, \Gamma_{c_2}, \ldots, \Gamma_{c_p}\]
   are disjointly embedded circles in $|\Gamma|$.
  \end{enumerate}
  We will refer to the first $p$ leaves as \emph{admissible leaves} and to their corresponding boundary cycles as \emph{admissible cycles}.
  Figure \ref{figure:fattening_graph_ex} (a) shows a fat graph that is not 1-admissible because the leaf with number 1 is not the only leaf in its boundary cycle.
  Figure \ref{figure:admissible_example} shows an example of a 3-admissible fat graph.
\end{definition}
\begin{figure}[ht]
  \centering
  \inkpic[.5\columnwidth]{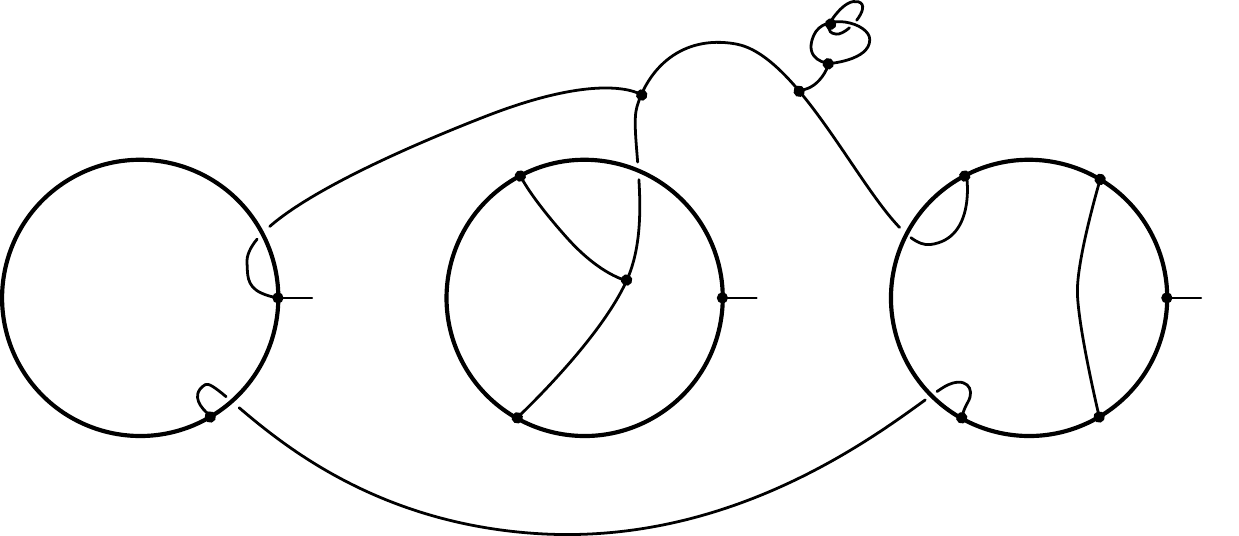}
  \caption{An example of a 3-admissible fat graph.}
  \label{figure:admissible_example}
\end{figure}

\begin{definition}
  Let  $\Gamma_1$ and $\Gamma_2$ be $p$-admissible fat graphs.
  We say $\Gamma_1 \sim_{\scriptscriptstyle{SD}} \Gamma_2$ if
  $\Gamma_1$ and $\Gamma_2$ are connected by a zigzag of edge collapses where we only collapse inner-edges that do not belong to the admissible cycles and are not loops.
  Equivalently, if $\Gamma_2$ can be obtained from $\Gamma_1$ by sliding vertices along edges that do not belong to the admissible cycles.
  Figure \ref{figure:sullivan_example} shows some examples of equivalent 1-admissible fat graphs.
\end{definition}
\begin{figure}[ht]
  \centering
  \inkpic[\columnwidth]{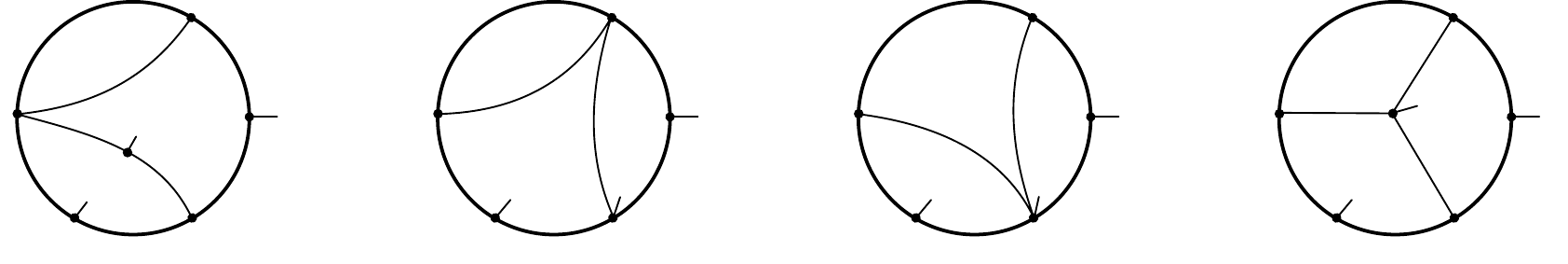}
  \caption{Four equivalent 1-admissible fat graphs.
  Since there is only one admissible cycle, 
  instead of labeling edges by $e^1_j$
  (as described in Definition \ref{definition:space_of_SD})
  they are just labeled by $e_j$.}
  \label{figure:sullivan_example}
\end{figure}

It follows directly that $\sim_{\scriptscriptstyle{SD}}$ is an equivalence relation.

\begin{definition}
  \label{definition:SD_eq_relation}
  A \emph{$p$-Sullivan diagram} $\Sigma$ is an equivalence class of $p$-admissible fat graphs under the equivalence relation $\sim_{\scriptscriptstyle{SD}}$.
\end{definition}

\begin{definition}
  \label{definition:space_of_SD}
  The \emph{multi-degree of a $p$-Sullivan diagram} $\Sigma$ is the tuple
  \[(\vert E_1 \vert -1,\vert E_2 \vert -1,\ldots, \vert E_p \vert -1)\]
  where $E_i$ is the set of inner-edges that belong to the $i$-th admissible cycle.
  The fat structure together with the admissible leaves give a natural ordering of the edges that belong to each admissible cycle.
  Let $e^i_0, e^i_1, \ldots, e^i_{\vert E_i\vert-1}$ denote the edges on the $i$-th admissible cycle in this induced order, see Figure \ref{figure:sullivan_example} for an example.
  For $1\leq i \leq p$, $0\leq j < |E_i|$ and $1 < |E_i|$, the \emph{faces} of a Sullivan diagram $\Sigma$ are given by
  \begin{equation}
    \label{faces}
    d^i_j(\Sigma):=\Sigma/e^i_j
  \end{equation}
  where $\Sigma/e^i_j$ is the Sullivan diagram obtained by collapsing the edge $e^i_j$.
  Note that $\Sigma/e^i_j$ is well defined since we are only collapsing inner-edges on the admissible cycles and if $|E_i|=1$ we do not collapse any edges on the $i$-th admissible cycle.
  The $d_j^i$'s fulfill the multi-semisimplicial identities.

  The \emph{space of p-Sullivan diagrams} is the multi-semisimplicial set denoted by $\ph\SDs$ with $(k_1,k_2,\ldots, k_p)$ multi-simplices given by (isomorphism classes) of Sullivan diagrams of that multi-degree and faces given as described in (\ref{faces}).
  In particular, the space of 1-Sullivan diagrams is a semi-simplicial set.
  The \emph{space of Sullivan diagrams $\SDs$} is
  \[
    \SDs:=\coprod_p \ph\SDs \,.
  \]
\end{definition}

\begin{remark}
  Notice that a $p$-Sullivan diagram $\Sigma$ can be fattened as in Remark \ref{remark:fattening_graph} to give a topological type of a surface with decorations which we will denote by $S_{\Sigma}$ and
  we call this \emph{the topological type} of $\Sigma$.
  The surface with decorations $S_\Sigma$ obtained by this fattening procedure must have either $p$ boundary components and at least one puncture or at least $p+1$ boundary components.
  Moreover, $p$ of its boundary components must have exactly one marked point which are enumerated by the set $\{1,2,\ldots, p\}$.
  
  Since the face maps collapse an inner-edge which is not a loop $\Sigma$ and $d^i_j(\Sigma)$ have the same topological type for any $i$ and $j$.
  Furthermore, and any two $p$-Sullivan diagrams of the same topological type are connected by a zigzag of face maps.
  Therefore, the space of $p$-Sullivan diagrams splits into connected components given by the topological type of their diagrams.
  We denote these components by by $\ph\SDs(S)$ i.e.,
  \[
    \ph\SDs=\coprod_S \ph\SDs(S)
  \]
  where the disjoint union is taken over all topological types of surfaces with decorations that can be obtained by the fattening procedure as indicated above.
\end{remark}

One can interpret the barycentric coordinates of a point in the space of a Sullivan diagrams as the lengths of the edges in the admissible cycles.
Then one can intuitively think of $\SDs$ as a space of metric admissible graphs were the topology is given by the lengths of the edges in the admissible cycles while all other edges are of length zero.
We use this to motivate the following definition.

\begin{definition}
  \label{definition:degenerate_diagram}
  We call a boundary cycle of a Sullivan diagram \emph{non-degenerate} if its boundary cycle sub-graph has at least one inner-edge which belongs to an admissible cycle and \emph{degenerate} otherwise.
  In other words a boundary cycle is non-degenerate if it has at least one edge of ``positive length".
  We call a Sullivan diagram \emph{non-degenerate} if all its boundary cycles are non-degenerate and \emph{degenerate} otherwise.
\end{definition}

\begin{remark}
  \label{remark:not_a_fibration}
  Let $\tilde{S}$ be an oriented surface with $p+m$ boundary components each of which has exactly one marked point enumerated by the set $\{1,2,\ldots, p+m\}$.
  Let $S$ be the oriented surface obtained by collapsing each of the last $m$ boundary components to a marked point.
  Then we have a central extension of the mapping class group $\Mod(S)$ by $\mathbb Z^m$
  \[\mathbb{Z}^m \cof \Mod(\tilde{S}) \fib \Mod(S)\]
  which induces a fibration sequence of spaces
  \[B\mathbb{Z}^m \to B\Mod(\tilde{S}) \fib B\Mod(S).\]
  Similarly, we have a forgetful map 
  \[\pi\colon\ph\SDs(\tilde{S})\to \ph\SDs(S)\]
  given by forgetting the parametrization of the last $m$ boundaries i.e.\ by forgetting the last $m$ leaves of a Sullivan diagram in $\ph\SDs(\tilde{S})$.
  Note that this is not a simplicial map.
  Moreover, due to the equivalence relation $\sim_{\scriptscriptstyle{SD}}$ the map induced on the level of spaces is not a fibration.
  Indeed, its fibers do not even have a constant homotopy type.
  In fact, one can endow $\ph\SDs(S)$ with the structure of a filtered space, in the sense of Quinn or Miller in \cite{Quinn, Miller},
  such that the homotopy type of the fibers of $\pi$ remain constant over each stratum but changes when moving from one stratum to another.

  To see this, we construct a filtration of $\ph\SDs(S)$
  \[\ph\SDs(S)=:X^m \supseteq X^{m-1} \supseteq \ldots \supseteq X^0\supseteq X^{-1}:=\emptyset\]
  where $X^i$ is the subspace of $\ph\SDs(S)$ consisting of Sullivan diagrams with at most $i$ degenerate boundary components.
  The \emph{$i$-th strata} are the path connected components of $X^{i}-X^{i-1}$.
  Each $i$-stratum consists of a path connected component of the subspace of $\ph\SDs(S)$ consisting of diagrams which have exactly $i$ degenerate boundary cycles.
  One can readily see that the fibers over each $i$-stratum are homeomorphic to $(S^1)^{m-i}$ which correspond to all the possible positions of the the leaves corresponding to the $(m-i)$ non-degenerate boundary cycles.
  See Figure \ref{not_fibration} for an example.

  For the same reason, the maps that forget the labels of the boundary components or punctures are not fibrations either.
  Nevertheless, in Subsection \ref{subsection:detecting_homology_recipe} we show that in the case $p=1$ these maps do behave like coverings on the level of homology.
\end{remark}

\begin{figure}[ht]
  \centering
  \inkpic[\columnwidth]{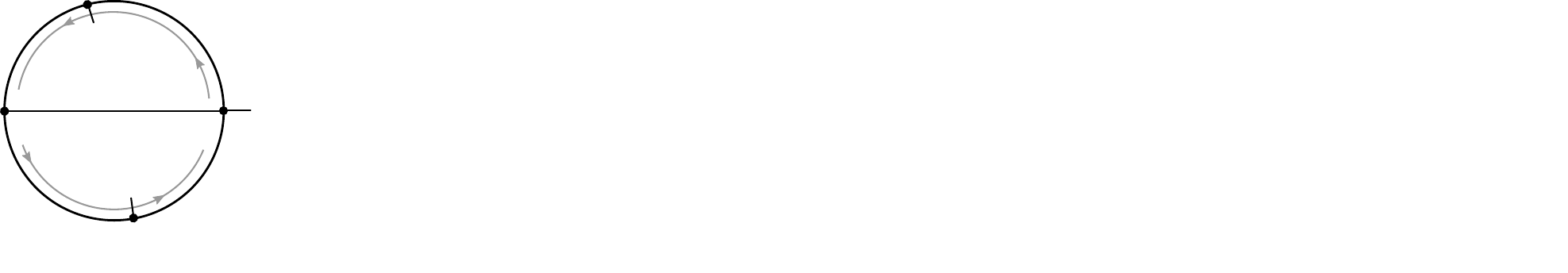}
  \caption{
    \label{not_fibration}
    The fibers of the map that forgets the leaves do not have a constant homotopy type.
    $(a)$ The fiber over a point in $X^0$ is a $S^1\times S^1$.
    $(b)$ The fiber over a point in $X^1-X^0$ is $S^1$.
  }
\end{figure}

We now define the chain complex of Sullivan diagrams.
Our definition is equivalent to the one given by Tradler and Zeinalian in \cite{TradlerZeinalian} and by Wahl and Westerland in \cite{wahlwesterland}.

\begin{definition}
  The \emph{chain complex of Sullivan diagrams $\SD$} is the chain complex freely generated as a $\Z$-module by all Sullivan diagrams.
  The total degree of a $p$-Sullivan diagram $\Sigma$ is 
  \[
    \mathrm{deg}(\Sigma):=\vert E_a \vert -p
  \]
  where $E_a$ is the set of edges that belong to the admissible cycles.
  The differential of a Sullivan diagram $\Sigma$ of multi-degree $(k_1, \ldots, k_p)$ is
  \[
    d(\Sigma):=\sum_{i=1}^p (-1)^{k_1 + \ldots + k_{i-1}} \sum_{j=0}^{k_i}
    (-1)^j d^i_j(\Sigma) \,.
  \]
  Figure \ref{figure:sullivan_differential} gives and example of the differential.

  The \emph{chain complex of $p$-Sullivan diagrams}, denoted $\ph\SD$, is the sub-complex of $\SD$ generated by all $p$-Sullivan diagrams.
  Note that the differential $d$ is the total differential of the multi-simplicial set $\ph\SDs$.
  Thus, $\ph\SD$ is the cellular complex of the multi-semisimplicial space $\ph\SDs$.
\end{definition}

\begin{figure}[ht]
  \centering
  \inkpic[.8\columnwidth]{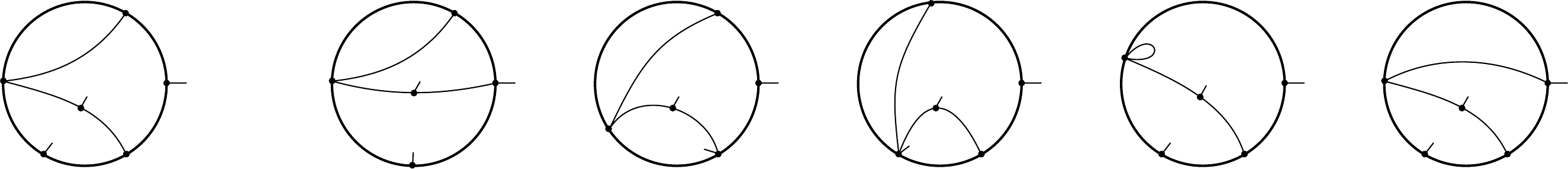}
  \caption{
    The differential of a $1$-Sullivan diagram of degree $4$.
    The labeling of the first leaf is as in Figure \ref{figure:sullivan_example}.
    Here, it is omitted for better readability.
  }
  \label{figure:sullivan_differential}
\end{figure}

\begin{remark}
  The complex of Sullivan diagrams splits into components
  \[
    \SD = \bigoplus_{p\geq 1} \ph\SD 
    \quad\quad\text{and}\quad\quad 
    \ph\SD = \bigoplus_S \ph\SD(S)
  \]
  where $\ph\SD(S)$ is the chain complex of $p$-Sullivan diagrams of topological type $S$.
  The second direct sum is indexed over all topological types $S$ that a $p$-admissible Sullivan diagram can have.
  More precisely, $S$ must have either $p$ boundary components and at least one puncture or at least $p+1$ boundary components.
  Moreover, $p$ of its boundary components have exactly one marked point and these are enumerated by the set $\{1,2,\ldots, p\}$.
\end{remark}

\begin{proposition}
  Let $S$ be a connected surface with at least one boundary component.
  If $S$ is not homeomorphic to the disk with one puncture then
  \[
    \chi(\ph\SDs(S)) = 0 \,.
  \]
  In particular, $\ph\SDs(S)$ is not contractible.
  Otherwise, if $S$ is homeomorphic to the disk with one puncture, then $p=1$ and
  \[\xh{1}\SDs(S)=\ast\, .\]
\end{proposition}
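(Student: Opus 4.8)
The plan is to read the Euler characteristic off the cell structure of the multi-semisimplicial set $\ph\SDs(S)$ of Definition~\ref{definition:space_of_SD}. Fix the topological type $S$. Since every $p$-admissible fat graph of type $S$ has all vertices of valence $\ge 3$, the identity $\#V - \#\{\text{inner edges}\} = \chi(S)$ (Remark~\ref{remark:fattening_graph}), together with $\sum_v |v| \ge 3\,\#V$ and the fact that the combinatorial boundary data of $S$ fixes the number of leaves, shows that the number of inner edges, hence the degree $\deg(\Sigma) = \sum_i |E_i| - p$, is bounded in terms of $S$. So there are only finitely many $p$-Sullivan diagrams of type $S$, in finitely many degrees, and
\[
  \chi(\ph\SDs(S)) = \sum_{\Sigma}(-1)^{\deg(\Sigma)},
\]
the sum over $p$-Sullivan diagrams of type $S$. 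Granting the two asserted values, non-contractibility is automatic since $0 \neq 1 = \chi(\ast)$.

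First I would dispose of the exceptional case. If $S$ is the disk with one puncture it has one boundary component and one puncture, so $p = 1$; admissibility (Definition~\ref{definition:admissible_graph}) forces the admissible leaf to be the unique leaf, and $\chi(S) = 0$ gives $\#V = \#\{\text{inner edges}\}$, which with $2\,\#\{\text{inner edges}\} + 1 \ge 3\,\#V$ leaves only $\#V = \#\{\text{inner edges}\} = 1$. Hence the graph is a single trivalent vertex with one loop and one leaf, there is a unique such $1$-Sullivan diagram, of degree $0$, and $\xh{1}\SDs(S) = \ast$.

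For $S$ not the once-punctured disk I would build a degree-shifting perfect matching, i.e.\ a fixed-point-free involution $\iota$ on the set of $p$-Sullivan diagrams of type $S$ that sends a cell of degree $d$ to one of degree $d \pm 1$; then all terms cancel in the alternating sum and $\chi(\ph\SDs(S)) = 0$. The involution is a contract/expand move localized on the first admissible cycle $c_1$ at its distinguished vertex $v_0$, the vertex carrying the admissible leaf: reading the cyclic fat structure at $v_0$, one either contracts the first cycle-edge $e^1_0$ (the face map $d^1_0$, lowering the first multi-degree by $1$) or, when that contraction is unavailable, splits $v_0$ along $c_1$ by inserting a new first edge (a coface, raising the first multi-degree by $1$), the two moves being mutually inverse. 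The combinatorial ``permutations-with-weights'' presentation of Sullivan diagrams (Proposition~\ref{proposition:combinatorial_1_SD}), extended from one to $p$ admissible cycles, is what makes this canonical choice and its inverse transparent.

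I expect the crux to be verifying that $\iota$ is defined on every cell, i.e.\ that no cell is a fixed point. This reduces to checking that the only local configuration at $v_0$ admitting neither the contraction nor the expansion is precisely the degenerate graph ``one trivalent vertex with a single loop and a leaf'', which, as in the exceptional case, forces $S$ to be the once-punctured disk and is therefore excluded. Carrying this out requires, besides the local analysis at $v_0$, checking compatibility of the move with the relation $\sim_{\scriptscriptstyle{SD}}$ and with the constraint that the admissible cycles stay disjointly embedded; this is cleanest in the combinatorial model, or else can be assembled from the discrete Morse-theoretic matchings of Section~\ref{section:vanishing}.
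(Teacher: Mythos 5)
Your overall strategy is the paper's own: dispose of the once-punctured disk by hand, and for every other $S$ cancel the cells of $\ph\SDs(S)$ in pairs via a degree-shifting contract/expand move at the vertex $v_0$ carrying the first admissible leaf --- the paper does exactly this with the map $\Psi$ (rotate the first admissible cycle while fixing the leaf) and its inverse $d^1_0$. The one step that would fail as written is your dispatch rule for the involution: you contract $e^1_0$ and expand only ``when that contraction is unavailable''. But $d^1_0$ is available on essentially every cell with $|E_1|>1$, including cells where $|v_0|>3$, so ``contract whenever possible'' is not an involution (contracting such a cell and then contracting again is not undone by a single expansion). The correct dichotomy --- the paper's suspended/unsuspended distinction --- is by the valence of $v_0$: pair a cell by $d^1_0$ exactly when $v_0$ is trivalent, and by the expansion exactly when $|v_0|>3$; note the expansion cannot be applied to a trivalent $v_0$ anyway, since it would leave behind a valence-two vertex. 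With this rule, the only possible failure is a suspended cell with $|E_1|=1$, i.e.\ the loop-with-leaf graph, which is precisely the excluded once-punctured disk; that is the real role of the hypothesis, rather than fixed points of $\iota$, which cannot occur since partners differ in degree. Your closing paragraph does identify this degenerate configuration correctly, so once the dispatch rule is restated in terms of the valence of $v_0$, your argument coincides with the paper's proof.
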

\begin{proof}
Let $S$ be a surface which is not homeomorphic to the disk with one puncture and let $\Sigma$ be a $p$-Sullivan diagram of topological type $S$.
Let $v_0$ be the vertex of the first admissible cycle of $\Sigma$ which is connected to its admissible leaf.
We call $\Sigma$ \emph{suspended} if the $v_0$ has valence $|v_0| = 3$ and \emph{unsuspended} if $|v_0|>3$.
Since $S$ is connected and not homeomorphic to the disk with one puncture, there is at least one $p$-Sullivan diagram of topological type $S$ which is unsuspended.
Consider the map
\[
  \Psi\colon\{\Sigma\in\ph\SD(S) \mid \Sigma \text{ is unsuspended}\}
  \to
  \{\Sigma\in\ph\SD(S) \mid \Sigma \text{ is suspended}\}
\]
where $\Psi(\Sigma)$ is the Sullivan diagram obtained by rotating the first admissible cycle of $\Sigma$ in clockwise direction while fixing its admissible leaf.
See Figure \ref{suspension_standard} for an example.
\begin{figure}[ht]
  \centering
  \inkpic[\columnwidth]{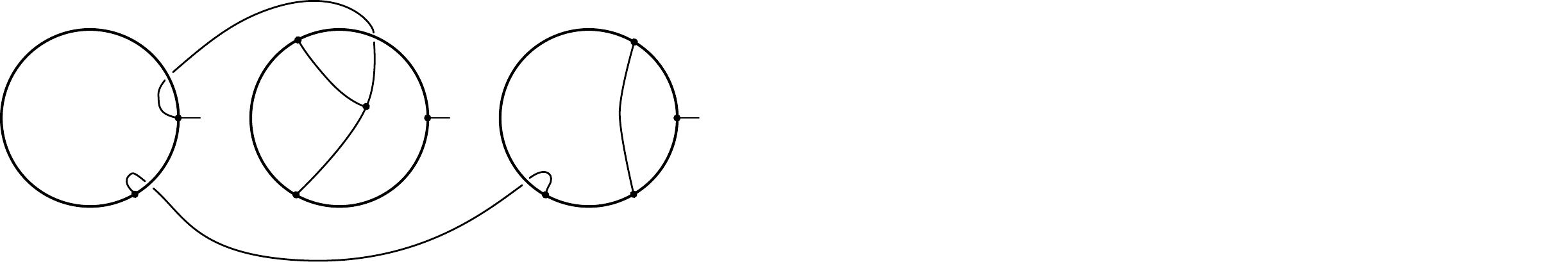}
  \caption{On the left an unsuspended Sullivan diagram and on the right its image under $\Psi$
    (which is obtained by rotating the first admissible cycle of $\Sigma$ in clockwise direction while fixing its admissible leaf).
  }
  \label{suspension_standard}
\end{figure}
Note that $\Psi$ is a bijection with inverse the face map $d_0^1$.
To finish the proof in this case, notice that $\mathrm{deg}(\Sigma) = \mathrm{deg}(\Phi(\Sigma))-1$.

On the other hand, if $S$ is homeomorphic to the disk with one puncture then 
$\ensuremath{1\hspace{1pt}\mhyphen}\SDs(S)$
has a single zero simplex and no higher simplices.
\end{proof}

\subsection{Alternative definitions}
In this paper, we have described the space of Sullivan diagrams as a quotient of the space of admissible fat graphs, although we have not explicitly described the topology of the latter.
However, there are other, equivalent definitions of this space, which are useful to have in mind.

In this subsection, we include a brief description of three such definitions.
It is important to note that not all constructions have the same conventions regarding incoming and outgoing boundary components.
Our convention coincide with the one of \cite{costellorg, wahlwesterland, wahluniversal}, but are opposite to the ones used by \cite{bodigheimer, kaufmann_sullivan}.
In our exposition, we have reversed the conventions of the latter to fit our setup.

\subsubsection{Spaces of arcs}
In \cite{kaufmann_sullivan}, Kaufmann describes a \emph{space of open-closed Sullivan diagrams} $\text{Sull}^{\text{c/o}}_p$ in terms of arcs embedded in a surface.
We briefly describe the closed part, $\text{Sull}^{\text{c}}_p$.
This space has components 
\[
  \text{Sull}^{\text{c}}_p=\bigsqcup_S \text{Sull}^{\text{c}}_p(S)
\]
where the disjoint union is taken over topological types of cobordisms $S$ with $p$ outgoing boundary components (all of which have exactly one marked point) and at least one incoming boundary component.
Each component is defined to be the quotient
\[
  \text{Sull}^{\text{c}}_p(S):=
  \betterwidetilde{\text{Sull}}^{\text{c}}_p(S)/\Mod(S)
\]
where $\betterwidetilde{\text{Sull}}^{\text{c}}_p(S)$ is a CW-complex obtained as the realization of a multi-semisimplicial set and $\Mod(S)$ is the mapping class group of $S$.
Each $(k_1, k_2, \ldots, k_p)$ multisimplex is given by a family $\alpha$ of ambient isotopy classes of arcs embedded in $S$ such that the arcs of $\alpha$ are all:
disjoint, non pairwise isotopic and flow from the outgoing boundary components to the incoming boundary components;
there are exactly $k_i+1$ arcs starting on the $i$-th outgoing boundary component for $1\leq i \leq p$.
Note that in particular, $\alpha$ has at least one arc intersecting each outgoing boundary component.
In other words $\betterwidetilde{\text{Sull}}^{\text{c}}_p(S)$ is a space of weighted isotopy classes of embedded arcs on the cobordism which flow from out to in, such that
there is at least one arc of positive weight on each outgoing boundary component.
The topology is given by the weights of the isotopy classes of arcs.
The mapping class group $\Mod(S)$ acts on the embedding and this action is cellular.
One can use a similar argument to Hatcher's surgery argument in \cite{hatcher_arc} to show that $\betterwidetilde{\text{Sull}}^{\text{c}}_p(S)$ is contractible.
However, the action of the mapping class group is not free.
The stabilizers are mapping class groups of surfaces of lower complexity.
One can give an explicit cellular isomorphism 
\[
  \text{Sull}^{\text{c}}_p(S) \stackrel{\cong}{\to} \ph\SDs(S).
\]
See \cite[Remark 2.15]{wahlwesterland} for a detailed description of this isomorphism.

\subsubsection{The harmonic compactification}
\label{subsection:harmonic_compactification}
Let $S$ denote an oriented cobordism of genus $g$ with $p \ge 1$ parametrized, enumerated outgoing boundary components and $m \ge 1 $  parametrized, enumerated incoming boundary components.
In \cite{bodigheimer}, B\"{o}digheimer constructs a space $\Rad_p(S)$ which is a model for $\cM(S)$ the moduli space of $S$.
The idea behind his construction is that any surface with a conformal structure of underlying topological type $S$ can be constructed from $p$ annuli in $p$ disjoint complex planes by a cut and glue procedure.
First, we cut slits into these annuli in a radial direction.
Then, the Riemann surface is obtained by gluing the border of these slits as shown in the example of Figure \ref{figure:radial_slit_configuration_glues_to_surface}.
\begin{figure}[ht]
  \centering
  \inkpic[.6\columnwidth]{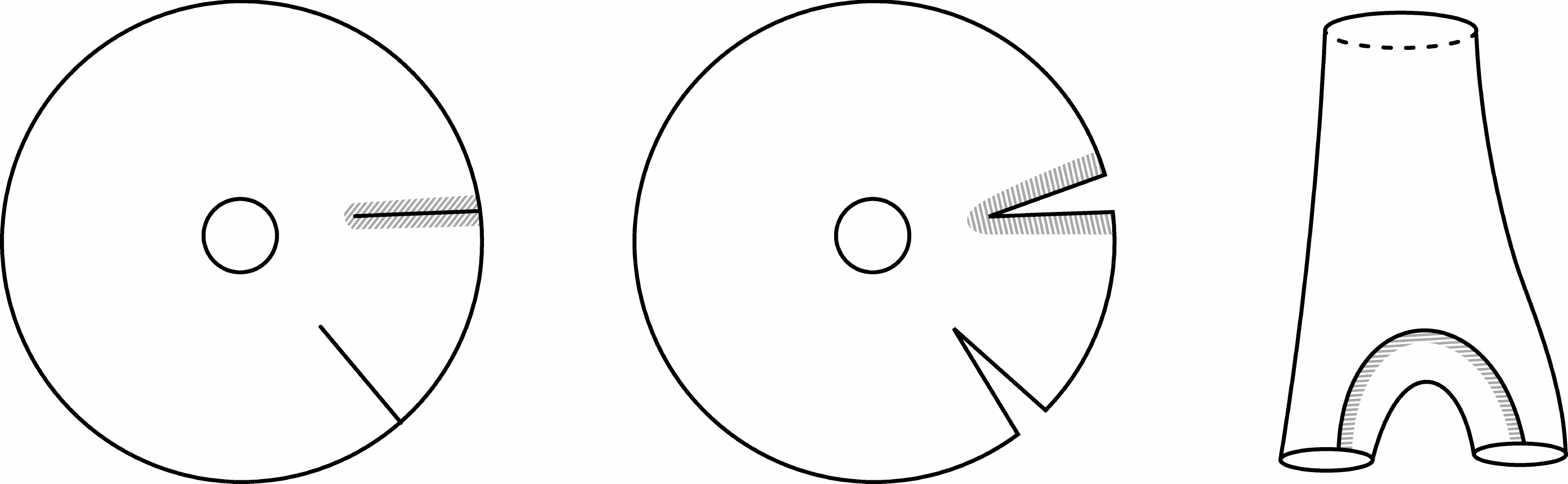}
  \caption{
    In this example, we begin with an annulus with two distinguished radial slits of the same length, starting at the outer boundary.
    After cutting these open, we glue the gray shaded border of one of the cut slits to the unshaded border of the second cut slit.
    This results in a surface with three boundary components.
    One of them corresponds to the inner radius of the annulus and the other two come from the cut and glue procedure.
    }
  \label{figure:radial_slit_configuration_glues_to_surface}
\end{figure}

The space $\Rad_p(S)$ is called \emph{the space of radial slit configurations of $S$}.
Its points are configurations of slits on the annuli together with gluing data,
subject to certain combinatorial conditions, such that the cut and glue procedure gives a surface of topological type $S$.

The main theorem of \cite{bodigheimer} states that there is a flat, affine bundle $\mathcal H_p(S) \xr{\simeq} \cM(S)$ together with a homeomorphism 
\[
  \mathcal H_p(S)\stackrel{\cong}{\longrightarrow} \Rad_p(S).
\]

Moreover, the space $\Rad_p(S)$ has a natural notion of compactification.
Namely, it is an open and dense subspace of a compact space $\bRad_p(S)$ which is the \emph{harmonic compactification of the moduli space of surfaces}.
Geometrically, this compactification allows handles to degenerate to intervals and finitely many points to become identified on the boundary.
The harmonic compactification has a subspace $\uRad_p(S)$  consisting of annuli of the same inner and outer radius in which all slits have the same length.
There is a strong deformation retraction of $\bRad_p(S)$ onto this subspace.
Furthermore, $\uRad_p(S)$ has a natural cell structure and admits a cellular homeomorphism to $\ph\SDs(S)$.
See \cite{egaskupers} for a detailed construction of this homeomorphism.
We summarize this in the following diagram: 
\begin{align*}
  \begin{tikzpicture}[baseline={([yshift=-.5ex]current bounding box.center)}] 
    \node (a) {$\cM(S)$};
    \node [right=1.5cm of a]  (b) {$\Rad_p(S)$};
    \node [right=1.5cm of b]  (c) {$\bRad_p(S)$};
    \node [right=1.5cm of c]  (d) {$\uRad_p(S)$};
    \node [right=1.5cm of d]  (e) {$\ph\SDs(S)$};
    \path[->]
      (b) edge node [above] {$\simeq$} (a)
      (d) edge node [above] {$\cong$} (e)
      (c) edge [bend left] node [above] {$\simeq$} (d);
    \path [right hook->]
      (b) edge (c);
    \path [left hook->]
      (d) edge node [below] {$\simeq$}(c);
  \end{tikzpicture}
\end{align*}
where $\simeq$ denotes homotopy equivalences, $\cong$ cellular {homeomorphisms} and hooked arrows denote inclusions.
Therefore, we can think of $\ph\SDs$ as a compactification of the moduli space and $\ph\SD$ as the chain complex that computes the homology of this compactification.

\subsubsection{BW graphs and the PROP-structure}
\label{subsection:BW_graphs}
  Let $S$ be the topological type of a cobordism with $p \ge 1$ parametrized, enumerated outgoing boundary components and $m \ge 1$ parametrized, enumerated incoming boundary components.
  In \cite{costellorg}, Costello constructs a chain complex generated by fat graphs with additional structure which computes the homology of the moduli space.
  Following the notation of \cite{wahlwesterland}, we call this complex the chain complex of black and white graphs and denote it  $\ph BW_{\mathrm{graphs}}(S)$.
  
  The chain complex of black an white graphs can be endowed with a notion of composition which models gluing surfaces along their boundary components \cite{wahlwesterland, egas}.
  In order words, black and white graphs assemble into a dg-PROP which models the 2d-cobordism category.
  That is, it assembles into a symmetric monoidal dg-category with objects the natural numbers and morphism spaces from $m$ to $p$ the chain complex of the disjoint union of the moduli spaces of all cobordisms with $m$ incoming boundary components and $p$ outgoing boundary components.
  
  Additionally, Wahl and Westerland show that the chain complex of Sullivan diagrams is a quotient of the chain complex of black and white graphs.
  That is, they give a quotient map
  \[
    \ph BW_{\mathrm{graphs}}(S)\fib \ph\SD(S).
  \]
  See \cite[Theorem 2.9]{wahlwesterland}.
  Furthermore, Sullivan diagrams inherit a composition structure from black and white graphs and also assemble into a dg-PROP.
  Here we briefly describe the PROP composition of Sullivan diagrams along closed boundary components.

\begin{definition}
  \label{def:composition}
  Let $S$ and $S'$ be cobordisms  with $n$ respectively $p$ incoming boundary components and $p$ respectively $q$ outgoing boundary components.
  The cobordisms $S$ and $S'$ are allowed to have punctures and their boundary components are parametrized (this is equivalent to having exactly one marked point on each boundary component) and
  enumerated by the sets $\{1, \ldots, n\}$, $\{1, \ldots, p\}$ and $\{1, \ldots, q\}$.
  Using the parametrizations and labels we glue the $i$-th outgoing boundary component of $S$ to the $i$-th incoming component of $S'$ and
  obtain a cobordism with $n$ incoming boundary components and $q$ outgoing boundary components which we denote $S'\circ S$.
  Then, by considering the admissible boundary cycles as outgoing boundary cycles and all other boundary cycles with a leaf as incoming boundary cycles, we get a degree zero chain map
  \[
    \circ \colon q\hspace{1pt}\mhyphen\SD(S') \otimes \ph \SD(S)  
    \longrightarrow q\hspace{1pt}\mhyphen \SD(S'\circ S), \quad
    \Sigma' \otimes \Sigma \mapsto \Sigma '\circ \Sigma
  \]
  where $\Sigma '\circ \Sigma$ is a sum of Sullivan diagrams obtained by the following procedure:
  \begin{enumerate}
    \item Choose fat graph representatives of $\Sigma$ and $\Sigma '$ and call them $\Gamma$ and $\Gamma'$.
    \item Cut open the $p$ admissible cycles of $\Gamma$ at the admissible leaves (which correspond to the marked points on the outgoing boundaries of $S$).
      This gives $p$ ``admissible intervals" enumerated by $\{1, \ldots, p\}$ to which a fat graph is attached.
    \item For each $1\leq i \leq p$ glue the $i$-th interval of the cut open graph along the $i$-th incoming boundary component of $\Gamma'$ such that the endpoints of the interval coincide with the $i$-th leaf of $\Gamma'$.
      Delete the $i$-th incoming leaf of $\Gamma'$.
    \item Note that there are several ways of gluing the intervals to their corresponding boundary cycles.
    Set $\Gamma'\circ\Gamma$ to be the sum over all such choices where the degree of the glued graph is $\deg(\Sigma)+\deg(\Sigma ')$.
    Let $\Sigma'\circ \Sigma$ be the sum of their corresponding Sullivan diagrams.
  \end{enumerate}
\end{definition}
Wahl and Westerland show that this is indeed a chain map and it is associative.
Therefore, we can consider a dg-category with objects natural numbers and morphisms from $n$ to $p$ to be  $\bigoplus_{S}\ph\SD(S)$, where the sum is taken over all cobordisms $S$ from $n$ to $p$ boundary components.
The composition map is the one given above.
The sum of natural numbers and disjoint union of cobordisms makes this into a symmetric monoidal dg-category.
That is, Sullivan diagrams assemble into a dg-PROP.

\subsection{Asymmetry of Sullivan diagrams}
\label{subsec:asymmetry}
The homotopy type of the space of $p$-Sullivan diagrams depends on the distribution of the (parametrized, enumerated) boundaries into incoming and outgoing.
We make this precise.
For $1\leq i\leq p$, let $S(i,g, m+p-i)$ denote a cobordism of genus $g$ with
$(m+p-i)$ parametrized, enumerated incoming boundary components and 
$i$ parametrized, enumerated outgoing boundary components.
Let $S_{g,p+m}$ denote its underlying surface.
The spaces $\Rad_i(S(i,g, m+p-i))$ and $\Rad_j(S(j,g, m+p-j))$ for $i\neq j$ are on the nose different.
However, they are homotopy equivalent since they are both models for the moduli space $\cM(S_{g, p+m})$ of $S_{g, p+m}$ which is independent of the parameters $i$ and $j$ i.e.,
\[
  \Rad_i(S(i,g, m+p-i)) \simeq \cM(S_{g,p+m}) \simeq \Rad_j(S(j,g, m+p-j)).
\]
This is no longer true for the space of Sullivan diagrams or equivalently the harmonic compactification.
Namely, $\xh{i}\SDs(S(i,g, m+p-i))$ and $\xh{j}\SDs(S(j,g, m+p-j))$  for $i\neq j$ are in general not homotopy equivalent.
They are not even homologically equivalent.
Indeed, in \cite{kaufmann_sullivan, wahlwesterland}, Kaufmann and independently Wahl and Westerland show that $\ph\SD(S(p,0,1))$ computes the homology of $\cM(S_{0, p+1})$.
The homology of $\cM(S_{0, p+1})$ has been computed by Salvatore and Wahl in \cite{Salvatore_Wahl}.
Contrasting their result with Theorem \ref{theorem_a} in Section \ref{section:vanishing} shows that $\ph\SD(S(p,0,1)$ and 
$\xh{1}\SD(S(1,0,p)$ are not quasi-isomorphic, i.e.\
\[
  H_*(\ph\SD(S(p,0,1)) \cong H_*(\cM(S_{0, p+1})) \ncong H_*(\xh{1}\SD(S(1,0,p)).
\]
Therefore the homotopy type of Sullivan diagrams is not symmetric with respect to which boundary components we consider as incoming and which we consider outgoing.

\section{Sullivan diagrams on a single circle}
\label{section:sd_one_circle}
In this paper we study the homotopy type of the following four families of spaces of 1-Sullivan diagrams.
\begin{definition}
  \label{sd_components}
  Let ${S_{\Sigma}}$ be the topological type of the surface with decorations obtained by thickening a Sullivan diagram $\Sigma$ (see Definition \ref{definition:fattening}).
  Let $g(\Sigma)$, $\partial(\Sigma)$ and $m(\Sigma)$ denote the genus, number of boundary components and number of punctures of ${S_{\Sigma}}$ respectively.
  We define the following spaces.
  \begin{description}
    \item[Unparametrized unenumerated diagrams]
      \[
        \xh{1}\SDs\supset\SDs_{g}^m:= \left\lbrace
          \text{cells }\Sigma \ \left|  \
          \begin{array}{l}
            \text{the only leaf of } \Sigma \text{ is the admissible leaf;}\\
             g(\Sigma)=g, \partial(\Sigma)=1, m(\Sigma)=m.
          \end{array}
          \right.
        \right\rbrace
      \]

    \item[Parametrized unenumerated diagrams]
      \[
        \xh{1}\SDs\supset\SDs_{g,m}:= \left\lbrace
          \text{cells }\Sigma \ \left|  \
          \begin{array}{l}
            \Sigma \text{ has exactly one leaf per boundary cycle;}\\
            \text{all except the admissible leave are unenumerated;}\\
            g(\Sigma)=g, \partial(\Sigma)=m+1, m(\Sigma)=0.
          \end{array}
          \right.
        \right\rbrace
      \]

    \item[Parametrized enumerated diagrams]
      \[
        \xh{1}\SDs\supset\betterwidetilde{\SDs}_{g,m}:=\left\lbrace
          \text{cells }\Sigma \ \left|  \
          \begin{array}{l}
            \Sigma \text{ has exactly one leaf per boundary cycle;}\\
            \text{all leaves are enumerated;}\\
            g(\Sigma)=g, \partial(\Sigma)=m+1, m(\Sigma)=0.
          \end{array}
          \right.
        \right\rbrace
      \]

    \item[Unparametrized enumerated diagrams]
      \[
        \betterwidetilde{\SDs}_{g}^m:=\left\lbrace
          \text{cells }\Sigma \ \left|  \
          \begin{array}{l}
            \text{the only leaf of } \Sigma \text{ is the admissible leaf;}\\
            g(\Sigma)=g, \partial(\Sigma)=1, m(\Sigma)=m;\\
            \text{all boundary cycles are enumerated.}
          \end{array}
          \right.
        \right\rbrace
      \]
  \end{description}
  The first three types of spaces of Sullivan diagrams are connected components of $\SDs$.
  However, the space $\betterwidetilde{\SDs}_{g}^m$ is not a subspace of $\SDs$ as we have defined it, since we only enumerate boundary cycles which are connected to a leaf and use the leaf to do so.
  Nevertheless, one could extended the definition of a fat graph to include this case by including additional data.
  Namely enumerating the boundary cycles of the fat graph and therefore of the Sullivan diagram.
  The topological type of such Sullivan diagrams would give surfaces with enumerated punctures.
  Our results extended naturally to this context.
\end{definition}

From now onward, we will remove $p$ from the notation and we will refer only to ``the space of Sullivan diagrams" or a ``Sullivan diagram" and it will be understood that $p=1$ unless stated otherwise.
By abuse of notation we write $\SDs$ for the space of 1-Sullivan diagrams.
Moreover, we exclude the enumeration of the admissible leaf in our drawings as it is the unique leaf on the admissible circle that ``points outwards".

In Section \ref{section:sd_one_circle:combinatorial_1_SD} we establish a combinatorial definition of Sullivan diagrams for the components described above.
We treat the face maps in this presentation in Section \ref{section:sd_one_circle:face_map}.
Finally, the stabilization map is discussed in Section \ref{section:sd_one_circle:stabilization_map}.

\subsection{Combinatorial 1-Sullivan diagrams}
\label{section:sd_one_circle:combinatorial_1_SD}

Recall that the cells of the space of Sullivan diagrams are given by equivalence classes of fat graphs as described in Section \ref{section:p_sullivan_diags:definitions}.
In this section we give unique representatives for the cells of all of the spaces above, except for the parametrized unenumerated case.
In this special case, we still represent the cells using an equivalence relation.
However, it is a much simpler equivalence relation then the one given in terms of graphs.

In order to do this we give a combinatorial definition of Sullivan diagrams.
The informal idea is that any Sullivan diagram is uniquely described by attaching onto a ``ground circle" topological types of surfaces with decorations.
These surfaces are determined by their genus, number of punctures, boundary components and the combinatorial data of marked points at the boundary.

Inspired by \cite{bodigheimer, abhau}, we describe the combinatorial data of the marked points at the boundary and the way in which these surfaces are attached to the ground circle by permutations.
Thereafter, we encode the genus and number of punctures as weights.
Therefore, we give a presentation of Sullivan diagrams in terms of bi-weighted permutations subject to certain conditions, which resemble the concept of stable graphs of \cite{kon92} and \cite{Loo95}.

For better readability we give separately the definition of combinatorial diagrams for each of the four cases in Definitions 
\ref{definition:combinatorial_1_SD_unpar_unen},
\ref{definition:combinatorial_1_SD_unpar_en},
\ref{definition:combinatorial_1_SD_par_en} and
\ref{definition:combinatorial_1_SD_par_unen}.
However, since they all have similar features, in the end we denote them in the same way.
See Notation \ref{notation:combianatorial_SD}.
We recommend the reader to study the unparametrized unenumerated case first.
This is the base case since most of the geometric intuition follows from it.

The other three cases are obtained by adding decorations to this basic case in the form of enumeration data or leaves.
However, to ease future reference and for compactness, we collect all the definitions of combinatorial diagrams together at the start of this subsection.
Therefore, in a first reading we recommend to start with Definition 
\ref{definition:combinatorial_1_SD_unpar_unen}, 
which describes the base case and then jump to the statement of Proposition \ref{proposition:combinatorial_1_SD}.
Continue with Definition \ref{definition:essential_trivalent} and read until Remark \ref{remark:non-degenerate_boundary}.  
In between these two the idea of the proof is presented, which gives the geometric intuition.
Thereafter go back to study the other three remaining cases and the proof of the proposition.

At the end of this subsection we make two remarks.
Remark \ref{remark:compare_combinatorial_1_SD_with_Rad} where we expose the tight relation between the combinatorics of B\"{o}digheimer's model for the moduli spaces and our description of 1-Sullivan diagrams.
Remark \ref{remark:top_degree_of_a_SD} relates the top degree of the spaces of Sullivan diagrams of topological type $S$ with the Euler characteristic of $S$.

\medskip

We start with the combinatorial description for the unparametrized cases.

\begin{definition}
  \label{definition:combinatorial_1_SD_unpar_unen}
  Let $n \ge 0$ and  $k \ge 1$.
  An \emph{unparametrized unenumerated combinatorial $1$-Sullivan diagram} $\Sigma$ of degree $n$ is a pair $(\lambda, \{ S_1, \ldots, S_k \})$ consisting of:
  \begin{enumerate}
    \item the \emph{fat structure} $\lambda\in \Symm([n])$ where $[n] = \{0, \ldots, n\}$,
    \item the set of cycles of $\lambda$ is denoted by $\Lambda$,
    \item the \emph{non-degenerate boundary} given by the permutation
      $\rho = \lambda^{-1}(0\ 1\ \ldots\ n),$
    \item the \emph{ghost surfaces} given by triples $S_i = (g_i, m_i, A_i)$ where $A_i\subset
      \Lambda$ and $g_i, m_i\in \N_{\geq 0}$.
  \end{enumerate}
  Subject to the following conditions:
  \begin{enumerate}[label={(\roman*)}]
    \item \label{data_partition_unpar_unen}      The $A_i$'s form a partition of $\Lambda$, i.e., $A_i\cap A_j = \emptyset$ if $i\neq j$ and $\cup_i A_i = \Lambda$.
    \item \label{data_suspension_discunpar_unen} If $S_i=(0,0,A_i=\{(r)\})$, that is $A_i$ is a set with one cycle on one element, then $r=0$.
                                                 In this case, we call $S_i$ the \emph{suspension disk}.
                                            
  \end{enumerate}
  We denote by $C(n)$ the set of unparametrized unenumerated combinatorial 1-Sullivan diagrams of degree $n$.
\end{definition}

 The unparametrized enumerated case is obtained from the above by adding data corresponding to the enumeration of the non-degenerate boundary components and punctures.
 
 \begin{definition}
  \label{definition:combinatorial_1_SD_unpar_en}
  Let $n \ge 0$ and $k \ge 1$.
  An \emph{unparametrized enumerated combinatorial $1$-Sullivan diagram} $\Sigma$ of degree $n$ is a quadruple $(\lambda, \{ S_1, \ldots, S_k \}, \beta_1, \beta_2)$ consisting of:
  \begin{enumerate}
    \item the \emph{fat structure} $\lambda\in \Symm([n])$ where $[n] = \{0, \ldots, n\}$,
    \item the set of cycles of $\lambda$ is denoted by $\Lambda$,
    \item the \emph{non-degenerate boundary} given by the permutation
      $\rho = \lambda^{-1}(0\ 1\ \ldots\ n),$
    \item the \emph{ghost surfaces} given by triples $S_i = (g_i, m_i, A_i)$ where $A_i\subset
      \Lambda$ and $g_i, m_i\in \N_{\geq 0}$,
    \item the \emph{enumerating data} given by injections
      \[
        \beta_1 \colon \{\text{cycles of } \rho\} \cof \{1,2,\ldots, m\}
      \]
      \[
        \beta_2 \colon \{1,2,\ldots, m\}-\mathbf{Im}(\beta_1)\cof \{S_1, S_2,\ldots, S_k\}.
      \]
  \end{enumerate}
  Subject to the following conditions:
  \begin{enumerate}[label={(\roman*)}]
    \item \label{data_partition_unpar_en}       The $A_i$'s form a partition of $\Lambda$, i.e., $A_i\cap A_j = \emptyset$ if $i\neq j$ and $\cup_i A_i = \Lambda$.
    \item \label{data_suspension_disc_unpar_en}   If $S_i=(0,0,A_i=\{(r)\})$, that is $A_i$ is a set with one cycle on one element, then $r=0$.
      In this case, we call $S_i$ the \emph{suspension disk}.
    \item[(v)] \label{data_enumerattion_unpar_en} For every $1\leq i\leq k$ it holds that $|\beta_2^{-1}(i)|=m_i$.
  \end{enumerate}
  We denote by $\betterwidetilde{C}(n)$ the set of unparametrized enumerated combinatorial 1-Sullivan diagrams of degree $n$.
\end{definition}
 
We extend this further to the parametrized case by adding a set of leaves to this combinatorial data.
 
\begin{definition}
  \label{definition:combinatorial_1_SD_par_en}
  Let $n \ge 0, k \ge 1$ and let $L = \{ l_1, \ldots, l_m \}$ be a non-empty finite set.
  A \emph{parametrized enumerated combinatorial $1$-Sullivan diagram} $\Sigma$ of degree $n$ with leaves $L$ is a pair $(\lambda, \{ S_1, \ldots, S_k \})$ consisting of the following data:
  \begin{enumerate}
    \item the \emph{fat structure} $\lambda\in \Symm([n]\sqcup L)$ where $[n] = \{0, \ldots, n\}$,
    \item the set of cycles of $\lambda$ is denoted by $\Lambda$,
    \item the \emph{non-degenerate boundary} given by the permutation
      $\rho = \lambda^{-1}(0\ 1\ \ldots\ n),$
    \item the \emph{ghost surfaces} given by triples $S_i = (g_i, 0, A_i)$ where $A_i\subset \Lambda$ and $g_i\in \N_{\geq 0}$.
  \end{enumerate}
  Subject to the following conditions:
  \begin{enumerate}[label={(\roman*)}]
    \item \label{data_partition_par_en}            The $A_i$'s form a partition of $\Lambda$, i.e., $A_i\cap A_j = \emptyset$ if $i\neq j$ and $\cup_i A_i = \Lambda$.
    \item \label{data_suspension_disc_par_en}      If $S_i=(0,0,A_i=\{(r)\})$, that is $A_i$ is a set with one cycle on one element, then $r=0$.
      In this case, we call $S_i$ the \emph{suspension disk}.
    \item \label{data_attach_all_surfaces_par_en}  For each $i$ the set $A_i$ contains at least one cycle with an element in $[n]$.
    \item \label{data_conditions_leaves_par_en}    Each cycle of $\rho$ permutes exactly one leaf non-trivially or it is a fixed point $(l_i)$ with $l_i \in L$.
  \end{enumerate}
    We denote by  $\betterwidetilde{C}(n,L)$, the set of parametrized enumerated combinatorial 1-Sullivan diagrams of degree $n$ with leaves $L$.
\end{definition}

We now define an equivalence relation on this data and use it to define the parametrized unenumerated case.

\begin{definition}
  \label{definition:combinatorial_1_SD_par_unen}
  Let $n \ge 0$ and $L = \{ l_1, \ldots, l_m \}$ be a finite, non-empty set.
  Given permutations $\lambda\in \Symm([n]\sqcup L)$ and $\sigma \in \Symm(L)$, we view $\sigma \in \Symm([n] \cup L)$ by extending $\sigma(i) = i$ for all $i \in [n]$.
  The conjugation of $\lambda$ by $\sigma$ is $c_\sigma(\lambda) = \sigma^{-1}\lambda\sigma$.
  Consequently, $\Symm(L)$ acts from right on the set of parametrized enumerated combinatorial 1-Sullivan diagrams of degree $n$ with labels in $L$ by
  \[
    (\lambda, S_1, \ldots, S_k). \sigma = (c_\sigma(\lambda), c_\sigma(S_1), \ldots, c_\sigma(S_k))
  \]
  where for $S_i = ( g_i, m_i, \{ s_{i,1}, \ldots, s_{i,r_i} \})$ we have $c_\sigma(S_i) = (  g_i, m_i, \{ c_\sigma(s_{i,1}), \ldots, c_\sigma(s_{i,r_i}) \} )$.
  
  This action defines an equivalence relation on the set of parametrized enumerated combinatorial 1-Sullivan diagrams and
  a \emph{parametrized unenumerated combinatorial 1-Sullivan diagram} is an equivalence class.
  Informally such a diagram is given by ``forgetting" the enumeration of the leaves i.e.\ the elements of the set $L$.
  We denote by 
  \[C(n,L) := \betterwidetilde{C}(n,L)/_{\Symm(L)}\]
  the set of parametrized unenumerated combinatorial 1-Sullivan diagrams of degree $n$ with $m$ leaves.
\end{definition}

\begin{remark}
  Note that there is also an action of $\Symm({1,2,\ldots, m})$ on the set of  unparametrized enumerated combinatorial 1-Sullivan diagrams of degree $n$ given by acting by conjugation on the enumerating data.
  This is similar to the action described in Definition \ref{definition:combinatorial_1_SD_par_unen}.
  This action gives an equivalence relation on the set of unparametrized enumerated combinatorial 1-Sullivan diagrams.
  One can readily see that an unparametrized unenumerated combinatorial 1-Sullivan diagram as we have defined it is an equivalence class under this relation.
\end{remark}  
  
We collect the four versions of combinatorial Sullivan diagrams into one.
  
\begin{notation}
\label{notation:combianatorial_SD}
  We refer by a \emph{combinatorial 1-Sullivan diagram} to any of the four different versions of given in Definitions \ref{definition:combinatorial_1_SD_unpar_unen},
  \ref{definition:combinatorial_1_SD_unpar_en},
  \ref{definition:combinatorial_1_SD_par_en} and 
  \ref{definition:combinatorial_1_SD_par_unen}.
  Such a diagram is determined by the data $(1)-(5)$ and conditions $(i)-(v)$ of their respective definitions.
  In different versions some of the data entries or conditions are empty.
  We denote any combinatorial 1-Sullivan by a tuple $\Sigma = (\lambda, S_1, \ldots, S_k)$ and omit writing the enumerating data unless it is specifically necessary.

  Consider two combinatorial 1-Sullivan diagrams $(\lambda, S_1, \ldots, S_k)$ and $(\lambda', S'_1, \ldots, S'_{k'})$ of degree $n$.
  In the unparametrized unenumerated case and in the parametrized enumerated case,
  the two Sullivan diagrams are equal if and only if $\lambda = \lambda'$, $k=k'$ and the ghost surfaces agree up to reordering.
  In the unparametrized enumerated case they must also have the same enumerating data.
  In the parametrized unenumerated case we require that the orbits of the Sullivan diagrams under the action of $\Symm(L)$ agree.
  By convention we order the ghost surfaces in the order in which they are attached to the ground circle.
\end{notation}

We now show that combinatorial Sullivan diagrams indeed index the cells of their corresponding spaces of Sullivan diagrams.

\begin{proposition}
  \label{proposition:combinatorial_1_SD}
  Given $n \ge 0$ and $m\geq 1$, let $L = \{ l_1, \ldots, l_m\}$.
  There exist bijections.
  \begin{align*}
    C(n) & \xlr{1:1} \{ \text{$n$-cells of } \coprod_{g,r} \SDs_g^r \}\\
    \betterwidetilde{C}(n) & \xlr{1:1} \{ \text{$n$-cells of } \coprod_{g,r} \betterwidetilde\SDs_{g}^r \}\\
    \betterwidetilde{C}(n,L) & \xlr{1:1} \{ \text{$n$-cells of } \coprod_{g} \betterwidetilde{\SDs}_{g,m} \}\\    
    C(n,L) := \betterwidetilde{C}(n,L)/_{\Symm(L)} & \xlr{1:1} \{ \text{$n$-cells of } \coprod_{g} \SDs_{g,m} \}
  \end{align*}
\end{proposition}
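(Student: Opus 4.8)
The plan is to establish the first bijection $C(n) \leftrightarrow \{n\text{-cells of } \coprod_{g,r}\SDs_g^r\}$ in detail and then obtain the remaining three by bookkeeping the extra decorations. A cell of $\SDs$ of degree $n$ is an equivalence class of admissible fat graphs under $\sim_{\scriptscriptstyle{SD}}$, and by sliding vertices along non-admissible edges one can always arrange a canonical representative in which every connected component of the complement of the (open) admissible cycle has been collapsed as far as possible — i.e.\ a ``ground circle'' with $n+1$ inner-edges (labelled $e_0,\dots,e_n$ by the fat structure starting at the admissible leaf, as in Definition \ref{definition:space_of_SD}) together with, attached at its vertices, a collection of sub-fat-graphs. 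First I would make precise the normal form: collapse every inner-edge not on the admissible cycle that is not a loop; the result is that each ghost piece is a ``rose'' attached at a single vertex of the ground circle, and its topological type (genus $g_i$, number of punctures $m_i$, and which ground-circle vertices it touches) is a complete invariant of the collapsed piece up to the residual equivalence. The vertices of the ground circle are indexed by the cycles of $\lambda \in \Symm([n])$ via $\rho = \lambda^{-1}(0\,1\,\cdots\,n)$ recording the boundary cycles — this is exactly the combinatorics already used in B\"odigheimer's model, and the parenthetical Remark \ref{remark:compare_combinatorial_1_SD_with_Rad} promised in the text is the place to cite that dictionary.

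The core of the argument is then: (a) from a combinatorial diagram $(\lambda, S_1,\dots,S_k) \in C(n)$ reconstruct an admissible fat graph by building the ground circle from $\lambda$ and gluing, at the vertex-sets $A_i$, a fat graph realizing a surface of type $(g_i, m_i)$ with the prescribed boundary behaviour — here I would invoke the standard fact that the topological type $(g,m,\text{boundary combinatorics})$ of a surface with decorations is realized by a fat graph, unique up to collapsing non-loop inner-edges, i.e.\ unique as a Sullivan-diagram-germ (Remark \ref{remark:fattening_graph}); and (b) conversely, from an admissible fat graph read off $\lambda$ from the fat structure restricted to the admissible cycle, read off $\rho$ as the boundary permutation, and read off each $S_i$ as the topological type of the corresponding ghost component. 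One must check these two assignments are mutually inverse and well-defined on $\sim_{\scriptscriptstyle{SD}}$-classes: well-definedness is precisely the statement that $\sim_{\scriptscriptstyle{SD}}$ moves do not change $\lambda$ (they only touch non-admissible edges) nor the topological type of the ghost components (Remark \ref{remark:fattening_graph} again). The degenerate-case conditions \ref{data_partition_unpar_unen}, \ref{data_suspension_discunpar_unen} exactly encode, respectively, that the ghost components partition the ground-circle vertices and that a component which is merely a ``suspension disk'' (a trivalent spike, $(0,0,\{(r)\})$) must sit at the admissible-leaf vertex $r=0$, matching the normal form.

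For the parametrized/enumerated variants I would argue by adding one decoration at a time. The enumerated unparametrized case $\betterwidetilde C(n)$ adds an enumeration of the boundary cycles $\rho$ and of the punctures of the ghost surfaces; the data $(\beta_1,\beta_2)$ with condition (v) $|\beta_2^{-1}(i)| = m_i$ is literally the choice of such an enumeration, so the bijection extends. The parametrized enumerated case $\betterwidetilde C(n,L)$ records each non-degenerate boundary by a leaf $l_i \in L$ placed on it — condition \ref{data_conditions_leaves_par_en} says each cycle of $\rho$ carries exactly one leaf — and sets all $m_i = 0$; condition \ref{data_attach_all_surfaces_par_en} reflects that in a fat graph every ghost component actually meets the ground circle. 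Finally $C(n,L) = \betterwidetilde C(n,L)/\Symm(L)$ corresponds to forgetting the leaf-labels, which is exactly the forgetful passage on fat graphs (Definition \ref{definition:combinatorial_1_SD_par_unen}), so the last bijection follows by taking $\Symm(L)$-quotients on both sides. The main obstacle I anticipate is part (a)–(b) well-definedness in the base case: pinning down the canonical $\sim_{\scriptscriptstyle{SD}}$-representative and verifying that \emph{every} Sullivan diagram admits one and only one such normal form — essentially a careful induction on the number of non-admissible inner-edges — and checking that the three exceptional/degenerate clauses are neither too strong nor too weak. Everything after that is recording bijections of finite combinatorial data.
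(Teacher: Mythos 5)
Your plan is essentially the paper's proof: pass to a representative that is essentially trivalent at the boundary and maximally contracted away from it (Remark \ref{remark:essential_trivalent}), record $\lambda$ together with the topological type of each component of $\Gamma_{in}$, reconstruct by realizing each prescribed topological type by a fat graph glued along the attaching data, and check the two assignments are mutually inverse; the decorated cases are then handled by layering on enumerations/leaves and taking the $\Symm(L)$-quotient, exactly as the paper does.

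Two cautions on the details you flag as the crux. First, the statement you propose to prove by induction --- that every Sullivan diagram admits \emph{one and only one} normal form --- is false at the level of fat graphs and is not needed: a maximally contracted representative is not unique (distinct fat graphs with a single internal vertex can realize the same ghost surface), and the paper's well-definedness argument only uses that the $\sim_{SD}$ moves (collapses of non-loop inner edges off the admissible cycle) preserve the topological type of each ghost component (Remark \ref{remark:fattening_graph}); in the reverse direction, any two fat-graph realizations of the prescribed type give the same Sullivan diagram for the same reason. Second, two misstatements that would derail a literal implementation: a ghost component in normal form is attached at possibly many ground-circle vertices (it has at most one vertex \emph{off} the admissible cycle, not a single attaching vertex), and $(g_i,m_i,\text{attaching set})$ is not a complete invariant --- you must also record how the attaching vertices and leaves are distributed over the boundary cycles of the ghost piece and their cyclic order there, i.e.\ the cycles of $\lambda$ forming $A_i$; these are read off from the boundary cycles of the $\Gamma_i$'s, not from the fat structure restricted to the admissible cycle, which is essentially trivalent and so carries no information. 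With these corrections your outline coincides with the paper's argument.
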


In order to prove Proposition \ref{proposition:combinatorial_1_SD}
we need a particular graph-representative of a Sullivan diagram, which we can think of as a representative in ``general form".

\begin{definition}
  \label{definition:essential_trivalent}
  Let $\Gamma$ be a 1-admissible fat graph, and let $\Gamma_{in}\subset \Gamma$ be the sub-graph of $\Gamma$ obtained by deleting the edges of the admissible cycle and the admissible leaf.
  We say $\Gamma$ is \emph{maximally contracted away from the boundary} if each connected component of $\Gamma_{in}$ has at most one vertex which is not on the admissible cycle of $\Gamma$.
  Furthermore, we call an admissible fat graph \emph{essentially trivalent at the boundary} if all the vertices on the admissible cycle have valence three,
  except possibly the vertex that is connected to the admissible leaf which can have valence four.
\end{definition}

\begin{remark}
  \label{remark:essential_trivalent}
  Any Sullivan diagram has a representative which is essentially trivalent at the boundary and maximally contracted away from the boundary.
  This is obtained by first sliding higher valence vertices away from the admissible cycle and then contracting each connected component of $\Gamma_{in}$ onto a graph with at most one vertex not on the admissible cycle.
\end{remark}

The general idea of the proof of Proposition \ref{proposition:combinatorial_1_SD} is that any Sullivan diagram of degree $n$ is given by attaching onto a ``ground circle" topological types of surfaces $S_1, S_2, \ldots, S_k$.
The data above determines the topological type of the $S_i$'s and how they are attached to the ground circle.

More precisely, for each $S_i$ the natural numbers $g_i$ and $m_i$ are the genus and the number of punctures of the surface.
The cycles of $\lambda$, that is the set $\Lambda$, represent the (set of) boundary components of all the surfaces $S_i$.
The decorations of each boundary component are encoded in the cycles of $\lambda$.
The partition of $\Lambda$ into the subsets $A_i$ determine which boundary components belong to the surface $S_i$.
Finally, the permutation $\lambda$ also determines how the ghost surfaces are attached to the ground circle.
See Figures \ref{figure:parametrized_and_enumerated} and \ref{figure:unparametrized} for examples.
This idea is clarified further by the following remark.

\begin{remark}
  \label{remark:non-degenerate_boundary}
  The names \emph{fat structure} and \emph{non-degenerate boundary} come from the geometric interpretation of this description.
  The cycles of the fat structure $\lambda$ describe the decorations of the boundary of the ghost surfaces and the cyclic ordering in which they occur.
  The cycles of $\rho$ determine the number of boundary cycles of $\Sigma$ which have at least one edge on the admissible cycle, to which we refer to as non-degenerate (see Definition \ref{definition:degenerate_diagram}).
  
  More precisely, in the unparametrized case
  the cycles of $\rho$ are in bijection with the cycles of $\Sigma$ which are not the admissible cycle and have at least one edge on the admissible cycle.
  Thus, a Sullivan diagram $\Sigma\in\betterwidetilde\SDs_{g}^m$ or $\SDs_{g}^m$ is non-degenerate if and only if $\rho$ has $m$ cycles.
  See Figure \ref{figure:unparametrized}.
  
  In the parametrized case, the cycles of $\rho$ are in bijection with the cycles of $\Sigma$ which are not the admissible cycle and
  which have either at least one edge on the admissible cycle or consist of a single leaf $\rho(l_i) = l_i$.
  Thus, a Sullivan diagram $\Sigma\in\betterwidetilde\SDs_{g,m}$ or $\SDs_{g,m}$ is non-degenerate if $\rho$ has $m$ cycles none of which consist of a single leave.
  See Figure \ref{figure:parametrized_and_enumerated}.

  In the case where all the ghost surfaces attached are disks with one boundary component, the reason for these names becomes evident.
  Indeed, in this case there is a unique fat graph representative of $\Sigma$ which is essentially trivalent at the boundary and maximally contracted.
  This is a fat graph where we attach corollas to the ground circle.
  Call this fat graph $\Gamma_{\Sigma}$.
  Then $\lambda$ is the fat structure at the vertices of $\Gamma_{\Sigma}$ which are not in the admissible cycle and
  the cycles of $\rho$ are the cycles of $\Gamma_{\Sigma}$ which are not the admissible cycle, all of which are non-degenerate.

  Regarding the enumerating data, in the unparametrized enumerated case i.e., $\betterwidetilde{\SDs}_{g}^m$, the maps $\beta_1$ and $\beta_2$ enumerate the non-degenerate boundary cycles and punctures respectively.
  In the case of $\betterwidetilde{\SDs}_{g,m}$, the role of $\beta_1$ and $\beta_2$, enumerating the non-degenerate and degenerate boundary, is implicit in $\lambda$ and the $A_i$'s.
\end{remark}

\begin{figure}[ht]
    \centering
    \inkpic[.4\columnwidth]{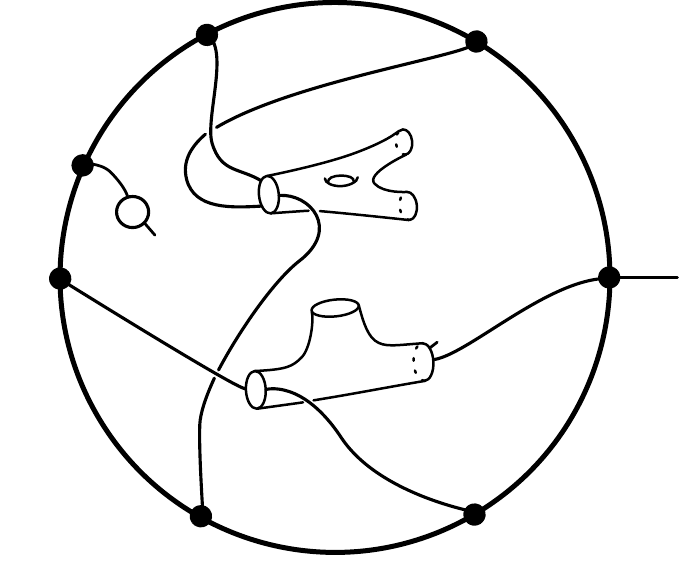}
    \caption{
      \label{figure:parametrized_and_enumerated}
      We interpret a combinatorial 1-Sullivan diagram $\Sigma = (\lambda, S_1, S_2, S_3)$ as a cell of $\widetilde{\SDs}_{g,m}$.
      The fat structure is $\lambda = (0\ l_2)(1\ 3)(4\ l_5)(2\ 6\ 5)$ with fixed points $l_1, l_3, l_4$ and $\rho = (0\ 3\ l_5\ 4\ 6\ l_2)(1\ 5\ 2)$ with the same fixed points.
      The ghost surfaces are $S_1 = (0,0,\{(0\ l_2),(l_3), (1\ 3)\})$, $S_2=(0,0,\{(4), (l_5)\})$ and
      $S_3= (1,0, \{(2\ 6 \ 5), (l_1), (l_4)\})$.
      Consequently, we consider a surface of genus $0$, no punctures and three boundary components for $S_1$,
      a surface of genus $1$, no punctures and three boundary components for $S_2$ and
      a surface of genus $0$, no punctures and one boundary component for $S_3$.
      The distribution of the leaves and the gluing of the surfaces to the ground cycle is prescribed by the fat structure $\lambda$.
    }
\end{figure}

\begin{proof}[Proof of Proposition \ref{proposition:combinatorial_1_SD}]
  We construct three pairs of maps that are mutually inverse to each other.
  The unparametrized enumerated case follows similarly and we leave it to the reader.
  
  Let $\Sigma\in\betterwidetilde{\SDs}_{g,m}$ and let $\Gamma_{\Sigma}$ denote a fat graph representative of $\Sigma$ which is essentially trivalent at the boundary (see Remark \ref{remark:essential_trivalent}).
  Note that $\Gamma_{\Sigma}$ is a fat graph with one admissible leaf and $m$ other leaves $L = \{ l_1, \ldots, l_m \}$.
  Enumerate the vertices on the admissible cycle of $\Gamma_{\Sigma}$ by the set $[n]$ using the order in which they occur on the admissible cycle starting at the vertex connected to the admissible leaf.
  See Figure \ref{figure:sd_construction}.
  \begin{figure}[ht]
    \centering
    \inkpic[.25\columnwidth]{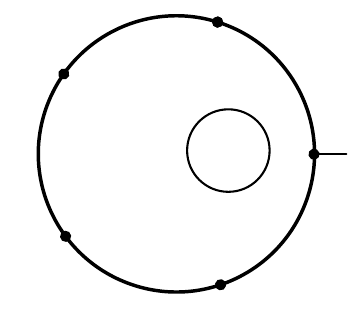}
    \caption{
      Following our convention, the orientation of the vertices is inherited by the (clockwise) orientation of the plane.
      The cyclic order of the half edges attached at the vertex $2$ is $(e_2^-, e_2^+, h_2)$.
    }
    \label{figure:sd_construction}
  \end{figure}
  Let $\Gamma_{in}\subset \Gamma_{\Sigma}$ be the sub-graph of $\Gamma_{\Sigma}$ obtained by deleting the edges of the admissible cycle and the admissible leaf.
  Then $\Gamma_{in}$  is a disjoint union of fat graphs
  \[
    \Gamma_{in}=\Gamma_1\sqcup \Gamma_2 \sqcup \ldots \sqcup \Gamma_k \,.
  \]
  We order the $\Gamma_i$'s by the smallest label of the vertex with which they are attached to on the admissible cycle, we call these the attaching vertices.
  In particular, if the vertex $0$ in $\Gamma_\Sigma$ is of valence three, then $\Gamma_1$ is a single vertex.
  In this case, we call $\Sigma$ suspended and $\Gamma_1$ the suspension disk.
  In every other case, each $\Gamma_i$ is a fat graph with leaves $L_i \subset L$.
  Moreover, each $\Gamma_i$ has least one vertex of valence one with labels in $[n]$.
  Observe that each boundary cycle of $\Gamma_i$ which is not connected to an attaching vertex has exactly one leaf.
  Any two fat graphs representing $\Sigma$ have the same topological type because they are connected by a zigzag of collapses of inner-edges which are not loops.
  Therefore, the topological type of $\Gamma_i$ is independent of the choice of representative of $\Gamma_{\Sigma}$.

  Then set $\lambda\in \Symm([n]\cup L)$ to be the permutation with cycles given by the cyclic order in which the leaves and attaching vertices occur in the boundary cycles of the $\Gamma_i$'s.
  Note in particular that, if $\Sigma$ is suspended, then $(0)$ is also a cycle of $\lambda$.
  Finally, set $S_i=(g_i, m_i, A_i)$, where $g_i$ is the genus of $\Gamma_i$, $m_i$ is the number of boundary cycles of $\Gamma_i$ which are not connected to a leaf or an attaching vertex and
  $A_i$ is the set of cycles of $\lambda$ which correspond to the leaves and attaching vertices of $\Gamma_i$.
  In particular, if $\Gamma_i$ is just a leaf $l_j$ with attaching vertex $r$, then $S_i = (g_i, m_i, A_i)$ is a disk with two marked points on the boundary, i.e., $A_i =\{ (r, l_j) \}$, $g_i = 0$ and $m_i = 0$.
  
  Conditions \ref{data_partition_par_en} and \ref{data_attach_all_surfaces_par_en} and \ref{data_suspension_disc_par_en} hold by construction.
  Moreover, we have $m_i=0$ for every $i$ by Definition \ref{definition:fattening} and Remark \ref{remark:fattening_graph}.
  Then, condition \ref{data_conditions_leaves_par_en} holds because the cycles of $\rho = \lambda^{-1}(0\ 1\ \ldots\ n)$ are in one-to-one correspondence with the boundary cycles of the Sullivan diagram and in each boundary cycle there is exactly one leaf.

  On the other hand, if $\Sigma\in \SDs_{g,m}$, then first choose an order of the leaves $L = \{ l_1, \ldots, l_m \}$ of $\Sigma$ and then do the construction above to obtain a combinatorial 1-Sullivan diagram $(\lambda, S_1, \ldots, S_k)$.
  Observe that a different choice of an ordering of $L$ and doing the above construction afterwards results in a combinatorial 1-Sullivan diagram which is equivalent to $(\lambda, S_1, \ldots, S_k)$.
  Finally, if $\Sigma\in \SDs_g^m$, then the construction above restricts to the case where the $\Gamma_i$'s have no leaves and $m_i$ is possibly non-zero.

  \medskip
  To go the other way around consider a combinatorial 1-Sullivan diagram $(\lambda, S_1, \ldots, S_k)$ of degree $n$ and let $C$ be an embedded circle on the plane with $n+1$ marked points enumerated by $[n]$ in clockwise order.
  We will use the ghost surfaces to construct fat graphs $\Gamma_1, \Gamma_2, \ldots, \Gamma_k$ with attaching vertices enumerated by $[n]$.
  The attaching vertices are of valence one, except possibly in the case where exactly one $\Gamma_i$ is a single vertex (which happens when $S_i = (0,0, \{ (0) \})$).
  We attach the fat graphs to the circle using the label of the attaching vertices.
  Notice that this gives the circle the structure of a graph by considering the attaching points as vertices and the intervals between them as edges.
  We need to give a fat structure at the attaching points and to add the admissible leaf.
  Let $x$ be an attaching point on the circle, the embedding of the circle gives the notion of incoming and outgoing half edges on $x$ in clockwise direction, say $e_{x}^-$ and $e_{x}^+$ respectively.
  The cyclic ordering at $x$ is given by $(e_x^+,  h_x, e_x^-)$, where $h_x$ is the half edge attached to the vertex $x$.
  Informally, this is to say all graphs are attached on the ``inside of the circles".
  Following the same convention, we attach the admissible leaf at the marked points $0$ from the outside, see Figure \ref{figure:sd_construction}.

  We now describe how to construct the $\Gamma_i$'s.
  The ghost surfaces and the fat structure will give topological types of surfaces with marked points at the boundary which we denote by $[S_i]$.
  We choose $\Gamma_i$ to be a fat graph of topological type $[S_i]$.
  Since two different choices of fat graphs are connected by a zigzag of collapses of inner-edges which are not loops, they all give the same Sullivan diagram.
  It is only left to describe how to obtain the $[S_i]$'s.
  If $S_i$ is the suspension disk, then $\Gamma_i$ is a single vertex.
  In all other cases, let $b_i:=|A_i|\in\mathbb{N}_{>0}$ and let $M_i\subset [n]\cup L$ to be the subset given by all the elements of the cycles of $A_i$.
  Then $[S_i]$ is the surface of genus $g_i$ with $m_i$ punctures and $b_i$ boundary components.
  The set $M_i$ is the set of marked points at the boundary of $S_i$ and the cyclic ordering on which they occur is given by $\lambda$.
  See Figure \ref{figure:unparametrized} for an example in the unparametrized case.
  \begin{figure}[ht]
    \centering
    \inkpic[.4\columnwidth]{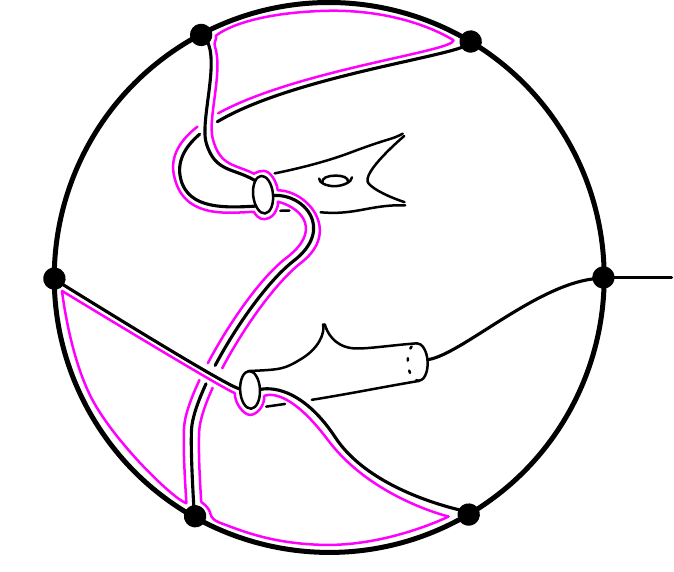}
    \caption{\label{figure:unparametrized}
      We interpret a combinatorial 1-Sullivan diagram $\Sigma = (\lambda, S_1, S_2)$ as cell of $\SDs_g^m$.
      It is $\lambda = (0)(1\ 3)(2\ 5\ 4)$, $\rho = (0\ 3\ 5)(1\ 4\ 2)$, $S_1 = (0,1,\{(0),(1\ 3)\})$ and $S_2 = (1,2, \{(2\ 5\ 4)\})$.
      We highlight one of the non-degenerate outgoing boundary curves of the thickened surface $S_\Sigma$.
    }
  \end{figure}
  In the unenumerated case, the equivalence relation gives that any two choices of enumeration of the leaves of $\Gamma_i$ are equivalent i.e., the leaves are unenumerated.
  See Figure \ref{figure:parametrized_and_enumerated} for an example of the parametrized, enumerated case.

  Condition \ref{data_partition_par_en} implies that there are no repeated markings on the boundary of the ghost surfaces and each vertex on the ground circle has exactly one surface attached to it.
  Condition \ref{data_attach_all_surfaces_par_en} implies that all surfaces must be attached.
  Condition \ref{data_suspension_disc_par_en} implies that a disk with exactly one marked point at the boundary can only be attached at the vertex $0$.
  This together with condition \ref{data_partition_par_en} imply that the graph constructed is essentially trivalent at the boundary.
  Condition \ref{data_conditions_leaves_par_en} ensures that each boundary cycle of the Sullivan diagram has exactly one leaf.
  
  The two constructions presented here are inverse to each other.
\end{proof}

\begin{remark}
  \label{remark:compare_combinatorial_1_SD_with_Rad}
  In \cite{egaskupers}, the second author and Kupers provide a cellular homotopy equivalence from the harmonic compactification of the moduli space of surfaces denoted $\ov{\Rad}_p(S)$ to the space of Sullivan diagrams denoted $\ph\SDs(S)$.
  Using their construction one can assemble the composite
  \[
    \Rad_p(S) \to \ov{\Rad}_p(S) \xr{\simeq} \ph\SDs(S) 
  \]
  where the first map is the inclusion of the space of radial slit configurations into the harmonic compactification given by B\"{o}digheimer.
  Since our definition of combinatorial 1-Sullivan diagrams is inspired by the combinatorial description of the cells of B\"{o}digheimer's model for moduli space, we can give a simpler description of this composition for the case $p=1$.
  To make this precise, let us first introduce some notation.
  
  The space of radial slit configurations $\Rad_1(S)$ is a relative bi-semisimplicial set i.e., a bi-semisimplicial set with some faces missing.
  A $\Delta^p \times \Delta^q$-cell of $\Rad_1(S)$ is given by a sequence of permutations
  \[
    \tau = (\tau_q | \ldots | \tau_1) \in \Symm([p])^{q}
  \]
  subject to certain combinatorial conditions (see \cite{bodigheimer}, \cite{abhau} and \cite{BoesHermann}).
  The cycles of a permutation $\alpha \in \Symm([p])$ are the equivalence classes of the equivalence relation $\sim_{cyc}$ generated by $j \sim_{cyc} \alpha(j)$.
  More generally, given a tuple $(\beta_q, \ldots, \beta_1)$ with $\alpha = \beta_q \cdots \beta_1$, consider the equivalence relation $\sim_{cl}$ generated by $j \sim_{cl} \beta_i(j)$ for some $i$.
  The equivalence classes of $\sim_{cl}$ are called \emph{clusters}.
  By definition $\sim_{cl}$ is coarser than the equivalence relation generated by $i \sim_{cyc} \alpha(i)$, i.e.\ $j \sim_{cyc} k$ implies $i \sim_{cl} k$.
  Consequently, each cluster is the union of cycles of $\alpha$.
  Thus, we say that two cycles of $\alpha$ \emph{are in the same cluster} if they belong to the same cluster.

  The \emph{radial projection} of a cell of $\Rad_1(S)$ of type $\tau = (\tau_q | \ldots | \tau_1)$ is the Sullivan diagram $\Sigma(\tau) = (\lambda, S_1, \ldots, S_k) \in \xh{1}\SDs(S)$,
  with $\lambda^{-1} = \tau_q \cdots \tau_1$ and 
  $S_i = (0,0,A_i)$ where $A_i$ is the collection of cycles of $\lambda$ that are in the same cluster (with respect to the decomposition $\lambda^{-1} = \tau_q \cdots \tau_1$).
  A straightforward check yields that the map $\iota \colon \Rad_1(S) \to {\ensuremath{1\hspace{1pt}\mhyphen}}\SDs(S)$ is given on the interior of the cells by
  \[
    \tau \times Int(\Delta^p \times \Delta^q) \to \Sigma(\tau) \times Int( \Delta^p )
  \] 
  induced by the projection $\Delta^p \times \Delta^q \to \Delta^p$.
\end{remark}

\begin{remark}[Top degree and Euler characteristic]
  \label{remark:top_degree_of_a_SD}
  Consider a Sullivan diagram $\Sigma \in \SDs_g^m$ and let ${S_{\Sigma}}$ be the surface with decorations obtained by fattening it as in Definition \ref{definition:fattening}.
  If $\Sigma$ is of top degree, then all ghost surfaces $S_i$ are either the suspension disk or disks with  exactly two attaching points at the boundary and no punctures or degenerate boundary.
  In other words, $\Sigma$ is a graph given by a system of chords attached on the ground circle where the vertex attached to the admissible leaf is of valence three.
  See Figure \ref{top_degree} (a).
  \begin{figure}[ht]
    \inkpic[.5\columnwidth]{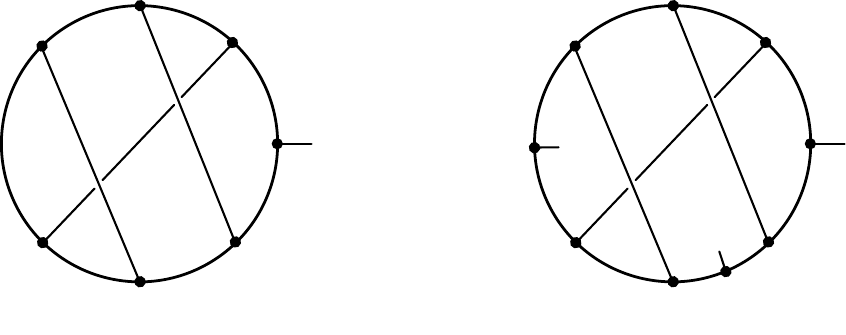}
    \caption{\label{top_degree} Examples of top dimensional cells.
      The unparametrized, unenumerated case is seen in (a) and the parametrized, unenumerated case is seen in (b).}
  \end{figure}
  Now, let $c$ be the number of chords of $\Sigma$.
  The Euler characteristic of the circle is $0$ and each time one adds a chord to the circle the Euler characteristic goes down by one.
  Therefore, we get 
  \[
    -c=\chi(\Sigma)=\chi(S_{\Sigma}).
  \]
  Thus, the degree of $\Sigma$ is 
  \[
    \deg(\Sigma)=-2\chi(S_{\Sigma}).
  \]

  On the other hand, if $\Sigma \in \SDs_{g,m}$ or $\betterwidetilde{\SDs}_{g,m}$ then the top degree cell is a diagram with chords and leaves on the admissible cycles,
  exactly one per boundary cycle of $\Sigma$.
  See Figure \ref{top_degree} (b).
  Therefore, if $\Sigma$ is of top degree we have
  \[
    \deg(\Sigma)=-2\chi(S_{\Sigma})+m.
  \]
\end{remark}

\subsection{The face map}
\label{section:sd_one_circle:face_map}
In Proposition \ref{proposition:combinatorial_1_SD}, we have seen that a Sullivan diagram $\Sigma$ is uniquely given by a combinatorial 1-Sullivan diagram.
After fixing some notation, we describe the face maps in this description in Discussion \ref{disc:faces}.

\begin{definition}
  Using the notation from the simplex category $\Delta$, let $[n] = \{ 0, \ldots, n \}$;
  the face maps are denoted by $d_i^\Delta \colon [ n-1 ] \to [n]$ and the degeneracy maps are denoted by $s_i^\Delta \colon [n+1] \to [n]$.
  Given a (possibly empty) finite set $L = \{ l_1, \ldots, l_m \}$, we extend $d_i$ and $s_i$ via $d_i(l_j) = l_j$ and $s_i(l_j) = l_j$.
\end{definition}

The upcoming maps were introduced in \cite{bodigheimerold}.

\begin{definition}
  \label{defi:D_i}
  Consider a finite set $L$ that is allowed to be empty.
  For a permutation $\alpha \in \Symm([n] \cup L)$ and $0 \le i \le n$, the \emph{$i$-th face map}
  \[
    D_i \colon \Symm([n] \cup L) \to \Symm([n-1] \cup L)
  \]
  removes $i$ from its cycle in its cycle decomposition and renormalizes the other symbols afterwards.
  More formally, denote the permutation exchanging $i$ and $\alpha^{-1}(i)$ by $(i\ \alpha^{-1}(i))$
  (it is a transposition if $i \neq \alpha^{-1}(i)$ and the identity otherwise).
  Now,
  \[
    D_i(\alpha) := s_i^\Delta \circ \alpha \circ (i\ \alpha^{-1}(i)) \circ d_i^\Delta
  \]
  where we write composition of permutations as composition of maps.
\end{definition}

\begin{definition}
  Consider a finite set $L$.
  The symmetric group $\Symm(L)$ operates on $\Symm([n] \cup L)$ by conjugation.
  The set of orbits is denoted by $\Tymm([n] \cup L)$.
  
  For $\sigma \in \Symm(L)$ and $\alpha \in \Symm([n] \cup L)$, we have $D_i(\sigma^{-1} \alpha \sigma) = \sigma^{-1} D_i(\alpha) \sigma$.
  The induced map of the set of orbits is also denoted by
  \[
    D_i \colon \Tymm([n] \cup L) \to \Tymm([n-1] \cup L) \,.
  \]
\end{definition}

The following Lemma is readily checked.

\begin{lemma}
  Consider a (possibly empty) finite set $L$.
  \begin{enumerate}
    \item We have two semisimplicial sets with $n$-simplices either $\Symm([n] \cup L)$ or $\Tymm([n] \cup L)$ and face maps $D_i$.
    \item The face maps $D_i$ commute with taking inverses i.e.\ $D_i(\alpha^{-1}) = D_i(\alpha)^{-1}$.
  \end{enumerate}
\end{lemma}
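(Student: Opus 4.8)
The plan is to factor each $D_i$ through an abstract ``deletion'' operation on permutations of arbitrary finite sets, and then to isolate two completely elementary properties of that operation. For a finite set $X$, an element $x \in X$ and $\alpha \in \Symm(X)$, let $\mathrm{del}_x(\alpha) \in \Symm(X \setminus \{x\})$ be the permutation obtained by removing $x$ from its cycle, that is $\mathrm{del}_x(\alpha)(y) = \alpha(y)$ if $\alpha(y) \neq x$ and $\mathrm{del}_x(\alpha)(y) = \alpha(x)$ otherwise. First I would check that the image of $\alpha \circ (i\ \alpha^{-1}(i)) \circ d_i^{\Delta}$ is exactly $([n] \cup L) \setminus \{i\}$, and that $s_i^{\Delta}$ restricted to this subset is the unique monotone bijection onto $[n-1] \cup L$ fixing $L$ pointwise; this identifies the $D_i$ of Definition \ref{defi:D_i} with $\mathrm{del}_i$ followed by that relabelling. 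Then I would record the two facts: (A) for $x \neq y$ one has $\mathrm{del}_x \circ \mathrm{del}_y = \mathrm{del}_y \circ \mathrm{del}_x$ on $\Symm(X)$, and (B) $\mathrm{del}_x(\alpha^{-1}) = \mathrm{del}_x(\alpha)^{-1}$. Both are proved by inspecting a cycle decomposition: for (A), splicing out two distinct letters of a cyclic word does not depend on the order (the one- and two-letter cycles being dealt with by hand); for (B), if $\alpha$ has local pattern $(\cdots a\ x\ b \cdots)$ then $\alpha^{-1}$ has $(\cdots b\ x\ a \cdots)$, so $\mathrm{del}_x(\alpha^{-1})$ and $\mathrm{del}_x(\alpha)^{-1}$ both send $b \mapsto a$, while every other cycle is untouched by $\mathrm{del}_x$.

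Granting this, part (2) is essentially immediate: conjugation by the relabelling bijection commutes with inversion, so $D_i(\alpha^{-1})$ is the relabelling of $\mathrm{del}_i(\alpha^{-1}) = \mathrm{del}_i(\alpha)^{-1}$, which is $D_i(\alpha)^{-1}$. For part (1), since there are no degeneracy maps in sight, the only relation to verify is the semisimplicial identity $D_i D_j = D_{j-1} D_i$ for $i < j$. Writing both sides through the factorization, $D_i D_j(\alpha)$ first deletes $j$ and then deletes $i$ — still labelled $i$ after the first relabelling because $i < j$ — whereas $D_{j-1} D_i(\alpha)$ first deletes $i$ and then deletes $j$, now labelled $j-1$. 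By (A) the two double deletions of $\alpha$ coincide, and in either order the composite relabelling $([n] \cup L) \setminus \{i,j\} \to [n-2] \cup L$ is the same monotone bijection; hence the two composites agree. This shows that $\Symm([n] \cup L)$ with the maps $D_i$ is a semisimplicial set. Finally, the quotient maps $\Symm([n] \cup L) \fib \Tymm([n] \cup L)$ are surjective and intertwine the $D_i$ (as noted just before the statement), so the identities, being equalities of maps, descend to $\Tymm([n] \cup L)$.

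I do not anticipate a genuine obstacle: the mathematical content is entirely the two one-line observations (A) and (B) about cyclic words. The only places that demand care are the index bookkeeping in the factorization of $D_i$ — in particular checking the claim about the image of $\alpha \circ (i\ \alpha^{-1}(i)) \circ d_i^{\Delta}$ in the case where $\alpha^{-1}(i)$ lies in $L$ — and the degenerate short-cycle cases underlying (A) and (B).
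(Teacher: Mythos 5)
Your proposal is correct, and since the paper offers no proof here (it declares the lemma ``readily checked''), your write-up simply supplies the omitted verification in the way the paper clearly intends: your factorization $D_i = (\text{monotone relabelling}) \circ \mathrm{del}_i \circ (\text{relabelling})^{-1}$ is exactly the content of Definition~\ref{defi:D_i} (``removes $i$ from its cycle and renormalizes''), and your facts (A) and (B), together with equivariance of deletion under relabelling and the observation that $D_i(\sigma^{-1}\alpha\sigma)=\sigma^{-1}D_i(\alpha)\sigma$ lets everything descend to $\Tymm([n]\cup L)$, are precisely what is needed. The index bookkeeping in $D_iD_j = D_{j-1}D_i$ for $i<j$ and the check that the image of $\alpha\circ(i\ \alpha^{-1}(i))\circ d_i^\Delta$ avoids $i$ (including when $\alpha^{-1}(i)\in L$ or $i$ is a fixed point) are handled correctly, so there is no gap.
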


\begin{lemma}
  \label{lem:rho_face}
  Let $\Sigma\in\SDs_g^m$, $\betterwidetilde{\SDs}_g^m$, $\SDs_{g,m}$ or $\betterwidetilde{\SDs}_{g,m}$ be a cell of degree $n$ with
  a (possibly empty) set of labels $L$, with fat structure $\lambda$ and non-degenerate boundary $\rho$.
  Let $\tilde{\Sigma}:=d_i(\Sigma)$ i.e., $\tilde{\Sigma}$ is the Sullivan diagram obtained by collapsing the $i$-th edge of the admissible cycle of $\Sigma$,
  and let $\tilde{\lambda}$ and $\tilde{\rho}$ denote the fat structure and non-degenerate boundary of $\tilde{\Sigma}$.
  Then
  \[\tilde{\rho}=D_i(\rho) 
  \quad\quad\text{ and } \quad\quad
  \tilde \lambda = D_i( \lambda \circ (a\ i) )
  \]
  where $a = \rho(i)$.
\end{lemma}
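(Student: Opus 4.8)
Throughout, abbreviate $g=(0\ 1\ \ldots\ n)\in\Symm([n]\cup L)$ and $\tilde g=(0\ 1\ \ldots\ n-1)\in\Symm([n-1]\cup L)$, so that by the definition of the non-degenerate boundary we have $\rho=\lambda^{-1}g$ and $\tilde\rho=\tilde\lambda^{-1}\tilde g$, equivalently $\lambda=g\rho^{-1}$ and $\tilde\lambda=\tilde g\tilde\rho^{-1}$. The plan is to prove the formula for $\tilde\rho$ geometrically and then to deduce the formula for $\tilde\lambda$ from it by a purely permutation-theoretic identity.

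\emph{Step 1: the formula $\tilde\rho=D_i(\rho)$.} I would fix a representative $\Gamma_\Sigma$ of $\Sigma$ that is essentially trivalent at the boundary and maximally contracted away from the boundary (Remark \ref{remark:essential_trivalent}), enumerate the vertices of its admissible cycle by $[n]$ starting at the admissible leaf exactly as in the proof of Proposition \ref{proposition:combinatorial_1_SD}, and write $e_0,\dots,e_n$ for the edges of the admissible cycle, $e_j$ joining vertex $j$ to vertex $j+1$ (indices mod $n+1$). Then $\tilde\Sigma=d_i(\Sigma)$ is represented by $\Gamma_\Sigma/e_i$, which merges the attaching vertices $i$ and $i+1$. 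Since $e_i$ is an inner edge and not a loop, collapsing it is a homotopy equivalence which does not change the number of boundary cycles nor their decorations (Remark \ref{remark:fattening_graph}); the only effect is that the two admissible-cycle markings $i$ and $i+1$ become identified. After renumbering the new admissible cycle again from the admissible leaf --- which sends old vertex $j$ to $j$ for $j<i$, to $j-1$ for $j>i$, and makes the merged vertex the new vertex $i$ --- I expect that tracing the non-degenerate boundary cycles of $\Gamma_\Sigma$ through the collapse shows that the permutation recording them is obtained from $\rho$ precisely by deleting the symbol $i$ from its cycle and renormalizing, i.e.\ $\tilde\rho=D_i(\rho)$ with $D_i$ as in Definition \ref{defi:D_i}.

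\emph{Step 2: the formula $\tilde\lambda=D_i(\lambda(a\ i))$.} From $\lambda=g\rho^{-1}$, $\tilde\lambda=\tilde g\tilde\rho^{-1}$, Step 1, and the fact that $D_i$ commutes with taking inverses, I obtain $\tilde\lambda=\tilde g\,D_i(\rho^{-1})$. It then suffices to establish the identity of permutations
\[
  \tilde g\,D_i\!\left(\rho^{-1}\right)\;=\;D_i\!\left(g\,\rho^{-1}(a\ i)\right)\;=\;D_i\!\left(\lambda(a\ i)\right),\qquad a=\rho(i).
\]
Here one first notes that, since $\rho^{-1}$ sends $a\mapsto i$, the permutation $\rho^{-1}(a\ i)$ is exactly $\rho^{-1}$ with $i$ pulled out of its cycle and kept as a fixed point, so that $g\,\rho^{-1}(a\ i)$ is $g$ composed with that modified permutation. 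I would then verify the displayed equation by evaluating both sides on each element of $[n-1]\cup L$: on a ``generic'' element this reduces to the obvious compatibility of $\tilde g$ with $g$ under the renormalization map $[n]\cup L\setminus\{i\}\to[n-1]\cup L$, while the finitely many remaining elements (the preimage and image of $i$, together with the wrap-around at $i=0$ and at $i=n$) are dispatched by a short case check. Substituting Step 1 back into $\tilde\lambda=\tilde g\,D_i(\rho^{-1})$ then gives $\tilde\lambda=D_i(\lambda(a\ i))$.

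I expect the main obstacle to be Step 1: carefully following how a non-degenerate boundary cycle threads along the admissible cycle and through the ghost surfaces near $e_i$, and checking that the renumbering convention forces deletion of the symbol $i$ rather than of $i+1$. A secondary nuisance is the list of degenerate configurations --- when $i$ or $i+1$ is a fixed point of $\lambda$ (a single-leaf boundary cycle, or the suspension disk), when $a=i$, and when $i\in\{0,n\}$ --- each of which must be treated separately but presents no conceptual difficulty. As an alternative to Step 2, one may instead prove the $\tilde\lambda$ formula directly by computing the cyclic order at the merged vertex of $\Gamma_\Sigma/e_i$ and putting the resulting fat graph back into essentially trivalent form; the transposition $(a\ i)$ then records precisely the local rearrangement this requires, and the two approaches should agree.
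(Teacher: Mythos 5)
Your proposal is correct and follows essentially the same route as the paper: the formula $\tilde\rho=D_i(\rho)$ is read off from the geometric fact that the cycles of $\rho$ are in bijection with the non-degenerate boundary cycles of the diagram, and the formula for $\tilde\lambda$ is then deduced purely algebraically from it using $\rho=\lambda^{-1}(0\ 1\ \ldots\ n)$ and the fact that $D_i$ commutes with taking inverses. The only cosmetic difference is that the paper carries out your Step~2 identity by a short chain of manipulations with the cosimplicial maps $d_i^\Delta$ and $s_i^\Delta$ (inserting the transposition $(i\ i+1)$) rather than by element-wise evaluation, but it is the same computation.
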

\begin{proof}
  From the definition of $d_i$ and 
  since the cycles of $\rho$ are in bijection with the boundary components of the geometric realization of $\Sigma$ (see Remark \ref{remark:non-degenerate_boundary}), it is clear that $\tilde\rho = D_i(\rho)$.
  The other formula is a little computation.
  \begin{align*}
    \tilde\lambda^{-1}
      &= \tilde \rho \circ (n-1\ \ldots\ 0) \\
      &= s_i^\Delta \circ (a\ i ) \circ \rho \circ d_i^\Delta \circ (n-1\ \ldots\ 0) \\
      \intertext{The maps $d_i^\Delta \circ (n-1\ \ldots\ 0)$ and $(n\ \ldots\ i + 1\ i - 1\ \ldots\ 0) \circ d_i^\Delta$ are equal.}
      \tilde\lambda^{-1} &= s_i^\Delta \circ (a\ i) \circ \rho \circ (n\ \ldots\ i+1\ i-1\ \ldots\ 0) \circ d_i^\Delta\\
      &= s_i^\Delta \circ (a\ i) \circ \rho \circ (n\ \ldots\ i+1\ i\ i-1\ \ldots\ 0) \circ (i\ i+1) \circ d_i^\Delta \\
      &= s_i^\Delta \circ (a\ i) \circ \lambda^{-1} \circ (i\ i+1) \circ d_i^\Delta \,.
      \intertext{Since $(a\ i) \circ \lambda^{-1} = (a\ i) \circ \rho \circ (n\ \ldots\ i+1\ i\ i-1\ \ldots\ 0)$ maps $i+1$ to $i$ we end (by the definition of $D_i$) with the following.}
      \tilde\lambda^{-1}&= D_i( (a\ i) \circ \lambda^{-1} ) \,.
  \end{align*}
  The claim follows since $D_i$ commutes with taking the inverse.
\end{proof}

\begin{discussion}
  \label{disc:faces}
  Let $\Sigma=(\lambda,S_1,S_2,\ldots,S_k)$ be a cell of degree $n$, let $\rho$ be its non-degenerate boundary.
  We describe the $i$-th face of $\Sigma$ which is denoted by
  $\tilde{\Sigma} := d_i(\Sigma) = (\tilde{\lambda},\tilde{S}_1,\tilde{S}_2,\ldots,\tilde{S}_{\tilde{k}})$.
  Lemma \ref{lem:rho_face} gives us $\tilde{\lambda}$ and $\tilde{\rho}$ so it is only left to determine the ghost surfaces of $\tilde{\Sigma}$.

  Let $S_{j}$ denote the surface attached at $i$, $S_{t}$ denote the surface attached at $i+1$ and $a = \rho(i)$.
  Since $\rho=\lambda^{-1} (0\ \ldots\ n)$ then we have that
  \[
    \lambda(a)=\lambda(\rho(i))=i+1.
  \]
  Moreover Lemma \ref{lem:rho_face} gives that 
  \[
    \tilde{\lambda}=D_i(\underline{\lambda}) 
    \quad\quad \text{where} \quad\quad
    \underline{\lambda}=\lambda\circ(i\ a ).
  \]
  Let $\Lambda, \tilde{\Lambda}$ and $\underline{\Lambda}$ denote the sets of cycles of $\lambda$, $\tilde{\lambda}$ and $\underline{\lambda}$ respectively.
  Note that $D_i$ induces a partially defined map 
  \begin{align}
    \label{Di_lambda} D_i^\ast \colon \underline{\Lambda} \dashrightarrow \tilde{\Lambda}.
  \end{align}
  which is a bijection if $i$ is not a fixed point of $\underline{\lambda}$;
  otherwise is it a bijection after removing the cycle $(i)$.
  Therefore, given a partition of $\underline{\Lambda}$ there is an induced partition of $\tilde{\Lambda}$.
  We use this setup to study the effect of the differential on the ghost surfaces separately in different possible cases.
  \begin{description}
    \item[Case 1]
      \underline{Let $a = i$.}
      In this case, $\lambda(i) = i+1$ and so $S_j=S_{t}$, see Figure \ref{figure:case1}.
      Note in particular that this case can not happen if $\Sigma$ is a cell in either ${\SDs}_{g,m}$ or $\betterwidetilde{\SDs}_{g,m}$ (by Condition \ref{data_conditions_leaves_par_en} in Definition \ref{definition:combinatorial_1_SD_par_en}).
      Now, when taking the $i$-th face of $\Sigma$ the boundary cycle corresponding to $(i)$ in $\rho$ disappears and degenerates to a puncture of $S_j$.
      Thus, we have to increase the number of punctures of $S_j$ by one while leaving the other data fixed.
      More precisely,  $\underline{\lambda}=\lambda$.
      Thus, the partition of $\Lambda$ induces a partition of $\underline{\Lambda}$ and by (\ref{Di_lambda}) this induces a partition of $\tilde{\Lambda}$ into $k$ blocks which we denote $\{\tilde{A}_i\}_i$.
      Then, $\tilde{S_r}=(g_r, m_r, \tilde{A}_r)$ for any $r\neq j$ and $\tilde{S}_j=(g_j, m_j+1, \tilde{A}_j)$.
      \begin{figure}[ht]
        \inkpic[.75\columnwidth]{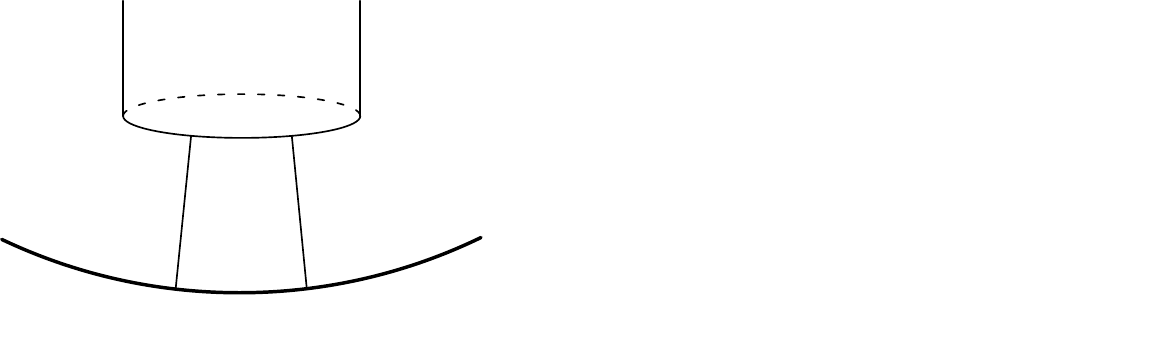}
        \caption{\label{figure:case1} Example of Case 1.
        The non degenerate boundary cycle $(i)$ of $\rho$ degenerates into a puncture of $S_j$ which we indicate by a gray dot.}
      \end{figure}
    \item[Case 2]
      \underline{Let $a \neq i$ with $a$ and $i$ in different cycles of $\lambda$.}
      In this case, $i+1$ and $i$ are in different cycles of $\lambda$.
      When taking the $i$-th face we merge the boundary of $S_{j}$ attached to $i$ to the boundary of $S_{t}$ attached to $i+1$.
      There are two possible sub-cases which we will call (a) and (b).
      We begin with the geometric meaning of the two sub-cases and provide the precise formulation afterwards.
      
      (a)  If $S_{j}\neq S_{t}$, then we merge both surfaces i.e., we add their genus and number of punctures and the remaining ghost surfaces remain unchanged, see Figure \ref{figure:case2_1}.
      \begin{figure}[ht]
        \inkpic[\columnwidth]{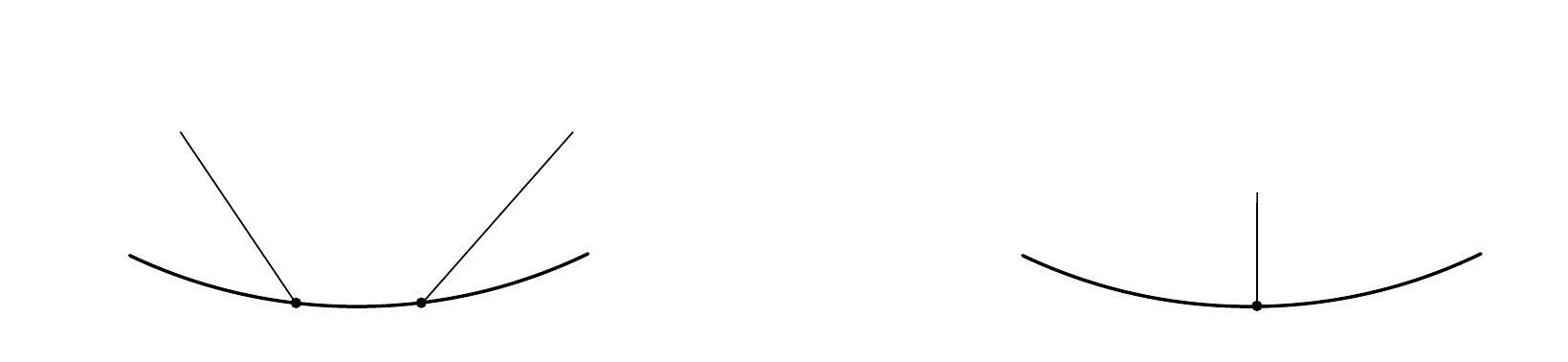}
        \caption{
          \label{figure:case2_1}
          Example of Case 2 (a).
          The face map glues together the boundaries of two ghost surfaces $S_j$ and $S_t$ resulting in a ghost surface $S_{\tilde j}$.
        }
      \end{figure}
      
      (b)  If $S_j= S_{t}$, then we merge both boundary components, which increases the genus of the surface by one, and the remaining ghost surfaces remain unchanged, see Figure \ref{figure:case2_2}.
      \begin{figure}[ht]
        \inkpic[\columnwidth]{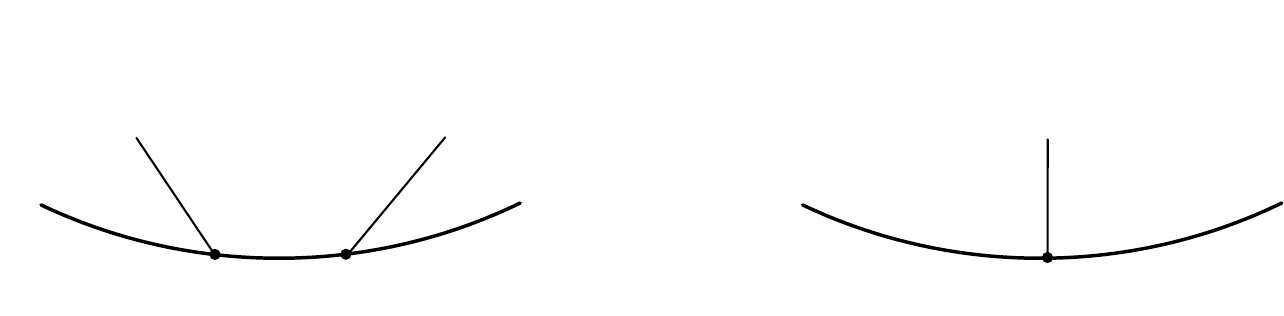}
        \caption{
          \label{figure:case2_2}
          Example of Case 2(b).
          The face map glues together two different boundary components of a ghost surface $S_j$ resulting in a ghost surface $S_{\tilde j}$
          having the same number of punctures while the genus is increased by one and the number of boundary components is decreased by one.
        }
      \end{figure}
      
      More precisely, $\underline{\lambda}$ is the permutation obtained from $\lambda$ by merging the cycles of $i$ and $i+1$.
      Therefore, by construction $\underline{\lambda}$ has one cycle less than $\lambda$.
      The two cases are as follows:
      
      (a) The cycles containing $i$ and $i+1$ belong to different blocks in the partition of $\Lambda$.
      Then, the partition of $\Lambda$ naturally induces a partition of $\underline{\Lambda}$ given by merging the blocks corresponding to the cycles containing $i$ and $i+1$.
      This partition induces a partition of $\tilde{\Lambda}$ into $k-1$ blocks which we denote $\{\tilde{A}_i\}_i$.
      Then we have  $\tilde{S_r}=(g_r, m_r, \tilde{A}_r)$ for any $r \neq j, t$ and
      $\tilde{S}_j=(g_{j}+g_{t}, m_{j}+m_{t}, \tilde{A}_j)$.
      
      (b) The cycles containing $i$ and $i+1$ belong to the same block in the partition of $\Lambda$.
      Then the partition of $\Lambda$ naturally induces a partition of $\underline{\Lambda}$ into $k$ blocks.
      This again induces a partition of $\tilde{\Lambda}$ into $k$ blocks which we denote $\{\tilde{A}_i\}_i$.
      Then, $\tilde{S_r}=(g_r, m_r, \tilde{A}_r)$ for any $r\neq j$ and $\tilde{S}_j=(g_j+1, m_j, \tilde{A}_j)$.
  \item[Case 3] 
    \underline{Let $a \neq i$ with $a$ and $i$ in the same cycle of $\lambda$.}
    In particular $i+1$ and $i$ are in the same cycle of $\lambda$ and $S_j= S_{t}$, see Figure \ref{figure:case3}.
    When taking the $i$-th face in this case, the attaching chord at $i$ moves onto the ghost surface and separates the attached boundary into two pieces.
    The other ghost surfaces remain unchanged.
    \begin{figure}[ht]
      \inkpic[.9\columnwidth]{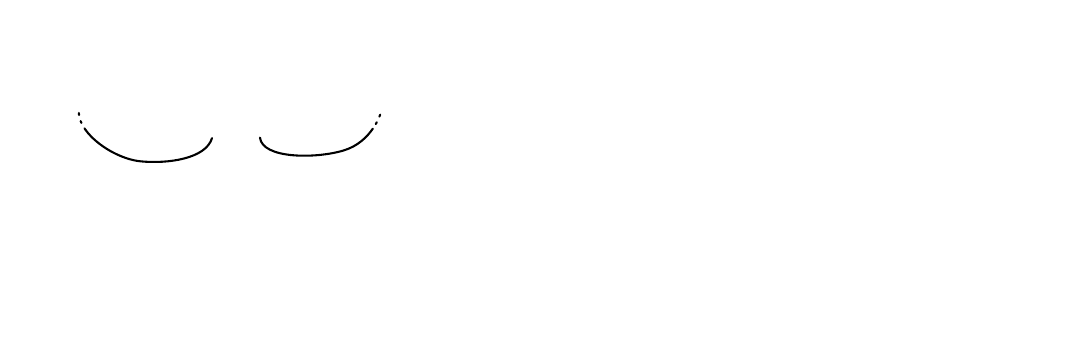}
      \caption{
        \label{figure:case3}
        Example of Case 3.
        The face map splits the boundary of a ghost surface into two parts.
      }
    \end{figure}
    More precisely, $\underline{\lambda}$ is obtained from $\lambda$ by splitting up the cycle containing $i$ and $i+1$ into two cycles.
    Then, the partition of $\Lambda$ naturally induces a partition of $\underline{\Lambda}$ where the two cycles obtained by splitting the cycle containing $i$ and $i+1$ belong to the same block.
    By (\ref{Di_lambda}) this induces a partition of $\tilde{\Lambda}$ into $k$ blocks which we denote $\{\tilde{A}_i\}_i$.
    Then, for any $r $ we have that $\tilde{S_r}=(g_r, m_r, \tilde{A}_r)$.
  \end{description}  
\end{discussion}

\subsection{The stabilization map}
\label{section:sd_one_circle:stabilization_map}
In this section we describe the stabilization map for Sullivan diagrams with respect to genus.
First, we briefly recall the stabilization for the mapping class groups.
Recall that an oriented cobordism $S$ is an oriented surface with parametrized boundary (or equivalently exactly one marked point on each boundary component) together with a partition of its boundary into incoming and outgoing.
Let $T$ be the cobordism with one incoming and one outgoing boundary component and genus one.
Let $S$ be an arbitrary cobordism with exactly one outgoing boundary component.
Then we can construct the composite cobordism $S\circ T$ obtained by ``sewing" the incoming boundary of $T$ to the outgoing boundary of $S$ using their parametrizations.
See Figure \ref{stabilization_map_mod_spc}.
\begin{figure}[ht]
  \centering
  \inkpic[.8\columnwidth]{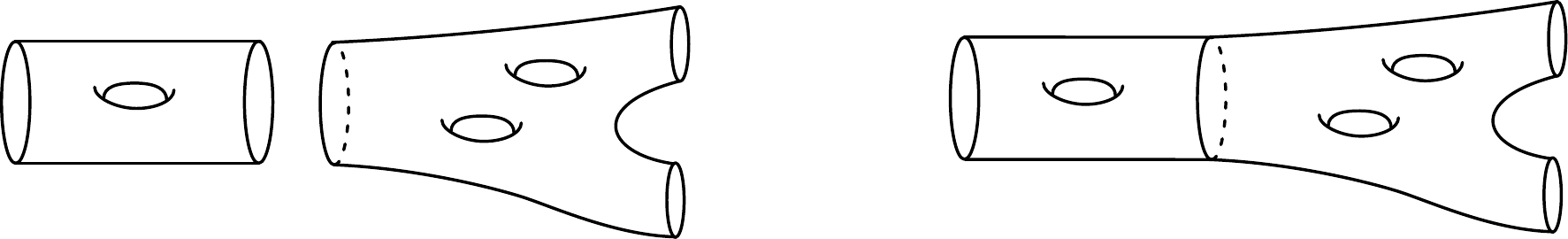}
  \caption{\label{stabilization_map_mod_spc}The stabilization map is induced from sewing surfaces.}
\end{figure}

This construction induces a homomorphism of groups 
and thus a continuous map of spaces 
\begin{equation}
  \label{stabilization_MCG}
  \varphi \colon B\Mod(S)\to B\Mod(S\circ T)
\end{equation}
which is called the stabilization map.
In \cite{Harer_stable} Harer showed that the induced map in homology is an isomorphism in a range.
In particular if $S$ is a cobordism with only one outgoing boundary component and no incoming boundary component then
the induced map in $H_*$ is an isomorphism for  $g \ge \frac{3\ast + 2}{2}$, see \cite{RandalWilliams_resolution_of_moduli_spaces}.
There is a similar picture in Sullivan diagrams.

\begin{definition}
  Let $\SDs(S)$ denote the component of the space of 1-Sullivan diagrams of topological type $S$ where we consider the boundary cycle of the admissible leaf to be the outgoing boundary.
  The \emph{stabilization map} is the map of spaces
  \[
    \varphi\colon \SDs(S)\cof \SDs(S\circ T)
  \]
  given by $\Sigma\mapsto \Sigma\circ \tau$ where $\Sigma\circ \tau$ is the Sullivan diagram obtained from $\Sigma$ by increasing the genus of the ghost surface attached at zero by one.
  We show the stabilization map in Figure \ref{stabilization_map_SD}.
  One readily checks that $\varphi$ is a semisimplicial map, thus it induces a degree zero chain map
  \[
    \varphi\colon \SD(S)\cof \SD(S\circ T)
  \]
  which, by abuse of language, we also call the stabilization map.
\end{definition}

\begin{figure}[ht]
  \centering
  \inkpic[.7\columnwidth]{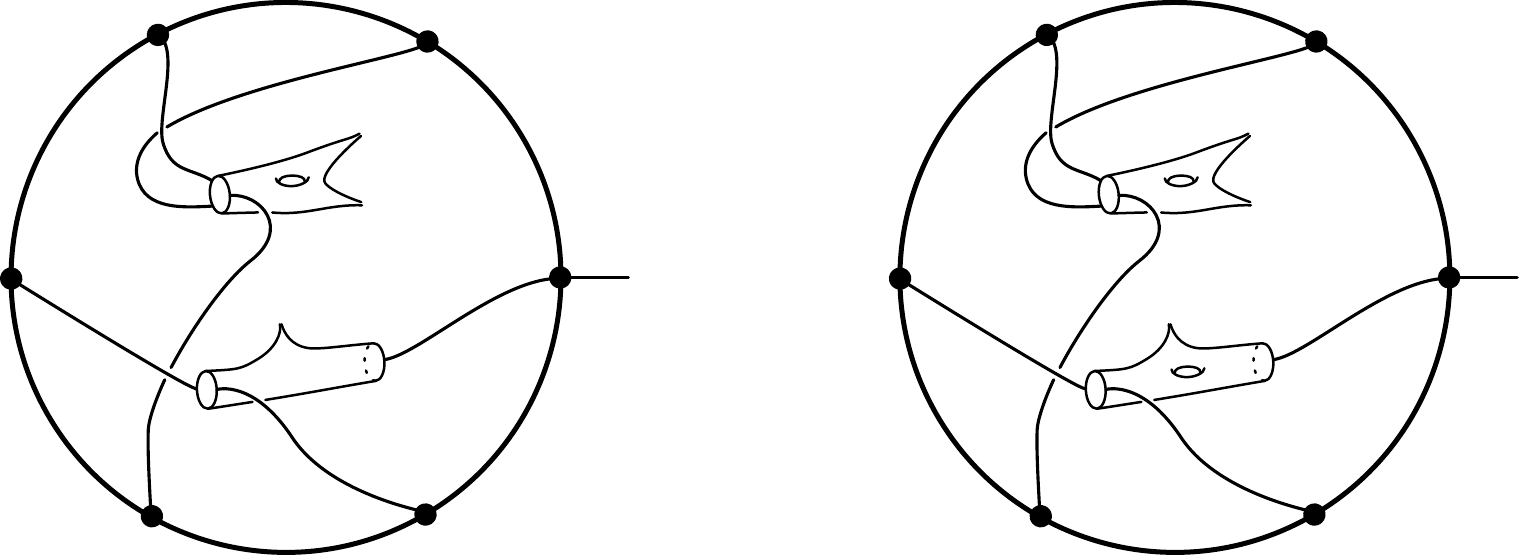}
  \caption{\label{stabilization_map_SD} The stabilization map increases the genus of the ghost surface attached at the vertex 0.}
\end{figure}

The next Lemma is readily checked.

\begin{lemma}
  \label{lemma:stabilization_map_as_inclusion_of_a_sub-complex}
  The stabilization map $\varphi \colon \SDs(S) \to \SDs(S\circ T)$ is an injective semisimplicial map.
  In particular, it identifies $\SD(S)$ with a sub-complex of $\SD(S\circ T)$.
\end{lemma}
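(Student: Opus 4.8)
The plan is to work throughout with the combinatorial model of Proposition~\ref{proposition:combinatorial_1_SD}, under which the stabilization map takes the completely explicit form
\[
  \varphi(\lambda, S_1, S_2, \ldots, S_k) = (\lambda, S_1', S_2, \ldots, S_k),
\]
where, ordering the ghost surfaces as in Notation~\ref{notation:combianatorial_SD} so that $S_1 = (g_1, m_1, A_1)$ is the one attached at the vertex $0$ (i.e.\ $A_1$ contains the cycle of $\lambda$ through $0$), we set $S_1' := (g_1 + 1, m_1, A_1)$. Note first that $\varphi$ preserves the degree $n$, so it makes sense to ask that it commute with the face maps $d_i$ simplex-wise.

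First I would check that $\varphi$ is well defined, i.e.\ that $(\lambda, S_1', S_2, \ldots, S_k)$ again satisfies the conditions of the relevant one of Definitions~\ref{definition:combinatorial_1_SD_unpar_unen}--\ref{definition:combinatorial_1_SD_par_unen}. The fat structure, the partition of $\Lambda$, and the enumerating data are untouched, so the only condition that could conceivably fail is the suspension-disk condition~\ref{data_suspension_disc_par_en}; but that condition only constrains ghost surfaces of the form $(0,0,\{(r)\})$, and $S_1'$ has positive genus, so it is vacuous for $S_1'$, while condition~\ref{data_attach_all_surfaces_par_en} persists because $A_1$ is unchanged. (In particular, if $\Sigma$ was suspended then $\varphi(\Sigma)$ is not, but $(1,0,\{(0)\})$ is a perfectly legitimate ghost surface.) Injectivity is then immediate: the assignment that decreases by one the genus of the ghost surface attached at the vertex $0$ is a left inverse of $\varphi$, and in fact one sees that the image of $\varphi$ is exactly the set of diagrams whose ghost surface at $0$ has positive genus -- which will identify $\SD(S)$ with that subcomplex of $\SD(S\circ T)$.

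Next I would verify that $\varphi$ is semisimplicial, $\varphi \circ d_i = d_i \circ \varphi$. By Lemma~\ref{lem:rho_face} the fat structure $\tilde\lambda$ and the non-degenerate boundary $\tilde\rho$ of $d_i(\Sigma)$ depend only on $\lambda$, which $\varphi$ leaves unchanged, so $d_i$ acts identically on the fat structures of $\Sigma$ and of $\varphi(\Sigma)$; it remains only to compare the effect of $d_i$ on the ghost surfaces, which I would do by running through the trichotomy of Discussion~\ref{disc:faces}. In Cases~1, 2(b), and 3 the map $d_i$ modifies (respectively: the puncture count, the genus, nothing) of a single ghost surface and regroups the cycle partition, and in Case~2(a) it merges two ghost surfaces, adding genera and punctures; in each case the genus of the ghost surface ultimately sitting at the vertex $0$ of $d_i(\Sigma)$ is obtained from $g_1$ by an additive operation that does not interact with adding $1$, so the two composites agree.

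The only step that requires genuine care -- and the closest thing to an obstacle -- is the index $i=0$, where the vertex $0$ is relabelled and may be merged with the vertex $1$: there one must check that ``the ghost surface attached at $0$'' of $d_0(\Sigma)$ is still the one built from $S_1$ (possibly merged with the surface $S_t$ at vertex $1$, as in Case~2(a), possibly with genus raised by one), and then that $(g_1+1)+g_t = (g_1+g_t)+1$ etc.\ make everything commute. Together with the remark that stabilization can destroy the suspension-disk structure of $S_1$ but never the validity of the combinatorial data, this completes the verification, and the induced degree-zero chain map identifies $\SD(S)$ with the stated subcomplex of $\SD(S \circ T)$.
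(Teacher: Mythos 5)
Your verification is correct, and it follows the same route the paper implicitly intends: the paper dismisses this lemma as ``readily checked,'' and your argument supplies exactly that check, using the combinatorial description of Proposition~\ref{proposition:combinatorial_1_SD} together with Lemma~\ref{lem:rho_face} and the case analysis of Discussion~\ref{disc:faces} to see that raising the genus of the ghost surface at vertex $0$ commutes with every face map and admits the obvious left inverse. Your identification of the image as the subcomplex of diagrams whose ghost surface with foot-point $0$ has positive genus also matches how the paper later uses the lemma in the proof of Theorem~\ref{theorem_b}.
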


\begin{remark}
  Our choice of notation is due to the fact that $\Sigma\circ \tau$ is the diagram obtained from $\Sigma$ by composition with the class $\tau$ shown in Figure \ref{stabilization_map_SD_prop} (b),
  using the PROP structure described in Definition \ref{def:composition}.
  Moreover, the stabilization map on Sullivan diagrams extends the stabilization map on the level of mapping class groups.
  
  To see this, recall that we have a quotient map of PROPs
  \[
    \ph BW_{\mathrm{graphs}}(S)\fib \ph\SD(S)
  \]
  where the left hand side is Costello's chain complex of black and white graphs, which models the two-dimensional cobordism category.
  See Section \ref{subsection:BW_graphs} for more details.
  The stabilization map on the level of mapping class groups (\ref{stabilization_MCG}) can be modeled in this setup as
  the composition with a degree zero class of the topological type of the cobordism $T$ of genus one with one incoming and one outgoing boundary component.
  
  \begin{figure}[ht]
    \centering
    \inkpic[.4\columnwidth]{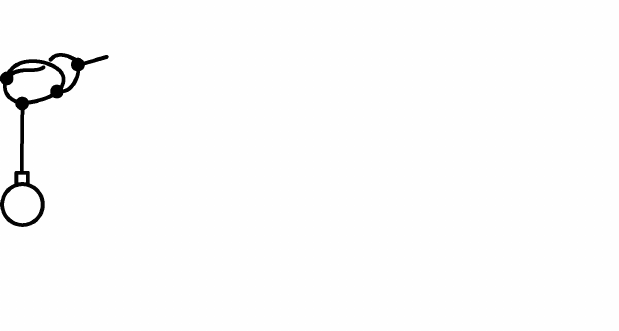}
    \caption{\label{stabilization_map_SD_prop} 
    (a) The black and white graph $\Upsilon$.
    Composition with this class represents the stabilization map on mapping class groups in this model.
    (b) The Sullivan diagram $\tau$ corresponding to the black and white graph $\Upsilon$.
    Composition with $\tau$ is the stabilization map on Sullivan diagrams.}
  \end{figure}
  
  In Figure \ref{stabilization_map_SD_prop} (a) one can see a degree 0 class in the chain complex of black and white graphs using the notation conventions of \cite{wahlwesterland}.
  We denote this class by $\Upsilon$.
  In Figure \ref{stabilization_map_SD_prop} (b) we show the Sullivan diagram corresponding to $\Upsilon$ under this quotient map.
  We call this Sullivan diagram $\tau$.
  One can readily check that the Sullivan diagram $\Sigma\circ \tau$ is the diagram obtained from $\Sigma$ by this PROP composition with the class $\tau$.
  This shows that the following diagram in chain complexes commutes.
  
  \begin{center}
    \begin{tikzpicture}[xscale=1.5,yscale=1.1]
      \path node   (c) at (0,0) {$\SD(S)$}
      node   (d) at (3,0) {$\SD(S \circ T)$}
      node   (a) at (0,1) {$\ph BW_{\mathrm{graphs}}(S)$} 
      node   (b) at (3,1) {$\ph BW_{\mathrm{graphs}}(S \circ T)$}
      node   (a0) at (-1.7,1) {$C_*(B\Mod(S))\cong$}
      node   (b0) at (5.05,1) {$\cong  C_*(B\Mod(S \circ T))$};
      \draw[->]   (a) -- node[above]{$\varphi$}    (b);
      \draw[->]   (c) -- node[above]{$\varphi$}    (d);
      \draw[->>]  (a) -- (c);
      \draw[->>]  (b) -- (d);
    \end{tikzpicture}
  \end{center}

\end{remark}

\section{The spaces of Sullivan diagrams and the stabilization maps are highly connected}
\label{section:vanishing}

We now state our vanishing results.

\begin{theorem}[Theorem A]\hspace{2mm}
  \label{theorem_a}%
  Let $g \ge 0$ and $m \ge 1$.
  The spaces $\betterwidetilde{\SDs}_{g,m}$, $\SDs_{g,m}$ $\betterwidetilde\SDs_g^m$ or $\SDs_g^m$ are highly connected, i.e.
  \begin{align*}
   \pi_\ast( \betterwidetilde{\SDs}_g^m ) = 0 \mspc{and}{20} \pi_\ast( \betterwidetilde{\SDs}_{g,m} ) = 0 \mspc{for}{20} \ast \le m-2
  \end{align*}
  and
  \begin{align*}
    \pi_\ast( \SDs_g^m ) = 0 \mspc{and}{20} \pi_\ast( \SDs_{g,m} ) = 0 \mspc{for}{20} \ast \le m'
  \end{align*}
  for $m'$ the largest even number smaller than $m$.
\end{theorem}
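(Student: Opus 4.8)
The plan is to combine the combinatorial description of the cells from Proposition~\ref{proposition:combinatorial_1_SD} with discrete Morse theory (reviewed in Section~\ref{section:vanishing}). For each of the four families I would build an \emph{acyclic matching} $\mathcal M$ on the set of combinatorial $1$-Sullivan diagrams, compatible with the face maps computed in Lemma~\ref{lem:rho_face} and Discussion~\ref{disc:faces}, having a single critical cell of degree $0$ and no critical cells in degrees $1,\dots,m-2$ (for the enumerated families) or $1,\dots,m'$ (for the unenumerated families). The Morse collapsing theorem for (multi-)semisimplicial sets then produces a homotopy equivalence with a CW complex having one $0$-cell and no cells in the corresponding range of low positive degrees, which is exactly the asserted connectivity.

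The organising dichotomy is degenerate versus non-degenerate diagrams (Definition~\ref{definition:degenerate_diagram}). First, a \emph{non-degenerate} diagram has, by Remark~\ref{remark:non-degenerate_boundary}, exactly $m$ non-admissible boundary cycles, each containing at least one admissible edge; since the two half-edges of an admissible edge are traversed by the admissible boundary cycle and by exactly one other boundary cycle, the admissible cycle must have at least $m$ edges, so the degree is at least $m-1$. Hence non-degenerate diagrams never lie in the low-degree range, and no matching is required on them for the enumerated bound. Second, a \emph{degenerate} diagram necessarily carries a ghost surface with a puncture, with positive genus, or with more boundary cycles than attaching vertices; I would define $\mathcal M$ by scanning the datum $\Sigma=(\lambda,S_1,\dots,S_k)$ deterministically along the ground circle and pairing $\Sigma$ with the cell obtained at the first admissible edge where a reduction move is available — opening a puncture into a non-degenerate boundary cycle (the reverse of Case~1 of Discussion~\ref{disc:faces}), or splitting a handle or a multi-boundary ghost surface off onto a fresh admissible chord (instances of Cases~2 and~3) — after first normalising the vertex adjacent to the admissible leaf via the suspension bijection $\Psi$ and $d^1_0$ from the proof of the Euler-characteristic proposition, and declaring one fixed degree-$0$ cell critical by fiat. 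Since every degenerate diagram of degree $\le m-2$ then gets matched and every non-degenerate diagram has degree $\ge m-1$, the only critical cell of degree $\le m-2$ is the distinguished $0$-cell; this proves Theorem~A for $\betterwidetilde{\SDs}_{g,m}$ and $\betterwidetilde{\SDs}_g^m$.

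For the unenumerated families $\SDs_{g,m}$ and $\SDs_g^m$ one must do slightly better because $m'=m-1$ when $m$ is odd. Here the cells are $\Symm$-orbits (over the group permuting the $m$ boundary cycles or punctures) of the cells above, and one carries $\mathcal M$ through this quotient. The extra identifications make one further reduction move available on the borderline \emph{non-degenerate} cells of degree exactly $m-1$, namely those whose admissible cycle has precisely $m$ edges, and this move matches all of them away exactly when $m-1$ is even, i.e.\ when $m$ is odd; when $m$ is even the analogous matching of these cells among themselves and with degree-$m$ cells is obstructed by a parity/fixed-point count. This is the origin of the parity and upgrades the connectivity bound from $m-2$ to $m'$.

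\textbf{Main obstacle.} The difficult point is acyclicity of $\mathcal M$, i.e.\ the absence of closed $V$-paths. The face maps act on the fat structure $\lambda$ and on the non-degenerate boundary $\rho$ in three genuinely different ways (Cases~1--3 of Discussion~\ref{disc:faces}), so iterating ``collapse the scanned edge, then expand elsewhere'' could a priori return to its starting cell; ruling this out requires exhibiting an integer cell invariant that strictly decreases along every gradient path, together with checking that the scan never stalls inside the low-degree range. This verification, along with the parity analysis in the unenumerated case, is what the two technical lemmas deferred to Section~\ref{section:proof_AB} supply.
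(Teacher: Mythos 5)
Your overall strategy is the same as the paper's: use the combinatorial description of cells, build a discrete Morse matching that pairs away the diagrams with punctures or degenerate boundary (the paper does this with ``fans'', pairing a cell having an odd fan with the face that collapses its first chamber), observe that puncture-free cells have degree at least $m-1$, and get the improved bound $m'$ in the unenumerated case from a parity analysis of the single puncture-free cell in degree $m-1$. The acyclicity you flag as the main obstacle is indeed the content of the technical Lemmas A deferred to Section \ref{section:proof_AB}, proved there via a lexicographic ``degree of degeneracy'' invariant. So far, so good.

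However, there is a genuine gap in the step where you pass from the matching to the conclusion about homotopy groups. You invoke a ``Morse collapsing theorem for (multi-)semisimplicial sets'' to obtain a homotopy equivalence with a CW complex having no cells in low positive degrees. No such space-level theorem applies here: the complexes $\SDs$ are not regular CW complexes, and the matched pairs are typically \emph{not} regular (free) faces --- the whole mechanism of a fan of length $l$ is that $d_i(\Sigma)=\dots=d_{i+l-1}(\Sigma)$, so the paired face occurs $l\ge 1$ times in the geometric boundary with algebraic coefficient $\pm 1$ only because of sign cancellation. Forman's homotopy-equivalence statement requires each matched face to be a regular face; in this non-regular setting only the algebraic (chain-level) version of discrete Morse theory is available, which is exactly why the paper states it ``only in the algebraic setting'' and deduces nothing more than vanishing of reduced homology below degree $m-1$. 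Your argument therefore proves $\widetilde H_\ast=0$ in the stated range but says nothing about $\pi_1$ (vanishing of $H_1$ only makes $\pi_1$ perfect), so the claimed statement about homotopy groups does not follow. The paper closes this gap with a separate, hands-on proof that the spaces are simply connected for $m>2$ (Lemma \ref{lemma:1_connectedness}, using genuinely free faces in a subcomplex of the $2$-skeleton plus an explicit $\pi_1$ computation in the enumerated case), and only then applies Hurewicz. Your proposal needs this extra ingredient (or a justified space-level collapse restricted to honestly free faces). Two smaller points: the genus-splitting moves you include are not needed for Theorem A (they belong to the fence matching used for Theorem B), and your explanation of the parity improvement as coming from the symmetric-group identifications is not quite the mechanism --- in the paper it comes from the fact that the unique puncture-free cell of degree $m-1$ carries an odd fan precisely when $m$ is odd, so the same fan matching already collapses it; when $m$ is even that cell survives and in fact generates $H_{m-1}$, so the bound cannot be improved.
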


\begin{theorem}[Theorem B]\hspace{2mm}
  \label{theorem_b}%
  Let $g \ge 0$ and $m > 2$.
  The stabilization maps are highly connected.
  More precisely, the maps
  \begin{align*}
    \betterwidetilde{\SDs}_g^m \xr{\varphi} \betterwidetilde{\SDs}_{g+1}^m \mspc{and}{20} \betterwidetilde{\SDs}_{g,m} \xr{\varphi} \betterwidetilde{\SDs}_{g+1,m}
  \end{align*}
  are $(g + m - 2)$-connected and the maps
  \begin{align*}
    \SDs_g^m \xr{\varphi} \SDs_{g+1}^m \mspc{and}{20} \SDs_{g,m} \xr{\varphi} \SDs_{g+1,m}
  \end{align*}
  are $(g + m')$-connected where $m'$ is the largest even number smaller than $m$.
\end{theorem}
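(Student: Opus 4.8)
The plan is to derive both theorems from discrete Morse theory, applied to the semi-simplicial sets $\betterwidetilde{\SDs}_{g,m}$, $\SDs_{g,m}$, $\betterwidetilde{\SDs}_g^m$ and $\SDs_g^m$ via the combinatorial model of Proposition~\ref{proposition:combinatorial_1_SD} and the explicit face maps of Discussion~\ref{disc:faces}. The form of the theory needed is the following: an acyclic matching on the cells of a CW complex $X$ whose critical cells, apart from a single $0$-cell, all lie in dimension $>n$ exhibits $X$ as homotopy equivalent to a CW complex with no cells in dimensions $1,\dots,n$, hence as $n$-connected; and if $A\subseteq X$ is a subcomplex and one is given an acyclic matching on the cells of $X$ lying outside $A$ whose critical cells all lie in dimension $>n$, then $X$ is obtained from $A$, up to homotopy, by attaching cells of dimension $>n$, so that $A\hookrightarrow X$ is $n$-connected. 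By Lemma~\ref{lemma:stabilization_map_as_inclusion_of_a_sub-complex} the stabilization map $\varphi$ is such a subcomplex inclusion, its image being exactly the set of diagrams whose ghost surface attached at the vertex $0$ has positive genus. Thus Theorem~\ref{theorem_a} reduces to building an acyclic matching on all cells with critical cells in degree $\ge m-1$ (enumerated case) resp.\ $\ge m'+1$ (unenumerated case), and Theorem~\ref{theorem_b} reduces to building an acyclic matching on the cells outside the image of $\varphi$ with critical cells in degree $\ge g+m-1$ resp.\ $\ge g+m'+1$.

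First I would construct the matching. Its prototype is the pairing underlying the earlier computation that the Euler characteristic of $\SDs(S)$ vanishes, where a \emph{suspended} diagram is matched with the diagram obtained by collapsing the first edge of its admissible cycle. This pairing is not acyclic as it stands, so instead one scans the ghost surfaces $S_1,\dots,S_k$ in the cyclic order in which they sit along the admissible cycle and matches at the \emph{first} position where a canonical ``contract/expand'' move is available: collapsing a suitable admissible edge $e_j$ when possible, and, in the other direction, splitting off a trivalent vertex just past the relevant attaching vertex. The rule must be set up so that it never pushes the vertex at the admissible leaf above valence four, and -- crucially for the two unenumerated models -- so that it is invariant under the relevant action of $\Symm(L)$ resp.\ $\Symm(\{1,\dots,m\})$ and hence descends to the quotient; this symmetry constraint is exactly what degrades the bound from $m-1$ to $m'$ there. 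Acyclicity would then be proved by exhibiting a rank function on cells -- a lexicographic statistic in the degree, the index of the first reducible ghost surface, and its combinatorial type -- that strictly decreases along every arrow of the modified Hasse diagram.

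Granting the matching, both theorems follow from a count of critical cells. A cell is critical exactly when no contract/expand move applies anywhere along the admissible cycle, and I expect this to force each boundary cycle of $\Sigma$ (in the parametrized cases) resp.\ each boundary cycle together with each puncture (in the unparametrized cases) to be charged to a distinct edge of the admissible cycle. Since $\deg(\Sigma) = |E_a|-1$ counts those edges, this gives $\deg(\Sigma)\ge m-1$ in the enumerated case, and the bound $\deg(\Sigma)\ge m'+1$ after passing to orbits under the relevant symmetric-group action in the unenumerated case (the passage to orbits accounting for the rounding), proving Theorem~\ref{theorem_a}. For Theorem~\ref{theorem_b} one runs the same scan on the cells of $\SDs_{g+1,m}$ that are not in the image of $\varphi$, that is, those whose ghost surface at the vertex $0$ has genus zero; a critical such cell must moreover realize the full genus $g+1$ away from the vertex $0$, and charging each additional handle again to a distinct admissible edge raises the threshold to $\deg(\Sigma)\ge g+m-1$ (resp.\ $g+m'+1$), which is the stated connectivity of $\varphi$.

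The main obstacle is the construction of the matching and the verification of its acyclicity. The face maps on Sullivan diagrams split into the several geometric cases recorded in Discussion~\ref{disc:faces} -- degenerating a boundary cycle to a puncture, merging two ghost surfaces, raising the genus of one ghost surface, splitting a boundary component of one ghost surface -- and the scanning rule has to be compatible with all of them at once while staying equivariant in the unenumerated models. Isolating the precise combinatorial statements that guarantee the scan is well defined, acyclic, and compatible with the degree bound is what I would package as the two technical lemmas proved in Section~\ref{section:proof_AB}.
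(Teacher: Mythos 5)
Your overall strategy is the paper's: identify $\varphi$ with a subcomplex inclusion via Lemma~\ref{lemma:stabilization_map_as_inclusion_of_a_sub-complex}, build a matching on the cells outside the image (the paper extends the fan-matching of Theorem~\ref{theorem_a} by the ``fence'' moves of Definition~\ref{defi:fence}, chosen only among surfaces with $ft(S)>0$ so the flow is compatible with stabilization), prove acyclicity by a lexicographic degeneracy statistic, and observe that the remaining critical cells have degree at least $g+m-1$. However, there is a genuine gap in the step you use to convert this into connectivity. The form of discrete Morse theory you invoke --- an acyclic matching whose critical cells lie above dimension $n$ exhibits $X$ as obtained from $A$ by attaching cells of dimension $>n$, hence $A\hookrightarrow X$ is $n$-connected --- is false for non-regular CW complexes, and non-regularity is unavoidable here: by Lemma~\ref{lem:faces_fan} and Lemma~\ref{lem:faces_fence} a collapsible cell $\Sigma$ satisfies $d_i(\Sigma)=d_{i+1}(\Sigma)=\dots=d_{i+l-1}(\Sigma)$ with $l$ odd, so for $l\ge 3$ the matched face is a repeated (non-free) face and no elementary collapse exists; only the algebraic statement (Theorem~\ref{thm:dmt_homology}: the Morse complex computes homology) survives. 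That the homotopical upgrade really can fail is shown, e.g., by the presentation $2$-complex of Higman's acyclic group $\langle a,b,c,d\mid a^{-1}ba b^{-2},\ b^{-1}cb c^{-2},\ c^{-1}dc d^{-2},\ d^{-1}ad a^{-2}\rangle$: matching each relator $2$-cell with the generator occurring with exponent sum $-1$ gives an acyclic matching with a single critical $0$-cell, yet the complex has enormous $\pi_1$.

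Consequently your matching only yields $H_i\bigl(\SD(g+1,m)/\SD(g,m)\bigr)=0$ for $i<g+m-1$ (resp.\ the unenumerated range), and to conclude that $\varphi$ is $(g+m-2)$-connected one must separately know that the spaces are simply connected and then apply the relative Hurewicz theorem. This is exactly what the paper does: simple connectivity for $m>2$ is Lemma~\ref{lemma:1_connectedness}, proved by an explicit two-dimensional collapsing argument, and it is the only place the hypothesis $m>2$ enters --- a hypothesis your argument never uses, which is a symptom of the missing step. Two smaller points: in the unenumerated case with $m$ odd your critical-cell bound $g+m-1$ is one short of the needed $g+m'+1=g+m$; the paper closes this by noting the single remaining low-degree candidate carries an odd fan/fence and is therefore not essential. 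Also, the $m$ versus $m'$ discrepancy is not a degradation forced by $\Symm$-equivariance: the unenumerated bound is the better one (for odd $m$), and in the paper it is the enumerated case that requires the more delicate matching.
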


Moreover, the structure of the proofs show that a huge amount of cells do not contribute to the homology.
Let us make this precise.

\begin{proposition}
  \label{proposition:support_of_homology}
  Let $\SD(g,m) = \betterwidetilde{\SD}_{g,m}$, $\SD_{g,m}$ $\betterwidetilde\SD_g^m$ or $\SD_g^m$ and let $R$ be an arbitrary coefficient group.
  Denote the sub-complex of Sullivan diagrams with at least $k \ge 0$ degenerate boundary cycles by $B_k$.
  That is, the generators of $B_k$ are Sullivan diagrams whose ghost surfaces have in total at least $k$ punctures or degenerate boundary components.
  \begin{enumerate}[label={(\roman*)}]
    \item \label{proposition:support_of_homology:not_supported}
      Every homology class $x \in H_\ast(\SD; R)$ is represented by a chain $\sum \kappa_i c_i$ with $c_i$ a non-degenerate Sullivan diagram i.e., $c_i \notin B_1$.
    \item \label{proposition:support_of_homology:injection}
      Let $B \subseteq B_2$ be an arbitrary sub-complex.
      Then, the projection $p \colon \SD \to \SD / B$ induces a split-injective map in homology
      \[
        H_\ast( \SD; R ) \to H_\ast( \SD / B; R) \,.
      \]
    \item \label{proposition:support_of_homology:sharp_bound}
      The projection $p \colon \SD_0^m \to \SD_0^m / B_1$ is not injective in homology if $m=6,8$.
  \end{enumerate}
\end{proposition}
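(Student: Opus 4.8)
The plan is to prove (iii) by a rank count, contrasting the homology of $\SD_0^m$ with that of the quotient complex $\SD_0^m / B_1$. Recall from the computations of Appendix \ref{appendix:computations} that for $m = 6$ and $m = 8$ one has $H_i(\SD_0^m;\Z) \cong \Z$ for $i \in \{0,\,m-1,\,m+1,\,m+2\}$ and $H_i(\SD_0^m;\Z) = 0$ for all other $i$, so there are two \emph{extra} classes sitting in degrees $m+1$ and $m+2$, beyond the classes in degrees $0$ and $m-1$ that match the generic pattern of Section \ref{section:vanishing}. By part (i) these extra classes are already represented by chains of non-degenerate diagrams, and by part (ii), applied with $B = B_2$, they survive the quotient map $\SD_0^m \to \SD_0^m / B_2$; the assertion of (iii) is precisely that they nonetheless become trivial after the further quotient onto $\SD_0^m / B_1$.

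First I would make the quotient complex explicit. By Remark \ref{remark:non-degenerate_boundary}, a diagram in $\SDs_0^m$ is non-degenerate exactly when its non-degenerate boundary $\rho$ has the maximal number $m$ of cycles, equivalently when all of its ghost surfaces are unpunctured planar surfaces; so $\SD_0^m / B_1$ is the free $\Z$-module on these diagrams, with the differential obtained from that of $\SD_0^m$ by discarding every face that creates a degenerate boundary cycle (the puncture-creating faces of Discussion \ref{disc:faces}). The plan is then to compute $H_\ast(\SD_0^m / B_1;\Z)$ for $m = 6$ and $m = 8$ with the same machine computation that produces the tables of Appendix \ref{appendix:computations}, and to check that it vanishes above degree $m-1$ (in fact it should equal $\Z$ in degrees $0$ and $m-1$, matching the generic pattern); in particular $H_{m+1}(\SD_0^m / B_1;\Z) = 0$ and $H_{m+2}(\SD_0^m / B_1;\Z) = 0$.

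Granting this, (iii) follows at once: in degree $m+1$ the projection induces $p_\ast \colon H_{m+1}(\SD_0^m;\Z) \cong \Z \longrightarrow H_{m+1}(\SD_0^m / B_1;\Z) = 0$, which is certainly not injective (the same works in degree $m+2$). Phrased through the long exact sequence of the pair $(\SD_0^m, B_1)$, the statement is equivalent to saying that the map $H_{m+1}(B_1;\Z) \to H_{m+1}(\SD_0^m;\Z)$ is surjective for $m = 6, 8$, i.e.\ the extra class admits a representative which is a cycle supported entirely on degenerate diagrams.

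The main obstacle is the vanishing $H_{>m-1}(\SD_0^m / B_1;\Z) = 0$ for $m = 6, 8$. Computationally this is a finite but sizeable linear-algebra problem over $\Z$, and it is genuinely a different computation from the one for $\SD_0^m$, since one must rebuild the boundary matrices for the quotient complex; the Euler characteristic is no help, as a quick check gives $\chi(\SD_0^m) = 0$ whether or not the extra classes are present. A conceptual alternative would be to refine the discrete Morse matching used to prove Theorem \ref{theorem_a} so that it restricts to the subcomplex $B_1$ and the induced matching on $\SD_0^m / B_1$ has critical cells only in degrees $0$ and $m-1$, which would settle the vanishing for all $m$ at once; but that matching is tuned to optimise connectivity rather than to control the top-degree homology of the quotient, so making it do this extra job is the delicate part — and presumably the reason the statement is confined to the two values of $m$ where the computation has been carried out.
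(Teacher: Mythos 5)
Your proposal only engages with part \ref{proposition:support_of_homology:sharp_bound}, and even there the decisive step is deferred rather than proved. Everything hinges on the claim that $H_\ast(\SD_0^m/B_1;\Z)$ is small in degrees $m+1$ and $m+2$ for $m=6,8$, and you offer this only as a computation to be run ("rebuild the boundary matrices for the quotient complex") or as a hoped-for refinement of the Morse matching which you yourself flag as the delicate, unresolved point. As it stands this is a plan, not a proof. The paper closes exactly this gap without any new computation: by \cite{Klamt_comfrob}, $H_\ast(\SD_0^m/B_1;\Z)$ is a (degree-shifted) copy of $H_\ast(Br_m;\Z)$, the homology of the $m$-th braid group; rationally this has total rank $2$, concentrated in two consecutive degrees, whereas $H_\ast(\SD_0^m;\Q)$ has rank $4$ for $m=6,8$ by the tables in Appendix \ref{appendix:computations}, so no injection is possible, and an infinite-order class in the kernel over $\Q$ gives non-injectivity over $\Z$. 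Two further inaccuracies in your sketch: the expected answer "$\Z$ in degrees $0$ and $m-1$" cannot be right, since the quotient complex $\SD_0^m/B_1$ has no cells in degrees below $m-1$ (every diagram of degree $<m-1$ has a degenerate boundary cycle), so $H_0$ of the quotient is $0$; and the integral vanishing $H_{>m-1}(\SD_0^m/B_1;\Z)=0$ that you predict is doubtful, because the braid groups contribute torsion above the bottom degree — only the rational vanishing (which is what the argument actually needs) is to be expected.

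Separately, the statement you were asked to prove includes parts \ref{proposition:support_of_homology:not_supported} and \ref{proposition:support_of_homology:injection}, which you invoke as known inputs but never establish. In the paper these are consequences of the discrete Morse flow of Section \ref{section:vanishing}: there are no flow paths from collapsible cells with punctures or degenerate boundary to essential cells, so by Theorem \ref{thm:dmt_homology} the Morse complex is a homotopy retract of $\SD$ consisting only of non-degenerate diagrams, which gives (i); and restricting the matching to $\SD/B_2$ (legitimate because every face of a non-degenerate cell has at most one degenerate boundary) shows that essential cells of $\SD$ keep the same Morse differential in the quotient, and discarding the essential cells not in the image of $p$ defines the splitting, which gives (ii). Without some argument of this kind your proposal does not cover two of the three assertions.
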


Let us say a few words on the proof and how this section is organized.
Our main technical tool is discrete Morse theory introduced by Forman in \cite{Forman}.
For convenience of the reader, we review the basic setup and results from discrete Morse theory in Subsection \ref{subsection:DMT}.
In Subsection \ref{subsection:vanishing_pi_and_geometric_idea} we show that the spaces in question are simply connected.
The computation carried out there is a simplified version of the general Morse flow to be constructed.
The Morse flow will be perfect in a certain range.
Then, our results are implied by Hurewicz Theorem.
The details are carried out in Subsections \ref{subsection:fans_and_proof_thm_A} and \ref{subsection:fences_and_proof_thm_B}.
The proofs of two technical Lemmas are postponed to Section \ref{section:proof_AB}.

\subsection{Discrete Morse theory review}
\label{subsection:DMT}

In this section we review the elements of discrete Morse theory which was introduced by Forman in \cite{Forman}.
We briefly recall the geometric motivation in a simplified setting.
Consider a finite, regular simplicial complex $X$ and
assume we have a top dimensional simplex $\Delta$ of $X$ with a free face $d_i\Delta$ (i.e.\ $d_i\Delta$ appears exactly once as a face of a top dimensional simplex).
Retracting $\Delta$ onto $\Lambda^i\Delta = \del \Delta - d_i\Delta$ results in a finite, regular simplicial complex $X' \simeq X$ having two simplicies fewer then $X$.
The cell $\Delta$ is called collapsible and $d_i\Delta$ redundant.
Repeating this process results in a sequence of pairs of collapsibles and redundants.
This set of (disjoint) pairs is called \emph{discrete Morse flow} and all non-paired simplices are called \emph{essential}.
Performing the collapses one gets a smaller simplicial complex $M \simeq X$ and $M$ has as many simplices as there are essentials in $X$.

We need a more general version where a non-regular cell complex $X$ and non-free faces are allowed.
We only describe this in the algebraic setting.

\begin{definition}[Cellular Graph]
  Let $R$ be a coefficient ring.
  A directed graph $C = (V,E,d,\theta)$ with \emph{vertices $V$}, \emph{edges $E \subseteq V \times V$}, \emph{degree $d \colon V \to \mathbb Z$} and
  \emph{coefficients} $\theta \colon E \to R$ is said to be \emph{cellular}
  if it has only finitely many vertices in each grading and every edge $(v,w) \in E$ decreases the grading by one and has non-vanishing coefficient, i.e.
  \[
    d(v) = d(w) + 1 \mspc{and}{20} \theta(v,w) \neq 0 \,.
  \]
  Let $K_\bullet$ be a chain complex of finite type that is free in each degree and has a distinguished choice of basis, e.g.\ the cellular complex of a CW complex of finite type.
  Its associated cellular graph has vertices given by the basis elements and edges given by the differential.
  The grading of the vertices is given by the degree of the basis elements in the chain complex and the coefficients of the edges by their coefficient in the differential.
\end{definition}

\begin{example}
  Consider the integral chain complex $K_\bullet$ concentrated in degrees $0$ and $1$ with $K_0 = \mathbb Z \langle e_1, e_2, e_3 \rangle$, $K_1 = \mathbb Z \langle f_1, f_2, f_3 \rangle$;
  the only non-trivial differential is
  \[
    \left( \begin{array}{rrr}
      1 & -1 & 0  \\
      2 & 2  & 1  \\
      0 & 5  & -1 
    \end{array} \right) \,.
  \]
  The cellular graph is
  \[
    \begin{tikzpicture}[scale=2.0]
      \node (e1) at (1,2)   {$e_1$};
      \node (e2) at (2.5,2) {$e_2$};
      \node (e3) at (4,2)   {$e_3$};
      \node (g0) at (5,2)   {$d = 1$};
      \node (f1) at (1,1)   {$f_1$};
      \node (f2) at (2.5,1) {$f_2$};
      \node (f3) at (4,1)   {$f_3$};
      \node (g1) at (5,1)   {$d = 0$};
      \node (pt) at (5.5,1.5) {.};
      
      \draw[->] (e1) to node [auto, swap] {$1$} (f1);
      \draw[->] (e1) to node [auto, very near start] {$2$} (f2);
      \draw[->] (e2) to node [auto, very near start, swap] {$-1$} (f1);
      \draw[->] (e2) to node [auto] {$2$} (f2);
      \draw[->] (e2) to node [auto, very near start] {$5$} (f3);
      \draw[->] (e3) to node [auto, very near start, swap] {$1$} (f2);
      \draw[->] (e3) to node [auto] {$-1$} (f3);
    \end{tikzpicture}
  \]
\end{example}

\begin{definition}[Matchings]
  Let $C = (V,E,d,\theta)$ be a cellular graph.
  A collection of edges $F \subseteq E$ is a \emph{matching} if any two edges in $F$ have disjoint vertices and
  if $\theta(v,w)$ is invertible for every edge $(v,w) \in F$.
  In this case, a vertex $v \in V$ is called
  \begin{cond}
    \emph{collapsible} & if there exists an edge $(v,w) \in F$ \\
    \emph{redundant}   & if there exists an edge $(u,v) \in F$ \\
    \emph{essential}   & else.
  \end{cond}
\end{definition}

\begin{definition}[$F$-inverted graph]
  Let $C = (V,E,d,\theta)$ be a cellular graph and $F \subseteq E$ a matching.
  The \emph{$F$-inverted graph} $\finv = (V, E_F, d, \theta_F)$ is obtained from $C$ by inverting the edges in $F$ i.e.\
  \begin{align}
    E_F &=  (E-F) \sqcup \{ (w,v) \mid (v,w) \in F \} \\
    \intertext{and inverting their coefficient in $R$ i.e.}
    \theta_F(x,y) &=
      \begin{cases}
        \theta(x,y) & (x,y) \in E - F \\
        - \theta(y,x)^{-1} & (y,x) \in F
      \end{cases}
  \end{align} 
  Every edge in the $F$-inverted graph either decreases or increases the grading by one and so we write $v \ad w$ if $(v,w) \in E - F$ and $v \au w$ else.
  
  A path in $\finv$ from $x$ to $y$ is denoted by $\gamma \colon x \leadsto y$.
  The set of paths in the $F$-inverted graph is
  \begin{align}
    P_F(y, x) = \{ \gamma \colon x \leadsto y \text{ in } \finv \} \,.
  \end{align}
  The \emph{coefficient} $\theta_F(\gamma)$ of a path $\gamma \in P_F(y, x)$ is the product of the coefficients $\theta_F$ of its edges.
\end{definition}

\begin{definition}[Discrete Morse flow]
  Let $C$ be a cellular graph.
  A matching $F$ is a \emph{discrete Morse flow} if the $F$-inverted graph $\finv$ is \emph{acyclic}, i.e.\ it does not contain oriented loops.
\end{definition}

The following Lemma turns out to be helpful to deduce the acyclicity of a matching.

\begin{lemma}
  \label{dmt_acyclicity}
  Let $C$ be a cellular graph and $F$ a matching.
  The $F$-inverted graph is acyclic if and only if there exists a partial order $\le$ on the collapsibles such that
  the existence of a path
  \[
    c_1 \ad r_2 \au c_2 \mspc{with}{20} \text{$c_i$ collapsible}
  \]
  implies $c_1 \lneqq c_2$.
\end{lemma}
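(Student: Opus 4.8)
The plan is to prove both implications by analysing the structure of oriented loops in the $F$-inverted graph $\finv$. First I would record a few elementary consequences of $F$ being a matching: since every vertex lies in at most one edge of $F$, a collapsible or essential vertex is never the target of an edge of $F$, so in $\finv$ its only outgoing edges are the grading-decreasing ones $\ad$ inherited from $E-F$; a redundant vertex $r$ with $(c,r)\in F$ has exactly one outgoing $\au$-edge, namely $r\au c$, besides possibly some $\ad$-edges. Consequently the source of any $\au$-edge is redundant and the target of any $\au$-edge is collapsible, so no oriented path in $\finv$ can contain two consecutive $\au$-edges. I would also use the grading constraint from the definition: each edge changes $d$ by exactly $\pm1$.

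For the direction ``such a partial order exists $\Rightarrow$ $\finv$ acyclic'', I would argue by contradiction: suppose $\le$ exists as in the statement and $\finv$ contains an oriented loop $\gamma$. Since $\ad$-edges strictly lower the grading, $\gamma$ contains at least one $\au$-edge, and since the grading changes sum to zero around $\gamma$, the numbers of $\au$- and $\ad$-edges coincide, say both equal $k\ge1$. Because no two $\au$-edges are consecutive, the $k$ up-edges cut $\gamma$ into $k$ blocks, each consisting of one $\au$-edge followed by at least one $\ad$-edge; as the total number of $\ad$-edges is also $k$, every block has exactly one $\ad$-edge. Rotating $\gamma$ to start just after an $\au$-edge then exhibits it as $c_0\ad r_1\au c_1\ad r_2\au\cdots\ad r_n\au c_n=c_0$ with $n=k\ge1$, where (by the observations above) each $c_i$ is collapsible and each $r_i$ is the redundant vertex matched to $c_i$. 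Applying the hypothesis to each two-edge sub-path $c_{i-1}\ad r_i\au c_i$ yields $c_{i-1}\lneqq c_i$, whence $c_0\lneqq c_1\lneqq\cdots\lneqq c_n=c_0$, a contradiction.

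For the converse ``$\finv$ acyclic $\Rightarrow$ such a partial order exists'', I would simply build the order from reachability. Define on the set of collapsible vertices the relation $c\prec c'$ to hold whenever there is an oriented path in $\finv$ of the form $c=c_0\ad r_1\au c_1\ad\cdots\ad r_n\au c_n=c'$ with $n\ge1$ and all $c_i$ collapsible, i.e.\ a concatenation of blocks of the shape figuring in the statement. Transitivity is immediate by concatenating such paths. Irreflexivity holds because $c\prec c$ would give a closed oriented walk of positive length in $\finv$, hence an oriented loop, contradicting acyclicity; thus $\prec$ is a strict partial order. Finally, a single block $c\ad r'\au c'$ with $c,c'$ collapsible is by definition a witness for $c\prec c'$, so $\prec$ (or its reflexive closure, if one wants a reflexive partial order) is the desired $\le$.

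The one place that needs care is the combinatorial bookkeeping in the forward direction, namely proving that every oriented loop in $\finv$ is forced to alternate $\au$ and $\ad$, so that it decomposes into exactly the two-edge pieces $c_{i-1}\ad r_i\au c_i$ that the hypothesis controls; this is where the matching property and the ``$\pm1$ grading change'' constraint are both essential. Everything else is formal.
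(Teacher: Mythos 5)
Your proof is correct and follows essentially the same route as the paper: the paper defines the order on collapsibles via (non-empty) reachability in $\finv$ exactly as you do for the converse, and dismisses the other direction as clear, which is precisely the loop-decomposition argument (alternating $\ad$/$\au$ blocks forced by the matching and the $\pm1$ grading) that you spell out. Your write-up simply supplies the details the paper omits, and they check out.
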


\begin{proof}
  If $\finv$ is acyclic, then the relation
  \[
    c_1 \precneqq c_2 \;\; \text{if and only if there exists a non-empty path from $c_1$ to $c_2$}
  \]
  is a partial order.
  The converse is clear.
\end{proof}

\begin{definition}[Morse Complex]
  Let $C = (V,E,d,\theta)$ be a cellular graph with Morse flow $F$.
  The \emph{Morse complex} $M_\bullet= M(C, F)_\bullet$ of $C$ and $F$ is a chain complex freely generated by the essential vertices in each grading.
  The coefficient of an essential cell $y \in M_{n-1}$ in the boundary of an essential cell $x \in M_n$ is
  \[
    \del_{y, x} = \sum_{\gamma \in P_F(y, x)} \theta_F(\gamma) \,.
  \]
\end{definition}

\begin{theorem}[Forman]
  \label{thm:dmt_homology}
  Let $C$ be the cellular graph of a chain complex $K_\bullet$.
  Let $F$ be a discrete Morse flow on $C$.
  Then, the Morse complex $M(C,F)_\bullet$ is a homotopy retract of $K_\bullet$.
  The inclusion $\iota \colon M(C,F)_\bullet \to K_\bullet$ is given by sending essential cells $x$ to
  \[
    \iota(x) =  \sum_{\gamma \in P_x} \theta_F(\gamma) t(\gamma)
  \]
  where $P_x = \{ \gamma \in P_F(y, x) \mid \deg(y) = \deg(x), y \text{ not redundant} \}$ and
  $t(\gamma)$ denotes the endpoint of a path $\gamma$.
\end{theorem}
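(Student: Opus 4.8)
The statement is the algebraic (based-chain-complex) form of Forman's theorem, so the plan is to follow the standard argument of discrete Morse theory from \cite{Forman}: build an explicit strong deformation retract of $K_\bullet$ onto a subcomplex spanned by the essential cells, and then identify that subcomplex, together with its inclusion, with $M(C,F)_\bullet$. First I would fix notation: decompose the distinguished basis $V$ of $K_\bullet$ as $V = V_{\mathrm{ess}} \sqcup V_{\mathrm{col}} \sqcup V_{\mathrm{red}}$ (essential, collapsible, redundant) and observe that the matching $F$ is precisely a degree-lowering bijection $c \mapsto r(c)$ from $V_{\mathrm{col}}$ onto $V_{\mathrm{red}}$ with each $\theta(c,r(c))$ a unit of $R$, i.e.\ a collection of rank-one isomorphisms between summands of $K_n$ and $K_{n-1}$. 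This is the algebraic incarnation of the ``collapse a free face'' picture recalled before Definition \ref{definition:essential_trivalent}.

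The engine of the proof is an algebraic flow built from $\finv$. Define the degree-$(+1)$ operator $V \colon K_\bullet \to K_{\bullet+1}$ that vanishes on essential and collapsible generators and sends a redundant generator $r=r(c)$ to $\pm\theta(c,r)^{-1}c$, with the sign dictated by the convention for $\del$, and set $\Phi = \mathrm{id} \pm (\del V + V \del)$. Then $\Phi$ is automatically a chain map (since $\del(\del V + V\del) = (\del V+V\del)\del$) and is chain-homotopic to $\mathrm{id}$ via $\mp V$; a direct computation on basis elements shows that the matrix entries of $\Phi$ are exactly the weights $\theta_F$ of the length-$\le 1$ moves in $\finv$, so $\Phi$ annihilates the $V_{\mathrm{red}}$-component of each boundary it meets and ``pushes cells one step along gradient arrows.''

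Next I would use acyclicity to stabilize. Since $C$ has finite type in each grading and $\finv$ is acyclic, the partial order furnished by Lemma \ref{dmt_acyclicity} forces every relevant alternating path $c_1 \ad r_2 \au c_2 \ad \cdots$ to be strictly increasing, hence each path set $P_F(y,x)$ is finite; consequently the iterates $\Phi^{n}$ and the partial homotopies $\sum_{k<n} V\Phi^{k}$ are eventually constant, defining a chain map $\Phi^{\infty}$ with $\Phi^{\infty}\Phi = \Phi\Phi^{\infty} = \Phi^{\infty}$. Thus $\Phi^{\infty}$ is idempotent, $\mathrm{id}-\Phi^{\infty}$ is null-homotopic, and $\mathrm{im}(\Phi^{\infty})$ is a subcomplex onto which $K_\bullet$ deformation retracts. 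A degree count, together with the fact that $\Phi^{\infty}(x)=x+(\text{terms strictly larger in the order, none equal to }x)$ for essential $x$ while $\Phi^{\infty}$ eventually kills collapsible and redundant generators, identifies $\mathrm{im}(\Phi^{\infty})$ with the free module on $V_{\mathrm{ess}}$, i.e.\ with $M(C,F)_\bullet$ (this in particular re-derives $\del^{2}=0$ for the prescribed Morse differential). Expanding $\Phi^{\infty}$ on an essential cell $x$ as the sum over all gradient trajectories in $\finv$ issuing from $x$, and isolating those that return to degree $\deg x$ without terminating at a redundant cell, yields the stated formula $\iota(x)=\sum_{\gamma\in P_x}\theta_F(\gamma)\,t(\gamma)$; composing with the projection to $V_{\mathrm{ess}}$ turns $\pi\del\iota$ into the path sum $\del_{y,x}=\sum_{\gamma\in P_F(y,x)}\theta_F(\gamma)$.

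The conceptual content is modest; the real labour — and the part I expect to be the main obstacle — is the sign-sensitive bookkeeping. Concretely, one must verify that $\pi\circ\iota=\mathrm{id}_{M}$, that the iterated-flow expression genuinely collapses to the advertised path sums, and that all the ``zig-zag'' cancellations of the form $c\ad r\au c$ carry the correct signs; each of these reduces to producing a sign-respecting bijection between families of gradient paths in $\finv$. An equivalent route that avoids infinite sums altogether is to well-order the edges of $F$ compatibly with the order of Lemma \ref{dmt_acyclicity} and perform Gaussian elimination one matched pair at a time, checking termination and that the accumulated correction terms reassemble precisely into the path formulas; the amount of sign bookkeeping is essentially the same either way.
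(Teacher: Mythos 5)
The paper does not actually prove Theorem \ref{thm:dmt_homology}; it defers entirely to the cited reference (Kozlov, Chapter 11.3), so there is no in-paper argument to compare against. Your proposal is essentially the standard proof found there and in algebraic Morse theory generally: the flow operator $\Phi = \mathrm{id} \pm (\del V + V\del)$, stabilization $\Phi^{\infty}$ using acyclicity together with finite type in each grading (which is what makes each path set $P_F(y,x)$ finite, since such paths stay in two adjacent gradings), identification of $\mathrm{im}(\Phi^{\infty})$ with the span of the essential cells, and extraction of the path-sum formulas for $\iota$ and the Morse differential; the equivalent Gaussian-elimination route you mention is also standard. The outline is sound, and you correctly locate the only real labour in the sign bookkeeping, so this matches the approach of the source the paper relies on.
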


For a proof of Theorem \ref{thm:dmt_homology}, we refer the reader to \cite[Chapter 11.3]{KozlovComAlgTop}.

\subsection{Fundamental groups and the geometric idea behind the discrete Morse flow}
\label{subsection:vanishing_pi_and_geometric_idea}
We first show that most of the spaces in question are simply connected.
The proof of this fact is a simplified version of the proof of our vanishing results.
At the end of this section we informally describe the Morse flow we construct following these ideas.

\begin{lemma}
  \label{lemma:1_connectedness}
  Let $m > 2$ and $g \ge 0$.
  The spaces $\SDs_g^m$, $\betterwidetilde\SDs_g^m$, $\SDs_{g,m}$ and $\betterwidetilde{\SDs}_{g,m}$ are simply connected.
\end{lemma}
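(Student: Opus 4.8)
The plan is to prove simple connectivity by exhibiting, for any based loop in one of these spaces, an explicit homotopy to the constant loop, using the combinatorial model of Proposition \ref{proposition:combinatorial_1_SD} and the structure of the $1$-skeleton. Since each space is (the realization of) a semi-simplicial set, a loop is represented by an edge-path in the $1$-skeleton, i.e.\ a sequence of $1$-cells; by the simplicial approximation theorem it suffices to show that any such edge-loop is null-homotopic. So first I would record the $1$-skeleton explicitly: a $0$-cell is a combinatorial diagram $(\lambda, S_1,\dots,S_k)$ with $\lambda\in\Symm([0]\sqcup L)$, i.e.\ with a single marked point on the ground circle, and a $1$-cell $\Sigma$ of degree $1$ has two faces $d_0\Sigma, d_1\Sigma$ described by Discussion \ref{disc:faces}. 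Then I would pick a convenient basepoint --- a diagram $\ast_0$ where a single ghost surface carrying all the genus and punctures is attached at the unique vertex (for the parametrized cases, with all leaves on that one ghost surface).

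The key step is a ``suspension/normal form'' move. Recall the suspension operation: a degree-$n$ diagram $\Sigma$ is suspended if the vertex $v_0$ attached to the admissible leaf has valence $3$ (equivalently $S_1$ is the suspension disk), and $d_0$ is inverse to the suspension $\Psi$ (this is already used in the proof that $\chi(\ph\SDs(S))=0$). Using this I would argue that every $0$-cell can be connected to the basepoint by an explicit edge-path: given $(\lambda,S_1,\dots,S_k)$ with $\lambda\in\Symm([0]\sqcup L)$, I choose a degree-$1$ diagram whose $d_1$-face is this $0$-cell and whose $d_0$-face is ``simpler'' in a suitable complexity measure (number of ghost surfaces plus total of their boundary components), by applying the appropriate case of Discussion \ref{disc:faces} --- Case 2(a) merges two ghost surfaces, Case 1 absorbs a degenerate boundary into a puncture, so repeatedly taking $d_1$-faces of well-chosen $1$-cells drives any diagram down to $\ast_0$. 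This shows the $1$-skeleton is connected (which we knew) but more importantly gives canonical paths $p_\Sigma$ from each vertex $\Sigma$ to $\ast_0$.

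Next, to kill $\pi_1$ it suffices to show every $1$-cell $e$, together with the canonical paths to its two endpoints, forms a null-homotopic loop; equivalently, that every such loop bounds in the $2$-skeleton. Here I would use the degree-$2$ cells: a degree-$2$ diagram $\Sigma$ has three faces $d_0,d_1,d_2$, and its boundary in the space gives a relation $d_0 - d_1 + d_2$ among the $1$-cells, i.e.\ a $2$-simplex filling the triangle with those three edges. The hard part will be the bookkeeping: I need to verify that the canonical paths $p_\Sigma$ can be chosen consistently (so that going ``down the complexity filtration'' via $d_1$-faces is confluent up to $2$-cells), and that the hypothesis $m>2$ is exactly what guarantees enough room --- when there are at least three punctures/boundary components, any ghost-surface configuration has a $2$-cell available to perform the merging moves and to witness commutativity of different merge orders, whereas for $m\le 2$ one can run out of such $2$-cells (consistent with Proposition \ref{intro:prop_pi1}). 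Concretely I would set up a spanning tree of the $1$-skeleton given by the canonical paths, and check that every non-tree $1$-cell's associated generator of $\pi_1$ is expressed as a product of boundaries of $2$-cells of the types in Discussion \ref{disc:faces}; the three cases of the differential (absorbing a puncture, merging two surfaces, splitting a boundary) give precisely the relations needed. I expect this confluence/relation-chasing --- making the choices in the path-to-basepoint construction coherent enough that no genuine new loop survives --- to be the main obstacle, and it is essentially a down-to-earth rehearsal of the discrete Morse flow constructed in the following subsections, restricted to degrees $\le 2$.
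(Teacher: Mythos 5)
Your proposal is a plan rather than a proof: the step you yourself flag as ``the main obstacle'' --- checking that the canonical paths are confluent up to $2$-cells and that every non-tree $1$-cell is killed by relations coming from Discussion \ref{disc:faces} --- is exactly the content of the lemma, and it is left undone. Concretely, two things are missing. First, the place where $m>2$ enters is not a vague ``enough room'' statement: one has to exhibit, for each $1$-cell with only one non-degenerate boundary (a single ghost surface attached twice, or two ghost surfaces attached once each --- types $\beta$ and $\gamma$ in Figure \ref{fig:generators_and_relations}), a specific $2$-cell of which it is a \emph{free} face, and this is possible precisely because $m>2$ forces some attached ghost surface to carry a puncture or degenerate boundary that can be expanded into a new non-degenerate boundary cycle; in the enumerated cases one must in addition make a coherent choice (e.g.\ smallest label) so that the face is genuinely free. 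Your proposal never produces these $2$-cells or verifies freeness, so the ``collapse down the complexity filtration'' is not justified.

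Second, and more seriously, the loops that survive this collapse are not handled at all. The $1$-cells with two non-degenerate boundaries (type $\alpha$) are not free faces of anything, and your generic remark that ``the three cases of the differential give precisely the relations needed'' does not dispose of them: in the unenumerated case the single such cell is killed only because the unique $2$-cell with three non-degenerate boundaries has all three of its faces equal to it, giving the dunce-cap relation $\alpha\bar\alpha\alpha\simeq\mathrm{const}$; in the enumerated cases one is left with (the $2$-skeleton of) the complex of ordered subsets of $\{l_1,\dots,l_m\}$, which is regular with no free faces, and its simple connectivity requires a separate argument (in the paper: quotient by a contractible tree of edges $(1,k)$ and an explicit check that each remaining edge $(a,b)$ and $(a,1)$ bounds). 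Without identifying this residual complex and computing its fundamental group, the spanning-tree/relation-chasing scheme does not close, so as written the proposal has a genuine gap at its central step. (Minor additional point: in the enumerated cases there are $m$ zero-cells, not one, so even the choice of basepoint and spanning tree needs the care you defer.)
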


\begin{proof}
  Let us denote by $\SDs$ either $\SDs_g^m$, $\betterwidetilde\SDs_g^m$, $\SDs_{g,m}$ or $\betterwidetilde{\SDs}_{g,m}$.
  Observe that $\SDs$ has a single zero-cell in the unenumerated case and $m$ zero-cells else.
  Let us organize the one-dimensional cells $\Sigma$ of $\SDs$.
  We have three types of one-cells $\Sigma$.
  The first type, which we denote $\alpha$, has exactly two non-degenerate boundaries.
  Else, $\Sigma$ has exactly one non-degenerate boundary.
  In this case, $\Sigma$ either has a single ghost surface with two different boundary curves attached to the admissible circle or it has two different surfaces attached.
  In the former case it is of type $\beta$ and in the latter it is of type $\gamma$.
  See the upper half of Figure \ref{fig:generators_and_relations}.
  Observe in particular that suspended cells are of type $\gamma$ by definition.
  \begin{figure}[ht]
    \resizebox{.95\columnwidth}{!}{
    \inkpic[1.1\columnwidth]{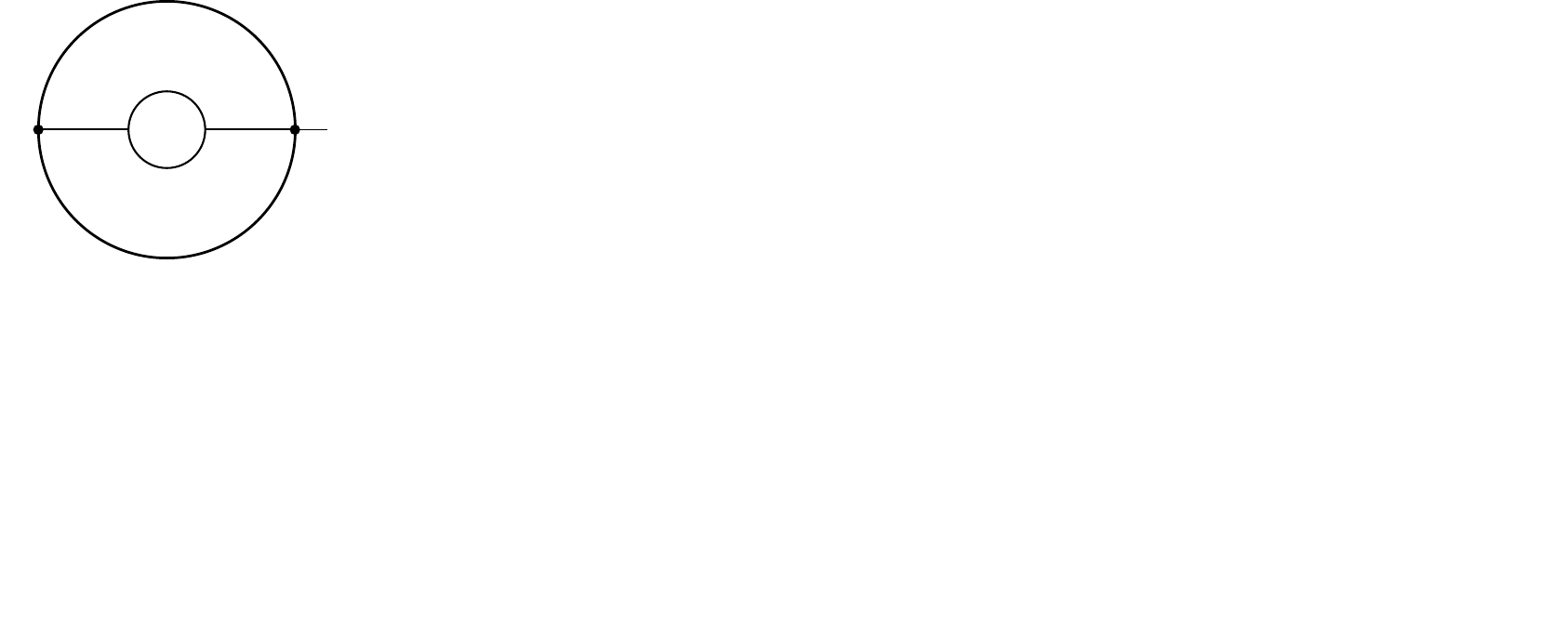}}
    \caption{\label{fig:generators_and_relations}
      The one and two-dimensional cells ordered by type.
      The first row shows the types of the one-dimensional cells.
      The second row shows the types of the two-dimensional cells.
      In the second row, we indicate the type of the face obtained by contracting one of the three edges of the admissible circle by writing the type next to the edge.
    }
  \end{figure}
  
  Now that the 1-skeleton $\SDs^{(1)}$ is understood, we attach to it certain two-cells and obtain a subspace $\SDs^{(1)} \subset C \subset \SDs^{(2)}$.
  We further construct smaller sub-complexes $A \subset B \subset C$
  and show that these are deformation retracts of $C$ and that $A$ is simply connected.
  This proves the Lemma.
  
  We first construct $A$.
  Consider the sub-complex consisting of all zero-cells and all one-cells $\Sigma$ of type $\alpha$.
  To this, we add all two-cells $\Sigma'$ having exactly three non-degenerate boundary cycles (see Figure \ref{fig:generators_and_relations}).
  The resulting space is called $A$.
  
  We now construct $B$.
  Let us denote by $m(S_i)$ the number of punctures or degenerate boundary components of the attached surface $S_i = (g_i, m_i, A_i)$.
  That is, in the unparametrized case $m(S_i) = m_i$ and in the parametrized case $m(S_i) = \#\{ (l_j) \in A_i \}$.
  
  Consider a cell $\Sigma$ of type $\beta$.
  Since $m > 2$, the unique ghost surface $S = S_1$ of $\Sigma$ has punctures or degenerate boundaries i.e.\ $m(S_1) > 0$.
  In the unenumerated case, there is a unique two-cell $\Sigma'$ with two non-degenerate boundaries such that $d_0(\Sigma') = \Sigma$ (cf.\ Figure \ref{fig:generators_and_relations}).
  In the enumerated case, there are $m(S_1)-1$ choices of $\Sigma'$ with two non-degenerate boundaries such that $d_0(\Sigma') = \Sigma$,
  because we can choose which of the enumerated degenerate boundaries (or punctures) is part of the first edge of the admissible cycle.
  Let us choose the enumerated boundary (or puncture) with the smallest label.
  To $A$, we attach all one-cells $\Sigma$ of type $\beta$ and (the uniquely chosen) cells $\Sigma'$ discussed above to obtain a space $B$.
  
  Finally we construct $C$.
  Consider a cell $\Sigma$ of type $\gamma$.
  Since $m > 2$, one of the two ghost surfaces $S_1$ and $S_2$ of $\Sigma$ has punctures or degenerate boundary i.e.\ $m(S_i) > 0$ for at least one $i$.
  Assume first that $m(S_1) > 0$.
  We proceed as before.
  In the unenumerated case, there is a unique two-cell $\Sigma'$ with three non-degenerate boundaries such that $d_0(\Sigma') = \Sigma$ (cf.\ Figure \ref{fig:generators_and_relations}).
  In the enumerated case, there are $m-1$ choices of $\Sigma'$ with two non-degenerate boundaries such that $d_0(\Sigma') = \Sigma$,
  because we can choose which of the enumerated degenerate boundaries (or punctures) is part of the first edge of the admissible cycle.
  Again, we choose the enumerated boundary (or puncture) with the smallest label.
  If $m(S_1) = 0$ then $m(S_2) > 0$ and one expands at $S_2$ as described above.
  To $B$, we attach all one-cells $\Sigma$ of type $\gamma$ and the (uniquely chosen) cells $\Sigma'$.
  This gives the sub-complex $\SDs^{(1)} \subset C \subset \SDs^{(2)}$.
  
  By construction, the cells of type $\beta$ and $\gamma$ are free faces in $C$, i.e.\ they appear only once as face of a simplex.
  Collapsing the unique two-cell they lie in provides deformation retractions 
  $C \stackrel{\simeq}{\to} B \stackrel{\simeq}{\to} A$.
  It remains to show, that $A$ is 1-connected.
  
  In the unenumerated case, this is fairly easy.
  Here we have a single zero-cell, a single one-cell (which is of type $\alpha$) and a single two-cell; the resulting space is a dunce-cap which is contractible.
  In the enumerated case, one has to work a little more.

  Observe that $A$ is the 2-skeleton of the semisimplicial set of ordered subsets of $\{ l_1, \ldots, l_m \}$:
  The $k$-simplices are named by all ordered subsets of size $k+1$ i.e.,\ 
  \[A^{(k)} = \{ (l_{i_0}, \ldots, l_{i_k}) \mid l_i \neq l_{i'} \text{ for } i \neq i' \}.\]
  The $i$-th face is given by forgetting the $i$-th entry.
  Observe further that $A$ is regular but does not have free faces.
  
  For better readability, we write $i$ instead of $l_i$
  and we denote a $k$-simplex by a tuple $(i_0, \ldots, i_k)$.
  Therefore, $A$ has $m$ zero-cells, $m (m-1)$ one-cells and $m (m-1) (m-2)$ two-cells.
  Let $A'\subset A$ denote the sub-complex given by the union of the one-cells $(1, k)$ together with their boundaries.
  Note that $A'$ is contractible sub-complex.
   We show that $A \simeq A / A'$ is simply connected.
  
  By construction, $A/ A'$ has a single zero-cell, $m(m-1) - (m-1) = (m-1)^2$ one-cells and the same number of two-cells as $A$.
  The fundamental group of $A/ A'$ is generated by the the one-cells $(a, b)$ with $a \neq 1$.
  Let us assume that $b \neq 1$.
  The two-cell $(1,a,b)$ in $A$ has the faces $(a,b), (1,a), (1,b)$.
  Therefore, $(a,b)$ with $b\neq 1$ is trivial in $\pi_1(A/A')$.
  It remains to study the cells $(a,1)$.
  The two-cell $(a,1,c)$ in $A$ has the faces $(1,c), (a,c), (a,1)$ with $c \neq 1$.
  Therefore, $(a,1)$ is also trivial in $\pi_1(A/A')$.
\end{proof}

\begin{remark}
  Repeating this argument in higher dimensions, the second author and Frank Lutz show that $\SDs_0^m$ is $1$,$2$,$3$ or $4$-connected if $m$ is at least $3$,$5$,$7$ or $9$ by providing a sequence of elementary homotopy collapses.
  Unfortunately, the argument cannot be easily extended to dimensions five or higher.
  This is due to the fact that for bigger and bigger $m$ the complex $\SDs_0^m$ contains an increasingly larger sub-complex which is contractible but not collapsible.
\end{remark}

The argument used to show triviality of the fundamental group is a toy version of the Morse flow showing that $\SDs$ is highly connected.
We now describe it informally before developing the details in the next subsections.
We start with the construction of a discrete Morse flow on the chain complex of Sullivan diagrams $\SDgm$.
In order to decide whether a given Sullivan diagram $\Sigma$ is collapsible, we look for a surface whose attaching strips resemble the shape of a fan as shown on Figure \ref{fig:fan_example}.
A cell with a fan is locally a generalization of the co-face of a cell of type $\alpha$ given in the bottom left corner of Figure \ref{fig:generators_and_relations}.
A generalization of the argument of the proof of Lemma \ref{lemma:1_connectedness} will show that the faces 
\[d_i(\Sigma)=d_{i+1}(\Sigma)=\ldots =d_{i+l-1}(\Sigma)\]
are all equal, where $i$ is the position where the fan starts.
However, they are different from $d_j(\Sigma)$ for $j \notin \{i, \ldots, i+l-1\}$.
We call $l$ the length of the fan and if it is odd, then $d_i(\Sigma)$ appears exactly once in the total boundary map of the chain complex of $\SDgm$.
In this case, we declare $\Sigma$ to be collapsible with redundant partner $d_i(\Sigma)$.
From counting dimensions it will be clear that the associated Morse complex $M$ does not have cells in low dimensions except for a single zero-cell.
But the chain complex $\SDgm$ and $M$ have the same homotopy type so the result follows.

The discrete Morse flow described above is compatible with the stabilization map i.e.,\ we have an induced discrete Morse flow on the quotient $\SD(g+1,m) / \SDgm$.
We extend this flow in order to show the connectivity increases with respect to genus.
As before, in order to decide if a cell is collapsible we look for a surface whose attaching strips resemble the shape of a fence.
A fence is a local condition analogous to that of a fan but in terms of genus instead of in terms of boundary.
Figure \ref{fig:fence} gives a schematic picture of a fence.
As before, the consecutive faces at a fence will coincide but will be different from all other faces.
A Sullivan diagram is collapsible if an attached surface has a fence of odd length and its redundant partner will be the diagram obtained by collapsing an edge on the fence.
As before a counting argument shows that the associated Morse complex has no cells in low dimensions so Theorem \ref{theorem_b} follows.

\subsection{Fans and the proof of Theorem A}
\label{subsection:fans_and_proof_thm_A}
Let $\SDgm$ be one of the four models.
In this section we construct the discrete Morse flow used to show that $\SD(g, m)$ has trivial homology in degrees smaller than $m-2$ which implies Theorem \ref{theorem_a}.

\begin{definition}[Foot-point]
  Let $\Sigma$ be a cell of degree $n$ and let $S$ be a surface of $\Sigma$.
  The \emph{foot-point} $ft(S) \in \{0 , \ldots, n \}$ is the first position on the ground circle, the surface $S$ is attached to.
\end{definition}

\begin{definition}[Fan]
  A cell $\Sigma$ of $\SD_g^m$ has a \emph{fan of length $l$ with foot-point $i$} if
  \begin{align*}
    \text{$0 \le i, \ldots, i+l-1 \le n$ are fixed points of $\rho$}
  \end{align*}
  and this condition is maximal i.e.\ it cannot be extended to $i-1$ or $l+1$.
  
  Let $\Sigma$ be a cell of $\SD_{g,m}$ and denote its set of leaves by $L$.
  We say that $\Sigma$ has a \emph{fan of length $l$ with foot-point $i$} if
  \begin{align*}
    \text{$0 \le i, \ldots, i+l-1 \le n$ are fixed points of $\rho^2$ and $\rho(i), \ldots, \rho(i+l-1) \in L$}
  \end{align*}
  and this condition is maximal i.e.\ it cannot be extended to $i-1$ or $l+1$.
  See Figure \ref{fig:fan_example} for a picture of a fan.
  \begin{figure}[ht]
    \inkpic[\columnwidth]{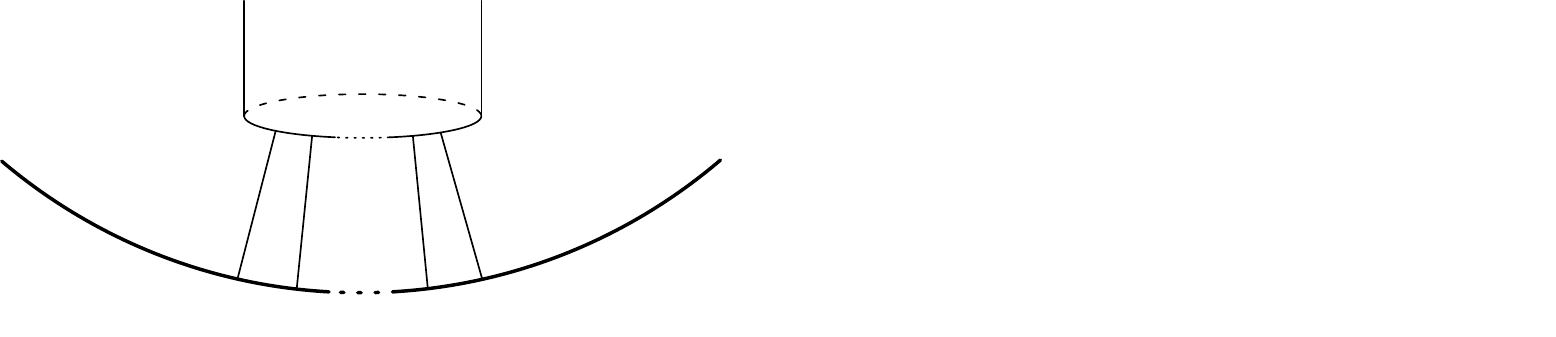}
    \caption{\label{fig:fan_example}
      The left hand side shows a fan in a cell $\Sigma$ of $\SD_g^m$ and the right hand side shows a fan in a cell $\Sigma$ of $\SD_{g,m}$.
    }
  \end{figure}
\end{definition}

\begin{lemma}
  \label{lem:faces_fan}
  Let $\Sigma$ be an $n$-cell of $\SD_g^m$ or $\SD_{g,m}$ with a fan of length $l$ with foot-point $i$.
  Then the following hold:
  \begin{enumerate}[label={(\roman*)}]
    \item \label{lemma:faces_fan:faces} $d_i(\Sigma) = \ldots = d_{i+l-1}(\Sigma) \neq d_k(\Sigma) \quad \text{for } k \notin \{ i, \ldots, i+l-1 \}.$
    \item \label{lemma:faces_fan:coefficient}  Let $\tilde \Sigma = d_i(\Sigma)$.
      Then, the coefficient of $\tilde \Sigma$ in the boundary of $\Sigma$ is
      \begin{align*}
        \del_{\tilde \Sigma, \Sigma} &=
          \begin{cases}
            (-1)^i & \text{$l$ is odd} \\
            0    & \text{$l$ is even}
          \end{cases}
      \end{align*}
  \end{enumerate}
\end{lemma}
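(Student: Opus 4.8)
The plan is to derive everything from Lemma~\ref{lem:rho_face} together with the case analysis of the face maps in Discussion~\ref{disc:faces}; part~(ii) then drops out of part~(i) by a one-line computation. The first step is to translate the fan condition into a statement about the cycle structure of $\lambda$. For a cell of $\SD_g^m$, the identity $\rho = \lambda^{-1}(0\ 1\ \dots\ n)$ shows that $\rho(j)=j$ for $j\in\{i,\dots,i+l-1\}$ is equivalent to $\lambda(i)=i+1,\ \lambda(i+1)=i+2,\ \dots,\ \lambda(i+l-1)=i+l$; hence a single cycle of $\lambda$ contains the consecutive run $i,i+1,\dots,i+l$, which lies in a single block $A_r$, so the whole fan is attached to one ghost surface $S=S_r$. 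For a cell of $\SD_{g,m}$ the condition unwinds to $\lambda(j)=\ell_j$ and $\lambda(\ell_j)=j+1$ with $\ell_j=\rho(j)\in L$, so one cycle of $\lambda$ reads $i,\ell_i,i+1,\ell_{i+1},\dots,i+l-1,\ell_{i+l-1},i+l$, and again the whole fan sits on one surface $S$. In both models maximality says $\lambda(i-1)\neq i$ and $\lambda(i+l)\neq i+l+1$ (respectively the leaf-analogues at the two ends).

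For the equality in~(i), fix $j$ and $j+1$ both in the fan. By Lemma~\ref{lem:rho_face}, $d_j(\Sigma)$ is given by $\tilde\rho=D_j(\rho)$ and $\tilde\lambda=D_j(\lambda\circ(a\ j))$ with $a=\rho(j)$, and by Case~1 of Discussion~\ref{disc:faces} (for $\SD_g^m$, since then $a=j$) or by Case~3 of Discussion~\ref{disc:faces} (for $\SD_{g,m}$, since then $a=\ell_j$ lies in the same cycle of $\lambda$ as $j$) the only effect on the ghost surfaces is that the number of punctures, respectively of degenerate boundary cycles, of $S$ goes up by one --- the \emph{same} surface for every $j$ in the fan. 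It therefore remains to compare $D_j$ and $D_{j+1}$ applied to $\rho$ and to the conjugated $\lambda$: since $j$ and $j+1$ are adjacent inside a cycle of $\lambda$ (respectively separated only by $\ell_j$), deleting the index $j$ and renormalising, versus deleting $j+1$ and renormalising, yield literally the same combinatorial datum on $[n-1]$, which is a direct computation from Definition~\ref{defi:D_i} of the same flavour as the one in the proof of Lemma~\ref{lem:rho_face}. In the $\SD_{g,m}$ case one further notes that $d_j$ and $d_{j+1}$ detach the leaves $\ell_j$ and $\ell_{j+1}$, which are interchanged by an element of $\Symm(L)$; since cells of $\SD_{g,m}$ are $\Symm(L)$-orbits (Definition~\ref{definition:combinatorial_1_SD_par_unen}), the two faces agree as cells. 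This is exactly where unenumeratedness enters, and it explains why the lemma is stated for $\SD_g^m$ and $\SD_{g,m}$ but not for their enumerated counterparts. Induction along the fan gives $d_i(\Sigma)=\dots=d_{i+l-1}(\Sigma)$.

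For the inequality $d_k(\Sigma)\neq d_i(\Sigma)$ when $k\notin\{i,\dots,i+l-1\}$, I would show that $\tilde\Sigma:=d_i(\Sigma)$ carries a maximal fan of length exactly $l-1$ with foot-point $i$, read off from $\tilde\rho=D_i(\rho)$ using the maximality of the original fan at its two ends, while the fan data of $d_k(\Sigma)$ is different: running through the branches of Discussion~\ref{disc:faces} at the index $k$, if $k$ lies away from the fan then $d_k(\Sigma)$ still has a maximal fan of length $l$ (possibly shifted), if $k=i-1$ or $k=i+l$ the fan persists with length at least $l$ after the corresponding merge or split, and if $k$ is the foot-point of a different fan that fan is untouched. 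In each case the collection of lengths and foot-points of the maximal fans of $d_k(\Sigma)$ disagrees with that of $\tilde\Sigma$, so the two cells are distinct.

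Granting~(i), part~(ii) is immediate: the faces of $\Sigma$ equal to $\tilde\Sigma$ are precisely $d_i(\Sigma),\dots,d_{i+l-1}(\Sigma)$, so in $d(\Sigma)=\sum_{j=0}^n(-1)^j d_j(\Sigma)$ the coefficient of $\tilde\Sigma$ is $\sum_{j=i}^{i+l-1}(-1)^j$, which equals $(-1)^i$ if $l$ is odd and $0$ if $l$ is even. I expect the inequality in part~(i) to be the main obstacle: the equality and~(ii) are essentially formal once fans are rephrased as runs in $\lambda$, whereas ruling out accidental coincidences $d_k(\Sigma)=\tilde\Sigma$ for $k$ outside the fan requires the somewhat tedious case distinction of Discussion~\ref{disc:faces} and a careful check that the fan-length data genuinely detects the collapse of a fan edge.
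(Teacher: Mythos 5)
Your proposal is correct and takes essentially the same route as the paper: the paper likewise observes that $i,\dots,i+l-1$ are consecutive fixed points of $\rho$, so $D_i(\rho)=\dots=D_{i+l-1}(\rho)$ and all these faces degenerate a boundary of the same ghost surface (the parametrized case being dismissed as ``similar'', i.e.\ exactly the $\Symm(L)$-identification of $\ell_j$ and $\ell_{j+1}$ you spell out), it distinguishes $d_k(\Sigma)$ from $d_i(\Sigma)$ via $D_k(\rho)\neq D_i(\rho)$ --- which is precisely your comparison of fan/fixed-point data --- and it gets (ii) from the alternating sum. The only (harmless) slip is your claim that for $k$ away from the fan $d_k(\Sigma)$ has a maximal fan of length exactly $l$ there: collapsing at $k$ can create a new fixed point adjacent to the fan (e.g.\ when $(k\ \, i-1)$ is a $2$-cycle of $\rho$), so the length may exceed $l$, but since it is still $\ge l$ while $d_i(\Sigma)$ has length exactly $l-1$, your distinctness conclusion is unaffected.
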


\begin{proof}
  Let us proof \ref{lemma:faces_fan:faces} and \ref{lemma:faces_fan:coefficient} for $\Sigma \in \SD_g^m$.
  Note that the numbers $i, \ldots, i+l-1$ are consecutive fixed points of $\rho$ so $D_i(\rho) = \ldots = D_{i+l-1}(\rho)$ and
  all these faces degenerate an outgoing boundary of the same surface (see Discussion \ref{disc:faces}).
  Therefore, these faces give the same cell.
  For $k \neq i, \ldots, i+l-1$ it follows $D_i(\rho) \neq D_k(\rho)$ so the faces $d_i(\Sigma)$ and $d_k(\Sigma)$ have to be different.
  Finally, \ref{lemma:faces_fan:coefficient} is an immediate consequence of \ref{lemma:faces_fan:faces}.
  
  The proof for $\Sigma \in \SD_{g,m}$ is similar.
\end{proof}

\begin{definition}
  Let $\Sigma$ be a cell of $\SD_g^m$ or $\SD_{g,m}$ and let $S$ be a surface of $\Sigma$.
  We say that \emph{$S$ starts with a fan of length $l$} (or \emph{with an odd fan}) if
  $\Sigma$ has a fan of length $l$ (or of odd length) with foot-point $ft(S)$.
  In this case, we set $l_S = l$ and $l_S = 0$ otherwise.
\end{definition}

We are going to define a Morse flow on $\SD_g^m$ and $\SD_{g,m}$ such that all cells with punctures or degenerate boundary are either redundant or collapsible.
We say that the redundants and collapsibles are of type $0$ since we will later enlarge the Morse flow and the new redundant and collapsible cells will be denoted of type $1$.

\begin{definition}[Collapsibles and redundants of type 0]
  \label{defi:collapsible_0}
  A cell $\Sigma \in \SD_g^m$ or $\SD_{g,m}$ of positive degree is \emph{collapsible of type $0$} if it has a surface $S$ starting with an odd fan such that
  every other surface $\tilde S$ of $\Sigma$ with $ft(S) > ft(\tilde S)$ does not have punctures or degenerate boundary.
  
  Among the surfaces of $\Sigma$ fulfilling the above condition, we pick $S$ with minimal foot-point i.e.\ $i = ft(S)$ is minimal, and say that
  \emph{$S$ witnesses the collapsibility of $\Sigma$}.
  In this case, $d_i(\Sigma)$ is the \emph{redundant partner of type $0$}.
\end{definition}

\begin{lemma}
  \label{lem:redundant_parter_unenumerated}
  A cell $\Sigma \in \SD_g^m$ or $\SD_{g,m}$ is the redundant partner of some other cell {$\tilde\Sigma$ if and only $\tilde\Sigma$}
  has a surface $S$ with punctures or degenerate boundary not starting with an odd fan such that
  every other surface $\tilde S$ with $ft(S) > ft(\tilde S)$ does not have punctures or degenerate boundary nor does it start with an odd fan.
\end{lemma}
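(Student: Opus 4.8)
The plan is to unwind Definition \ref{defi:collapsible_0} into an intrinsic criterion on the redundant partner $\Sigma$: among the surfaces of $\Sigma$ that carry a puncture, a degenerate boundary, or an odd fan, the one of smallest foot-point must carry a puncture or degenerate boundary and must not itself start with an odd fan. The two tools are Discussion \ref{disc:faces} and the $D_i$-calculus of Lemma \ref{lem:rho_face}. First I would isolate the computation underlying both directions: if a surface $S$ of a cell starts with a fan of length $l$ and foot-point $i$, then contracting the $i$-th admissible edge is Case $1$ of Discussion \ref{disc:faces} in the model $\SDs_g^m$ (the non-degenerate boundary cycle $(i)$ of $\rho$ degenerates to a puncture of $S$) and Case $3$ in the model $\SDs_{g,m}$ (the leaf $\rho(i)$ is split off as a fixed point of $\lambda$, i.e.\ a degenerate boundary of $S$). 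In both cases the image of $S$ gains exactly one puncture or degenerate boundary, retains foot-point $i$, and now starts with a fan of length $l-1$, while every surface of foot-point $<i$ is untouched together with its fan. This follows directly from the formulas $\tilde\rho = D_i(\rho)$ and $\tilde\lambda = D_i(\lambda\circ(a\ i))$ of Lemma \ref{lem:rho_face}.

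For the forward direction, let $\Sigma = d_i(\tilde\Sigma)$ with $\tilde\Sigma$ collapsible of type $0$ witnessed by $S$ and $i = ft(S)$, so that the fan of $S$ in $\tilde\Sigma$ has \emph{odd} length $l$ and every surface of $\tilde\Sigma$ of foot-point $<i$ has no puncture and no degenerate boundary. By the computation above, in $\Sigma$ the surface $S$ has a puncture or degenerate boundary and starts with a fan of the \emph{even} length $l-1$, hence not an odd fan; the surfaces of $\Sigma$ of foot-point $<i$ are in bijection with those of $\tilde\Sigma$, so they too have no puncture or degenerate boundary, and none of them starts with an odd fan --- otherwise such a surface would witness the collapsibility of $\tilde\Sigma$ with a strictly smaller foot-point, contradicting the minimality clause of Definition \ref{defi:collapsible_0}. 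This is precisely the asserted criterion, with $S$ the distinguished surface.

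For the converse, suppose $\Sigma$ has a surface $S$ with a puncture or degenerate boundary that does not start with an odd fan, and such that every surface of foot-point $< i := ft(S)$ has no puncture, no degenerate boundary and no odd fan; let $l$ (even, possibly $0$) be the fan-length of $S$. I would reverse the contraction: insert a new admissible edge at position $i$ and re-arrange the cyclic data so that one puncture of $S$ is expanded to a fixed point of $\rho$ at $i$ in $\SDs_g^m$, respectively one leaf-fixed-point of $S$ is re-inserted into the cycle of $\lambda$ at $i$ in $\SDs_{g,m}$. Checking the defining conditions of a combinatorial $1$-Sullivan diagram (Definitions \ref{definition:combinatorial_1_SD_unpar_unen}--\ref{definition:combinatorial_1_SD_par_unen}) shows this produces a bona fide cell $\tilde\Sigma$, and applying Lemma \ref{lem:rho_face} to $\tilde\Sigma$ recovers $d_i(\tilde\Sigma) = \Sigma$. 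In $\tilde\Sigma$ the surface $S$ starts with a fan of the \emph{odd} length $l+1$; the surfaces of foot-point $<i$ are unchanged, so they still carry no puncture, no degenerate boundary and no odd fan, which forces $S$ to be the surface of minimal foot-point starting with an odd fan. Therefore $\tilde\Sigma$ is collapsible of type $0$ witnessed by $S$, and $\Sigma = d_i(\tilde\Sigma)$ is its redundant partner.

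The step I expect to be the main obstacle is not the logical skeleton above but the explicit construction of the ``insert an edge'' operation together with the verification that the fan length changes by exactly one with the foot-point fixed. Unlike the face maps $d_i$, which come for free from the (multi-)semisimplicial structure, a one-sided inverse has to be written out in terms of $\lambda$, $\rho$ and the (de)normalisation maps $s_i^\Delta, d_i^\Delta$, and one must check that it lands inside the correct model; in particular condition \ref{data_conditions_leaves_par_en} of Definition \ref{definition:combinatorial_1_SD_par_en} in the parametrised case and the suspension-disk condition when $i=0$ require some care, and the $\SDs_{g,m}$ case (Case $3$ of Discussion \ref{disc:faces}) needs the detailed cycle-splitting bookkeeping rather than the quicker Case $1$ argument available for $\SDs_g^m$.
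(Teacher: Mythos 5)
Your proposal is correct and takes essentially the paper's approach: the paper dismisses this lemma as an immediate consequence of the definitions and Discussion \ref{disc:faces}, and your argument is precisely that unwinding --- Case 1 (resp.\ Case 3) of the face map at the foot-point turns an odd fan of length $l$ into an even fan of length $l-1$ while adding one puncture or degenerate boundary, the minimality clause of Definition \ref{defi:collapsible_0} rules out earlier odd fans, and the evident edge-insertion inverse (which you rightly flag as the only step needing explicit bookkeeping, and which does go through, including the $l=0$ and suspension-disk cases) gives the converse. You also correctly resolved the garbled variable naming in the statement by reading it as an intrinsic criterion on the redundant cell $\Sigma$ itself, which is how the lemma is used in the proof of Lemma \ref{lemma_a_unenumerated}.
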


\begin{proof}
  This is a immediate consequence from the definitions and Discussion \ref{disc:faces}.
\end{proof}

\begin{remark}
  \label{remark:suspended_cells_and_collapsibility_A}
  A cell $\Sigma = (\lambda, S_1, \ldots, S_k)$ is called \emph{suspended}, if the surface $S$ with foot-point $0$ is the suspension disk $S = (0,0, \{ (0) \})$.
  Observe that a suspended cell is collapsible (resp.\ redundant) type $0$ if and only if $d_0(\Sigma)$ is collapsible (resp.\ redundant) of type $0$.
\end{remark}

\begin{lemma}[Lemma A]
  \label{lemma_a_unenumerated}
  The collapsibles and their redundant partners define a discrete Morse flow on $\SD_g^m$ and $\SD_{g,m}$.
  A cell (of positive degree) with punctures or degenerate boundary is either collapsible or redundant.
\end{lemma}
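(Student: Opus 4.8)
The statement packages three assertions: that the type-$0$ collapsible/redundant pairs form a matching, that the associated inverted graph is acyclic, and that every positive-degree cell carrying a puncture or degenerate boundary occurs in some pair. \emph{For the matching}, each collapsible cell $\Sigma$ is paired with $d_i(\Sigma)$, where $S$ witnesses collapsibility and $i = ft(S)$; by Lemma~\ref{lem:faces_fan} the coefficient of this edge is $(-1)^i$, a unit, so it is an admissible matching edge. I would show that the pairing is injective by exhibiting its inverse: given a redundant cell, Lemma~\ref{lem:redundant_parter_unenumerated} singles out the ghost surface $S$ of smallest foot-point carrying a feature but not beginning with an odd fan, and re-inserting one admissible edge just before $ft(S)$ --- turning one puncture, resp.\ degenerate boundary cycle, of $S$ back into a non-degenerate boundary cycle and lengthening the opening fan of $S$ by one to odd length --- recovers the unique collapsible partner (this reverses the relevant move of Discussion~\ref{disc:faces}: Case~1 for $\SD_g^m$, Case~3 for $\SD_{g,m}$). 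Collapsibles and redundants are disjoint: if $\Sigma$ were both, its odd-fan witness $S$ and its feature-carrying, fan-free witness $S'$ would be ordered by foot-point, and each of $ft(S)<ft(S')$, $ft(S)=ft(S')$, $ft(S)>ft(S')$ contradicts one of the conditions of Definition~\ref{defi:collapsible_0} or Lemma~\ref{lem:redundant_parter_unenumerated}. Suspended cells are handled uniformly by Remark~\ref{remark:suspended_cells_and_collapsibility_A}.

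\emph{For the coverage claim}, let $\Sigma$ have positive degree and a ghost surface with a puncture or degenerate boundary. Among the ghost surfaces that either carry such a feature or begin with an odd fan, let $S$ be the one with smallest foot-point. If $S$ begins with an odd fan, minimality of $ft(S)$ makes every earlier ghost surface feature-free, so $\Sigma$ is collapsible of type~$0$ witnessed by $S$; otherwise $S$ carries a feature and no odd fan while every earlier surface has neither, so Lemma~\ref{lem:redundant_parter_unenumerated} exhibits $\Sigma$ as a redundant partner. Positivity of the degree is what guarantees that a suitable $S$ exists and excludes the zero-cell.

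\emph{For acyclicity}, by Lemma~\ref{dmt_acyclicity} it suffices to order the collapsibles so that every zig-zag $c_1 \ad r \au c_2$ forces $c_1 \lneqq c_2$. I would use the lexicographic order that reads off, going around the ground circle from position $0$, the profile of each successive ghost surface --- its foot-point, whether it carries a feature, and the length and parity of its opening fan --- with ties broken by degree. The mechanism is this: since $r = d_k(c_1)$ with $k$ outside the witnessing fan of $c_1$, the segment of $c_1$ up to and including its witness $S$ is left essentially intact ---the surfaces before $S$ stay feature-free and $S$ keeps its odd fan, so $S$ is never the redundancy-witness of $r$--- whence, when $c_2$ is reconstructed from $r$ by the inverse move above, its witness lies strictly later along the circle than $S$, and any new features or fans created by $d_k$ (read off the case list of Discussion~\ref{disc:faces}) sit at or after $S$. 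This yields $c_1 \lneqq c_2$.

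I expect the genuine obstacle, and the reason the proof is deferred to Section~\ref{section:proof_AB}, to be the justification of ``essentially intact'' in full: one must exclude the possibility that a ghost surface lying \emph{before} the witness of $c_1$, but wrapping around the ground circle past the fan, picks up a puncture or degenerate boundary under $d_k$ via the merging or splitting of Cases~2 and~3 of Discussion~\ref{disc:faces}. Ruling this out requires controlling precisely which positions $k$ can appear along a zig-zag $c_1 \ad r \au c_2$, and it is this bookkeeping that forms the first technical lemma of Section~\ref{section:proof_AB}; the rest of the argument above is then routine.
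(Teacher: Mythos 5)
Your treatment of the matching itself and of the coverage claim is sound and essentially what the paper leaves implicit (well-definedness and invertibility of the pairing via Lemma~\ref{lem:faces_fan} and Lemma~\ref{lem:redundant_parter_unenumerated}, disjointness and coverage by the foot-point case analysis). The gap is in the acyclicity step, and it is not the one you flag at the end. Your key mechanism --- that for $r=d_k(c_1)$ with $k$ outside the witnessing fan, ``the surfaces before $S$ stay feature-free and $S$ keeps its odd fan'', so that the witness of $c_2$ lies strictly later than $S$ --- is false. Fans are maximal runs of fixed points of $\rho$ and need not sit at foot-points: a surface $S'$ attached \emph{before} the witness $S$ of $c_1$ may be feature-free with an even (or no) initial fan, hence harmless for the collapsibility of $c_1$, and still carry a maximal run of fixed points away from $ft(S')$. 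Collapsing $d_k$ at such a position (this is Case~1 of Discussion~\ref{disc:faces}, resp.\ its parametrized analogue --- no wrapping and no Cases~2/3 are involved; indeed merges and generic splits never create punctures or degenerate boundary, so your stated ``genuine obstacle'' is off target) creates a feature on $S'$, i.e.\ before $S$. The resulting $r$ is redundant with witness $S'$, and its collapsible partner $c_2$ has witness $S'$ with its initial fan lengthened by one --- strictly \emph{earlier} along the circle than $S$, not later. Consequently the circle-reading order you propose is not shown (and not obviously able) to decrease along such a zig-zag, and the argument as written does not establish acyclicity.

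The paper's actual proof takes a different, more global route: it attaches to each cell the lexicographic degree of degeneracy $\degen(\Sigma)=(\dim(\Sigma),\,s_\Sigma,\,m_\Sigma,\,-l_\Sigma)$, where $s_\Sigma$ is the number of ghost surfaces, $m_\Sigma$ the total number of punctures/degenerate boundaries, and $l_\Sigma$ the total length of \emph{initial} fans, and checks it strictly decreases along every path $c \ad r \au \tilde c$, invoking Lemma~\ref{dmt_acyclicity}. In the problematic scenario above the decrease is recorded by the $-l_\Sigma$ coordinate (the initial fan of $S'$ grows from even to odd length), while zig-zags in which $d_k$ merges surfaces or is not taken at a fan are caught by the coordinates $s_\Sigma$ and $m_\Sigma$; the remaining case analysis is by comparing the foot-point of the surface where the face is taken with the foot-point of the witness of $\tilde c$. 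To repair your argument you would either have to adopt such a global invariant or prove well-foundedness of your order against the backward jump of the witness described above; the local ``segment up to the witness is intact'' heuristic cannot be salvaged as stated.
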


By construction, the definition of collapsibles and their redundant partners define a matching on the cellular graph $\SDs_g^m$ or $\SDs_{g,m}$.
Therefore, it is only left to show that this matching is acyclic.
We postpone the rather technical proof of this fact to Section \ref{subsection:proofs:lemma_a}.

In the enumerated case we do a matching which is in spirit similar to the unenumerated case in the sense that we will pair cells by collapsing a chamber of a fan and thus adding a puncture.
However, the structure of the matching is different, because instead of using lengths of fans we use the labels of the chambers to determine which cells are collapsible and which are redundant.
This is a technical description which we give in detail in Subsection \ref{subsection:proofs:setup_labaled_case}.
We obtain the following result.

\begin{lemma}[Lemma A]
  \label{lemma_a_enumerated}
  The cellular graphs of $\betterwidetilde\SDs_g^m$ and $\betterwidetilde\SDs_{g,m}$ admit a discrete Morse flow such that every cell with punctures or degenerate boundary is either redundant or collapsible 
  (except for a single cell of degree zero).
\end{lemma}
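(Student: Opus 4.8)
The plan is to mimic the structure of the unenumerated case (Lemma \ref{lemma_a_unenumerated}) but replace the length-of-fan bookkeeping by a labelling argument on the chambers of fans. First I would fix, for a cell $\Sigma = (\lambda, S_1, \ldots, S_k)$ of $\betterwidetilde\SDs_g^m$ or $\betterwidetilde\SDs_{g,m}$ with punctures or degenerate boundary, a canonical way to single out one chamber to collapse: among all surfaces $S$ of $\Sigma$ that either start with a fan or have a puncture/degenerate boundary component, pick the one $S_\ast$ of minimal foot-point such that every surface with strictly smaller foot-point is a ``plain'' surface (no punctures, no degenerate boundary, not starting with a fan). Within the fan attached at $ft(S_\ast)$ — more precisely, within the chambers of that fan together with the enumerated degenerate boundaries/punctures of $S_\ast$ — use the enumeration to choose the chamber (equivalently the edge $e^1_j$ on the admissible cycle) carrying the smallest label. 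This edge is the one we collapse. A cell is declared \emph{collapsible of type $0$} if collapsing that edge adds a puncture and the resulting edge-count parity works out so that the collapsed cell appears exactly once in the boundary; otherwise it (or rather the cell obtained by un-collapsing) is \emph{redundant}. By Discussion \ref{disc:faces}, collapsing an edge inside a fan of a surface $S$ is exactly Case 1 or Case 2(b), i.e. it degenerates a boundary cycle to a puncture (or merges two boundary components), so the target cell does have punctures or degenerate boundary; this is what makes the pairing stay within the sub-complex $B_1$ of cells with at least one puncture/degenerate boundary.

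Next I would verify this really is a matching: the two key points are (a) an edge $(v,w)$ in the pairing has invertible coefficient in the differential, which follows because by construction the face $d^1_j(\Sigma)$ being collapsed is not equal to any other $d^1_{j'}(\Sigma)$ (the labels differ), hence its coefficient in $\partial\Sigma$ is $\pm1$; and (b) no cell is matched twice, which follows from the canonical choices — a redundant cell $\tilde\Sigma = d^1_j(\Sigma)$ determines $\Sigma$ uniquely by the analogue of Lemma \ref{lem:redundant_parter_unenumerated}: $\tilde\Sigma$ has a distinguished surface with a puncture/degenerate boundary not sitting at the end of a fan, with all earlier surfaces plain, and one recovers $\Sigma$ by ``splitting off'' the chamber with the smallest label. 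I would state and prove this recovery statement as an enumerated analogue of Lemma \ref{lem:redundant_parter_unenumerated}. The suspension-disk cells need the same caveat as in Remark \ref{remark:suspended_cells_and_collapsibility_A}: a suspended cell is collapsible (resp. redundant) iff $d^1_0$ of it is, so they are handled consistently and do not create unmatched cells except the single degree-zero cell (the one whose only ghost surfaces are disks, i.e. a top-degree-zero configuration with no puncture), which is left essential by design.

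Finally, acyclicity of the $F$-inverted graph: I would invoke Lemma \ref{dmt_acyclicity} and exhibit a partial order $\le$ on the collapsibles making every zig-zag $c_1 \ad r \au c_2$ strictly increasing. The natural order refines lexicographically: first by the foot-point $i$ of the witnessing surface $S_\ast$ (a downward face $c_1 \ad r$ can only decrease the number of edges on the admissible cycle and can only turn earlier plain surfaces into still-plain surfaces or merge things further along, so the witnessing foot-point cannot decrease when we then go up $r \au c_2$), then — at equal foot-point — by (say) the total number of punctures/degenerate boundary components, which strictly increases along $c_1 \ad r$ (a collapse of type $0$ adds one) and is unchanged by the up-move $r \au c_2$, since $r \au c_2$ un-collapses a \emph{different} edge (one not forming a fan), hence cannot be the inverse of the down-move. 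Making the last sentence airtight — i.e. checking that the down-move from $r$ to a collapsible $c_2$ can never simply undo the up-move $c_1 \ad r$, so that strict increase genuinely holds — is the main obstacle, exactly as in the unenumerated Lemma \ref{lemma_a_unenumerated}; the enumeration actually helps here, because the smallest-label rule pins down which edge gets collapsed and rules out the ambiguity. The technical details of the labelling conventions and this acyclicity check I would carry out in Subsection \ref{subsection:proofs:setup_labaled_case}, paralleling Subsection \ref{subsection:proofs:lemma_a}.
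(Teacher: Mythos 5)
Your overall strategy --- replace the fan-length/parity bookkeeping by a pairing driven by the labels of the chambers --- is indeed the route the paper takes, but the concrete matching rule you propose does not define a matching, and this is exactly the difficulty the paper's Subsection \ref{subsection:proofs:setup_labaled_case} is built to overcome. If you always collapse the smallest-labelled chamber of the fan at $S_\ast$ (when that label beats the punctures of $S_\ast$), then two distinct cells that differ only in \emph{where} that smallest-labelled chamber sits inside the fan --- say fan labels $(a,\ell,b)$ versus $(a,b,\ell)$ with $\ell$ minimal --- are both declared collapsible and both have the \emph{same} face (fan $(a,b)$ plus a puncture labelled $\ell$), so that redundant cell acquires two partners. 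Dually, for a redundant cell your recipe ``split off the chamber with the smallest label'' does not specify at which position in the fan the new chamber is to be inserted, so the collapsible partner is not determined. The paper resolves both problems at once: the insertion position is pinned down by the map $f_n^i$ (insert the new letter so the word has minimal order type), and --- crucially --- the collapsing letter is \emph{not} in general the smallest label but the smallest letter $j$ that is canonically placed relative to the letters below it; the recursive decomposition $A_n = B_n \sqcup \bigsqcup \alpha_{j,n}^{-1}(\imag f_{j-1}^i)$ of Proposition \ref{prop:set_of_sentences_decomposes} is precisely what guarantees that every cell is matched at most once and that collapsing and expanding are mutually inverse. Your invocation of ``edge-count parity'' is also misplaced: as you yourself note, in the enumerated case the faces inside a fan are pairwise distinct because their labels differ, so parity plays no role at all (this is why the paper abandons it).

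The acyclicity sketch also has a concrete error. Along a segment $c_1 \ad r \au c_2$ of the $F$-inverted graph, the up-move $r \au c_2$ is the inverted matched edge, i.e.\ $c_2$ is obtained from $r$ by turning a puncture back into a chamber, so the number of punctures \emph{drops} by one along $r \au c_2$; it is not ``unchanged'' as you claim, and consequently your proposed order (foot-point, then puncture count) is not monotone. The hard case is when $\dim$, the number of surfaces, the number of punctures and the total fan length are all equal across the zig-zag; there the paper's proof of Lemma \ref{lemma_a_enumerated} reuses the degree of degeneracy from Lemma \ref{lemma_a_unenumerated} to reduce to this case and then argues that the transition removes a letter $X$ from a word and re-inserts a letter $\tilde X \le X$ at the order-minimizing position, so the order type of that single word strictly decreases while the others are unchanged --- this strictly decreasing word invariant is the missing ingredient your argument would need, and it only works because of the canonical insertion rule you left unspecified.
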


\begin{proof}[Proof of Theorem \ref{theorem_a} ] 
  Given a cell $\Sigma$, denote the number of punctures or degenerate boundary components by $m_\Sigma$.
  By Lemmas \ref{lemma_a_unenumerated} and \ref{lem:redundant_parter_unenumerated} we have a discrete Morse flow such that every cell $\Sigma$ with $m_\Sigma > 0$ is either redundant or collapsible.
  By Discussion \ref{disc:faces} taking a co-face of $\Sigma$, the number $m_\Sigma$ is either constant or drops by one and the unique essential cell $\Sigma$ in degree zero fulfills $m_\Sigma = m-1$.
  Thus, there are no essential cells in degrees smaller than $m-1$ (besides a single cell in degree zero) so $\SD(g,m)$ has vanishing reduced homology in degree smaller than $m-1$.
  By Lemma \ref{lemma:1_connectedness}, $\SD(g,m)$ is simply connected.
  We obtain the first part of Theorem \ref{theorem_a} --- claiming $\pi_\ast(\SDgm) = 0$ for $\ast \le m-2$ --- from Hurewicz's Theorem.
  
  In order to deduce the second part of Theorem \ref{theorem_a} let $m$ be odd (in the even case, there is nothing to show).
  Using Hurewicz's Theorem again, it is left to show that $\SDs_{g,m}$ and $\SDs_g^m$ do not have essential cells in degree $m' = m-1$.
  Since cells with punctures or degenerate boundary are not essential, there is a single cell of degree $m'$ to be considered.
  But this cell has just an odd fan of length $m'$ i.e.\ hence is not essential either.
\end{proof}

From this it follows that there is no homology supported in cells with punctures or degenerate boundary as we show below.

\begin{proof}[Proof of Proposition \ref{proposition:support_of_homology}]
  By construction of the discrete Morse flow, there are no paths (in the $F$-inverted graph) from collapsible cells with punctures or degenerate boundary to essential cells.
  The Morse complex is identified with a particular sub-complex of $\SD$ (cf.\ Theorem \ref{thm:dmt_homology}).
  It consists of cells without punctures or degenerate boundary and it is a homotopy retract of $\SD$.
  This implies \ref{proposition:support_of_homology:not_supported}.
  
  To show \ref{proposition:support_of_homology:injection}, it suffices to show that $p \colon \SD \to \SD/ B_2$ is split injective.
  To this end, consider the restriction of the discrete Morse flow on $\SD$ to $\SD /B_2$ i.e.\  a pair of cells in $\SD/B_2$ is paired if and only if it is paired in $\SD$.
  Let $e \in \SD$ be without punctures or degenerate boundary.
  Then it is either essential or collapsible and so is $p(e)$ in $\SD/B_2$ (because all faces of $e$ have at most one puncture or degenerate boundary).
  Therefore, essential cells $e \in \SD$ and $p(e) \in \SD/B_2$ have the same Morse differential in $\SD$ and $\SD/B_2$.
  In order to construct the splitting, let $f \in \SD / B_2$ be an essential cell not in the image of $p$.
  Then, $f$ has to have at least one puncture or degenerate boundary and its Morse differential in $\SD/B_2$ is always a sum of cells, each of them having at least one puncture or degenerate boundary.
  Therefore, we have a splitting by discarding all cells not in the image of $p$.
  
  From \cite{Klamt_comfrob} it follows that $H_\ast( \SD_0^m / B_1; \Z)$ is a suspension of the homology of the $m$-th braid group $Br_m$.
  Comparing Betti numbers (c.f.\ Appendix \ref{appendix:computations}), we cannot get an injection from $H_\ast( \SD_0^m; \Q)$ to any suspension of $H_\ast( Br_m ; \Q)$ for $m=6,8$.
  This proves \ref{proposition:support_of_homology:sharp_bound}.
\end{proof}

\subsection{Fences and the proof of Theorem B}
\label{subsection:fences_and_proof_thm_B}
Let $\SDgm$ be one of the four models.
In Section \ref{section:sd_one_circle:stabilization_map}, we introduced the stabilization map $\varphi \colon \SDgm \to \SD(g+1,m)$.
It identifies $\SDgm$ with a sub-complex of $\SD(g+1,m)$ by Lemma \ref{lemma:stabilization_map_as_inclusion_of_a_sub-complex}.
This inclusion is compatible with the discrete Morse flows on both complexes.
In this section, we show that the quotient $\SD(g+1, m) / \SDgm$ has trivial reduced homology in degrees smaller than $g+m-2$, by extending the discrete Morse flow.
This implies Theorem \ref{theorem_b}.

\begin{definition}[Endpoint]
  \label{defi:endpoint}
  Let $\Sigma$ be an $n$-cell of $\SD_g^m$ or $\SD_{g,m}$ and let $S$ be a surface of $\Sigma$.
  The \emph{endpoint} $end(S) \in \{ 0, \ldots, n \}$ is the last position on the ground circle the surface $S$ is attached to.
\end{definition}

\begin{definition}[Fence]
  \label{defi:fence}
  Let $\Sigma$ be a cell of $\SD_g^m$ or $\SD_{g,m}$ with at least two surfaces attached.
  A cell $\Sigma$ has a \emph{fence of length $L$ with endpoint $i$} if
  there exists a surface $S$ with foot-point $ft(S) > 0$ such that
  \begin{align}
    \label{defi:fence:fixed_pts_lambda}
    & \text{$n \ge i, \ldots, i-L+1 \ge 0$ are fixed points of $\lambda$}
  \intertext{or equivalently}
    \tag{\ref*{defi:fence:fixed_pts_lambda}'}
    \label{defi:fence:cond_rho}
    & \rho(i-1) = i, \ldots, \rho(i-L) = i-L+1
  \intertext{and}
     \label{defi:fence:attaching_pts}
    & \text{$n \ge i, \ldots, i-L \ge 0$ are attached to pairwise different boundary components of $S$} 
  \end{align}
  Furthermore, these conditions are maximal i.e.\ conditions \eqref{defi:fence:attaching_pts} and \eqref{defi:fence:fixed_pts_lambda} cannot both simultaneously be extended to $i+1$ or $L+1$.
  We allow one of the conditions to be extensible to $i-1$ or $L+1$ only if the other condition is not.
  See Figure \ref{fig:fence} for a picture of a fence.
  \begin{figure}[ht]
    \inkpic[.5\columnwidth]{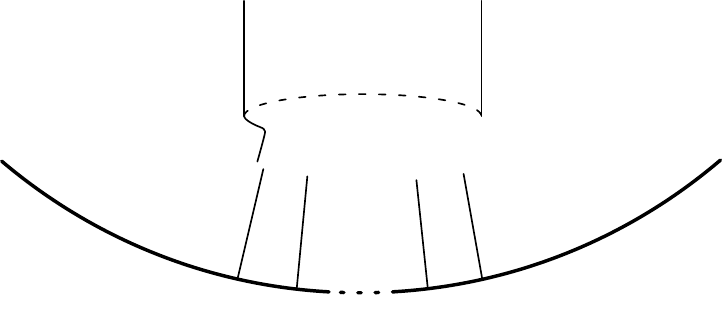}
    \caption{\label{fig:fence}A surface $S$ that ends with at fence.
      The surface $S$ is attached to the ground circle with $L$ tubes.
      Each tube has a single attaching point and these are attached to $i,\ldots, i-L+1$.
      Moreover, the surface $S$ is also attached to position $i-L$ but we do not require that this boundary component of $S$ is also a tube with a single attaching point.
      However, in this picture it is the case.
    }
  \end{figure}
\end{definition}

\begin{lemma}
  \label{lem:faces_fence}
  Let $\Sigma$ be an $n$-cell of $\SD_g^m$ or $\SD_{g,m}$ having a fence of length $L$ with endpoint $i$.
  Then the following hold:
  \begin{enumerate}[label={(\roman*)}]
    \item \label{lemma:faces_fence:faces} $d_{i-1}(\Sigma) = \ldots = d_{i-L}(\Sigma) \neq d_k(\Sigma) \quad \text{for } k \notin \{ i-1, \ldots, i-L \}$ 
    \item \label{lemma:faces_fence:coefficient} Let $\tilde \Sigma = d_{i-1}(\Sigma)$.
      The coefficient of $\tilde \Sigma$ in the boundary of $\Sigma$ is
    \begin{align*}
      \del_{\tilde \Sigma, \Sigma} &=
        \begin{cases}
          (-1)^{i-1} & \text{$L$ is odd} \\
          0    & \text{$L$ is even}
        \end{cases}
    \end{align*}
  \end{enumerate}
\end{lemma}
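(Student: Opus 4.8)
The plan is to argue exactly as in the proof of Lemma \ref{lem:faces_fan}, reading off the effect of each face map from Lemma \ref{lem:rho_face} and the case analysis of Discussion \ref{disc:faces}; the only difference is that a fence is detected at the level of the fat structure $\lambda$ rather than the non-degenerate boundary $\rho$, so a little more bookkeeping with $\lambda$ is needed.

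First I would determine which case of Discussion \ref{disc:faces} each face $d_j(\Sigma)$ with $j\in\{i-L,\dots,i-1\}$ falls into. By \eqref{defi:fence:cond_rho} we have $a:=\rho(j)=j+1$ for every such $j$; since $j+1$ is a fixed point of $\lambda$ by \eqref{defi:fence:fixed_pts_lambda}, the elements $j$ and $a=j+1$ lie in different cycles of $\lambda$, and by \eqref{defi:fence:attaching_pts} both positions $j$ and $j+1$ are attached to $S$, so the surfaces glued at $j$ and at $j+1$ both equal $S$. Hence each $d_j(\Sigma)$, $j\in\{i-L,\dots,i-1\}$, is an instance of Case 2(b) of Discussion \ref{disc:faces}: it merges two distinct boundary components of $S$, raising the genus of $S$ by one, and leaves every other ghost surface untouched.

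Next I would show the resulting cell is independent of $j$, which is part \ref{lemma:faces_fence:faces}. Write $\tilde\lambda = D_j(\lambda\circ(j\ j+1))$ as in Lemma \ref{lem:rho_face}. For $j$ in the interior of the fence, i.e.\ $j\in\{i-L+1,\dots,i-1\}$, both $j$ and $j+1$ are fixed points of $\lambda$, the transpositions in the formula for $D_j$ cancel, and $\tilde\lambda = s_j^\Delta\circ\lambda\circ d_j^\Delta$ is simply $\lambda$ with the fixed point $j$ deleted; for $j=i-L$ a direct computation shows that $\tilde\lambda$ coincides with the case $j=i-L+1$ after renumbering (the fixed point $i-L+1$ slides into the slot vacated by deleting $i-L$). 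Since $\{i-L+1,\dots,i\}$ is a block of consecutive fixed points of $\lambda$, deleting any one of them and renumbering gives the same permutation; consequently $\tilde\lambda$, the induced non-degenerate boundary $\tilde\rho=D_j(\rho)$, and the induced partition of the cycles of $\tilde\lambda$ into ghost surfaces (with the genus of $S$ bumped by one) are all independent of $j$. This gives $d_{i-1}(\Sigma)=\dots=d_{i-L}(\Sigma)$. For $k\notin\{i-L,\dots,i-1\}$ one checks, just as in Lemma \ref{lem:faces_fan}, that $d_k(\Sigma)\neq\tilde\Sigma$: for $k$ outside the $\rho$-chain $\rho(i-1)=i,\dots,\rho(i-L)=i-L+1$ this is already visible on $\tilde\rho=D_k(\rho)$, and for the two boundary values $k=i$ and $k=i-L-1$ it follows from the maximality clause in Definition \ref{defi:fence} together with the description of the affected ghost surface. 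I expect this last point to be the genuinely fiddly step, since one must treat separately the two ways maximality can fail --- namely $i+1$ (resp.\ $i-L-1$) failing to be a fixed point of $\lambda$, versus failing to be attached to a fresh boundary component of $S$ --- and rule out an accidental coincidence in each.

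Finally, part \ref{lemma:faces_fence:coefficient} is immediate from \ref{lemma:faces_fence:faces}. In the total differential $\del\Sigma=\sum_{k=0}^{n}(-1)^k d_k(\Sigma)$ the faces equal to $\tilde\Sigma=d_{i-1}(\Sigma)$ are precisely $d_{i-L}(\Sigma),\dots,d_{i-1}(\Sigma)$, so $\del_{\tilde\Sigma,\Sigma}=\sum_{k=i-L}^{i-1}(-1)^k$, which equals $0$ when $L$ is even and $(-1)^{i-1}$ when $L$ is odd.
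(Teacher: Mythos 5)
Your identification of each $d_j(\Sigma)$, $j\in\{i-L,\dots,i-1\}$, as a Case 2(b) face, your computation showing that these faces coincide, and the sign count giving part (ii) are fine and essentially the paper's argument. The genuine gap is in the inequality part of (i): your claim that for $k$ outside the chain (other than $k=i$, $k=i-L-1$) the inequality is ``already visible on $\tilde\rho=D_k(\rho)$'' is false. Unlike a fan, which is a condition on $\rho$, a fence is a condition on $\lambda$ together with the attaching data of one particular ghost surface, and neither $\rho$ nor $\lambda$ records which surface receives the genus increase. Concretely, take the degree-$5$ cell $\Sigma\in\SD_g^m$ with $\lambda=\mathrm{id}$, the suspension disk at $0$, $S=(0,0,\{(1),(2)\})$ and $T=(0,0,\{(3),(4),(5)\})$. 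This has a fence of length $L=1$ with endpoint $i=2$ witnessed by $S$, and $k=3$ lies in your ``generic'' range; yet $d_1(\Sigma)$ and $d_3(\Sigma)$ have the same $\tilde\rho$ and even the same $\tilde\lambda$ (the identity on $[4]$): both are genus-increasing merges, one performed on $S$, the other on the parallel fence of $T$, and the two faces differ only in the partition of the cycles into ghost surfaces and in where the new genus sits. So the conclusion of the lemma holds here, but not for the reason you give, and your proposed argument cannot be completed at the level of $\rho$.

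This is exactly where the paper's proof takes a different route: it first observes that any $k$ with $d_k(\Sigma)=d_{i-1}(\Sigma)$ must itself merge two boundary cycles of a single surface (otherwise the number of ghost surfaces, the punctures, or the total genus of the face would not match), it excludes $k=i$ because $d_i$ merges two different surfaces, and for $k\le i-L-1$ or $k\ge i+1$ it compares fences: such a face $d_k(\Sigma)$ still has a fence of length at least $L$ ending at $i$ or $i-1$, whereas $d_{i-1}(\Sigma)$ has a fence of length exactly $L-1$ with endpoint $i-1$, so the cells differ. Note also that your treatment of the boundary values $k=i$ and $k=i-L-1$ is only announced, not carried out. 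To repair your proof, replace the $\rho$-based step by an argument of this kind on the ghost-surface data (the partition of the cycles of $\tilde\lambda$ and the genera), for instance the fence-length comparison above.
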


\begin{proof}
  To see \ref{lemma:faces_fence:faces}, note that the numbers $i, \ldots, i-L+1$ are consecutive fixed points of $\lambda = \lambda(\Sigma)$ or equivalently $\rho(i-1) = i, \ldots, \rho(i-L) = i-L+1$.
  Therefore $D_{i-1}(\rho ) = \ldots = D_{i-L}(\rho)$ and
  all these faces merge two boundary cycles of the same surface $S$ to increase the genus of $S$ by one (see Discussion \ref{disc:faces}).
  The outcome is a cell that has a fence of length exactly $L-1$ with endpoint $i-1$.
  We showed $d_{i-1}(\Sigma) = \ldots = d_{i-L}(\Sigma)$.
  
  Let us assume, we have $d_{i-1}(\Sigma) = d_k(\Sigma)$ for some $k$.
  The face $d_{i-1}$ merges two boundaries of the same surface $S$, therefore $d_k$ has to merge two boundaries of the same surface $\tilde S$.
  This and the definition of a fence imply that $k$ must not be $i$, because $d_i$ has to merge two different surfaces.
  Observe that for $k \le i-L-1$ or $k \ge i+1$, the cell $d_k(\Sigma)$ has a fence of length at least $L$ ending at $i-1$ or $i$.
  However, $d_{i-1}$ has a fence of length exactly $L-1$ with endpoint $i-1$.
  Therefore $k$ has to be one of $i-1, \ldots, i-L$.
  This proofs \ref{lemma:faces_fence:faces}.
  
  Part \ref{lemma:faces_fence:coefficient} is an immediate consequence of \ref{lemma:faces_fence:faces}.
\end{proof}

\begin{definition}
  Let $S$ be a surface of a cell $\Sigma \in \SD_g^m$ or $\SD_{g,m}$.
  We say that \emph{$S$ ends with a fence of length $L$} (or \emph{with an odd fence}) if
  $\Sigma$ has a fence of length $L$ (or of odd length) with endpoint $end(S)$.
  In this case, we set $L_S = L$ and $L_S = 0$ otherwise.
\end{definition}

We now extend our discrete Morse flow given by fans, by using fences.

\begin{definition}[Collapsibles and redundants of type 1]
  \label{defi:collapsible_1}
  A cell $\Sigma \in \SD_g^m$ or $\SD_{g,m}$ is \emph{collapsible of type $1$} if the following holds:
  \begin{enumerate}
   \item It is neither collapsible nor redundant of type $0$;
   \item The cell $\Sigma$ has a surface $S$ ending with an odd fence;
   \item Every other surface $\tilde S$ of $\Sigma$ with $ft(\tilde S) \neq 0$ and $end(\tilde S) > end(S) > 0$ has genus zero.
  \end{enumerate}
  Among the surface of $\Sigma$ fulfilling the above conditions we pick $S$ with maximal endpoint i.e.\ $i = end(S)$ maximal.
  We say say that \emph{$S$ witnesses the collapsiblity of $\Sigma$}.
  In this case, $d_{i-1}(\Sigma)$ is the \emph{redundant partner of type $1$}.
\end{definition}

\begin{remark}
  \label{remark:suspended_cells_and_collapsibility_B}
    Note that in the definition of a fence we are only interested in surfaces that are not attached to $0$.
  This restriction ensures that the Morse flow behaves well with the stabilization map, which increases the genus of the surface with foot-point $0$.
  Therefore, in contrast to Remark \ref{remark:suspended_cells_and_collapsibility_A}, if a suspended cell $\Sigma = (\lambda, S_1, \ldots, S_k)$ is collapsible of type $1$, its face $d_0(\Sigma)$ might not be collapsible of type $1$.
\end{remark}

The definition above extends our matching of type 0 in the unenumerated case.
Furthermore, after defining the matching of type 0 in the enumerated case, the description above will also extend to this setting.
This will give the following lemma.
The proof of this result as well as the Morse matching in the enumerated case, both of which are technical in nature, are deferred to Section \ref{section:proof_AB}.

\begin{lemma}[Lemma B]
  \label{lemma_b_unenumerated}
  The collapsibles and their redundant partners of type $1$ extend the discrete Morse flow (of type $0$) on $\SD_g^m$, $\SD_{g,m}$, $\betterwidetilde\SD_g^m$ and $\betterwidetilde\SD_{g,m}$.
  Every cell $\Sigma$ is redundant or collapsible if it has punctures or degenerate boundary or if it has a surface $S$ of positive genus $g_S > 0$ not attached at $0$ i.e., with  $ft(S) > 0$.
  
  Moreover, the stabilization map respects this discrete Morse flow, i.e.\ (1) it sends essentials to essentials, redundants to redundants and collapsibles to collapsibles and
  (2) a redundant or collapsible cell is in the sub-complex if and only if its partner is.
\end{lemma}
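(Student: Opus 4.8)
The plan is to verify four things in turn, writing $\SDgm$ for any one of the four models $\SD_g^m$, $\SD_{g,m}$, $\betterwidetilde\SD_g^m$, $\betterwidetilde\SD_{g,m}$: \textbf{(a)} the type-$1$ collapsibles together with their redundant partners, added to the type-$0$ pairs, form a matching on the cellular graph; \textbf{(b)} this matching is acyclic, hence a discrete Morse flow; \textbf{(c)} every cell with punctures or degenerate boundary, or with a ghost surface of positive genus not attached at $0$, is collapsible or redundant; and \textbf{(d)} the stabilization map respects the flow in the stated sense. I expect (a), (c) and (d) to follow fairly directly from Lemma~\ref{lem:faces_fence}, Discussion~\ref{disc:faces} and the definitions, and the acyclicity in (b) to be the genuine obstacle; its proof, together with the construction and analysis of the label-based type-$0$ matching in the enumerated case, is what would be deferred to Section~\ref{section:proof_AB}.

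The first step for (a) is to record a \emph{type-$1$ redundant-partner criterion} parallel to Lemma~\ref{lem:redundant_parter_unenumerated}: a cell $\tilde\Sigma$ is the type-$1$ redundant partner of some cell if and only if it has a surface $S$ of positive genus with $ft(S)>0$ that does \emph{not} end with an odd fence, while every surface with foot-point $\neq 0$ and strictly larger endpoint has genus zero and does not end with an odd fence; this is immediate from Lemma~\ref{lem:faces_fence}\ref{lemma:faces_fence:faces} together with the genus-merging case of Discussion~\ref{disc:faces}. Invertibility of the matched coefficient is Lemma~\ref{lem:faces_fence}\ref{lemma:faces_fence:coefficient}, uniqueness of the partner is forced by the maximal-endpoint selection in Definition~\ref{defi:collapsible_1}, and disjointness from the type-$0$ pairs is built into condition~(1) of that definition; since the type-$1$ rule mentions only genus, the argument is unchanged in the enumerated case sitting on top of Lemma~\ref{lemma_a_enumerated}. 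For (c) one argues by cases: a cell with punctures or degenerate boundary is already matched of type~$0$ by Lemma~\ref{lemma_a_unenumerated} (resp.\ Lemma~\ref{lemma_a_enumerated}); otherwise, among the positive-genus surfaces with foot-point $>0$ pick $S$ of maximal endpoint and let $L$ be the length of the fence ending at $end(S)$. If $L$ is odd, conditions (1)--(3) of Definition~\ref{defi:collapsible_1} all hold (condition~(1) because $\Sigma$ has no punctures or degenerate boundary, condition~(3) by maximality of $end(S)$), so $\Sigma$ is collapsible of type~$1$; if $L$ is even, the criterion above presents $\Sigma$ as a type-$1$ redundant. Suspended cells need no special care here, because by Definition~\ref{defi:fence} a fence never involves the surface attached at $0$ (cf.\ Remark~\ref{remark:suspended_cells_and_collapsibility_B}).

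For (d) the key observation is that $\varphi$ changes only the genus $g_1$ of the ghost surface attached at position $0$ and leaves $\lambda$, $\rho$, the partition into ghost surfaces, and the numbers of punctures and degenerate boundaries unchanged. Consequently $\Sigma$ and $\varphi(\Sigma)$ have identical fans and fences, the same witnessing surfaces, and hence the same matched-partner face; moreover every type-$0$ matched face $d_{ft(S)}$ and every type-$1$ matched face $d_{i-1}$ (with $i-1\ge ft(S)>0$) commutes with $\varphi$ and leaves the surface at $0$, and therefore its genus, untouched. Since the subcomplex $\varphi(\SDgm)$ is exactly the set of cells with $g_1\ge 1$ (Lemma~\ref{lemma:stabilization_map_as_inclusion_of_a_sub-complex}), it follows at once that $\varphi$ sends essentials to essentials, redundants to redundants and collapsibles to collapsibles, and that a matched cell lies in the subcomplex precisely when its partner does.

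The remaining and hardest point is the acyclicity in (b), which I would deduce from Lemma~\ref{dmt_acyclicity} by exhibiting a partial order $\le$ on the collapsibles such that every excursion $c_1\ad r\au c_2$ forces $c_1\lneqq c_2$. The order I have in mind is lexicographic in: the number of punctures or degenerate boundary components (a face can only preserve or raise it, a type-$0$ matched edge strictly raises it, a type-$1$ matched edge preserves it); then the total genus of the ghost surfaces not attached at $0$; then the minimal foot-point of a type-$0$ witnessing surface, respectively the maximal endpoint of a type-$1$ witnessing surface; and finally a lexicographic comparison of $\lambda$. Verifying that a $\ad\au$-excursion strictly increases this key is a case-by-case analysis along the lines of Discussion~\ref{disc:faces}; the delicate part, which is the main obstacle, is to exclude mixed excursions alternating type-$0$ and type-$1$ collapses and to run the analogous argument for the enumerated, label-based matching — precisely the content of Section~\ref{section:proof_AB}. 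Granting (b), the Morse complex of $\SD(g+1,m)/\SDgm$ (with the restricted flow) is a homotopy retract of it by Theorem~\ref{thm:dmt_homology}, it has no essential cells in the relevant range by the same counting as in the proof of Theorem~\ref{theorem_a}, and Theorem~\ref{theorem_b} follows from Hurewicz's theorem.
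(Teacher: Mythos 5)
There is a genuine gap: the heart of this lemma \emph{is} the acyclicity verification, and your proposal does not carry it out. You correctly reduce everything to point (b), but then defer it to ``Section~\ref{section:proof_AB}'' --- which is circular, since that section is precisely where this lemma is proved. What the paper actually does there is extend the degree of degeneracy to the six-component key $\degen(\Sigma) = (\dim(\Sigma), s_\Sigma, m_\Sigma, -l_\Sigma, g_\Sigma, -L_\Sigma)$ (dimension, number of ghost surfaces, number of punctures/degenerate boundaries, total fan length, reduced genus, total fence length) and checks, case by case via Discussion~\ref{disc:faces}, that it changes strictly monotonically along every excursion $c \ad r \au \tilde c$, including the mixed ones where $c$ is collapsible of type $1$ and $\tilde c$ of type $0$ (or vice versa). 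Your sketched key --- punctures, then genus away from $0$, then ``witness position'', then $\lambda$ --- omits exactly the components $s_\Sigma$ and $-l_\Sigma$ that make those mixed cases work. For instance, if $c$ is type-$1$ collapsible ($m_c=0$, no odd fans) and $\tilde c$ is type-$0$ collapsible, the face $c \ad r$ must be a Case-1 face creating one puncture, so $m_c = m_{\tilde c} = 0$ and the reduced genus is unchanged; in the paper the strict inequality then comes from the fan lengths, $l_c \le l_r = l_{\tilde c} - 1$, which your key cannot see. Your third component is also not a well-defined comparison: it assigns a ``minimal foot-point'' to type-$0$ collapsibles and a ``maximal endpoint'' to type-$1$ collapsibles with no rule for comparing cells of different types, which is precisely where mixed excursions must be excluded.

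A secondary issue of the same kind: for the enumerated models $\betterwidetilde\SD_g^m$, $\betterwidetilde\SD_{g,m}$ the type-$0$ flow is not the fan-length matching but the label-based matching of Lemma~\ref{lemma_a_enumerated} (built from normalized sentences and the maps $f_n^i$), and the acyclicity of the extended flow there has to be argued against \emph{that} matching (the paper does this by reusing the type-$0$ degeneracy of the enumerated case and appending $g_\Sigma$ and $-L_\Sigma$); your proposal treats the enumerated case as identical to the unenumerated one and defers this too. Your points (a), (c) and (d) are fine and essentially what the paper takes as readily checked --- in particular your observation that fences never involve the surface at $0$, so that $\varphi$ preserves fans, fences and witnesses, is exactly the reason the flow is compatible with stabilization --- but as it stands the proposal proves the routine part and leaves the substantive part unproved.
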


With this lemma we proceed to proof our stability result.

\begin{proof}[Proof of Theorem \ref{theorem_b}]
  By $\SD(g,m)$ we denote one of $\SD_g^m$, $\SD_{g,m}$, $\betterwidetilde\SD_g^m$ or $\betterwidetilde\SD_{g,m}$.
  The stabilization map $\SD(g,m) \to \SD(g+1,m)$ is seen as the inclusion of a sub-complex by Lemma \ref{lemma:stabilization_map_as_inclusion_of_a_sub-complex}.
  In particular a cell $\Sigma \in \SD(g+1,m)$ is in the sub-complex $\SD(g,m)$ if and only if the surface $S$ with $ft(S) = 0$ has positive genus.
  
  Using the same argument as in the proof of of Theorem \ref{theorem_a}, we see that the essential cells not in $\SD(g,m)$ are of degree at least $m+g-1$.
  In particular, the Morse complex of the quotient $\SD(g+1,m) / \SD(g,m)$ has no cells in degree below $m+g-1$.
  By Theorem \ref{theorem_a} and Lemma \ref{lemma:1_connectedness}, the spaces in question are at least $1$-connected if $m > 2$.
  Applying the relative Hurewicz theorem yields Theorem \ref{theorem_b}.
\end{proof}

\section{Detecting non-trivial families}
\label{section:detecting_homology}
In this section, we present our methods to detect non-trivial homology classes.
We perform explicit computations in Subsection \ref{subsection:explicit_computations}.
In Subsection \ref{subsection:detecting_homology_via_string_topology}, we use String topology to interpret cycles in the space of Sullivan diagrams as operations on the Hochschild homology of Frobenius algebras.
It follows that our cycles are non-trivial because the associated operations are non-trivial.
Having non-trivial classes at hand, we cook up new non-trivial classes using PROP-compositions and transfers in Subsection \ref{subsection:detecting_homology_recipe}.

\subsection{Explicit computations}
\label{subsection:explicit_computations}
We first compute the fundamental group in a special case.

\begin{proposition}
  \label{proposition:fundamental_group_two_punctures}
  Consider the space of Sullivan diagrams $\SDs_g^2$.
  \begin{enumerate}[label={(\roman*)}]
    \item \label{prop_c:i}
      The fundamental group of $\SDs_g^2$ is
      \begin{align*}
        \pi_1( \SDs_g^2 ) \cong 
        \begin{cases}
          \mathbb Z\langle \alpha_0 \rangle & g=0 \\
          \mathbb Z / 2\mathbb Z \langle \alpha_g \rangle  & g > 0
        \end{cases}
      \end{align*}
      with $\alpha_g$ shown in Figure \ref{fig:generator_m_2}.
      The map induced by the stabilization map on fundamental groups sends $\alpha_g$ to $\alpha_{g+1}$.
    \item \label{prop_c:iii}
      Furthermore, $\alpha_g$ is in the image of the homomorphism induced by the map of spaces.
      \[
        B\Mod(S_{g,1}^2) \to \SDs_g^2
      \]
      The source is the classifying space of the mapping class group of the surface $S_{g,1}^2$ with one boundary component, genus $g$ and two punctures;
      and the target is the space of Sullivan diagrams $\SDs_g^2$.
      After taking fundamental groups, the generator $\alpha_g$ is in the image of the Dehn (half-)twist that exchanges the two punctures inside a small disk.
  \end{enumerate}
\end{proposition}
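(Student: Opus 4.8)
The plan is to analyze the $2$-skeleton of $\SDs_g^2$ explicitly, using the combinatorial model of Proposition \ref{proposition:combinatorial_1_SD} (the unparametrized unenumerated case, $\Sigma = (\lambda, S_1, \ldots, S_k)$). Since $m=2$, a non-degenerate Sullivan diagram has exactly two non-degenerate boundary cycles, so $\rho$ has exactly two cycles, and by Proposition \ref{proposition:support_of_homology}\ref{proposition:support_of_homology:not_supported} every homology/homotopy class is supported away from cells with punctures or degenerate boundary. First I would enumerate the relevant low-dimensional non-degenerate cells. In degree $0$ there is a single cell. In degree $1$ there is essentially one non-degenerate cell (a single chord splitting the ground circle into two boundary cycles, with the residual genus all carried by one ghost surface); call the corresponding loop $\alpha_g$, and draw it as in Figure \ref{fig:generator_m_2}. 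In degree $2$ I would list the non-degenerate $2$-cells together with the types of their three faces (as in the proof of Lemma \ref{lemma:1_connectedness}, using Discussion \ref{disc:faces}), discarding any face that is degenerate since those do not affect $\pi_1$ after the deformation retraction onto the non-degenerate part. This gives a presentation $\pi_1(\SDs_g^2) = \langle \alpha_g \mid R_g\rangle$ with one generator. The genus-$0$ case has no $2$-cells contributing a relation, giving $\mathbb Z$; for $g>0$ there is exactly one $2$-cell whose boundary word is $\alpha_g^2$ (the cell with a ghost surface of genus $g$ and two boundary components attached at two consecutive ground-circle positions, whose two non-degenerate faces both equal $\alpha_g$ — this is the Case 2(b) merge of Discussion \ref{disc:faces}), forcing $\pi_1 = \mathbb Z/2$. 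The bookkeeping of which $2$-cells appear and which faces survive is the main technical content, but it is a finite check once the combinatorial model is in hand.

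For part \ref{prop_c:iii}, I would use the composite $B\Mod(S_{g,1}^2) \simeq \cM(S_{g,1}^2) \simeq \Rad_1(S_{g,1}^2) \to \ov{\Rad}_1(S_{g,1}^2) \simeq \SDs_g^2$ and the explicit cellular description of this map via the radial projection of Remark \ref{remark:compare_combinatorial_1_SD_with_Rad}: a cell of $\Rad_1$ of type $\tau = (\tau_q|\cdots|\tau_1)$ maps to $\Sigma(\tau) = (\lambda, S_1,\ldots,S_k)$ with $\lambda^{-1} = \tau_q\cdots\tau_1$ and ghost surfaces recording the clusters. I would pick a loop in $\Rad_1(S_{g,1}^2)$ representing the half-twist that exchanges the two punctures inside a small disk — concretely a $1$-parameter family of radial slit configurations realizing this mapping class on $\pi_1$ of moduli space — and compute its radial projection at the level of $1$-cells. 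The key point is that under the radial projection this half-twist loop projects to a loop whose class in $\pi_1(\SDs_g^2)$ is $\alpha_g$ (after homotoping into the $1$-skeleton), because the swap of the two punctures corresponds on the combinatorial side to sliding the unique chord once around the ground circle, which is precisely the generator. I expect verifying this last identification — matching the explicit image of the half-twist loop under the radial projection with the edge $\alpha_g$ — to be the main obstacle, since it requires being careful about orientation conventions and about the degeneration/compactification identifications; the finiteness of the $2$-skeleton computation makes everything else routine.
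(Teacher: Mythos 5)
Your part (i) contains a genuine gap at the step where you discard degenerate cells and degenerate faces. Proposition \ref{proposition:support_of_homology}\ref{proposition:support_of_homology:not_supported} is a homology statement obtained from a chain-level Morse retraction; it does not give a deformation retraction of the space $\SDs_g^2$ onto its non-degenerate part, and the non-degenerate diagrams do not even form a subcomplex, since faces of non-degenerate cells can be degenerate (Case 1 of Discussion \ref{disc:faces} turns a boundary cycle into a puncture). Hence, when reading off $\pi_1$-relations from $2$-cells you may not simply delete degenerate faces from the boundary word. This matters concretely: the $2$-cell forcing $\alpha_g^2=1$ for $g>0$ does not have boundary word $\alpha_g^2$; its boundary word is of the form $\alpha\,\beta\,\alpha$, where $\beta$ is a degenerate $1$-cell of type $\beta$ (exactly one non-degenerate boundary). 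To conclude $\alpha_g^2=1$ one must first show that the loops given by the degenerate $1$-cells of types $\beta$ and $\gamma$ are nullhomotopic — which the paper does, using the expansion $2$-cells as in the proof of Lemma \ref{lemma:1_connectedness} (the argument still applies for $m=2$ because the relevant ghost surface has a puncture or degenerate boundary) — and those nullhomotopies use precisely the degenerate cells you propose to throw away. Likewise, for $g=0$ one must check that no relation on $\alpha_0$ survives after the $\beta$- and $\gamma$-generators are killed, which again requires keeping the degenerate $1$- and $2$-cells in the presentation. So the paper's computation is a generators-and-relations computation on the full $2$-skeleton, not on the non-degenerate part; your shortcut, as stated, is not justified, although once repaired it collapses into the paper's argument.

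For part (ii), your route through B\"{o}digheimer's radial model and the radial projection of Remark \ref{remark:compare_combinatorial_1_SD_with_Rad} is the alternative the paper itself only sketches (a pair of antipodal slits exchanged by a rotation through $\pi$); the paper's primary argument instead factors through the quotient from Costello's black and white graphs and exhibits an explicit class $\Lambda_g$, representing the half-twist, that maps onto $\alpha_g$. Either route is viable, but as written you leave the key identification — that the image of the half-twist loop, pushed into the $1$-skeleton, is the edge $\alpha_g$ — unverified, and that is where the actual content of part (ii) lies.
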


\begin{figure}[ht]
    \inkpic[.5\columnwidth]{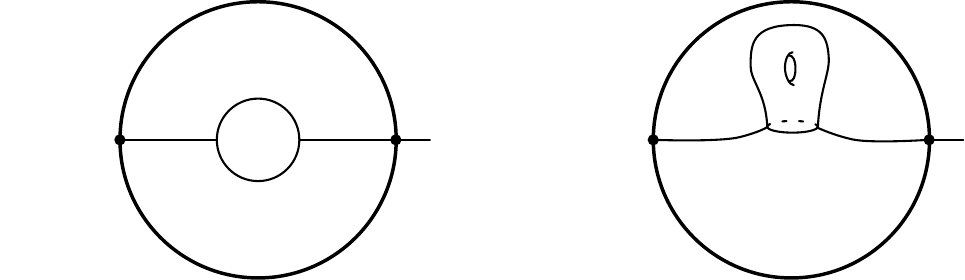}
    \caption{\label{fig:generator_m_2}The generators of $\pi_1(\SDs_g^2)$.}
\end{figure}

\begin{remark}
  \label{remark:vanishing_result_ww}
   In \cite{wahlwesterland} the authors describe a quotient map
   \[
    C_*(B\Mod(S_{g,p+q}^m)) \simeq \ph BW_{\mathrm{graphs}}(S_{g,p+q}^m)\fib \ph\SD(S_{g,p+q}^m),
  \]
  see Subsection \ref{subsection:BW_graphs}.
  In Proposition 2.14, they show that if $q\geq 1$ all stable classes vanish when passing to the quotient.
  Our Proposition \ref{proposition:fundamental_group_two_punctures} shows their bound $q \ge 1$ is sharp.
\end{remark}

\begin{proof}[Proof of Proposition \ref{proposition:fundamental_group_two_punctures}]
  Part \ref{prop_c:i} is a direct computation using generators and relations.
  The one-cells of $\SDs_g^2$ are of type $\alpha$, $\beta$ and $\gamma$ as in the proof of Lemma \ref{lemma:1_connectedness} (see also Figure \ref{fig:generators_and_relations}).
  The cells of type $\beta$ and $\gamma$ are trivial for the same reason as in the proof of Lemma \ref{lemma:1_connectedness}.
  One can see that $\pi_1(\SDs_0^2) \cong \Z$ is generated by the cell $\alpha_0$ seen on the left in Figure \ref{fig:generator_m_2}.
  Similarly, for $g > 0$, the fundamental group $\pi_1(\SDs_g^2) \cong \Z/2\Z$ is generated by the cell $\alpha_g$ seen on the right in Figure \ref{fig:generator_m_2}.
  However, there is the additional relation of the form $\alpha \beta \alpha$ and $\beta$ vanishes in $\pi_1(\SDs_g^2)$.
  By construction the stabilization map sends $\alpha_g \mapsto \alpha_{g+1}$.

  To see part \ref{prop_c:iii} observe first that $\pi_1(B\Mod(S_{0,1}^2)) = \Mod(S_{0,1}^2)$ is the second braid group.
  In particular, it is an infinite cyclic group generated the Dehn (half)twist that exchanges the two punctures.
  Now consider the quotient map described in Subsection \ref{subsection:BW_graphs}
  \[
    C_*(B\Mod(S_{0,1}^2))\simeq \ensuremath{1\hspace{1pt}\mhyphen}BW_{\mathrm{graphs}}(S_{g,1}^2)\fib \SD_{g,1}^2
  \]
  where the left hand side is Costello's fat graph model of the mapping class group and the quotient map is described in \cite[Theorem 2.9]{wahlwesterland}.
  One can see that the class $\alpha_g$ is hit by the class $\Lambda_g$ in black and white graphs shown in Figure \ref{fig:class_Lambda_g}.
  The class $\Lambda_g$ represents the element in $\Mod(S_{g,1}^2)$ that exchanges the two particles by a Dehn twist.
  \begin{figure}[ht]
    \inkpic[.3\columnwidth]{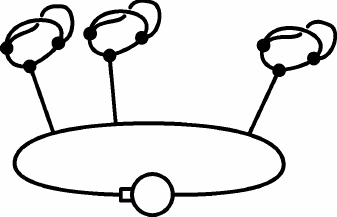}
    \caption{\label{fig:class_Lambda_g}The class $\Lambda_g \in BW_{\mathrm{graphs}}(S_{g,1}^2)$.}
  \end{figure}
  Another way to see part \ref{prop_c:iii} uses B{\"o}digheimer's radial model \cite{bodigheimer}.
  Here, the homology class in question is represented by a pair of antipodal radial slits in an annulus that exchange their positions using a rotation by the angle $\pi$.
\end{proof}

In contrast to our vanishing results, the homology of $\SDs$ is highly non-trivial.
Indeed, for small genus and number of boundary components we computed the homology of $\SDs_g^m$ and $\SDs_{g,m}$ by hand and with the help of a computer program.
See Appendix \ref{appendix:computations}.
In particular, for $\SDs_0^m$ we found that in these cases the first non-vanishing homology group is free abelian of rank one and it sits in degree $m'+1$ with $m'$ the largest even integer strictly smaller than $m$.
The Morse flow shows this is true in general for any $m$.

\begin{definition}
  \label{definition:class_zeta_m}
  For $m > 0$ let $\lambda^{\zeta}= (0\ 1\ \ldots\ m-1)$ and denote the unique cycle of $\lambda^{\zeta}$ by $\Lambda^\zeta$.
  We define $\zeta^m:=(\lambda^{\zeta},S_1)$ to be the Sullivan diagram where we attach a single surface $S_1 = (0,0, \Lambda^\zeta)$.
  Similarly, let $\lambda^\eta = (0)(1\ 2\ \ldots\ m)$ and denote the cycle corresponding to the fixed point $0$ by $\Lambda^0$ and the other cycle by $\Lambda^1$.
  We define $\eta^m:=(\lambda^{\eta},S_1,S_2)$ to be the Sullivan diagram where we attach the suspension disk $S_1 = (0,0, \Lambda^0)$ and $S_2 = (0,0, \Lambda^1)$.
  See Figure \ref{fig:class_zeta_m} for an example.
  \begin{figure}[ht]
    \inkpic[.8\columnwidth]{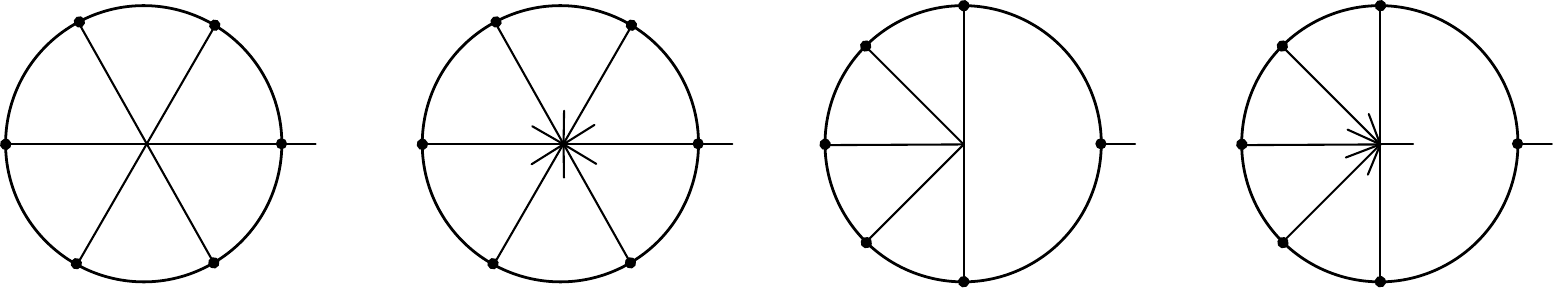}
    \caption{\label{fig:class_zeta_m}From left to right: the Sullivan diagrams $\zeta^6$, $\zeta_6$, $\eta^5$ and $\eta_5$.}
  \end{figure}
  
  In the parametrized unenumerated case, we obtain similar cells $\zeta_m$ and $\eta_m$ by putting $m$ leaves to the surface $S_1$ and $S_2$ respectively.
  More precisely, consider the permutation $\lambda_\zeta = (0\ l_1\ 1\ l_2\ \ldots\ m-1\ l_m)$ and let $\Lambda$ denote its unique cycle.
  We define $\zeta_m:=(\lambda_{\zeta},S_1)$ to be the Sullivan diagram where we attach a single surface $S_1 = (0,0, \Lambda_\zeta)$.
  Similarly consider the permutation $\lambda_\eta = (0)(l_1\ 1\ l_2\ 2\ l_3\ \ldots\ m)$ and denote by $\Lambda_0$ the cycle corresponding to the fixed point $0$ and by $\Lambda_1$ the other cycle.
  We define $\eta_m:=(\lambda_{\eta},S_1,S_2)$ to be the Sullivan diagram where we attach the suspension disk $S_1 = (0,0, \Lambda_0)$ and $S_2 = (0,0, \Lambda_1)$.
  
  Note in particular that $\zeta_m$ and $\zeta^m$ are cycles if and only if $m$ is even and $\eta^m$ and $\eta_m$ are cycles if and only if $m$ is odd.
\end{definition}

\begin{proposition}
  In the unenumerated case, the first non-vanishing reduced homology groups are as follows.
  For $m$ even they are given by
  \[
      H_{m-1}( \SDs_0^m; \Z ) \cong \mathbb \Z \langle \zeta^m \rangle \mspc{and}{20} H_{m-1}( \SDs_{0,m}; \Z ) \cong \mathbb \Z\langle \zeta_m \rangle
  \]
and for $m$ odd they are given by
  \[
      H_{m}( \SDs_0^m; \Z ) \cong \mathbb \Z \supset \Z \langle \eta^m \rangle \mspc{and}{20} H_{m}( \SDs_{0,m}; \Z ) \cong \mathbb \Z \supset \Z\langle \eta_m \rangle  \,.
  \]
\end{proposition}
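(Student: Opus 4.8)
The plan is to combine the discrete Morse flow constructed in Section~\ref{section:vanishing} with an explicit identification of the Morse complex in low degrees. By Theorem~\ref{theorem_a} (and its proof via Lemmas~\ref{lemma_a_unenumerated} and~\ref{lemma_a_enumerated}), every cell with punctures or degenerate boundary of positive degree is either collapsible or redundant, so the Morse complex is generated only by non-degenerate cells together with a single $0$-cell. For $\SDs_0^m$ (and $\SDs_{0,m}$), a non-degenerate Sullivan diagram of genus zero has all ghost surfaces disks and its non-degenerate boundary $\rho$ has exactly $m$ cycles. First I would enumerate, in each degree $d$, the Morse-essential cells: these are precisely the non-degenerate cells of genus $0$ with $m$ boundary cycles which carry no odd fan and are not redundant partners. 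Counting degrees via Remark~\ref{remark:top_degree_of_a_SD} ($\deg(\Sigma) = -2\chi(S_\Sigma)+\text{(number of leaves)}$ in the parametrized case, $\deg(\Sigma)=-2\chi(S_\Sigma)$ otherwise) shows the smallest degree in which a non-degenerate cell can live is $m-1$ when $m$ is even and $m$ when $m$ is odd, because a genus-zero surface with one outgoing and $m$ incoming boundary components has $-2\chi = 2(m-1)$ chords in top degree, and the minimal non-degenerate configurations are governed by parity.

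Second, I would show that in that minimal degree there is, up to the Morse flow, a \emph{unique} essential cell, namely $\zeta^m$ (resp.\ $\zeta_m$) when $m$ is even and $\eta^m$ (resp.\ $\eta_m$) when $m$ is odd. The cell $\zeta^m = (\lambda^\zeta, S_1)$ with $\lambda^\zeta = (0\,1\,\ldots\,m-1)$ has $\rho = \lambda^{-1}(0\,1\,\ldots\,m-1) = \mathrm{id}$ on $\{0,\ldots,m-1\}$, so it has $m$ fixed points of $\rho$, hence $m$ non-degenerate boundary cycles and no punctures; its degree is $m-1$. One checks it carries a fan of length $m$ with foot-point $0$; since $m$ is even, by Lemma~\ref{lem:faces_fan}(ii) the coefficient of its unique face $d_0(\zeta^m)$ in the boundary vanishes, so $\zeta^m$ is a cycle and is \emph{not} matched (an even fan does not make a cell collapsible of type $0$, and there is nothing below it to be a redundant partner of). A dimension count, using that any other non-degenerate genus-zero cell of degree $m-1$ would have to have the same $\rho$ (namely the identity on $m$ symbols of a degree-$(m-1)$ fat structure) and hence be isomorphic to $\zeta^m$, gives uniqueness. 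For $m$ odd the analogous analysis applies to $\eta^m=(\lambda^\eta,S_1,S_2)$ with $\lambda^\eta=(0)(1\,2\,\ldots\,m)$: here $\rho$ again has $m$ fixed points, the suspension disk sits at $0$, and the non-suspended part gives a fan of odd length $m-1$ at a nonzero foot-point, which would normally make it collapsible — so one must carefully inspect the Morse flow to see that the collapse is blocked by the suspension disk (Remark~\ref{remark:suspended_cells_and_collapsibility_A}), leaving $\eta^m$ essential of degree $m$; and $\eta^m$ is a cycle precisely when $m$ is odd.

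Third, from the Morse complex having no essential cells in positive degree $<m-1$ (resp.\ $<m$) and exactly one in degree $m-1$ (resp.\ $m$), Forman's Theorem~\ref{thm:dmt_homology} gives $\widetilde H_i(\SDs_0^m;\Z)=0$ for $i<m-1$ and $\widetilde H_{m-1}(\SDs_0^m;\Z)\cong\Z$, generated by the image $\iota(\zeta^m)$ of that essential cell under the inclusion $\iota\colon M_\bullet\hookrightarrow \SD_\bullet$; since $\zeta^m$ is itself a cycle representing that generator, $H_{m-1}(\SDs_0^m)\cong\Z\langle\zeta^m\rangle$ for $m$ even. For $m$ odd, $H_m(\SDs_0^m)\cong\Z$ and $\eta^m$ is a cycle, hence defines a class $\Z\langle\eta^m\rangle\subseteq H_m(\SDs_0^m)$; but — and this is why the statement only asserts containment — the single essential cell $\zeta^m$-type generator need not literally be $\eta^m$ but could be $\iota(\text{essential})$, a sum of $\eta^m$ with other (degenerate) cells, so $\eta^m$ generates a subgroup (possibly all) of $H_m$. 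The parametrized cases $\SDs_{0,m}$ run identically using Lemma~\ref{lemma_a_unenumerated} for the parametrized flow, with $\zeta_m,\eta_m$ in place of $\zeta^m,\eta^m$, noting the degree shift by $m$ leaves is already built into Remark~\ref{remark:top_degree_of_a_SD}.

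The main obstacle I anticipate is the uniqueness-of-essential-cell count in the odd case: one must verify that no non-degenerate genus-zero cell of degree $<m$ survives the Morse flow, and that in degree $m$ only the $\eta^m$-configuration is essential, which requires tracking how the type-$0$ fan-based matching interacts with the suspension disk and with the enumeration data in the parametrized/enumerated variants. A secondary subtlety is computing $\iota$ on the essential generator well enough to conclude that $\eta^m$ (not merely some homologous chain) represents a generator or at least a nonzero element — which is why the odd-$m$ statement is phrased with ``$\supseteq$''.
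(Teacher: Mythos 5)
Your overall strategy (the fan-based Morse flow from Lemma \ref{lemma_a_unenumerated}, then reading off low-degree homology from the essential cells) is the same as the paper's, but two steps in your argument are genuinely broken. First, in the even case, knowing that $\zeta^m$ is the \emph{unique} essential cell in degree $m-1$ does not by itself give $H_{m-1}\cong\Z\langle\zeta^m\rangle$: the Morse complex could still have a non-trivial differential from degree $m$ hitting $\zeta^m$, in which case $H_{m-1}$ of the Morse complex would be a finite cyclic group. You must list the essential cells in degree $m$ and verify that their Morse boundaries do not involve $\zeta^m$ (this is exactly the ``straightforward calculation'' after ``listing all essential cells of degree $m$'' in the paper). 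Second, in the odd case your uniqueness claim is false: there are in general \emph{several} essential non-degenerate genus-zero cells in degree $m$, not just the $\eta^m$-configuration. For instance, for $m=5$ the cell with fat structure $\lambda=(0\ 4\ 5)(1\ 2\ 3)$ and two disk ghost surfaces has $\rho=(0\ 3)(1)(2)(4)(5)$, hence five non-degenerate boundary cycles, genus zero, no punctures, and its only fan (at foot-point $1$) has even length $2$, so it is neither collapsible nor redundant — it is essential alongside $\eta^5$. Consequently you cannot conclude $H_m\cong\Z$ from a cell count; you must actually compute the homology of the Morse complex in degree $m$ (kernel/image of the Morse differentials from degree $m+1$ and into degree $m-1$) and check that $\eta^m$ is a cycle there which is not a boundary, which is what the paper does.

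Two further slips in the odd case, though they do not change the conclusion: your statement that no non-degenerate cell exists below degree $m$ is wrong — $\zeta^m$ is a non-degenerate cell of degree $m-1$ for odd $m$ as well; it is simply collapsible because its fan has odd length $m$ (this is the content of the second half of the proof of Theorem \ref{theorem_a}). And for $\eta^m$ the fan at foot-point $1$ has length $m-1$, which is \emph{even} when $m$ is odd, so $\eta^m$ is essential for the plain reason that it has no odd fan and no punctures; there is no ``blocking by the suspension disk'' mechanism in the type-$0$ flow — Remark \ref{remark:suspended_cells_and_collapsibility_A} says the opposite, namely that a suspended cell is collapsible of type $0$ if and only if its face $d_0$ is.
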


\begin{proof}
  By Theorem \ref{theorem_a}, the spaces $\SDs_0^m$ and $\SDs_{0,m}$ are simply connected.
  In our Morse flow, cells with punctures or degenerate boundary are either redundant or collapsible.
  For even $m$, it is easy to see that $\zeta^m$ respectively $\zeta_m$ is the unique essential cell in degree $m-1$.
  After listing all essential cells of degree $m$, a straightforward calculation implies that
  $\zeta^m$ respectively $\zeta_m$ generate the homology of the Morse complex in this degree, which is free abelian of rank one.
  In this particular case, the inclusion of the Morse complex into $\SDs_0^m$ respectively $\SDs_{0,m}$ sends $\zeta^m$ to $\zeta^m$ respectively $\zeta_m$ to $\zeta_m$.
  
  For odd $m$, there are several essential cells in degree $m$ and $m+1$.
  However, one can work out, that the homology of the Morse complex in this degree is free abelian of rank one.
  Moreover, the class $\eta^m$ respectively $\eta_m$ are not boundaries in the Morse complex and
  the inclusion of the Morse complex into $\SDs_0^m$ respectively $\SDs_{0,m}$ sends $\eta^m$ to $\eta^m$ respectively $\eta_m$ to $\eta_m$.
\end{proof}

\subsection{String Topology}
\label{subsection:detecting_homology_via_string_topology}
We now use string topology to further detect non-trivial classes in the homology of Sullivan diagrams following the construction of \cite{wahluniversal}.
We give a brief sketch of this idea.
For any Frobenius algebra $A$ we study operations of the form 
\[
  CC_*(A,A)^{\otimes q} \longrightarrow CC_*(A,A)^{\otimes p}
\]
where $CC_*(A,A)$ denotes the reduced Hochschild chains of $A$.
Tradler and Zeinalian in \cite{TradlerZeinalian}, describe an action of a Sullivan diagram on the Hochschild Homology of any finite dimensional, unital Frobenius algebra $A$.
In \cite{wahlwesterland}, Wahl and Westerland give a recipe of how to read a Sullivan diagram as an operation on the reduced Hochschild chains of the algebra $A$.

All natural operations on the Hochschild homology of Frobenius algebras form a chain complex and in \cite{wahluniversal} Wahl introduces a chain complex $Nat(q,p)$ of formal operations,
which are an approximation of the chain complex of natural operations.
Let $\SD(q,p)$ denote the chain complex of Sullivan diagrams with $p$ admissible circles and $p+q$ enumerated leaves, exactly one in each boundary cycle.
Wahl shows that there is an inclusion 
\[
 \SD(q,p)\cof Nat(q,p)
\]
and this inclusion is a split quasi-isomorphism, see \cite[Theorem 2.9]{wahluniversal}.
Therefore, a cycle in $\betterwidetilde{\SD}_{g,m} \subset \SD(m,1)$  that induces a non-trivial operation $HH_\ast(A,A)^{\otimes m} \to HH_\ast(A,A)^{\otimes 1}$ for some $A$ is a non-trivial class in $H_*( \betterwidetilde{\SD}_{g,m}; \mathbb Z)$.

Using this approach, we construct two infinite families of generators of $ \betterwidetilde{\SD}_{g,m}$.
We show these are cycles by direct computation and then prove they represent non-trivial homology classes by showing that they induce non-trivial operations in the  reduced Hochschild chains of a certain Frobenius algebra $A$.
We now construct these generators.

\begin{definition}
  We begin with the basic building blocks.
  See Figure \ref{figure:building_blocks} for examples for small $m$.
  \begin{enumerate}
  \item For $m>0$ consider the permutation $\widetilde{\lambda}_\zeta = (0\ l_1\ 1\ l_2\ \ldots\ l_{m-1}\ m-1\ l_m)$ and let $\Lambda_\zeta$ denote its unique cycle.
    We define the chain $\widetilde{\zeta}_m:=(\widetilde{\lambda}_{\zeta},S_1)\in \widetilde{\SD}_{0,m}$ to be the Sullivan diagram where we attach a single surface $S_1 = (0,0, \Lambda_\zeta)$.
  \item
    For $m>0$ we define the chain $\widetilde{\mu}_m:=(\lambda,S_0,S_1,\ldots, S_m) \in \widetilde{\SD}_{0,m}$ to be given by the following data:
    \begin{align*}
      \lambda_0 &= (0\ 2\ 4\ \dots \ 2m-2) \\
      \lambda_i &= (2i-1\ l_i) \text{ for } 1 \le i \le m \\
      \lambda &= \lambda_0 \lambda_1 \cdots \lambda_m \\
      S_i & =(0,0,\lambda_i) \text{ for } 0 \leq i \leq m
    \end{align*}
  \item
    For $m>1$ we define the chain $\widetilde{\omega}_m:=\widetilde{\omega}_{m,1}-\widetilde{\omega}_{m,2}\in \widetilde{\SD}_{0,m}$, where 
    $\widetilde{\omega}_{m,1}=(\lambda_1,S_{0,1},S_{1,1},S_2,\ldots, S_{m})$ and 
    $\widetilde{\omega}_{m,2}=(\lambda_2,S_{0,2},S_{1,2},S_2,\ldots, S_{m})$ 
    are given by the following data:
    \begin{align*}
      \lambda_{0,1} &=(0) \text{ and } \lambda_{0,2} =(0\ l_1)\\
      \lambda_{1,1} &=(1\ 3\ \ldots \ 2m-1 \ l_1) \text{ and } \lambda_{1,2} =(1\ 3\ \ldots \ 2m-1) \\
      \lambda_i &=( 2i - 2\ l_i) \text{ for } 2 \le i \le m \\
      \lambda_1 &= \lambda_{0,1} \lambda_{1,1} \lambda_2 \cdots \lambda_m \\
      \lambda_2 &= \lambda_{0,2} \lambda_{1,2} \lambda_2 \cdots \lambda_m \\
      S_{i,j} &= (0,0,\lambda_{i,j}) \text{ for } i =0,1 \text{ and } j = 1,2 \\
      S_i &= (0,0,\lambda_i) \text{ for } 2\leq i \leq m
    \end{align*}
  \item 
    We define the chain $\widetilde{\gamma}:=\widetilde{\gamma}_1+\widetilde{\gamma}_2 -\widetilde{\gamma}_3\in \widetilde{\SD}_{1,1}$ where for $1\leq i \leq 3$, 
    $\widetilde{\gamma}_i=(\lambda_i,S_{1,i}, S_{2,i})$ and these are given by the following data 
    \begin{align*}
      \lambda_{1,j} &= (0) \text{ for } j=1,2\\
      \lambda_{1,3} &= (0 \ l_1) \\
      \lambda_{2,1} &= (l_1 \ 1 \ 3 \ 2)\\
      \lambda_{2,2} &= (1 \ 3 \ l_1 \ 2)\\
      \lambda_{2,3} &= (1 \ 3 \ 2)\\
      \lambda_{j} &= \lambda_{1,j} \lambda_{2,j} \text{ for } j=1,2,3 \\
      S_{i,j} &= (0,0,\lambda_{i,j}) \text{ for } i = 1,2 \text { and } j=1,2,3
    \end{align*}
  \end{enumerate} 
\end{definition}

\begin{figure}[ht]
  \centering
  \inkpic[.8\columnwidth]{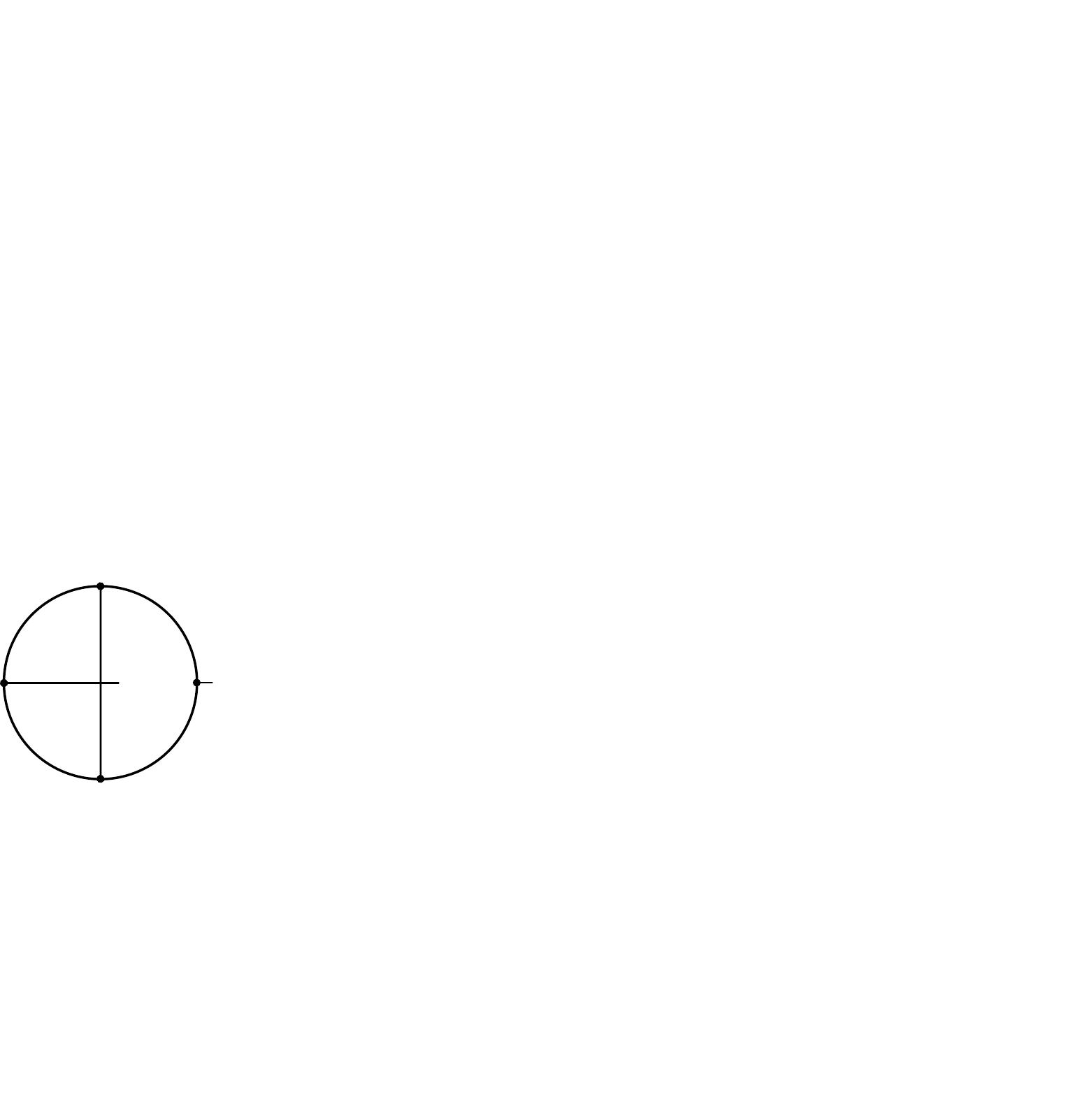}
  \caption{The building blocks.}
  \label{figure:building_blocks}
\end{figure}

\begin{lemma}
  \label{lem:cycles_building_blocks}
  For any $m>0$ the chains $\widetilde{\mu}_m, \ \widetilde{\omega}_{m}$ and $\widetilde{\gamma}$ are cycles in the chain complex of Sullivan diagrams.
  The chain $\widetilde{\zeta}_m$ is not a cycle.
\end{lemma}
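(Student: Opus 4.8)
The plan is a direct computation with the simplicial face maps. For a $1$-Sullivan diagram $\Sigma = (\lambda, S_1, \ldots, S_k)$ of degree $n$ the differential is $d(\Sigma) = \sum_{i=0}^{n}(-1)^i d_i(\Sigma)$, and each $d_i(\Sigma)$ is determined by Lemma \ref{lem:rho_face} (which gives its non-degenerate boundary $D_i(\rho)$ and fat structure $D_i(\lambda \circ (\rho(i)\ i))$) together with Discussion \ref{disc:faces} (which records how the ghost surfaces merge or gain punctures). So for each building block I would first write down $\lambda$ and compute the non-degenerate boundary $\rho = \lambda^{-1}(0\ 1\ \ldots\ n)$ explicitly; from this the list of all faces follows, and showing the chain is a cycle amounts to exhibiting a fixed-point-free, sign-reversing pairing of the terms in the signed sum (across the two or three summands when the building block is a linear combination).

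For $\widetilde{\zeta}_m$ one computes $\rho = (0\ l_1)(1\ l_2)\cdots(m-1\ l_m)$, a product of $m$ transpositions, each pairing a ground-circle vertex with a leaf; in particular $\widetilde{\zeta}_m$ has a fan of length $m$ with foot-point $0$. Every face $d_i$ is an instance of Case $3$ of Discussion \ref{disc:faces}: the $i$-th attaching chord slides onto the unique ghost surface and the boundary cycle carrying the leaf $l_{i+1}$ degenerates. Since the leaves are enumerated, the faces $d_0(\widetilde{\zeta}_m), \ldots, d_{m-1}(\widetilde{\zeta}_m)$ are pairwise distinct, so $d\widetilde{\zeta}_m = \sum_{i=0}^{m-1}(-1)^i d_i(\widetilde{\zeta}_m)$ is a non-zero chain for $m \ge 2$ (for $m = 1$ the cell has degree $0$ and there is nothing to check); hence $\widetilde{\zeta}_m$ is not a cycle. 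This is exactly the point at which the enumerated complex differs from the unenumerated one: in $\SDs_{0,m}$ or $\SDs_0^m$ all these faces coincide and, by Lemma \ref{lem:faces_fan}, cancel in pairs precisely when $m$ is even --- which is why $\zeta_m$ is a cycle there.

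For the three cycles the pairing is as follows. For $\widetilde{\mu}_m$ one finds $\rho = (0\ l_1\ 1)(2\ l_2\ 3)\cdots(2m-2\ l_m\ 2m-1)$, a product of $m$ disjoint $3$-cycles, so $\widetilde{\mu}_m$ is a non-degenerate $(2m-1)$-cell; for each $0 \le k \le m-1$ both $d_{2k}$ and $d_{2k+1}$ are instances of Case $2(a)$ gluing the leaf-disk $S_{k+1}$ onto the central disk $S_0$ from the two sides of the respective chord, and a short computation with the maps $D_i$ shows $d_{2k}(\widetilde{\mu}_m) = d_{2k+1}(\widetilde{\mu}_m)$; since these enter $d\widetilde{\mu}_m$ with opposite signs $(-1)^{2k}$ and $(-1)^{2k+1}$, all $2m$ terms cancel. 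For $\widetilde{\omega}_m = \widetilde{\omega}_{m,1} - \widetilde{\omega}_{m,2}$ one computes that the two non-degenerate boundaries are both products of $m$ disjoint $3$-cycles agreeing in all but the cycle through $0$, $l_1$ and $2m-1$, which is reversed between the two summands; the $m-1$ interior $3$-cycles $(2k-1\ l_{k+1}\ 2k)$ again produce cancelling pairs $d_{2k-1} = d_{2k}$ inside each summand, while the two remaining faces $d_0$ and $d_{2m-1}$ of $\widetilde{\omega}_{m,1}$ coincide with the corresponding faces of $\widetilde{\omega}_{m,2}$ --- one checks $D_0$ and $D_{2m-1}$ agree on the two boundaries and that the ghost surfaces match, the un-suspension at position $0$ of $\widetilde{\omega}_{m,1}$ (cf.\ Remark \ref{remark:suspended_cells_and_collapsibility_A}) reproducing the merge at the chord in $\widetilde{\omega}_{m,2}$ --- so they cancel in the difference, giving $d\widetilde{\omega}_m = 0$. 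Finally $\widetilde{\gamma} = \widetilde{\gamma}_1 + \widetilde{\gamma}_2 - \widetilde{\gamma}_3$ has degree $3$, so this is a finite check: starting from $\rho(\widetilde{\gamma}_1) = (0\ l_1\ 2\ 1\ 3)$, $\rho(\widetilde{\gamma}_2) = (0\ 2\ 1\ l_1\ 3)$, $\rho(\widetilde{\gamma}_3) = (0\ 2\ 1\ 3\ l_1)$ one lists the at most twelve degree-$2$ faces and verifies that $\sum_{i=0}^{3}(-1)^i\bigl(d_i\widetilde{\gamma}_1 + d_i\widetilde{\gamma}_2 - d_i\widetilde{\gamma}_3\bigr)$ vanishes.

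I expect the main obstacle to be the bookkeeping in the $\widetilde{\omega}_m$ and $\widetilde{\gamma}$ cases: one must identify faces coming from different summands in the enumerated setting, track exactly which ghost surfaces get merged (and in particular handle the suspension disk at position $0$, where collapsing $e_0$ un-suspends the cell rather than producing a fresh puncture), and make sure every sign is correct. By contrast, the $\widetilde{\zeta}_m$ and $\widetilde{\mu}_m$ claims are essentially immediate once their non-degenerate boundaries have been written down.
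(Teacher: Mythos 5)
Your proposal is correct and follows essentially the same route as the paper: compute the non-degenerate boundary $\rho$ for each building block and exhibit the sign-reversing coincidences of faces (your pairings $d_{2k}(\widetilde{\mu}_m)=d_{2k+1}(\widetilde{\mu}_m)$, $d_{2k-1}(\widetilde{\omega}_{m,j})=d_{2k}(\widetilde{\omega}_{m,j})$ and $d_i(\widetilde{\omega}_{m,1})=d_i(\widetilde{\omega}_{m,2})$ for $i=0,2m-1$ are exactly the identities listed in the paper's proof, and the pairwise-distinctness of the faces of $\widetilde{\zeta}_m$ is the paper's argument for the non-cycle claim). The only cosmetic difference is that you leave the degree-$3$ check for $\widetilde{\gamma}$ as a finite verification where the paper records the explicit face identities, and your aside on $m=1$ mirrors the same edge case the paper itself glosses over.
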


\begin{proof}
  A short computation shows that
  \begin{align*}
    d_i(\widetilde{\mu}_m) & =d_{i+1}(\widetilde{\mu}_m) \text{ for all } i \text{ even}\\
    d_i(\widetilde{\omega}_{m,j}) & =d_{i+1}(\widetilde{\omega}_{m,j}) \text{ for } j=1,2 \text{ and } 1\leq i \leq 2m-3 \text{ odd}\\
    d_i(\widetilde{\omega}_{m,1}) & =d_{i}(\widetilde{\omega}_{m,2}) \text{ for } i=0,2m-1\\ 
    d_1(\widetilde{\gamma}_{i}) & =d_{2}(\widetilde{\gamma}_{i}) \text{ for } i=1,2,3\\
    d_0(\widetilde{\gamma}_{1}) & =d_{0}(\widetilde{\gamma}_{3})\\
    d_3(\widetilde{\gamma}_{2}) & =d_{3}(\widetilde{\gamma}_{3})\\
    d_0(\widetilde{\gamma}_{2}) & =d_{3}(\widetilde{\gamma}_{1}).
  \end{align*}
  The result follows.
  To see that $\widetilde{\zeta}_m$ is not a cycle, notice that for $i\neq j$ we have that $d_i(\widetilde{\zeta}_m)\neq d_j(\widetilde{\zeta}_m)$.
\end{proof}

Using these building blocks we now construct other chains which we describe using the PROP composition as described in Definition \ref{def:composition}.

\begin{definition}
  Let $m > 0 $ and let $(c_1, \ldots , c_m)$ be a tuple of integers with $c_i>1$ for $1\leq i \leq m$ and set $c:=\sum_{i=1}^m c_i$.
  We define the chain $\widetilde{\Omega}_{(c_1, \ldots, c_m)}\in \widetilde{\SD}_{0,c}$ to be
  \[\widetilde{\Omega}_{(c_1, \ldots, c_m)}:=\widetilde{\zeta}_m\circ
    (\widetilde{\omega}_{c_1}\otimes
    \widetilde{\omega}_{c_2}\otimes
    \ldots \otimes
    \widetilde{\omega}_{c_m}).
  \] 
  Note in particular that $\widetilde{\Omega}_{(c)} = \widetilde{\zeta}_1\circ \widetilde{\omega}_c = \widetilde{\omega}_c$.
  See Figure \ref{figure:generator_Omega} for another example.
  
  Similarly, for $m> 0$ we define the chain $\widetilde{\Gamma}_{m}\in \widetilde{\SD}_{m,m}$ to be
  \[\widetilde{\Gamma}_{m}:=\widetilde{\zeta}_m\circ
    (\widetilde{\gamma}^{\otimes m}).
  \]
  Note in particular that $\widetilde{\Gamma}_1=\widetilde{\gamma}$.
\end{definition}

\begin{figure}[ht]
  \centering
  \inkpic[.8\columnwidth]{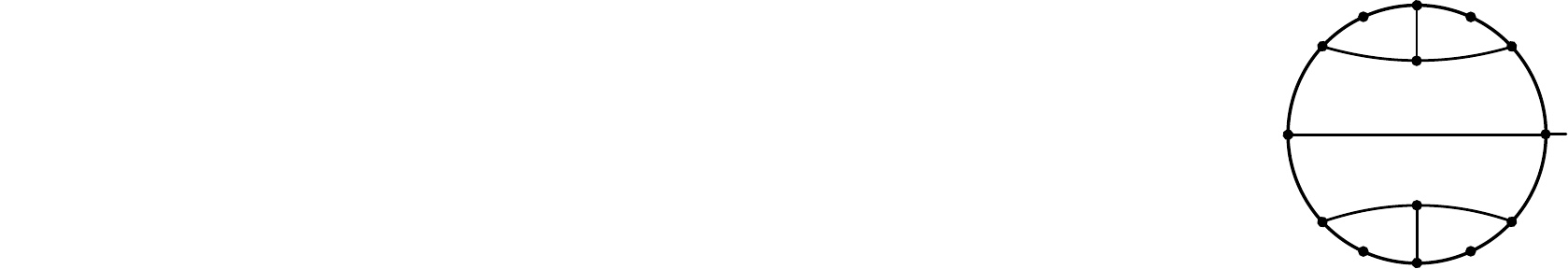}
  \caption{\label{figure:generator_Omega}The homology class $\widetilde{\Omega}_{(3,3)}$.}
\end{figure}

\begin{lemma}
  \label{lem:cycles_composed_classes}
  For any $m>0$ and any sequence of integers $(c_1, \ldots , c_m)$ with $c_i>1$ for $1\leq i \leq m$, the chains $\widetilde{\Omega}_{(c_1,\ldots,c_m)}$ and $\widetilde{\Gamma}_m$ are cycles in the chain complex of Sullivan diagrams.
\end{lemma}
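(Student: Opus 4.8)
The plan is to reduce the statement, via the Leibniz rule for the PROP composition, to a vanishing statement about the non-cycle $\widetilde{\zeta}_m$, and then to exploit the precise shape of the building blocks. Recall from Definition~\ref{def:composition} (and the remark following it) that $\circ$ is a degree-zero chain map. Since the differential on a tensor product of chain complexes of Sullivan diagrams is the graded Leibniz differential, and since each of $\widetilde{\omega}_{c_i}$ and $\widetilde{\gamma}$ is a cycle by Lemma~\ref{lem:cycles_building_blocks}, the contribution of the inner differential vanishes and we obtain
\[
  d\,\widetilde{\Omega}_{(c_1,\dots,c_m)} = (d\,\widetilde{\zeta}_m)\circ(\widetilde{\omega}_{c_1}\otimes\cdots\otimes\widetilde{\omega}_{c_m})
  \qquad\text{and}\qquad
  d\,\widetilde{\Gamma}_m = (d\,\widetilde{\zeta}_m)\circ\widetilde{\gamma}^{\otimes m}.
\]
For $m=1$ there is nothing to prove, since $\widetilde{\zeta}_1$ has degree $0$, hence $d\,\widetilde{\zeta}_1 = 0$ (and indeed $\widetilde{\Omega}_{(c)}=\widetilde{\omega}_c$, $\widetilde{\Gamma}_1=\widetilde{\gamma}$). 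For $m\ge 2$ I would write $d\,\widetilde{\zeta}_m=\sum_{i=0}^{m-1}(-1)^i d_i(\widetilde{\zeta}_m)$ and show that every summand $d_i(\widetilde{\zeta}_m)\circ Z$ vanishes, where $Z$ is either $\widetilde{\omega}_{c_1}\otimes\cdots\otimes\widetilde{\omega}_{c_m}$ or $\widetilde{\gamma}^{\otimes m}$.

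To do this I would analyse $d_i(\widetilde{\zeta}_m)$ using Discussion~\ref{disc:faces}. The fat structure $\lambda$ of $\widetilde{\zeta}_m$ is a single cycle and $\rho(i)\neq i$, so collapsing the $i$-th edge of the admissible cycle is an instance of Case~3: the attaching chord at $i$ slides onto the unique (genus-zero) ghost surface and splits its boundary into two. One reads off that $d_i(\widetilde{\zeta}_m)$ has exactly one degenerate boundary cycle (in the sense of Definition~\ref{definition:degenerate_diagram}), namely the single leaf $l_{i+1}$, now sitting on a free boundary component of that ghost surface. In $Z$ the factor $\widetilde{\omega}_{c_{i+1}}$ (respectively $\widetilde{\gamma}$) is plugged into the $(i+1)$-st incoming boundary of $\widetilde{\zeta}_m$, so in $d_i(\widetilde{\zeta}_m)\circ Z$ it is glued along precisely this leaf-only boundary. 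The crucial point is then that each of these building blocks is, by construction, an alternating sum of diagrams that differ only in the placement of the leaf $l_1$ and in whether the vertex $0$ lies on the suspension disk or on the surface carrying $l_1$; once the block is glued along a boundary cycle consisting of a single leaf, the vertex $0$ of its admissible cycle loses its admissible leaf and is no longer distinguished, so these differences disappear and the summands of the block become equal as Sullivan diagrams. Hence the alternating sum, and with it $d_i(\widetilde{\zeta}_m)\circ Z$, vanishes; summing over $i$ gives $d\,\widetilde{\Omega}_{(c_1,\dots,c_m)}=0$ and $d\,\widetilde{\Gamma}_m=0$.

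The main obstacle I expect is making this last step rigorous. This requires unwinding the composition of Definition~\ref{def:composition} in the degenerate situation, where the two endpoints of an admissible interval are both identified with one leaf, and then matching the cyclic orders $\lambda$ of $\widetilde{\omega}_{c,1},\widetilde{\omega}_{c,2}$ (respectively of $\widetilde{\gamma}_1,\widetilde{\gamma}_2,\widetilde{\gamma}_3$) to check that the resulting diagrams genuinely coincide once the distinguished vertex $0$ has been absorbed into the ground structure; the interplay with the suspension convention (compare the discussion of $\Psi$ and $d_0^1$ following Definition~\ref{definition:space_of_SD}) is the delicate bookkeeping. Should this route prove awkward, an alternative is to argue by brute force: compute all faces $d_j$ of $\widetilde{\Omega}_{(c_1,\dots,c_m)}$ and of $\widetilde{\Gamma}_m$ directly and verify, exactly as in the proof of Lemma~\ref{lem:cycles_building_blocks}, that consecutive faces coincide while the faces at the seams between $\widetilde{\zeta}_m$ and the spliced-in blocks cancel against one another; this is elementary but considerably longer.
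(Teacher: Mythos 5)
Your reduction via the Leibniz rule is legitimate and genuinely different from the paper's argument: since $\circ$ is a degree-zero chain map and the blocks are cycles by Lemma~\ref{lem:cycles_building_blocks}, it does suffice to show $d_i(\widetilde{\zeta}_m)\circ Z=0$ for every $i$; the paper instead carries out exactly the ``elementary but longer'' route you relegate to a fallback, namely a direct verification that the face identities of the building blocks persist in the composite (writing $\widetilde{\Gamma}_m=\widetilde{\Gamma}_m^{(j,1)}+\widetilde{\Gamma}_m^{(j,2)}-\widetilde{\Gamma}_m^{(j,3)}$ for each $j$ and checking coincidences of the faces $d_l,\dots,d_{l+3}$ with $l=4(j-1)$).

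However, your justification of the key vanishing $d_i(\widetilde{\zeta}_m)\circ Z=0$ has a genuine gap. First, it is inconsistent with the composition you are using: Definition~\ref{def:composition} defines $\Sigma'\circ\Sigma$ as the sum only over those gluings for which the degree is additive, and this restriction is precisely what makes $\circ$ the degree-zero chain map your Leibniz step relies on. The $(i{+}1)$-st incoming boundary of $d_i(\widetilde{\zeta}_m)$ is degenerate, i.e.\ it contains no edge of the admissible cycle, so when the admissible interval of a block of positive degree ($\widetilde{\omega}_{c_{i+1}}$ or $\widetilde{\gamma}$, degrees $2c_{i+1}-1$ and $3$) is glued along it, none of its interior vertices can subdivide an admissible edge and every gluing drops the degree. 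Hence $d_i(\widetilde{\zeta}_m)\circ Z$ vanishes as an \emph{empty} sum, not because glued summands coincide and cancel; the ``cancellation after the vertex $0$ loses its distinguished role'' picture describes terms that are simply not present in the composition. Second, even taken on its own terms the cancellation mechanism fails for $\widetilde{\Gamma}_m$: the block is $\widetilde{\gamma}=\widetilde{\gamma}_1+\widetilde{\gamma}_2-\widetilde{\gamma}_3$, whose coefficients sum to $+1$, so if the three summands became equal after gluing, the result would be a single diagram with coefficient $1$, not $0$. The correct repair of your route is the degree argument just indicated (no degree-additive gluing exists along a degenerate incoming boundary for a block of positive degree); with that substitution your proof goes through, and otherwise the direct face computation of the paper remains available.
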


\begin{proof}
  This follows from noticing the boundaries of these generators behave locally as for the basic building blocks.
  We make this precise for $\widetilde{\Gamma}_m$.
  Let $1\leq j \leq m$, $1\leq k \leq 3$ and let 
  \[
    \widetilde{\gamma}^{(j, k)}:=\sum_{1\leq i_s\leq 3} (-1)^\sigma
    \widetilde{\gamma}_{i_1}\otimes \widetilde{\gamma}_{i_2}\otimes \ldots
    \otimes \widetilde{\gamma}_{i_{j-1}}\otimes \widetilde{\gamma}_{k}\otimes \widetilde{\gamma}_{i_{j+1}}\otimes \ldots
    \otimes \widetilde{\gamma}_{i_m}
  \]
  and 
  \[
    \widetilde{\Gamma}_m^{(j,k)}:=\widetilde{\zeta}_m\circ
    (\widetilde{\gamma}^{(j,k)})
  \]
  where 
  \[
    \sigma (i_1, i_2, \ldots, i_m) = \#\{i_s=3 \mid 1\leq s \leq m, s \neq k\}.
  \]
  Notice that for any $1\leq j \leq m$
  \[
    \widetilde{\gamma}^{\otimes m}= \widetilde{\gamma}^{(j, 1)}+\widetilde{\gamma}^{(j, 2)}-\widetilde{\gamma}^{(j, 3)}
  \]
  and thus 
  \[\widetilde{\Gamma}_m=\widetilde{\Gamma}_m^{(j, 1)}+\widetilde{\Gamma}_m^{(j, 2)}-\widetilde{\Gamma}_m^{(j, 3)}.\]
  Now let $l=4(j-1)$ and note that
  \begin{align*}
    d_{l+1}(\widetilde{\Gamma}_m^{(j,k)}) & =d_{l+2}(\widetilde{\Gamma}_m^{(j,k)}) \text{ for } k=1,2,3\\
    d_{l}(\widetilde{\Gamma}_m^{(j,1)}) & =d_{l}(\widetilde{\Gamma}_m^{(j,3)})\\
    d_{l+3}(\widetilde{\Gamma}_m^{(j,2)}) & =d_{l+3}(\widetilde{\Gamma}_m^{(j,3)})\\
    d_l(\widetilde{\Gamma}_m^{(j,2)}) & =d_{l+3}(\widetilde{\Gamma}_m^{(j,1)})
  \end{align*}
  which shows that $\widetilde{\Gamma}_m$ is a cycle.
  The case of $\widetilde{\Omega}_{(c_1,\ldots,c_m)}$ follows similarly.
\end{proof}

In \cite[Section 4.2]{wahluniversal}, Wahl shows that for $m>0$ the $\widetilde{\mu}_m$'s are non-trivial classes of the homology of $\SD$ by showing that these cycles induce non-trivial operations.
We extend these results to all the infinite families of cycles constructed above.

\begin{remark}
  \label{remark:relating_mu_and_Omega}
  The classes found by Wahl $\widetilde{\mu}_m$ and the classes $\widetilde{\Omega}_{(c_1, \ldots, c_k)}$ with $\sum_{i=1}^k c_i = m$ have the same degree and topological type.
  A simple argument shows that $\widetilde{\mu}_m-\widetilde{\omega}_{m}$ is a boundary and thus
  $\widetilde{\mu}_m$ and $\widetilde{\omega}_{m}=\widetilde{\Omega}_{(m)}$ are homologous.
  We show this graphically in Figure \ref{homologous}.
  The general case, follows in exactly the same way.
  \begin{figure}[ht]
    \centering
    \inkpic[.7\columnwidth]{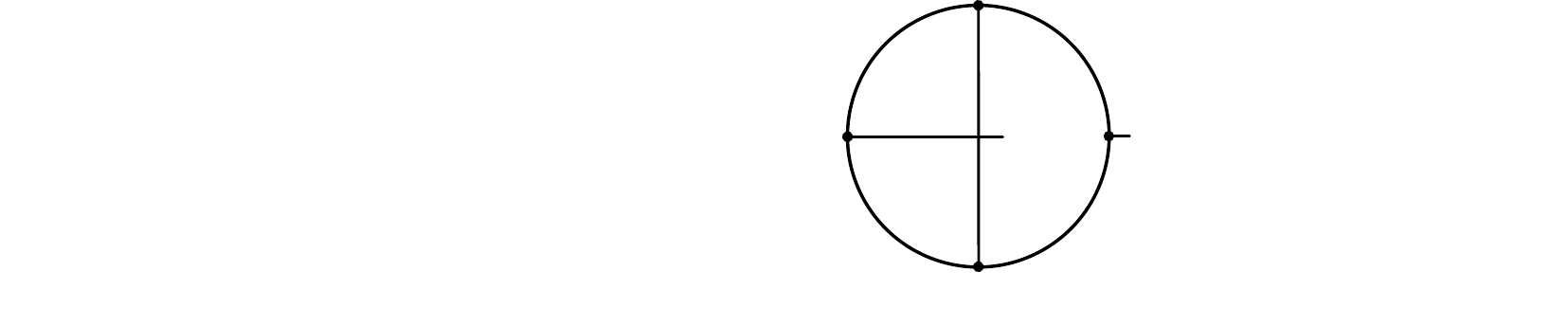}
    \caption{A Sullivan diagram whose boundary is $\tilde\mu_3 - \tilde\omega_3$}
    \label{homologous}
  \end{figure}
  
  However, if $(c_1, \ldots, c_k)$ is a sequence of length two or more, we do not know if $\widetilde{\mu}_m$ and $\widetilde{\Omega}_{(c_1, \ldots, c_k)}$ are homologous.
  Furthermore, the classes $\widetilde{\Gamma}_m$ are certainly not homologous to the ones found by Wahl, since they are of a different topological type.
\end{remark}

\begin{proposition}
  \label{proposition:non_trivial_op}
  For any $m>0$ and any sequence of integers $(c_1, \ldots , c_m)$ with $c_i>1$ for all $1\leq i \leq m$ the cycles $ \widetilde{\Omega}_{(c_1, \ldots , c_m)}$ and $\widetilde{\Gamma}_m$ represent
  non-trivial classes of infinite order in the homology of the complex of Sullivan diagrams.
\end{proposition}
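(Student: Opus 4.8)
The plan is to follow the template that Wahl uses in \cite[Section 4.2]{wahluniversal} for the classes $\widetilde{\mu}_m$: since the inclusion $\SD(q,p)\hookrightarrow Nat(q,p)$ is a split quasi-isomorphism (\cite[Theorem 2.9]{wahluniversal}), it suffices to exhibit, for each of the cycles $\widetilde{\Omega}_{(c_1,\ldots,c_m)}$ and $\widetilde{\Gamma}_m$, a finite-dimensional unital symmetric Frobenius algebra $A$ over a field such that the associated operation on $HH_\ast(A,A)$ (via the Tradler--Zeinalian / Wahl--Westerland recipe) is non-zero. This will show the class is non-trivial; to get \emph{infinite order}, I would work over $\Q$ (or over $\Z$ and observe the operation lands in a free group), so that non-triviality of the operation forces the class to generate a $\Z$-summand rather than a torsion group. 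Concretely, the rational Frobenius algebras $A = \Q[x]/(x^{n+1})$ with the trace $\langle x^n\rangle = 1$ are the natural test algebras, exactly as in \cite{wahluniversal}; I would pick $n$ large enough (depending on $m$ and the $c_i$) so that the Hochschild classes the operation is evaluated on are non-zero.

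\textbf{Key steps.} First, recall from Remark \ref{remark:relating_mu_and_Omega} that $\widetilde{\Omega}_{(m)} = \widetilde{\omega}_m$ is homologous to $\widetilde{\mu}_m$, so the $m=1$-factor case is already covered by Wahl. For the general case I would proceed in two stages. Stage one: compute the operation associated to each building block $\widetilde{\omega}_c$ and to $\widetilde{\gamma}$ on Hochschild homology. For $\widetilde{\omega}_c$ this should, up to sign and a known class, reproduce (a multiple of) the operation of Wahl's $\widetilde{\mu}_c$ because they are homologous; for $\widetilde{\gamma}\in\widetilde{\SD}_{1,1}$ one computes directly the genus-one ``one-in one-out'' operation it defines. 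Stage two: use that the PROP composition in Sullivan diagrams is compatible with composition of operations on $HH_\ast$ (this is the content of \cite{wahlwesterland}, and is what makes $\SD$ act on Hochschild homology), so that the operation of $\widetilde{\Omega}_{(c_1,\ldots,c_m)} = \widetilde{\zeta}_m\circ(\widetilde{\omega}_{c_1}\otimes\cdots\otimes\widetilde{\omega}_{c_m})$ is the composite of the operation of $\widetilde{\zeta}_m$ with the tensor product of the operations of the $\widetilde{\omega}_{c_i}$, and similarly $\widetilde{\Gamma}_m = \widetilde{\zeta}_m\circ(\widetilde{\gamma}^{\otimes m})$. One then only needs to check that (i) $\widetilde{\zeta}_m$ (which is \emph{not} a cycle, but is a legitimate chain, cf.\ Lemma \ref{lem:cycles_building_blocks}) induces a non-degenerate ``multiplication-type'' chain operation $HH_\ast(A,A)^{\otimes m}\to HH_\ast(A,A)$ — essentially an $m$-fold Hochschild shuffle/coproduct pairing — and (ii) the operations of $\widetilde{\omega}_{c_i}$ and of $\widetilde{\gamma}$ applied to carefully chosen generators of $HH_\ast(A,A)$ produce Hochschild classes on which $\widetilde{\zeta}_m$ does not vanish. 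Since the $\widetilde{\Gamma}_m$ are of genus $m$ and the $\widetilde{\Omega}$'s of genus zero, their topological types are distinct (Remark \ref{remark:relating_mu_and_Omega}), so the two families give genuinely different classes and no coincidence needs to be ruled out between them.

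\textbf{Main obstacle.} The hard part will be the explicit Hochschild-homology computation of stage one and the non-vanishing check in stage two: unwinding the Wahl--Westerland recipe for reading a Sullivan diagram as an operation on the (reduced) Hochschild chain complex is combinatorially heavy, and one has to track signs and the precise Hochschild classes carefully to be sure the composite is non-zero rather than, say, killed by a degree or parity obstruction. In particular one must verify that for the chosen $A = \Q[x]/(x^{n+1})$ the inputs on which we evaluate $\widetilde{\Gamma}_m$ and $\widetilde{\Omega}_{(c_1,\ldots,c_m)}$ survive: the genus of $\widetilde{\gamma}$ means its operation involves the Connes $B$-operator / the Frobenius coproduct in an essential way, and one needs $n$ large enough that this does not vanish for dimension reasons. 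A cleaner route for $\widetilde{\Omega}$, which I would try first, is to reduce entirely to Wahl's theorem: show that $\widetilde{\Omega}_{(c_1,\ldots,c_m)}$ maps under the PROP structure to (a non-zero multiple of) a class built from the $\widetilde{\mu}_{c_i}$, whose non-triviality is already known, leaving only $\widetilde{\Gamma}_m$ to be handled by a direct Frobenius-algebra computation. Finally, infinite order follows once non-triviality is established over $\Q$: the operation being non-zero over $\Q$ shows the image of the class in $H_\ast(\SD;\Q)$ is non-zero, hence the integral class is not torsion.
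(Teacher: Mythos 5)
Your overall strategy coincides with the paper's: both arguments detect $\widetilde{\Omega}_{(c_1,\ldots,c_m)}$ and $\widetilde{\Gamma}_m$ by reading them, via the Tradler--Zeinalian/Wahl--Westerland recipe, as operations on the Hochschild complex of an explicit Frobenius algebra and concluding non-triviality and infinite order from non-vanishing of the induced operation. The differences are in the execution. The paper does not use $\Q[x]/(x^{n+1})$: it takes the two-dimensional graded algebra $A=\Z[x]/(x^2)=H^*(S^1)$, evaluates the fully composed diagrams directly on $x^{\otimes c}\in HH_0(A)^{\otimes c}$ (resp.\ $x^{\otimes m}$), and uses three structural observations about suspended versus unsuspended diagrams to see that the outputs are $1\otimes x^{\otimes 2c-1}$ and $2^m(1\otimes x^{\otimes 4m-1})$, generators of infinite cyclic summands of $HH_*(A,A)$ by \eqref{HH_A_over_Z}; working integrally with this tiny algebra gives non-triviality and infinite order in one stroke, with no need to tune $n$ or pass to $\Q$. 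Your modular plan (compute the blocks $\widetilde{\omega}_{c_i}$, $\widetilde{\gamma}$ and compose) is legitimate since the Hochschild action is a dg-PROP action, but because $\widetilde{\zeta}_m$ is not a cycle it only acts at chain level, so you must in the end evaluate on explicit chain representatives --- which is essentially the paper's direct computation anyway; the explicit non-vanishing check you defer as ``the main obstacle'' is therefore not a verification left to the reader but the entire content of the proof. Finally, the ``cleaner route'' you propose for $\widetilde{\Omega}$ would not work: replacing the $\widetilde{\omega}_{c_i}$ by the homologous $\widetilde{\mu}_{c_i}$ inside $\widetilde{\zeta}_m\circ(-)$ need not preserve the homology class, since the difference is $\widetilde{\zeta}_m\circ(\cdots\otimes\partial b\otimes\cdots)$ with $\widetilde{\zeta}_m$ not closed, and indeed the paper states in Remark \ref{remark:relating_mu_and_Omega} that for sequences of length at least two it is not known whether $\widetilde{\Omega}_{(c_1,\ldots,c_m)}$ is homologous to any $\widetilde{\mu}$-class; moreover, non-triviality of the individual factors would not by itself imply non-triviality of the composite class, so a direct evaluation as in the paper cannot be avoided.
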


We will prove the proposition by considering the Frobenius algebra $A=\Z[x]/(x^2)$ with $\vert x \vert = 1$, where the coproduct is given by $\nu(1)=1\otimes x + x\otimes 1$ and $\nu(x)=x\otimes x$.
For the convenience of the reader, let us recall the Hochschild homology of $A$ for $n>0$.
\begin{align}
  \label{HH_A_over_Z}
  HH_n( A, A ) \cong
  \begin{cases}
    \Z \langle x^{\otimes n+1} \rangle & \text{$n$ even}\\
    \Z \langle 1 \otimes x^{\otimes n} \rangle \oplus \Z/2\Z \langle x^{\otimes n+1} \rangle & \text{$n$ odd}
  \end{cases}
\end{align}

\begin{remark}
  The Frobenius algebra $A$ is actually the cohomology algebra of $S^1$ i.e.\ $A:=H^*(S^1)$.
  Moreover, up to a degree shift and signs it is also the cohomology algebra of $S^n$ for $n\geq 2$.
  Since signs and degrees do not play a role in the proof of Proposition \ref{proposition:non_trivial_op}, the same argument of \cite[p.~29]{wahluniversal} shows that
  $\tilde\Omega_{(c_1, \ldots , c_m)}$ and $\tilde\Gamma_m$ give non-trivial string operations on  $H_*(LS^n)$, where $LS^n$ is the free loop space of $S^n$.
\end{remark}

\begin{proof}[Proof of Proposition \ref{proposition:non_trivial_op}]
These chains are cycles by Lemma \ref{lem:cycles_composed_classes}.
  In order to see these cycles represent non-trivial classes in homology, we read them as operations on the Hochschild homology of $A$.
  Let $\xi\in \widetilde{\SD}_{g,m}$ denote any of these cycles.
  Following the recipe given in \cite[Section 6.2]{wahlwesterland}, the cycle $\xi$ induces an operation
  \[
    \xi_{*}:HH_*(A)^{\otimes m} \to HH_{*+|\xi|}(A)
  \]
  and we test it on the non-trivial element $x^{\otimes m} \in HH_0(A)^{\otimes m}$.
  We briefly describe how to graphically compute $\xi_{*}(x^{\otimes m})$.
  For a general description and further details we refer the reader to \cite{wahluniversal, wahlwesterland}.

  Assume first that $\xi$ is a single Sullivan diagram.
  Choose an essentially trivalent representative of $\xi$ and remove the edges from the admissible cycle and its admissible leaf.
  This gives a trivalent fat graph $\Gamma_\xi$ with $m$ enumerated leaves which we think of as ``incoming" and $1 + |\xi|$ enumerated vertices of valence one which we think of as ``outgoing".
  The enumerated vertices correspond to the vertices in the admissible cycle in the cyclic order in which they occur starting with the vertex to which the admissible leaf was attached.
  See Figure \ref{figure:example_map_to_nat} for an example.
  \begin{figure}[ht]
    \begin{center}
      \begin{minipage}{1.5cm}
        \centering
        $(a)$
      \end{minipage}
      \begin{minipage}{.15\columnwidth}
        \centering
        \inkpic[\columnwidth]{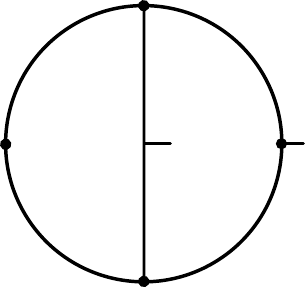}
      \end{minipage}
      \begin{minipage}{2cm}
        \centering
        $\leadsto$
      \end{minipage}
      \begin{minipage}{.20\columnwidth}
      \tikz[baseline={([yshift=-3.5pt]current bounding box.center)}, x=1ex, y=1ex]
      {
        \draw[line width=.8pt] (0,0) to (6,0) to (8,2) to (10,2) node[shape=circle, fill=black, inner sep=1pt] {};
        \draw[line width=2pt, color=white]                (6,2) to (8,0);
        \draw[line width=.8pt] (0,3) to (2,3) to (4,2) to (6,2) to (8,0) to (10,0) node[shape=circle, fill=black, inner sep=1pt] {};
        \draw[line width=.8pt]          (2,3) to (4,4) to (10,4) node[shape=circle, fill=black, inner sep=1pt] {};
        \draw node[shape=circle, fill=black, inner sep=1pt] at (10,6) {};
        \draw node at (-1,0) {$2$};
        \draw node at (-1,3) {$1$};
        \draw node at (12,0) {$e_3$};
        \draw node at (12,2) {$e_2$};
        \draw node at (12,4) {$e_1$};
        \draw node at (12,6) {$e_0$};
      }
      \end{minipage}
    \end{center}
    
    \begin{center}
      \begin{minipage}{1.5cm}
        \centering
        $(b)$
      \end{minipage}
      \begin{minipage}{.15\columnwidth}
        \centering
        \inkpic[\columnwidth]{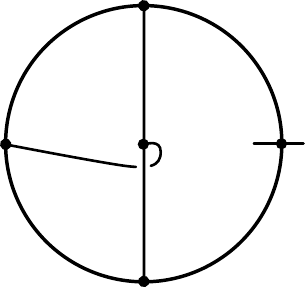}
      \end{minipage}
      \begin{minipage}{2cm}
        \centering
        $\leadsto$
      \end{minipage}
      \begin{minipage}{.20\columnwidth}
        \tikz[baseline={([yshift=-3.5pt]current bounding box.center)}, x=1ex, y=1ex]
        {
          \draw[line width=.8pt] (2,3) to[out=180, in=90] (0,2) to[out=270, in=180] (2,1);
          \draw[line width=.8pt] (2,1) to (4,1) to (6,0) to (8,0) to (10,2) to (12,2) node[shape=circle, fill=black, inner sep=1pt] {};
          \draw[line width=2pt, color=white] (6,2) to (8,2) to (10,0) to (12,0) node[shape=circle, fill=black, inner sep=1pt] {};
          \draw[line width=.8pt]          (4,3) to (6,2) to (8,2) to (10,0) to (12,0) node[shape=circle, fill=black, inner sep=1pt] {};
          \draw[line width=.8pt] (2,3) to (4,3) to (6,4) to (12,4) node[shape=circle, fill=black, inner sep=1pt] {};
          \draw[line width=.8pt] (0,6) to (12,6) node[shape=circle, fill=black, inner sep=1pt] {};
          \draw node[shape=circle, fill=black, inner sep=1pt] at (12,6) {};
          \draw node at (-1,6) {$1$};
          \draw node at (14,0) {$e_3$};
          \draw node at (14,2) {$e_2$};
          \draw node at (14,4) {$e_1$};
          \draw node at (14,6) {$e_0$};
        }
      \end{minipage}
    \end{center}

    \caption{\label{figure:example_map_to_nat}
      Two Sullivan diagrams and their corresponding trivalent graphs.
      Diagram (a) induces the operation:
      $x\otimes x \mapsto 1\otimes x\otimes x \otimes x \neq 0.$
      Diagram (b) induces the operation:
      $x\mapsto  x \otimes 1\otimes x\otimes x + x \otimes x\otimes 1\otimes x + x \otimes x\otimes x \otimes 1=0$.
    }
  \end{figure}

  We place an $x$ in each of the incoming leaves and proceed to ``read" the graph as a composition of basic operations in $A$ as detailed in Figure \ref{figure:atoms_fat_graphs}.
  The result of $\xi_*(x^{\otimes m})$ is the tensor product of the entries at the $1+|\xi|$ outgoing vertices.
  See Figure \ref{figure:example_map_to_nat} for examples.
  \begin{figure}[ht]
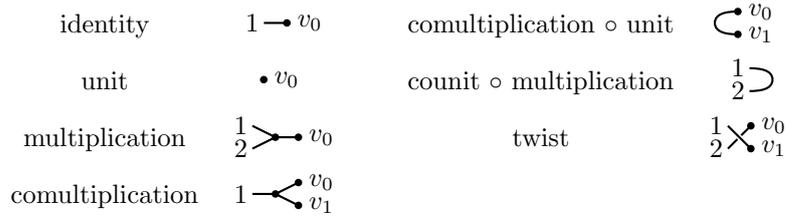

    \begin{tabular}{ccccc}
      identity & $\bbid$ & \hspace{1ex} & comultiplication $\circ$ unit & $\bbcomultunit$ \\
      unit & $\bbunit$ && counit $\circ$ multiplication & $\bbcounitmult$ \\
      multiplication & $\bbmult$ && twist & $\bbtwist$ \\
      comultiplication & $\bbcomult$ \\
    \end{tabular}
    \caption{\label{figure:atoms_fat_graphs}Graphical representation of basic operations of $A$.}
  \end{figure}
  
  If $\xi$ is not a single diagram but a sum of such, then we compute the operation for each entry and sum up the results.
  
  \medskip
  
Assume $\xi$ is a single Sullivan diagram.
From the above, one can proof the following facts:
  
  \begin{itemize}
  \item[(a)] We have that
  \[
    \xi_*(x^{\otimes m})=\sum y_1 \otimes y_2\otimes  \cdots \otimes y_k
  \]
  where $y_i\in \{1,x\}$, $k=m+|\xi|+1$ if $\xi$ is suspended and $k=m+|\xi|$ otherwise.
  
  \item[(b)] If $\xi$ is suspended, then
  \[
    \xi_*(x^{\otimes m})=\sum 1 \otimes y_2\otimes  \cdots \otimes y_k \,.
  \]
  
  \item[(c)] If $\xi$ is unsuspended and has a ghost surface $S$ that does not have a leaf in its boundary, then
  for each entry of the sum there is at least one $i>1$ for which $y_i=1$.
  From this it follows that
  \[
    \xi_*(x^{\otimes m}) = 0 \,.
  \]
    \end{itemize}

  Using these facts, we deduce that
  \[\widetilde{\Omega}_{(c_1,\ldots,c_m)*}(x^{\otimes c})=
    (\widetilde{\zeta}_m\circ
    (\widetilde{\omega}_{c_1,1}\otimes 
    \widetilde{\omega}_{c_2,1}\otimes
    \ldots \otimes
    \widetilde{\omega}_{c_m,1}))_*(x^{\otimes c})=1\otimes x^{\otimes 2c -1}\neq 0
  \]
  where the last computation is obtained by following the recipe on the graph.
  Similarly, 
  \begin{align*}
    \widetilde{\Gamma}_{m*}(x^{\otimes m})&=
    \sum_{i_j=1,2}  
    (\widetilde{\zeta}_m\circ
    (\widetilde{\gamma}_{i_1}\otimes 
    \widetilde{\gamma}_{i_2}\otimes
    \ldots \otimes
    \widetilde{\gamma}_{i_m}))_*(x^{\otimes m})\\
    &=
    \sum_{\substack{i_j=1,2 \\ 1\leq j\leq m}}
    (1\otimes x^{\otimes 4m-1})\\
    &= 2^{m}(1\otimes x^{\otimes 4m-1})\neq 0
  \end{align*}   
  where the middle step is obtained by following the recipe on the graph.
  Therefore, since $\widetilde{\Omega}_{(c_1,\ldots,c_m)*}$ and $\widetilde{\Gamma}_{m*}$ are non-trivial operations, their corresponding chains are non-trivial classes.
  They are of infinite order since $(t\cdot \widetilde{\Omega}_{(c_1,\ldots,c_m)*})(x^{\otimes c}) = t \cdot (\widetilde{\Omega}_{(c_1,\ldots,c_m)*}(x^{\otimes c})) \neq 0$ and similarly for $\widetilde{\Gamma}_{m*}$.
\end{proof}

\subsection{More non-trivial families by transfer}
\label{subsection:detecting_homology_recipe}
On the level of spaces, we have the following diagram of forgetful maps.
\begin{align}
  \begin{tikzcd}[ampersand replacement=\&]
                                                                                        \& \betterwidetilde{\SDs}_g^m \arrow{dr}{\omega}  \& \\
      \betterwidetilde{\SDs}_{g,m} \arrow{ur}{\tilde\vartheta} \arrow{dr}{\hat\omega}   \&                                                \& \SDs_g^m \\
                                                                                        \& \SDs_{g,m} \arrow{ur}{\vartheta}               \&
    \end{tikzcd}
\end{align}
They are extensions of maps of moduli spaces.
The maps $\hat\omega$ and $\omega$, which forget the enumeration, are extensions of covering maps.
The maps $\tilde\vartheta$ and $\vartheta$ , which forget the $m$ leaves which are not the admissible leaf,  are extensions of fibrations.
Unfortunately, because the homotopy type of the fibers are not constant, these extensions are not coverings respectively fibrations.
See Remark \ref{remark:not_a_fibration} for details.
Nevertheless, using our Morse flow we show that the maps $\hat\omega$ and $\omega$ behave like coverings in homology.

Using our description of cells, given in Proposition \ref{proposition:combinatorial_1_SD}, and our discussion of the face map, see Lemma \ref{lem:rho_face} and Discussion \ref{disc:faces},
we assemble the corresponding forgetful maps of chain complexes.
It is a straight forward verification that forgetting the enumeration of the boundaries or punctures defines chain map versions of $\hat \omega$ and $\omega$.

Let us describe the forgetful map $\vartheta \colon \SD_{g,m} \to \SD_g^m$.
Consider the sub-complex $\SD_{g,m}' \subseteq \SD_{g,m}$ consisting of all cells $\Sigma$ that do not have a leaf on the admissible circle at a position other than $e_0$.
The map that forgets the leaves is clearly a chain map $\vartheta' \colon \SD_{g,m}' \to \SD_g^m$.
As a map of graded modules, let $\vartheta = \vartheta' \circ p$ with $p \colon \SD_{g,m} \to \SD_{g,m}'$ the orthogonal projection,
i.e.\ we define $p$ by setting $p(\Sigma) = \Sigma$ for a basis element $\Sigma \in \SD'_{g,m}$ and $p(\Sigma) = 0$ for the other basis elements.
It is left to show that $\vartheta$ commutes with the differential if $\vartheta(\Sigma) = 0$.
In this case, $\Sigma$ has a leaf on the admissible circle at a position $e_i \neq e_0$.
Now $\vartheta(d_j(\Sigma) ) = 0$ for $j \neq i, i-1$ because $d_j(\Sigma)$ has a leaf on the admissible circle not sitting at $e_0$.
Moreover, $\vartheta(d_i(\Sigma)) = \vartheta(d_{i-1}(\Sigma))$ so these two cells cancel each other in the boundary.
We showed $\vartheta(d(\Sigma)) = 0$ if $\vartheta(\Sigma) = 0$.

The forgetful map $\tilde \vartheta \colon \betterwidetilde\SD_{g,m} \to \betterwidetilde\SD_g^m$ is constructed and treated analogously.
We have proven the following proposition.

\begin{proposition}
  \label{proposition:forgetful_chainmaps}
  There are forgetful maps of chain complexes.
  \begin{align}
    \begin{tikzcd}[ampersand replacement=\&]
                                                                                        \& \betterwidetilde{\SD}_g^m \arrow{dr}{\omega} \& \\
      \betterwidetilde{\SD}_{g,m} \arrow{ur}{\tilde\vartheta} \arrow{dr}{\hat\omega}    \&                                              \& \SD_g^m \\
                                                                                        \& \SD_{g,m} \arrow{ur}{\vartheta}              \&
    \end{tikzcd}
  \end{align}
\end{proposition}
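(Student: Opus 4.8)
The plan is to build the four maps one pair at a time, using the combinatorial model of cells from Proposition \ref{proposition:combinatorial_1_SD} together with the formulas for faces in Lemma \ref{lem:rho_face} and Discussion \ref{disc:faces}. The two ``enumeration-forgetting'' maps $\hat\omega,\omega$ will be defined cell-by-cell, and the two ``leaf-forgetting'' maps $\vartheta,\tilde\vartheta$ will be defined as a genuine chain map precomposed with an orthogonal projection. The commutativity of the two triangles will then be immediate from the definitions, since all four maps forget pairwise disjoint pieces of decoration.

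\textbf{The maps $\hat\omega$ and $\omega$.} By Proposition \ref{proposition:combinatorial_1_SD}, a cell of $\betterwidetilde{\SD}_{g,m}$ (resp.\ of $\betterwidetilde{\SD}_g^m$) is a cell of $\SD_{g,m}$ (resp.\ of $\SD_g^m$) equipped with extra enumeration data: an enumeration of the leaves in the parametrized case, or the injections $\beta_1,\beta_2$ in the unparametrized case. I would define $\hat\omega$ and $\omega$ on basis elements by discarding this data. Since Lemma \ref{lem:rho_face} expresses the fat structure and non-degenerate boundary $\tilde\lambda,\tilde\rho$ of a face purely in terms of $\lambda,\rho$, and Discussion \ref{disc:faces} describes the induced ghost surfaces with no reference to the enumeration, the face $d_i$ of the underlying unenumerated diagram depends only on that underlying diagram; hence $\omega\circ d_i = d_i\circ\omega$ on basis elements with the same signs, so $\omega$ and $\hat\omega$ are degree-zero chain maps. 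In the unparametrized case one also checks that forgetting $\beta_1,\beta_2$ is invariant under the $\Symm(\{1,\dots,m\})$-action, so the map is well defined on the quotient.

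\textbf{The maps $\vartheta$ and $\tilde\vartheta$.} The subtlety is that deleting the $m$ non-admissible leaves need not preserve degree: if $\Sigma$ has a leaf on the admissible circle at some position $e_i$ with $i\neq 0$, removing it would change $n$. So I would let $\SD'_{g,m}\subseteq\SD_{g,m}$ be the sub-complex spanned by cells with no leaf on the admissible circle except at $e_0$; on $\SD'_{g,m}$, forgetting the $m$ leaves is manifestly a chain map $\vartheta'\colon\SD'_{g,m}\to\SD_g^m$. Set $\vartheta:=\vartheta'\circ p$, where $p\colon\SD_{g,m}\to\SD'_{g,m}$ is the orthogonal projection onto the chosen basis (the identity on cells of $\SD'_{g,m}$ and zero on the others). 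Since $\vartheta'$ already commutes with $d$, it remains only to show $\vartheta(d\Sigma)=0$ whenever $\vartheta(\Sigma)=0$, i.e.\ whenever $\Sigma$ has a leaf $l$ on the admissible circle at some $e_i$, $i\neq 0$: for $j\neq i,i-1$ the face $d_j(\Sigma)$ still carries $l$ on the admissible circle at a position $\neq e_0$ and so is killed by $\vartheta$, while collapsing $e_{i-1}$ or $e_i$ moves $l$ off the admissible circle in the same way, giving $\vartheta(d_i\Sigma)=\vartheta(d_{i-1}\Sigma)$; as these appear with opposite signs $(-1)^i$ and $(-1)^{i-1}$ in $d(\Sigma)$, they cancel. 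The enumerated analogue $\tilde\vartheta\colon\betterwidetilde{\SD}_{g,m}\to\betterwidetilde{\SD}_g^m$ is constructed identically, using $\betterwidetilde{\SD}'_{g,m}$.

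\textbf{Main obstacle.} Everything except the last point is bookkeeping with the combinatorial model (unwinding Discussion \ref{disc:faces} to see the face data is untouched by enumeration, and checking $\Symm(L)$-invariance so the maps descend). The one genuinely nontrivial step — and the only place signs enter — is verifying that for a cell with a misplaced leaf the two ``non-vanishing'' faces $d_{i-1}(\Sigma)$ and $d_i(\Sigma)$ become equal after forgetting the leaves and carry opposite signs, which is exactly what makes $\vartheta$ a chain map despite not being defined naively cell-by-cell.
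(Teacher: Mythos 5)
Your proposal is correct and follows essentially the same route as the paper: $\hat\omega$ and $\omega$ are defined cell-wise by discarding enumeration data (a straightforward check using the combinatorial face description), and $\vartheta,\tilde\vartheta$ are defined exactly as in the paper, namely as $\vartheta'\circ p$ with $p$ the orthogonal projection onto the sub-complex of cells having no leaf on the admissible circle away from $e_0$, followed by the same cancellation argument showing $\vartheta(d\Sigma)=0$ whenever $\vartheta(\Sigma)=0$ because $d_i(\Sigma)$ and $d_{i-1}(\Sigma)$ agree after forgetting leaves and enter the boundary with opposite signs. No gaps; this matches the paper's proof in both structure and the one nontrivial step.
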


We want to think of $\hat\omega$ and $\omega$ as covering maps.
On the cells of top dimension, the maps behave like an $m!$-sheeted covering but on Sullivan diagrams with at least two degenerate boundary cycles the number of sheets drops.
Fortunately, by Proposition \ref{proposition:support_of_homology}, the homology is not supported on Sullivan diagrams of this type.
This allows us to show that, in homology, $\hat\omega_\ast$ and $\omega_\ast$ behave like covering maps.
To make this precise, consider the sub-complex $B$ of $\SD = \SD_{g,m}$ or $\SD_g^m$ of diagrams having at least two degenerate boundary cycles.
Denote its counterpart in $\betterwidetilde\SD = \betterwidetilde\SD_{g,m}$ or $\betterwidetilde\SD_g^m$ by $\betterwidetilde B$.
Clearly,the forgetful maps $\hat\omega$ and $\omega$ descent to the quotients
\[
  \hat P \colon \betterwidetilde{\SD}_{g,m} / \betterwidetilde{B}_{g,m} \to \SD_{g,m} / B_{g,m} \;\;\;\;\text{ or }\;\;\;\; P \colon \betterwidetilde{\SD}_g^m / \betterwidetilde{B}_g^m \to \SD_g^m / B_g^m \,.
\]
These are covering maps with $m!$ sheets:
Each cell $\Sigma$ in $\SD_{g,m} / B_{g,m}$ or $\SD_g^m / B_g^m$ has exactly $m!$ preimages under $\hat P$ and $P$.
Moreover, sending $\Sigma$ to the sum of its preimages defines a transfer map
\begin{align}
  \label{transfer_maps_on quotients}
  tr \colon \SD_{g,m} / B_{g,m} \to \betterwidetilde{\SD}_{g,m} / \betterwidetilde{B}_{g,m} \;\;\;\;\text{ and }\;\;\;\; tr \colon \SD_g^m / B_g^m \to \betterwidetilde{\SD}_g^m / \betterwidetilde{B}_g^m \,.
\end{align}

\begin{proposition}
  \label{proposition:transfer_maps}
  Let $\SD = \SD_{g,m}$ or $\SD_g^m$.
  In homology, the forgetful maps $\hat\omega$ respectively $\omega \colon \betterwidetilde\SD \to \SD$ behave like $m!$-sheeted covering maps, i.e.\  we have transfer maps
  \begin{align}
    \label{transfer_maps}
    tr \colon H_\ast(\SD_{g,m}; \Z ) \to H_\ast( \betterwidetilde{\SD}_{g,m}; \Z ) \;\;\;\;\text{ and }\;\;\;\; tr \colon H_\ast( \SD_g^m; \Z ) \to H_\ast( \betterwidetilde{\SD}_g^m; \Z )
  \end{align}
  and $\hat\omega_\ast \circ tr$ respectively $\omega_\ast \circ tr$ is the multiplication by $m!$.
\end{proposition}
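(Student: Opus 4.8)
The plan is to transport the genuine covering transfer $tr$ on the quotients, constructed in \eqref{transfer_maps_on quotients}, back to the homology of $\SD$ and $\betterwidetilde\SD$, using Proposition \ref{proposition:support_of_homology} to bridge the gap. Write $p\colon\SD\to\SD/B$ and $q\colon\betterwidetilde\SD\to\betterwidetilde\SD/\betterwidetilde B$ for the projections. I would first record the formal inputs. Since $P\colon\betterwidetilde\SD/\betterwidetilde B\to\SD/B$ and $\hat P$ are $m!$-sheeted covering maps (as recalled just above the statement), $tr$ is a chain map with $P\circ tr = m!\cdot\id$ and $\hat P\circ tr = m!\cdot\id$; in particular $P_\ast\circ tr_\ast = m!\cdot\id$ on $H_\ast(\SD/B)$ and similarly for $\hat P_\ast$. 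Next, the identities $p\circ\omega = P\circ q$ and $p\circ\hat\omega = \hat P\circ q$ hold on the nose, so $p_\ast\circ\omega_\ast = P_\ast\circ q_\ast$ and $p_\ast\circ\hat\omega_\ast = \hat P_\ast\circ q_\ast$ on homology. Finally, by Proposition \ref{proposition:support_of_homology}\ref{proposition:support_of_homology:injection} the maps $p_\ast$ and $q_\ast$ are split injective.

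The one step that requires an argument is the claim that $tr_\ast$ carries $\imag(p_\ast)\subseteq H_\ast(\SD/B)$ into $\imag(q_\ast)\subseteq H_\ast(\betterwidetilde\SD/\betterwidetilde B)$. Given a class $x\in H_\ast(\SD)$, Proposition \ref{proposition:support_of_homology}\ref{proposition:support_of_homology:not_supported} provides a representing cycle $z=\sum_i\kappa_i c_i$ in $\SD$ with every $c_i$ non-degenerate, i.e.\ $m_{c_i}=0$ and $c_i\notin B_1$. Forgetting the enumeration does not change the number of degenerate boundary cycles, so each of the $m!$ preimages of $c_i$ under $\omega$ is again non-degenerate; hence $w:=\sum_i\kappa_i\sum_{\omega(\tilde c)=c_i}\tilde c$ is a chain in $\betterwidetilde\SD$ with non-degenerate terms whose image under $q$ is $tr(\bar z)$, where $\bar z$ denotes the image of $z$ in $\SD/B$. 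I would then check that $w$ is a cycle: by Discussion \ref{disc:faces} a face of a cell raises the number $m_\Sigma$ of degenerate boundary cycles by at most one, so every term of $\partial w$ has at most one degenerate boundary cycle and hence does not lie in $\betterwidetilde B$; on the other hand $q(\partial w)=\partial\, tr(\bar z)=tr(\overline{\partial z})=0$ forces $\partial w\in\betterwidetilde B$. These two observations force $\partial w=0$, and then $q_\ast[w]=[tr(\bar z)]=tr_\ast(p_\ast(x))$, which proves the claim.

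Given the claim, I would define $tr:=(q_\ast)^{-1}\circ tr_\ast\circ p_\ast\colon H_\ast(\SD)\to H_\ast(\betterwidetilde\SD)$, using that $q_\ast$ is an isomorphism onto its image; by construction $q_\ast\circ tr = tr_\ast\circ p_\ast$. Then
\[
  p_\ast\circ\omega_\ast\circ tr \;=\; P_\ast\circ q_\ast\circ tr \;=\; P_\ast\circ tr_\ast\circ p_\ast \;=\; m!\cdot p_\ast,
\]
and since $p_\ast$ is injective this yields $\omega_\ast\circ tr = m!\cdot\id$ on $H_\ast(\SD_g^m)$; running the identical argument with $\hat\omega$ and $\hat P$ in place of $\omega$ and $P$ gives $\hat\omega_\ast\circ tr = m!\cdot\id$ on $H_\ast(\SD_{g,m})$.

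I expect the crux to be the middle step, namely that $tr_\ast$ respects the subgroups $\imag(p_\ast)$ and $\imag(q_\ast)$; this is precisely where Proposition \ref{proposition:support_of_homology} is indispensable, part \ref{proposition:support_of_homology:not_supported} supplying the non-degenerate representatives (so that the naive preimage-sum $w$ remains a cycle) and part \ref{proposition:support_of_homology:injection} guaranteeing that $q_\ast$ is injective, so that $tr$ is well defined. Everything else is a formal diagram chase with the covering transfer on the quotients.
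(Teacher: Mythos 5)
Your proposal is correct and follows essentially the same route as the paper: the transfer is obtained by transporting the genuine $m!$-sheeted covering transfer on the quotients $\SD/B$ and $\betterwidetilde\SD/\betterwidetilde B$ back through the split injections of Proposition \ref{proposition:support_of_homology}. The only difference is one of detail: the paper's one-line proof simply asserts that the transfer ``lifts'' along these injections, whereas you explicitly verify the needed compatibility $tr_\ast(\imag(p_\ast))\subseteq\imag(q_\ast)$ via non-degenerate representatives and Discussion \ref{disc:faces}, and then deduce $\omega_\ast\circ tr=m!\cdot\id$ from injectivity of $p_\ast$ --- a welcome filling-in of the implicit step rather than a different argument.
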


\begin{proof}
  By Proposition \ref{proposition:support_of_homology}, the homology of $\SD$ injects into the homology of $\SD/B$.
  Therefore, the trace map \eqref{transfer_maps_on quotients} lifts.
  \[
    \begin{tikzcd}[ampersand replacement=\&]
      H_\ast( \betterwidetilde\SD; \Z ) \arrow[hook]{r}         \& H_\ast( \betterwidetilde\SD / \betterwidetilde B; \Z ) \\
      H_\ast( \SD; \Z ) \arrow[hook]{r} \arrow[dotted]{u}{tr}   \& H_\ast( \SD / B; \Z ) \arrow{u}{tr}
    \end{tikzcd}
  \]
\end{proof}

We use these transfers to construct more non-trivial families.

\begin{proposition}
  \label{proposition:generators_by_operations}
  For any $m > 0$ and any sequence of integers $(c_1, \ldots , c_m)$ with $c_i>1$ the classes $\hat\omega( \tilde\Omega_{(c_1, \ldots , c_m)} )$ and  $\hat\omega( \tilde\Gamma_m)$ are of infinite order.
  For $m$ even, the classes $\hat\omega( \tilde\zeta_m )$ are of infinite order.
\end{proposition}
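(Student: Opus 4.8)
The plan is to bootstrap from the non-triviality of the enumerated classes established earlier --- Proposition~\ref{proposition:non_trivial_op} for $\widetilde{\Gamma}_m$ and $\widetilde{\Omega}_{(c_1,\dots,c_m)}$, and the computation $H_{m-1}(\SDs_{0,m};\Z)\cong\Z\langle\zeta_m\rangle$ for $\widetilde{\zeta}_m$ --- by pushing them through the transfer machinery of Proposition~\ref{proposition:transfer_maps}. Write $\hat\omega\colon\widetilde{\SD}\to\SD$ for the forgetful map in question ($\widetilde{\SD}_{0,c}\to\SD_{0,c}$ for $\widetilde{\Omega}_{(c_1,\dots,c_m)}$, and $\widetilde{\SD}_{m,m}\to\SD_{m,m}$ for $\widetilde{\Gamma}_m$), let $B\subseteq\SD$ and $\widetilde B\subseteq\widetilde{\SD}$ be the subcomplexes of diagrams with at least two degenerate boundary cycles (invariant under the leaf-relabeling action of $\Symm_N$, where $N$ is the number of leaves), and let $\hat P\colon\widetilde{\SD}/\widetilde B\to\SD/B$ be the induced map. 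As recorded in the proof of Proposition~\ref{proposition:transfer_maps}, $\hat P$ is a genuine $N!$-sheeted covering with deck group $\Symm_N$, and the cellular transfer satisfies, on the chain level, $tr\circ\hat P_{\#}=\sum_{\sigma\in\Symm_N}\sigma_{\#}$.

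The first step I would carry out is a purely formal reduction: \emph{if $\xi\in\widetilde{\SD}$ is a cycle and $\hat\omega_{\ast}[\xi]$ has finite order in $H_{\ast}(\SD;\Z)$, then $\sum_{\sigma\in\Symm_N}\sigma_{\ast}[\xi]$ has finite order in $H_{\ast}(\widetilde{\SD};\Z)$.} Indeed, an integer $n\ge1$ annihilating $\hat\omega_{\ast}[\xi]$ also annihilates its image $\hat P_{\ast}\tilde q_{\ast}[\xi]$ in $H_{\ast}(\SD/B)$, where $\tilde q\colon\widetilde{\SD}\to\widetilde{\SD}/\widetilde B$ is the projection; applying $tr$ and using the $\Symm_N$-equivariance of $\tilde q$ gives $n\cdot\tilde q_{\ast}\bigl(\sum_{\sigma}\sigma_{\ast}[\xi]\bigr)=0$, and since $\tilde q_{\ast}$ is split injective by Proposition~\ref{proposition:support_of_homology}, we conclude $n\cdot\sum_{\sigma}\sigma_{\ast}[\xi]=0$. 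Thus the proposition reduces to showing that the symmetrized cycle $\Xi:=\sum_{\sigma\in\Symm_N}\sigma\cdot\xi$ --- a cycle because each $\sigma\cdot\xi$ is, by Lemma~\ref{lem:cycles_composed_classes} --- represents a class of infinite order in $H_{\ast}(\widetilde{\SD};\Z)$, for $\xi=\widetilde{\Omega}_{(c_1,\dots,c_m)}$ and $\xi=\widetilde{\Gamma}_m$.

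To see this I would re-run the string-topology detection of Subsection~\ref{subsection:detecting_homology_via_string_topology}, but over the Frobenius algebra $A=H^{\ast}(S^2)\cong\Z[x]/(x^2)$ with $|x|=2$ instead of over $H^{\ast}(S^1)$. Relabeling the incoming leaves of a Sullivan diagram precomposes the induced operation with the corresponding permutation of input tensor factors; since $|x|$ is \emph{even}, evaluating on the all-equal input $x^{\otimes N}$ incurs no Koszul sign, so $(\sigma\cdot\xi)_{\ast}(x^{\otimes N})=\xi_{\ast}(x^{\otimes N})$ for all $\sigma$, whence $\Xi_{\ast}(x^{\otimes N})=N!\cdot\xi_{\ast}(x^{\otimes N})$. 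By the computations in the proof of Proposition~\ref{proposition:non_trivial_op} --- which, as the Remark after it notes, go through verbatim up to signs for any $H^{\ast}(S^n)$ --- this equals $N!\,(1\otimes x^{\otimes 2c-1})$ for $\widetilde{\Omega}_{(c_1,\dots,c_m)}$ and $2^{m}N!\,(1\otimes x^{\otimes 4m-1})$ for $\widetilde{\Gamma}_m$, each a nonzero element of infinite order in $HH_{\ast}(A,A)$. Hence $n\Xi_{\ast}$ is a non-trivial operation for every $n\ge1$; since $\SD(q,p)\hookrightarrow Nat(q,p)$ is a split quasi-isomorphism (\cite[Theorem~2.9]{wahluniversal}), $[\Xi]$ is of infinite order, finishing these two cases. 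For $\xi=\widetilde{\zeta}_m$ with $m$ even there is nothing more to do: $\hat\omega(\widetilde{\zeta}_m)$ is literally the cell $\zeta_m$, which is a cycle for even $m$ by Definition~\ref{definition:class_zeta_m}, and the computation of the first non-vanishing reduced homology of $\SDs_{0,m}$ exhibits it as a generator of $H_{m-1}(\SDs_{0,m};\Z)\cong\Z$.

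I expect the genuine difficulty to lie in the third paragraph. The obstruction is that $\hat\omega_{\ast}$ is far from injective and does not obviously detect these classes, which forces the symmetrization $\Xi$; but with the original algebra $H^{\ast}(S^1)$ the relabeling signs would combine to $\sum_{\sigma}\operatorname{sgn}(\sigma)=0$ for $N\ge2$, so $\Xi_{\ast}(x^{\otimes N})$ would vanish and the detection would collapse. Passing to an even-degree Frobenius algebra repairs this, but one must verify carefully that Wahl's operation recipe is honestly $\Symm_N$-equivariant --- that relabeling incoming leaves does nothing beyond permuting the input factors with their Koszul sign --- and be scrupulous that the covering identity $tr\circ\hat P_{\#}=\sum_{\sigma}\sigma_{\#}$ and the injectivity of $\tilde q_{\ast}$ are invoked for the quotients modulo $B_2$ and not for $\SD$ and $\widetilde{\SD}$ themselves.
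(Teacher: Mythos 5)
Your proposal is correct, and its skeleton is the same as the paper's: reduce, via the chain-level transfer on the quotients by $B_2$ (Proposition \ref{proposition:transfer_maps}) together with the split injectivity of Proposition \ref{proposition:support_of_homology}, to showing that the symmetrized cycle $\sum_{\sigma\in\Symm(L)}\sigma.\xi$ has infinite order, and then detect that class through the operation it induces on Hochschild homology evaluated on $x^{\otimes N}$. Where you genuinely diverge is in how the detection is carried out. The paper stays with $A=\Z[x]/(x^2)$, $|x|=1$, and simply asserts that the operations induced by $\xi$ and $\sigma.\xi$ agree, so that $tr\,\hat\omega(\xi)=\sum_\sigma\sigma.\xi$ induces $m!\cdot\xi_*$; you instead rerun the evaluation over $H^*(S^2)$, where the generator is even, precisely so that relabeling the inputs cannot introduce Koszul signs and the sum is manifestly $N!\,\xi_*(x^{\otimes N})\neq 0$. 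This buys sign-independence --- your worry that over an odd-degree generator the symmetrization might collapse to $\sum_\sigma\operatorname{sgn}(\sigma)=0$ is exactly the point the paper's one-line claim leaves implicit --- at the price of transporting the computations of Proposition \ref{proposition:non_trivial_op} to $H^*(S^2)$, which the paper's own remark following that proposition supplies, and of the $\Symm$-equivariance of the Wahl--Westerland action, which you rightly flag but which is part of the PROP structure. For the third family you observe that $\hat\omega(\tilde\zeta_m)=\zeta_m$ and quote the earlier computation $H_{m-1}(\SDs_{0,m};\Z)\cong\Z\langle\zeta_m\rangle$, which is indeed immediate; the paper instead argues via the PROP identity $tr\,\hat\omega(\tilde\mu_m)=(tr\,\hat\omega(\tilde\zeta_m))\circ(\tilde\mu_1\otimes\cdots\otimes\tilde\mu_1)$ and the infinite order of $\tilde\mu_m$, which avoids appealing to the Morse-theoretic identification of that bottom homology group. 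Both routes are valid; yours is shorter for $\zeta_m$ and more robust against sign conventions in the operation-detection step.
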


\begin{proof}
  Let $\xi = \tilde\Omega_{(c_1, \ldots , c_m)}$ or $\tilde\Gamma_m$.
  Note that $tr \hat\omega(\xi) = \sum_{\sigma \in \Symm(L)} \sigma.\xi$ where $\Symm(L)$ acts on the cells by permuting the labels.
  Observe that in the proof of Proposition \ref{proposition:non_trivial_op} the non-trivial operation $\xi_*$ associated to $\xi$ and $\sigma.\xi$ agree.
  Therefore $tr \hat\omega(\xi)$ induce the operation $m! \cdot \xi_*$ which is also of infinite order.
  Thus, $\hat \omega(\xi)$ must be a non-trivial homology class of infinite order.
  
  Observe that $\hat\omega(\tilde\zeta_m)$ is a cycle (provided $m$ is even) and observe further that
  \[tr \hat\omega( \tilde \mu_m )=
  (tr \hat\omega( \tilde\zeta_m ))\circ (\tilde\mu_1 \otimes \ldots \otimes \tilde \mu_1)\]
  where $\circ$ denotes the PROP composition.
In \cite{wahluniversal}, Wahl shows that $\tilde\mu_m$ is non-trivial of infinite order.
  Alternatively, the same follows from Remark \ref{remark:relating_mu_and_Omega} stating that $\tilde\mu_m$ is homologous to $\Omega_{(m)}$ which is of infinite order by Proposition \ref{proposition:non_trivial_op}.
  This proofs the claim.
\end{proof}

\section{Proof of Lemmas A and B}
\label{section:proof_AB}
In this section, we prove our technical lemmas.
Let us treat the unenumerated case first.

\subsection{Proof of Lemma A (unenumerated case)}
\label{subsection:proofs:lemma_a}

We restate Lemma \ref{lemma_a_unenumerated} for the convenience of the reader.
\begin{lemmawithfixednumber}{\ref{lemma_a_unenumerated}}
  The collapsibles and their redundant partners define a discrete Morse flow on $\SD_g^m$ and $\SD_{g,m}$.
  A cell (of positive degree) with punctures or degenerate boundary is either collapsible or redundant.
\end{lemmawithfixednumber}

\begin{proof}[Proof of Lemma \ref{lemma_a_unenumerated}]
  Let $\SDgm$ be either $\SD_g^m$ or $\SD_{g,m}$.
  Our definitions of redundant and collapsible give a matching $F$ on the cellular complex of $\SDgm$.
  It is left to show that it is acyclic.
  For every cell $\Sigma$, we introduce the following notation.
  \begin{itemize}
    \item The number of attached surfaces is $s_\Sigma$.
    \item The number of punctures or degenerate boundary of a surface $S \in \Sigma$ is $m_S$ and
          the number of punctures or degenerate boundary of the cell $\Sigma$ is $m_\Sigma = \sum_{S \in \Sigma} m_S$.
    \item If a surface $S \in \Sigma$ starts with a fan of length $l$ we define $l_S = l$ else we set $l_S = 0$.
          The total length of fans of the cell $\Sigma$ is $l_\Sigma = \sum_{S \in \Sigma} l_S$.
  \end{itemize}
  Moreover, we assign to every cell $\Sigma$ a \emph{degree of degeneracy}
  \begin{align}
      \label{lemma_a:deg_of_degeneracy}
      \degen(\Sigma) = (\dim(\Sigma), s_\Sigma, m_\Sigma, -l_\Sigma) \,.
  \end{align}
  Now, we show that the lexicographical order of this sequence decreases along every path $c \ad r \au \tilde c$ in the $F$-inverted graph.
  Then $F$ is acyclic by Lemma \ref{dmt_acyclicity}.
  
  Consider a path $c \ad r \au \tilde c$ with $c$ and $\tilde c$ collapsible and $r$ redundant.
  Clearly $\dim(c) = \dim(\tilde c)$ and $s_{c} \ge s_{r} = s_{\tilde c}$.
  We assume that $s_{c} = s_{\tilde c}$ and study how the punctures or degenerate boundary change.
  The transition from $r$ to $\tilde c$ reduces $m_r$ by one and increases $l_r$ by one; in formulas: $m_r = m_{\tilde c} + 1$ and $l_r = l_{\tilde c} - 1$.
  The cell $r$ is a face of $c$ so we have $m_c \le m_r \le m_{\tilde c} + 1$ (by Discussion \ref{disc:faces}).
  If $m_c = m_r = m_{\tilde c} + 1$ we are done, because $m_c > m_{\tilde c}$.
  
  It remains to study the case $\dim(c) = \dim(\tilde c)$, $s_{c} = s_{\tilde c}$ and $m_{c} = m_{\tilde c}$.
  Denote the unique surface that witnesses the collapsiblity of $\tilde c$ by $\tilde S$.
  The redundant partner $r$ is obtained from $\tilde c$ by the face maps at $ft(\tilde S), \ldots, ft(\tilde S) + l_{\tilde S} - 1$.
  Therefore, we see that the surfaces of $r$ that are attached in front of $ft(\tilde S)$ do not have punctures or degenerate boundary and start with even fans (compare Lemma \ref{lem:redundant_parter_unenumerated}).
  Moreover, the surface at $ft(\tilde S)$ has at least one puncture or degenerate boundary component and starts with an even fan.
  Since $m_c = m_{\tilde c} = m_r - 1$, the transition from $c$ to $r$ uses a face that is part of an odd fan in some surface $S$ of $c$.
  
  Assume $ft(S) > ft(\tilde S)$.
  By the reasoning in the last paragraph, we see that $c$ and $r$ have a surface with foot-point $ft(\tilde S)$ that has punctures or degenerate boundary and starts with an even fan and
  the surfaces that are attached in front of $ft(\tilde S)$ also start with even fans.
  But then, $c$ cannot be collapsible by Lemma \ref{lem:redundant_parter_unenumerated}.
  This leads to a contradiction.
  
  If $ft(S) = ft(\tilde S)$, then the surface $T$ witnessing the collapsibility of $c$ has to be attached after $ft(S)$.
  Moreover, the face used in the transition from $c$ to $r$ is not taken at the fan of $S$ (compare Figure \ref{fig:pic1_lemma_a_unenumerated}).
  \begin{figure}[ht]
      \inkpic[.75\columnwidth]{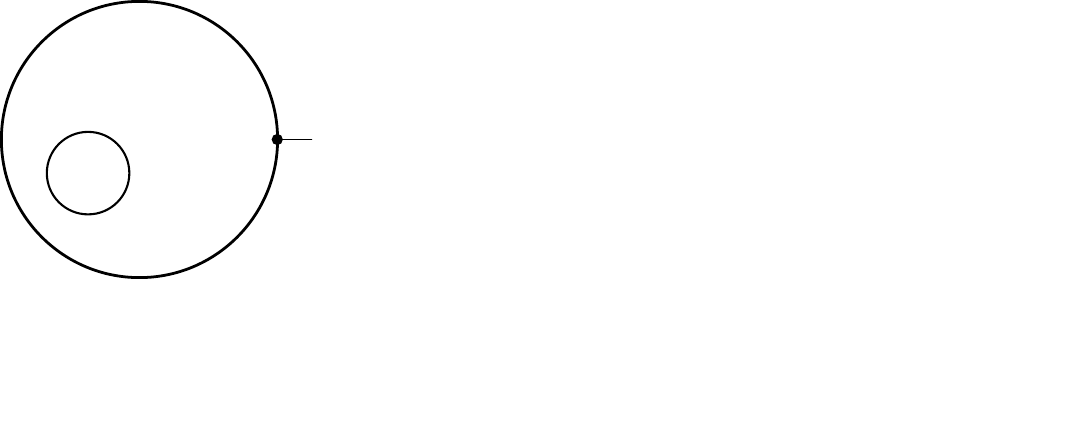}
      \caption{\label{fig:pic1_lemma_a_unenumerated} The transition $c \ad r \au \tilde c$ for $ft(S) = ft(\tilde S)$.}
  \end{figure}
  We conclude $l_c = l_r = l_{\tilde c} - 1$.
  
  If $ft(S) < ft(\tilde S)$,
  we are not taking a face at the fan of $S$ (because $S$ does not start with an odd fan by the above discussion).
  But then, the surface at $ft(S)$ of $r$ witnesses the redundancy of $r$ (all surface that are attached before start with even fans and do not have punctures or degenerate boundary).
  This contradicts the assumption, that $\tilde c$ is the partner of $r$.
\end{proof}
\subsection{Proof of Lemma B (unenumerated case)}
\label{subsection:proofs:lemma_b}
We restate Lemma \ref{lemma_b_unenumerated}, in the unenumerated case, for the convenience of the reader.
The proof is similar to the proof of Lemma \ref{lemma_a_unenumerated} above.

\begin{lemmawithfixednumber}{\ref{lemma_b_unenumerated}}[unenumerated case]
  The collapsibles and their redundant partners of type $1$ extend the discrete Morse flow (of type $0$) on $\SD_g^m$ and $\SD_{g,m}$.
  Every cell $\Sigma$ is redundant or collapsible if it has punctures or degenerate boundary or if it has a surface $S$ of positive genus $g_S > 0$ not attached at $0$ i.e., with  $ft(S) > 0$.
  
  Moreover, the stabilization map respects this discrete Morse flow, i.e.\ (1) it sends essentials to essentials, redundants to redundants and collapsibles to collapsibles and
  (2) a redundant or collapsible cell is in the sub-complex if and only if its partner is.
\end{lemmawithfixednumber}

\begin{proof}[Proof of Lemma \ref{lemma_b_unenumerated}]
  For every cell $\Sigma \in \SDgm$, we introduce the following notation.
  \begin{itemize}
    \item The number of attached surfaces is $s_\Sigma$.
    \item The number of punctures or degenerate boundary of a surface $S \in \Sigma$ is $m_S$ and
      the number of punctures or degenerate boundary of the cell $\Sigma$ is $m_\Sigma = \sum_{S \in \Sigma} m_S$.
    \item If a surface $S \in \Sigma$ starts with a fan of length $l$ we define $l_S = l$ and set $l_S = 0$ otherwise.
      The total length of fans of the cell $\Sigma$ is $l_\Sigma = \sum_{S \in \Sigma} l_S$.
    \item The genus of a surface $S \in \Sigma$ is $g_S$ and
      the reduced genus of the cell $\Sigma$ is $g_\Sigma = \sum_{S \in \Sigma, ft(S) > 0} g_S$.
    \item If a surface $S \in \Sigma$ ends with a fence of length $L$ we define $L_S = L$ and set $L_S = 0$ otherwise.
      The total length of fences of the cell $\Sigma$ is $L_\Sigma = \sum_{S \in \Sigma, ft(S) > 0} L_S$.
  \end{itemize}
  We extend the degree of degeneracy as follows.
  \begin{align}
    \degen(\Sigma) = ( \dim(\Sigma), s_\Sigma, m_\Sigma, - l_\Sigma, g_\Sigma, -L_\Sigma )
  \end{align}
  As before, we show that the lexicographical order of this sequence decreases along every path $c \ad r \au \tilde c$ in the $F$-inverted graph.
  Then, $F$ is acyclic by Lemma \ref{dmt_acyclicity}.
  
  Let us first assume that $\tilde c$ is collapsible of type $0$.
  The case where $c$ is also collapsible of type $0$ is treated in the proof of Lemma \ref{lemma_a_unenumerated}.
  Let $c$ be collapsible of type $1$.
  As before we have that $s_c \geq s_r = s_{\tilde c}$, so it is enough to study the case $s_c = s_r = s_{\tilde c}$.
  Since $c$ is of type 1, we have $m_c = 0$ and since $\tilde c$ is collapsible of type 0 we have $m_{\tilde c} = m_r - 1$.
  We may assume $m_c = 0 = m_{\tilde c}$ and therefore $m_r = 1$.
  Moreover $c$ does not contain any surface starting with an odd fan (because it is collapsible of type $1$).
  Therefore (using $s_c = s_r = s_{\tilde c}$) we get that $l_c \le l_r = l_{\tilde c} - 1$.
  We conclude $\degen(c) < \degen(\tilde c)$ if $\tilde c$ is collapsible of type $0$.
  
  We now assume that $\tilde c$ is collapsible of type $1$ and since $s_c \ge s_r = s_{\tilde c}$ we can assume further that $s_c = s_{\tilde c}$.
  Combining this with the fact that $r \au \tilde c$ does not change the number of punctures or degenerate boundary, we conclude $m_{\tilde c} = m_r = 0$ and therefore $m_c = 0$.
  Then observe that the transition $c \ad r$ cannot reduce the number $l_c$ since $s_c = s_r$ and $m_r = 0$.
  Moreover $l_r = l_{\tilde c}$, so we get $l_c \le l_r = l_{\tilde c}$.
  
  It remains to study the case where $\tilde c$ is collapsible of type $1$ and that
  \[
    \dim_c = \dim_{\tilde c}, \quad s_c = s_r = s_{\tilde c}, \quad m_c = m_r = m_{\tilde c} = 0 \quad\text{and}\quad  l_c = l_r = l_{\tilde c} \,.
  \]
  In particular, the three cells have the same sequence of initial fans.
  By Discussion \ref{disc:faces}, we either have $g_c = g_r = g_{\tilde c} + 1$ or $g_c + 1 = g_r = g_{\tilde c} + 1$.
  Thus we study the case where $c \ad r$ merges two boundaries of the same surface $S$ increasing the genus of $S$ by one.
  Similarly to the reasoning in the proof of Lemma \ref{lemma_a_unenumerated}, we see that $L$ increases by one.
\end{proof}
\subsection{Setup for the enumerated case}
\label{subsection:proofs:setup_labaled_case}

We now construct a discrete Morse flow for the enumerated case, which is in spirit similar to the unenumerated case in the sense that we will pair cells by collapsing a chamber of a fan and thus adding a puncture.
However, the structure of the matching is different, because instead of using lengths of fans we use the labels of the chambers to determine which are collapsible and which are redundant.

\begin{definition}[Unnormalized sentences]
  The symbols $\{ 1, 2, \ldots \}$ are called \emph{letters} and the symbol $\epsilon$ is a \emph{placeholder}.
  A \emph{word} is a finite sequence of letters and placeholders and we allow a word to be empty.
  We read words from right to left.
  The empty word is denoted by $()$.
  
  Let $\Sigma$ be a cell of $\betterwidetilde\SD_g^m$ or $\betterwidetilde\SD_{g,m}$.
  Denote the footpoints of the surfaces $S_k, \ldots, S_1$ by $i_k > \ldots > i_1 = 0$.
  If $S_j$ starts with a fan of length $l_j > 0$ then the enumerated boundary cycles at $i_j+l_j-1, \ldots, i_j$ define a non-empty word $w_j$.
  If $S_j$ does not start with a fan the word $w_j = ()$ is empty.
  The punctures or degenerate boundary of $S_j$ define a set of so called \emph{free letters over} $w_j$.
  This way, we obtain the \emph{unnormalized sentence} $\tilde T_\Sigma = w_k, \ldots, w_1$ and a set of free letters $Z$.
\end{definition}

\begin{example}
  \label{example:sentence_by_diagram_unnormalized}
  Consider the Sullivan diagram $\Sigma$, shown in Figure \ref{fig:sentence_by_diagram_unnormalized},
  described by the following combinatorial data:  $\lambda = (0\ l_4\ 1\ l_3\ 2) (l_2) (3\ l_1)$, $S_1 = (0,0, \{(0\ l_4\ 1\ l_3\ 2), (l_2)\})$ and $S_2 = (0,0,\{ (3\ l_1)\})$.
  The unnormalized sentence consists of two words, one for the non-trivial fan in $S_1$ and one for the trivial fan in $S_2$.
  It is $\tilde T_\Sigma = (), (3\ 4)$.
  There is a single free letter $2$, it is over the word $(3\ 4)$.
  \begin{figure}[ht]
      \inkpic[.3\columnwidth]{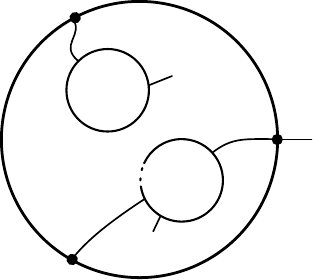}
      \caption{
        \label{fig:sentence_by_diagram_unnormalized}The Sullivan diagram $\Sigma$ given by $\lambda = (0\ l_4\ 1\ l_3\ 2) (l_2) (3\ l_1)$, $S_1 = (0,0, \{(0\ l_4\ 1\ l_3\ 2), (l_2)\})$ and $S_2 = (0,0,\{ (3\ l_1)\})$.
        Its corresponding unnormalized sentence is $\tilde T_\Sigma = (), (3\ 4)$ and its corresponding normalized sentence is $T_\Sigma = (), (\epsilon)$.}
  \end{figure}
\end{example}

The definition of the collapsibles and their redundant partners depend on an ordering of the set of the above words and the free letters.
It turns out that letters larger than the smallest free letter (if there is any) do not play a role in the discussion.
We replace them by what we denote placeholders and consecutive placeholders are seen as a single placeholder.
In order to make the presentation of the most technical step cleaner, we assume that the letters appearing in the sentence are exactly the numbers $1$ to $n$ and the free letter (if existing) is $n+1 \le m$.

\begin{definition}[Normalized sentences]
  On the set of words, we introduce the equivalence relation generated by the fusing two consecutive placeholders into one
  \[
    \ldots \epsilon \epsilon \ldots \sim \ldots \epsilon \ldots \,.
  \]
  A word is in \emph{reduced form} if it does not have two consecutive placeholders.
  Each word is equivalent to a unique reduced word.
  Abusing notation, we identify words with their reduced form.
  The \emph{length} of a word is the number of letters and placeholders of its reduced form.
  
  The set of words of the same length are linearly ordered as follows.
  We extend the canonical order on the natural numbers by $\epsilon > k \in \mathbb N$ and order the words lexicographically by their letters (read from right to left).
  
  Let $A_{n,k}$ be the set of sentences of $k$ words in the letters $1, \ldots, n$ and the placeholder $\epsilon$ such that every letter occurs exactly once in the sentence.
  We have no assumptions on the number of placeholders.
  If $k$ is fixed or known from the context, we simply write $A_n$.
  
  Let $\Sigma$ be a cell of $\betterwidetilde\SD_g^m$ or $\betterwidetilde\SD_{g,m}$ with unnormalized sentence $\tilde T_\Sigma = w_k, \ldots, w_1$ and its set of free letters $Z$.
  If $Z$ is empty, we renormalize the sentence $\tilde T_\Sigma$ such that the used letters are $1, \ldots, n$ for some $n$.
  Otherwise, there is smallest free letter, say $n+1$.
  We discard all other free letters, replace all letters larger than $n+1$ by the placeholder $\epsilon$ and renormalize the remaining letters and the free letter $n+1$ if necessary.
  In both cases we obtain the \emph{normalized sentence} $T_\Sigma \in A_n$.
\end{definition}

\begin{example}
  The Sullivan diagram discussed in Example \ref{example:sentence_by_diagram_unnormalized} has the unnormalized sentence $\tilde T_\Sigma = (), (3\ 4)$ and
  there is a single free letter $2$, it is over the first word $(3\ 4)$.
  Therefore, the normalized sentence is $T_\Sigma = (), (\epsilon)$ and the (normalized) free letter is $1$ over the first word $(\epsilon)$.
\end{example}

The Morse flow will couple cells with the same number $k$ of ghost surfaces (as in the unenumerated case).
Therefore we keep the letter $k$ fixed in what comes next.

\begin{definition}
  Consider $k$ to be fixed.
  For $n > m$ let
  \[
    \alpha_{m,n} \colon A_n \to A_m
  \]
  be the map induced by replacing the letters larger than $m$ with $\epsilon$.
  Set $\alpha_{n,n} = \id$.
  We define $B_n$, $I_n$, $J_n$ and $f_n^i$ recursively.
  Let
  \begin{align*}
    I_0 &:= \emptyset \subseteq A_0 \;, \\
    J_0 &:= \emptyset \subseteq A_0 \;, \\
    B_0 &:= A_0 - J_0 = A_0\\
  \intertext{and}
    f_0^i &\colon B_0 \xhr{} A_1 \text{ for } 1 \le i \le k
  \end{align*}
  be the map that is induced by placing the letter $1$ at the beginning of the $i^{\text{th}}$ word (i.e.\ $1$ is the right most letter afterwards).
  
We define 
  \begin{align*}
    I_n &:= \bigsqcup_{i=1}^k \imag(f_{n-1}^i) \subseteq A_n \;, \\
    J_n &:= \bigcup_{j=1}^n \alpha_{j,n}^{-1}( I_j ) \subseteq A_n \;, \\
    B_n &:= A_n - J_n = A_n - \bigcup_{\substack{1 \le j \le n\\ 1 \le i \le k}} \alpha_{j,n}^{-1}( \imag(f_{j-1}^i) ) \\
  \intertext{and}
    f_n^i &\colon B_n \xhr{} A_{n+1}
  \end{align*}
  to be the map that is induced by placing the letter $n+1$ into the $i^{\text{th}}$ word such that the resulting word has minimal order type.
\end{definition}

\begin{example}
  We provide an example for $k = 1$ and $n \le 2$.
  \label{example:def_labaled_case}
  \begin{align*}
    A_0 = \{& (), (\epsilon) \} \\
    B_0 = \{& (), (\epsilon) \} \\
    f_0^1 = \{& () \mapsto (1), \; (\epsilon) \mapsto (\epsilon 1) \} \\
    A_1 = \{& (1), (\epsilon 1), (1 \epsilon), (\epsilon 1 \epsilon) \} \\
    I_1 = \{& (1), (\epsilon 1) \} \\
    J_1 = \{& (1), (\epsilon 1) \} \\
    B_1 = \{& (1 \epsilon), (\epsilon 1\epsilon) \} \\
    f_1^1 = \{& (1 \epsilon) \mapsto (1 \epsilon 2), \; (\epsilon 1\epsilon) \mapsto (\epsilon 1\epsilon 2) \} \\
    A_2 = \{& (2 1), (\epsilon 2 1), (2 \epsilon 1), (\epsilon 2 \epsilon 1), (1 2), (\epsilon 1 2), (1 \epsilon 2), (\epsilon 1 \epsilon 2), (2 1 \epsilon), \\
            & (\epsilon 2 1 \epsilon), (2 \epsilon 1 \epsilon), (\epsilon 2 \epsilon 1 \epsilon), (1 2 \epsilon), (\epsilon 1 2 \epsilon), (1 \epsilon 2 \epsilon), (\epsilon 1 \epsilon 2 \epsilon) \} \\
    I_2 = \{& (1 \epsilon 2), (\epsilon 1 \epsilon 2) \} \\
    J_2 = \{& (2 1), (\epsilon 2 1), (2 \epsilon 1), (\epsilon 2 \epsilon 1), (1 \epsilon 2), (\epsilon 1 \epsilon 2) \} \\
    B_2 = \{& (1 2), (\epsilon 1 2), (2 1 \epsilon), (\epsilon 2 1 \epsilon), (2 \epsilon 1 \epsilon), (\epsilon 2 \epsilon 1 \epsilon), (1 2 \epsilon), (\epsilon 1 2 \epsilon), (1 \epsilon 2 \epsilon), (\epsilon 1 \epsilon 2 \epsilon) \}
  \end{align*}
  Observe that $J_2$ is the disjoint union of $\alpha_{1,2}^{-1}( I_1 ) = \{ (2 1), (\epsilon 2 1), (2 \epsilon 1), (\epsilon 2 \epsilon 1) \}$ and $\alpha_{2,2}^{-1}( I_2 ) = \{ (1 \epsilon 2), (\epsilon 1 \epsilon 2) \}$.
  This is true for all $n$ and $k$ by the next proposition.
\end{example}

\begin{proposition}
  \label{prop:set_of_sentences_decomposes}
  Let the number of sentences $k$ be fixed.
  The union $J_n = \cup_{j=1}^n \alpha_{j,n}^{-1}( I_j )$ is a disjoint union.
  In particular, the set of sentences $A_n$ decomposes as
  \begin{align}
    A_n = B_n \sqcup \bigsqcup_{\substack{1 \le j \le n\\ 1 \le i \le k}} \alpha_{j,n}^{-1}( \imag(f_{j-1}^i) )
  \end{align}
  i.e.\ for every sentence $T \in A_n - B_n$ there are unique $i$, $j$ and $T' \in B_{j-1}$ with $\alpha_{j,n}(T) = f_{j-1}^i(T')$.
\end{proposition}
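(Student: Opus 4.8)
The plan is to prove the statement by induction on $n$, with the number of words $k$ held fixed; the case $n=0$ is immediate since $J_0=\emptyset$ and $B_0=A_0$. For the inductive step I would first record the cocycle identity $\alpha_{j,n}=\alpha_{j,n-1}\circ\alpha_{n-1,n}$ for $j\le n-1$ (erasing all letters exceeding $j$ can be carried out in two stages, and fusing placeholders is stable under this), which gives $\alpha_{j,n}^{-1}(I_j)=\alpha_{n-1,n}^{-1}\big(\alpha_{j,n-1}^{-1}(I_j)\big)$ and hence, using $\alpha_{n,n}=\id$,
\[
J_n \;=\; I_n \;\cup\; \alpha_{n-1,n}^{-1}(J_{n-1}).
\]
By the inductive hypothesis $J_{n-1}=\bigsqcup_{j\le n-1}\alpha_{j,n-1}^{-1}(I_j)$ is a disjoint union, and taking the preimage under $\alpha_{n-1,n}$ preserves disjointness, so $\alpha_{n-1,n}^{-1}(J_{n-1})=\bigsqcup_{j\le n-1}\alpha_{j,n}^{-1}(I_j)$. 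Separately, $I_n=\bigsqcup_{i=1}^{k}\imag(f_{n-1}^i)$ is automatically disjoint: the sentences in $\imag(f_{n-1}^i)$ are exactly those whose maximal letter $n$ lies in the $i$-th word, and each $f_{n-1}^i$ is injective (deleting $n$ from the position into which it was inserted recovers the argument). Granting these, the first assertion of the proposition — disjointness of the union defining $J_n$ — comes down to the single claim $I_n\cap\alpha_{n-1,n}^{-1}(J_{n-1})=\emptyset$, equivalently $\alpha_{n-1,n}\big(\imag(f_{n-1}^i)\big)\subseteq B_{n-1}$ for every $i$; and then the asserted decomposition of $A_n$, including uniqueness of the triple $(i,j,T')$, follows formally from injectivity of the $f_{j-1}^{i}$ and disjointness of the $\imag(f_{j-1}^{i})$ across $i$.

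Thus the whole argument reduces to the \emph{Key Lemma}: for all $i$ and all $T'\in B_{n-1}$, $\alpha_{n-1,n}(f_{n-1}^i(T'))\in B_{n-1}$. To prove this I would first pin down the composite $\alpha_{n-1,n}\circ f_{n-1}^i$ explicitly. Since $n$ is larger than every other letter but smaller than the placeholder $\epsilon$, minimizing the order type (read right to left) forces $f_{n-1}^i$ to insert $n$ adjacent to the rightmost placeholder of the $i$-th word $w$ of $T'$; replacing $n$ by $\epsilon$ then lets the neighbouring placeholder absorb it, so that $T'$ is recovered unchanged. The one exception is when $w$ contains no placeholder at all (in particular when $w$ is empty): then $n$ is forced to the left end of $w$, and replacing it by $\epsilon$ has the net effect of prepending a single placeholder to the $i$-th word of $T'$, leaving all other words fixed. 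Hence the Key Lemma is equivalent to: the ``$\epsilon$-padding'' operation $\iota_i$ (prepend a placeholder to the $i$-th word) maps $B_{n-1}$ into $B_{n-1}$.

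To establish $\iota_i(B_{n-1})\subseteq B_{n-1}$ I would fold it into the induction as a strengthening of the inductive hypothesis and prove it by checking that $\iota_i$ is compatible with the recursion defining the $J_\bullet$: namely that $\alpha_{j,n-1}\circ\iota_i=\iota_i\circ\alpha_{j,n-1}$ (prepending a placeholder commutes with erasing large letters, using idempotence of placeholder-fusing), and that $\iota_i(S)$ never lies in any $I_j$ — because a sentence in $\imag(f_{j-1}^{i'})$ has a rigidly constrained shape in its $i'$-th word (the letter $j$ sitting against the rightmost placeholder, or heading a placeholder-free word), a shape incompatible with having an extra placeholder freely prepended. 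Running these two facts through $J_{n-1}=\bigcup_{j}\alpha_{j,n-1}^{-1}(I_j)$ yields $\iota_i(T')\notin J_{n-1}$, i.e.\ $\iota_i(T')\in B_{n-1}$, which closes the induction.

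I expect the Key Lemma, and within it the verification that $\epsilon$-padding avoids all the sets $I_j$ after normalization, to be the main obstacle. It is the one place where the recursive — and somewhat opaque — definition of $B_n$ (cf.\ Example \ref{example:def_labaled_case}) must be unwound, and the description of which reduced words can occur as the $i$-th word of an element of $B_n$ (respectively of $I_n$) has to be carried out alongside, handling all the edge cases at once: the $i$-th word empty, ending in a letter, or ending in a placeholder, and placeholders created or absorbed by $\alpha_{n-1,n}$. The bookkeeping becomes routine once the explicit form of $\alpha_{n-1,n}\circ f_{n-1}^i$ and the characterization of these words are in hand; obtaining them correctly — especially the minimal-order-type analysis of $f_{n-1}^i$ — is where the care is needed.
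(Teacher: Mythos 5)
Your outer reduction is sound and is essentially the paper's: the cocycle identity $\alpha_{j,n}=\alpha_{j,n-1}\circ\alpha_{n-1,n}$, the splitting of $J_n$ into $I_n$ and $\alpha_{n-1,n}^{-1}(J_{n-1})$, disjointness of the $\imag(f_{n-1}^i)$ over $i$, and your identification of $\alpha_{n-1,n}\circ f_{n-1}^i$ as either the identity or the prepending of a single placeholder to the $i$-th word (this is the paper's first auxiliary lemma, and your minimal-order-type analysis of the insertion position is correct). The genuine gap is in your proof of the Key Lemma. The statement you reduce to --- that $\epsilon$-padding $\iota_i$ maps $B_{n-1}$ into $B_{n-1}$ --- is true, but the fact you propose to derive it from, namely that $\iota_i(S)$ never lies in any $I_j$, is false. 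The paper's own Example \ref{example:def_labaled_case} (with $k=1$) already contains a counterexample: $(\epsilon 1\epsilon 2)=\iota_1\bigl((1\epsilon 2)\bigr)$ lies in $I_2$, because $(\epsilon 1\epsilon 2)=f_1^1\bigl((\epsilon 1\epsilon)\bigr)$ with $(\epsilon 1\epsilon)=\iota_1\bigl((1\epsilon)\bigr)\in B_1$. For $k\ge 2$ the failure is even more immediate, since $\iota_i$ does not touch the $i'$-th word for $i'\neq i$, so there is no shape obstruction at all: for $k=2$ one has $\iota_1$ applied to the sentence $(1),()$ equal to $(1),(\epsilon)=f_0^2\bigl((),(\epsilon)\bigr)\in I_1$. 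In general the images $\imag(f_{j-1}^{i'})$ do contain padded sentences --- exactly the padded images of padded arguments, because $f_{j-1}^{i'}$ commutes with $\iota_i$ --- so only a conditional statement can hold.

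Moreover, the one-sided implication you need does not propagate through your induction on its own: if $\iota_i(U)=f_{j-1}^{i'}(V)$ with $i'\neq i$ and $V\in B_{j-1}$, then to contradict $U\notin I_j$ you must strip the prepended placeholder off $V$ and know that the stripped sentence still lies in $B_{j-1}$, which is the \emph{converse} implication one level down. The correct repair --- and this is what the paper does in its second auxiliary lemma --- is to prove the two-sided statement that $S\in J_n$ if and only if $\iota_i(S)\in J_n$, by induction on $n$, using the commutation $\alpha_{j,n}\circ\iota_i=\iota_i\circ\alpha_{j,n}$ (which you have) together with the compatibility $f_{j-1}^{i'}\circ\iota_i=\iota_i\circ f_{j-1}^{i'}$, valid also for $i'=i$ by exactly your analysis of the insertion position (the prepended $\epsilon$ becomes the rightmost placeholder only when the word had none, and then the two descriptions of the insertion point agree). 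With that lemma in place, your deduction of $\alpha_{n-1,n}(\imag(f_{n-1}^i))\subseteq B_{n-1}$ and hence of the disjoint decomposition goes through and coincides with the paper's Lemma \ref{lemma:disjoint_decomposition} and its short derivation of the proposition.
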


This will follow from the upcoming lemmas.

\begin{lemma}
  The $j^{\text{th}}$ word of $\alpha_{n,n+1} \circ f_n^i( w_k, \ldots, w_1 )$ is $w_j$ for $j \neq i$ and $w_i$ or $\epsilon w_i$ for $j=i$.
\end{lemma}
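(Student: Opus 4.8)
\emph{Proof proposal.} Fix $(w_k,\dots,w_1)\in B_n$ (the domain of $f_n^i$). The plan is to follow the single new letter $n+1$ through the composite $\alpha_{n,n+1}\circ f_n^i$ and to exploit that $\alpha_{n,n+1}$ touches nothing else. First I would note that $f_n^i$ alters only the $i^{\text{th}}$ word, inserting the one letter $n+1$ into it; inserting a letter never triggers a placeholder fusion, so for every choice of slot the resulting word has length exactly $|w_i|+1$. Hence in $f_n^i(w_k,\dots,w_1)$ the $j^{\text{th}}$ word is literally $w_j$ for each $j\neq i$, and since the only letter of $A_{n+1}$ exceeding $n$ is $n+1$ itself, which lies in the $i^{\text{th}}$ word, $\alpha_{n,n+1}$ fixes every word with $j\neq i$. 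This disposes of the case $j\neq i$.

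For $j=i$ the task is to locate where $f_n^i$ places $n+1$. Writing $w_i$ in reading order (right to left) as $a_1a_2\cdots a_L$, inserting $n+1$ so that it occupies reading-position $p+1$ yields a word of the fixed length $L+1$, so all these words are pairwise comparable in the order on $A_{n+1}$. Comparing the insertions at consecutive slots $p$ and $p+1$, the first reading-position at which they disagree is $p+1$, where one sees $n+1$ versus $a_{p+1}$; by the convention $\epsilon>r$ for every natural number $r$ we have $n+1<a_{p+1}$ precisely when $a_{p+1}=\epsilon$, and $n+1>a_{p+1}$ otherwise. Thus the minimal order type is attained by sliding $n+1$ past every non-placeholder letter and halting just before the first placeholder encountered; concretely the chosen slot is the largest $p^{\ast}$ with $a_1,\dots,a_{p^{\ast}}$ all non-placeholders, so $p^{\ast}=0$ if $a_1=\epsilon$ and $p^{\ast}=L$ if $w_i$ has no placeholder. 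Getting the direction of these inequalities right under the right-to-left reading convention is the one genuinely delicate point; everything else is bookkeeping.

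It then remains to apply $\alpha_{n,n+1}$, which turns $n+1$ into $\epsilon$ and fuses consecutive placeholders. If $w_i$ contains no placeholder, $f_n^i$ puts $n+1$ at the far left, so the result is $\epsilon\,a_L\cdots a_1=\epsilon w_i$, already reduced. If $w_i$ does contain a placeholder, let $a_t$ be the first one in reading order; then $f_n^i$ produces $a_L\cdots a_t\,(n+1)\,a_{t-1}\cdots a_1$, so after $\alpha_{n,n+1}$ a subword $a_t\,\epsilon=\epsilon\,\epsilon$ appears at that spot and fuses to a single $\epsilon$; since $w_i$ was already reduced, no further fusion occurs and the outcome is exactly $w_i$. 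In either case the $i^{\text{th}}$ word of $\alpha_{n,n+1}\circ f_n^i(w_k,\dots,w_1)$ is $w_i$ or $\epsilon w_i$, as claimed. I would finish by checking the degenerate cases ($w_i$ empty, or $w_i$ a lone placeholder) directly and confirming consistency with the base case $f_0^i$ from the definition and with Example~\ref{example:def_labaled_case}.
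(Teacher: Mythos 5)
Your argument is correct and takes essentially the same route as the paper's proof, which simply records that $f_n^i$ inserts $n+1$ immediately to the right of the first $\epsilon$ of $w_i$ (in reading order), or at the left end if $w_i$ has no placeholder, and that $\alpha_{n,n+1}$ then either cancels the new $\epsilon$ against that placeholder or leaves a single $\epsilon$ at the left end; you merely spell out the lexicographic-minimality step that the paper declares immediate. One cosmetic remark: your adjacent-slot comparison shows where the order decreases locally, but for global minimality of the slot $p^{\ast}$ it is cleanest to compare slot $p^{\ast}$ directly with an arbitrary slot $q$ (the first disagreement sits at position $\min(p^{\ast},q)+1$, where one sees $n+1$ against a letter or against $\epsilon$), a one-line patch that does not change your conclusion.
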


\begin{proof}
  This is immediate from the definitions:
  The map $f_n^i$ inserts the letter $n+1$ right of the first $\epsilon$ in $w_i$ if there is any, else it is put at the left end.
  The map $\alpha_{n, n+1}$ replaces $n+1$ by $\epsilon$, i.e.\ it is either canceled by the other $\epsilon$ or it becomes an $\epsilon$ at the left end of $w_i$.
\end{proof}

\begin{lemma}
  Let $S := v_k, \ldots, v_i, \ldots, v_1$ and $S' := v_k, \ldots, \epsilon v_i, \ldots, v_1$.
  We have $S \in J_n$ if and only if $S' \in J_n$.
\end{lemma}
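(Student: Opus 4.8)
The plan is to prove the stronger statement that $J_n$ is \emph{saturated} under the idempotent operation $e_i \colon A_n \to A_n$ that prepends a placeholder $\epsilon$ to the $i$-th word (and then reduces), i.e.\ that $T \in J_n \Leftrightarrow e_i(T) \in J_n$ for every $T \in A_n$; the lemma is then the case $T = S$, since $S' = e_i(S)$. Three elementary facts about $e_i$ will be recorded first: it is idempotent; it commutes with every restriction map, $e_i \circ \alpha_{j,n} = \alpha_{j,n} \circ e_i$ (prepending a placeholder and replacing large letters by $\epsilon$ act on disjoint data, and collapsing runs of $\epsilon$ is confluent, so the two composites reduce to the same word); and it commutes with $f_{j-1}^t$ whenever $t \ne i$, since the two maps modify different words.

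The first reduction is to the claim ``$I_j$ is $e_i$-saturated for every $j \ge 0$''. Indeed, writing $J_n = \bigcup_{j=1}^n \alpha_{j,n}^{-1}(I_j)$ and using $\alpha_{j,n}\circ e_i = e_i\circ\alpha_{j,n}$, we have: $T \in J_n$ iff $\alpha_{j,n}(T) \in I_j$ for some $j$, iff $e_i(\alpha_{j,n}(T)) = \alpha_{j,n}(e_i(T)) \in I_j$ for some $j$ (using $e_i$-saturation of each $I_j$ in the biconditional form), iff $e_i(T) \in J_n$. Applying this same argument at levels $\le j-1$ shows that $e_i$-saturation of $I_0, \ldots, I_{j-1}$ forces $J_{j-1}$, hence also $B_{j-1} = A_{j-1} - J_{j-1}$, to be $e_i$-saturated.

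Then I would prove ``$I_j$ is $e_i$-saturated'' by induction on $j$, the base case $I_0 = \emptyset$ being vacuous. For $j \ge 1$ write $I_j = \bigsqcup_{t=1}^k \imag(f_{j-1}^t)$ and take $U = f_{j-1}^t(U')$ with $U' \in B_{j-1}$. If $i \ne t$, then $e_i(U) = f_{j-1}^t(e_i(U'))$ by commutation, and $e_i(U') \in B_{j-1}$ by the induction hypothesis and the previous paragraph, so $e_i(U) \in I_j$. If $i = t$ and the $i$-th word of $U$ already begins with $\epsilon$ there is nothing to show, as $e_i(U) = U$. Otherwise one invokes the explicit description of $f_{j-1}^i$ from the preceding lemma (insert the new largest letter right after the first $\epsilon$ of the $i$-th word, or at the left end if there is none) to check, splitting according to whether the word $v_i$ of $U'$ already carries a placeholder, that inserting that letter into the canonical slot of the word $\epsilon v_i$ of $e_i(U')$ reproduces exactly the $i$-th word of $e_i(U)$; hence $e_i(U) = f_{j-1}^i(e_i(U')) \in I_j$. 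The reverse inclusion $e_i(U) \in I_j \Rightarrow U \in I_j$ follows from the symmetric argument, deleting the leading placeholder, so the biconditional holds.

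The bookkeeping in the case $i = t$ is the one genuine obstacle: one must verify that prepending a placeholder does not displace the ``canonical insertion slot'' defining $f_{j-1}^i$, which forces the sub-case split on whether the word in question already contains a placeholder and a careful comparison with the preceding lemma's description of $f$. Once the two commutation identities $e_i \circ \alpha_{j,n} = \alpha_{j,n}\circ e_i$ and $e_i \circ f_{j-1}^t = f_{j-1}^t \circ e_i$ (for $t \ne i$) are established, the rest is formal.
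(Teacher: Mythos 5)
Your proposal is correct and takes essentially the same route as the paper: the paper's proof is an induction on $n$ applied directly to the statement $S \in J_n \Leftrightarrow S' \in J_n$, and its content is exactly your two compatibilities — prepending $\epsilon$ commutes with the restriction maps $\alpha_{j,n}$, and with the insertion maps $f_{j-1}^t$ via the explicit description of the insertion slot. Your repackaging (saturation of each $I_j$ under the idempotent $e_i$, proved by induction on $j$) is if anything slightly more careful than the paper's terse argument, since it explicitly separates the case where the letter is inserted into a word other than the $i$-th.
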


\begin{proof}
  We prove the claim by induction on $n$.
  The base case $n=1$ is clear and we assume the claim is true for all $n \le N$.
  By the definition of $J_{N+1}$ we assume $\alpha_{j,N+1}(S) \in \imag(f_{j-1}^l) \subset J_j$ for some $j$ and $l$.
  If $j < N+1$, then (by the induction hypothesis) this is equivalent to $\alpha_{j,N+1}(S') \in J_j$ and this implies $S' \in J_{N+1}$ by the definition of $J_{N+1}$.
  It remains to study the case $j = N+1$, i.e.\ $S = f_N^i(v_k, \ldots, v_1)$.
  Equivalently, $v_k, \ldots, v_1 \notin J_N$ and by the induction hypothesis $v_k, \ldots, \epsilon v_i, \ldots, v_1 \notin J_N$.
  But this is equivalent to $S' = f_N^i(v_k, \ldots, \epsilon v_i, \ldots, v_1)$ by the definition of $f_N^i$.
\end{proof}

\begin{lemma}
  \label{lemma:disjoint_decomposition}
  If $S \in \imag(f_{n-1}^i)$ for some $i$ and $n$, then $S \notin \imag(f_{n-1}^l)$ for any $l \neq i$.
  Moreover we have $\alpha_{n-1,n}(S) \notin J_{n-1}$.
\end{lemma}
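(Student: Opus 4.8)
The plan is to prove both assertions by tracking the letter $n$ through the maps $f_{n-1}^i$ and $\alpha_{n-1,n}$; once this is set up, the statement follows formally from the two preceding lemmas together with the fact that $T \in B_{n-1}$ means precisely $T \notin J_{n-1}$.

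First I would record the key elementary observation: every sentence in $A_n$ contains the letter $n$ exactly once, hence it sits in exactly one of its $k$ words, in a single position. By construction $f_{n-1}^l$ inserts the letter $n$ into the $l$-th word of its argument $T' = (w'_k, \dots, w'_1) \in B_{n-1}$ and leaves all other words unchanged; since inserting a \emph{letter} cannot produce two consecutive placeholders, no further reduction occurs and the unique word of $f_{n-1}^l(T')$ containing $n$ is its $l$-th word. Therefore, if $S = f_{n-1}^i(T) = f_{n-1}^l(T')$, the word of $S$ containing $n$ is simultaneously the $i$-th and the $l$-th entry of the $k$-tuple $S$, which forces $i = l$. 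This proves the first assertion.

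For the second assertion I would write $S = f_{n-1}^i(T)$ with $T = (w_k, \dots, w_1) \in B_{n-1}$ and compute $\alpha_{n-1,n}(S)$, i.e.\ replace the inserted $n$ by $\epsilon$. Applying the first of the two preceding lemmas (with $n$ replaced by $n-1$), the $j$-th word of $\alpha_{n-1,n}\circ f_{n-1}^i(T)$ is $w_j$ for $j \neq i$ and is either $w_i$ or $\epsilon w_i$ for $j = i$: if $w_i$ contains a placeholder the new $\epsilon$ is absorbed into it, and if it does not the new $\epsilon$ ends up as a leading symbol. Hence $\alpha_{n-1,n}(S)$ is either $T$ itself or $T$ with a placeholder prepended to its $i$-th word. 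By the second preceding lemma each of these belongs to $J_{n-1}$ if and only if $T$ does; but $T \in B_{n-1} = A_{n-1} - J_{n-1}$, so $T \notin J_{n-1}$, whence $\alpha_{n-1,n}(S) \notin J_{n-1}$.

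I do not expect a genuine obstacle here: the content is combinatorial bookkeeping, and the only delicate point --- that inserting $n$ and then turning it into $\epsilon$ interacts correctly with passage to reduced normal form --- is precisely what the two preceding lemmas have already isolated. With this lemma in hand, the disjointness in Proposition~\ref{prop:set_of_sentences_decomposes} then follows by an induction on $n$ feeding the present statement into the recursive definitions of $I_n$ and $J_n$.
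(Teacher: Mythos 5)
Your proposal is correct and follows essentially the same route as the paper: the first assertion is the direct observation that the letter $n$ occurs exactly once and $f_{n-1}^i$ places it in the $i$-th word, and the second assertion is exactly the paper's deduction from the two preceding lemmas (that $\alpha_{n-1,n}(S)$ is $T$ or $T$ with an $\epsilon$ prepended to its $i$-th word, hence lies outside $J_{n-1}$ because $T \in B_{n-1}$). The paper merely states this tersely; your write-up fills in the same details.
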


\begin{proof}
  The first part is clear from the definition, the last part is a consequence of the two Lemmas above.
\end{proof}

\begin{proof}[Proof of Proposition \ref{prop:set_of_sentences_decomposes}]
  The proof is an immediate consequence of Lemma \ref{lemma:disjoint_decomposition}:
  Assume we have $S \in \alpha^{-1}_{j,n}( \imag(f_{n-1}^i) ) \cap \alpha^{-1}_{k,n}( \imag(f_{n-1}^l) )$.
  We are safe to assume that $j \ge k$.
  Considering $\alpha_{j,n}(S)$, it suffices to study the case $j=n$ because $\alpha_{n,n} = \id$.
  Therefore, we are left with
  \[
    S \in \imag(f_{n-1}^i) \cap \alpha^{-1}_{k,n}( \imag(f_{n-1}^l) ) \,.
  \]
  By Lemma \ref{lemma:disjoint_decomposition} and the definition of $\alpha_{k,n}$ and $J_k$, we deduce $k=n$ and therefore $l=i$.
\end{proof}

Now we define our Morse flow.

\begin{definition}[Collapsible or redundant of type $0$]
  Let $T_\Sigma$ be the normalized sentence of $\Sigma$.
  If $T_\Sigma \in J_n$ then there is a unique $j$ and $i$ with $\alpha_{j, n}( T_\Sigma ) \in \imag( f_{j-1}^i )$.
  The cell $\Sigma$ is \emph{collapsible of type $0$} and the \emph{redundant partner of type $0$} is $d_k(\Sigma)$ with $k$ corresponding to the position $i$.
  
  If $T_\Sigma \notin J_n$ and $n+1$ is a free letter then $\Sigma$ is redundant and the collapsible partner is the co-face of $\Sigma$ corresponding to the sentence $f_n^i(T_\Sigma)$ with $i$ the surface providing the free letter $n+1$.
\end{definition}

The upcoming Lemma refines the proof of Lemma \ref{lemma_a_unenumerated}.
To keep its presentation short, we assume the reader is familiar with the argumentation of the proof of Lemma \ref{lemma_a_unenumerated}.

\begin{lemmawithfixednumber}{\ref{lemma_a_enumerated}}
  The cellular graphs of $\betterwidetilde\SDs_g^m$ and $\betterwidetilde\SDs_{g,m}$ admit a discrete Morse flow such that every cell with punctures or degenerate boundary is either redundant or collapsible (except for a single cell of degree zero).
\end{lemmawithfixednumber}

\begin{proof}
  The collapsible and redundant cells of type $0$ define a matching by Proposition \ref{prop:set_of_sentences_decomposes}.
  It is clear that all cells with punctures or degenerate boundary is either redundant or collapsible (except for a single cell of degree zero which corresponds to the sentence $T_\Sigma = (1)$).
  In order to show that it is acyclic, let us assume the converse and consider a subpath $c \ad r \au \tilde c$ of a fixed loop.
  
  Using the same argument as in the proof of Lemma \ref{lemma_a_unenumerated}, the degree of degeneracy
  \[
    \degen(\Sigma) = (\dim(\Sigma), s_\Sigma, m_\Sigma, -l_\Sigma)
  \]
  is weakly decreasing among every path in the $F$-inverted graph.
  So we are safe to assume that $\degen(c) = \degen(\tilde c)$.
  
  Since $s_c = s_{\tilde c}$, both unnormalized sentences $\tilde T_c$ and $\tilde T_{\tilde c}$ have the same number of words.
  Moreover, the transition from $c$ to $\tilde r$ is by degenerating an enumerated boundary (which is part of a fan since $l_c = l_{\tilde c}$) to a degenerate boundary.
  The transition from $r$ to $\tilde c$ takes a degenerate boundary (which is a free letter) and creates a non-degenerate boundary such that the order type of the word corresponding to the fan is minimal.
  The means, we remove a letter $X$ from $\tilde T_c$ and inserting a letter $\tilde X \le X$ afterwards.
  So we are safe to assume that $X = \tilde X$.
  
  Summing up, the transition from $c$ to $\tilde c$ is by removing a letter $X$ from a word $w_i$ and inserts it afterwards (into the same word but at a different position).
  By construction of the Morse flow, the order type of the word $w_i$ is strictly decreased while all other words are constant.
  This contradicts the existence of a loop.
\end{proof}

In order to prove the stability result with respect to genus, we proceed as in the unenumerated case.
The definitions can be copied almost verbatim from Section \ref{subsection:fences_and_proof_thm_B}.
The enumerated version of Lemma B reads as follows and is proven analogously to the unenumerated case.
This completes the proof of Theorem \ref{theorem_b}.

\begin{lemmawithfixednumber}{Lemma \ref{lemma_b_unenumerated}}
  The discrete Morse flows (of type $0$) on $\betterwidetilde\SD_g^m$ and $\betterwidetilde\SD_{g,m}$ can be extended such that
  every cell $\Sigma$ is redundant or collapsible if it has punctures or degenerate boundary or if it has a surface $S$ of positive genus $g_S > 0$ that is attached at $ft(S) > 0$.
  
  Moreover, the stabilization map respects the Morse flow, i.e.\ (1) it sends essentials to essentials, redundants to redundants and collapsibles to collapsibles and
  (2) a redundant or collapsible cell is in the sub-complex if and only if its partner is.
\end{lemmawithfixednumber}

\appendix
\section{Computational results}
\label{appendix:computations}
We obtain the integral homology of $\SDs_g^m$ and $\SDs_{g,m}$ using computer software.
\begin{proposition}
  The integral homology of $\SDs_g^m$, for small parameters $2g+m$, is given by the tables \ref{result:genus_0}, \ref{result:genus_1}, \ref{result:genus_2} and \ref{result:genus_3}.
\end{proposition}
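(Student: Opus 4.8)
This is a computational statement, so the proof amounts to an explicit calculation of the homology of the (finite) cellular chain complex $\SD_g^m$ for each pair $(g,m)$ in the listed range. The plan is as follows. First I would use Proposition \ref{proposition:combinatorial_1_SD} to enumerate the cells: an $n$-cell of $\coprod_{g,r}\SDs_g^r$ is an element of $C(n)$, i.e.\ an unparametrized unenumerated combinatorial $1$-Sullivan diagram $(\lambda,\{S_1,\dots,S_k\})$, and this data gives \emph{unique} representatives (Notation \ref{notation:combianatorial_SD}), so no isomorphism-class bookkeeping is needed beyond ordering the ghost surfaces by foot-point. For fixed $(g,m)$ one restricts to those diagrams whose thickened surface $S_\Sigma$ has genus $g$, one boundary component and $m$ punctures; the genus and puncture count are read off as $g=\sum_i g_i + (\text{genus contributed by }\lambda,\{A_i\})$ and $m=\sum_i m_i + (\text{number of degenerate boundary cycles})$ via Remark \ref{remark:fattening_graph}. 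By Remark \ref{remark:top_degree_of_a_SD} the degree of any such cell is at most $-2\chi(S_\Sigma)=-2(2-2g-1-m)$, so the complex is concentrated in finitely many degrees and each $\SD_g^m$ is a finite chain complex of free abelian groups.

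Next I would implement the differential. By the differential formula in Section \ref{section:p_sullivan_diags} together with Lemma \ref{lem:rho_face} and Discussion \ref{disc:faces}, the $i$-th face $d_i(\Sigma)$ is computed purely combinatorially: $\tilde\rho = D_i(\rho)$, $\tilde\lambda = D_i(\lambda\circ(a\ i))$ with $a=\rho(i)$, and the ghost surfaces of $\tilde\Sigma$ are obtained by the case analysis of Discussion \ref{disc:faces} (degenerating a boundary cycle to a puncture in Case 1, merging two ghost surfaces or increasing genus in Case 2, splitting a boundary cycle in Case 3). After normalizing the resulting cell back to its canonical combinatorial form (reindexing $[n-1]$ and reordering the $S_j$ by foot-point), one assembles the integer boundary matrices $\partial_n\colon \SD_g^m{}_{,n}\to\SD_g^m{}_{,n-1}$, and the homology is read off from their Smith normal forms.

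The step I expect to be the main obstacle is the combinatorial explosion: the number of cells of $\SD_g^m$ in middle degrees grows very rapidly in $2g+m$, and a naive computation of Smith normal forms of the full boundary matrices is infeasible beyond the smallest cases. The way around this is to pass first to the Morse complex of the discrete Morse flow constructed for Theorems \ref{theorem_a} and \ref{theorem_b}: by Lemma \ref{lemma_a_unenumerated} and Proposition \ref{proposition:support_of_homology}\ref{proposition:support_of_homology:not_supported}, every cell with punctures or degenerate boundary is collapsible or redundant, and the Morse complex is a homotopy retract of $\SD_g^m$ supported on the (far fewer) non-degenerate cells; in practice one can go further and also collapse along fences (type $1$ of Lemma \ref{lemma_b_unenumerated}) and along additional elementary collapses, keeping only the essential cells. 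Computing the Morse differential via Theorem \ref{thm:dmt_homology} requires care with the paths in the $F$-inverted graph and with signs, but it shrinks the problem to a size where exact integer linear algebra succeeds. Finally I would cross-check the output: the Euler characteristic of each $\SDs_g^m$ must vanish (Proposition of Section \ref{section:p_sullivan_diags}), the connectivity must agree with Theorem \ref{theorem_a}, the first non-vanishing group must be the one generated by $\zeta^m$ or $\eta^m$ as in Section \ref{section:detecting_homology}, the stabilization maps of Theorem \ref{theorem_b} must be consistent across genera, and the small cases should match the hand computations; agreement of an independent second implementation would complete the verification recorded in Tables \ref{result:genus_0}, \ref{result:genus_1}, \ref{result:genus_2} and \ref{result:genus_3}.
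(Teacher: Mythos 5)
Your proposal is correct and matches the paper's actual procedure: the authors generate the chain complexes $\SD_g^m$ from the combinatorial description of cells and faces by computer (Python), compute the integral homology with exact-arithmetic software (a GMP-patched CHomP), and cross-check against theoretical constraints and an independent Magma implementation for genus zero based on weighted non-crossing partitions. The only cosmetic difference is that the paper feeds the full cellular complex to CHomP (whose internal reductions play the role of your explicit discrete Morse preprocessing) rather than first forming the Morse complex by hand.
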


\begin{proposition}
  The integral homology of $\SDs_{g,m}$, for small parameters $2g+m$, is given by the tables \ref{result:para_genus_0}, \ref{result:para_genus_1} and \ref{result:para_genus_2}.
\end{proposition}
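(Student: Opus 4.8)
The plan is to reduce the statement to an explicit, finite linear-algebra computation over $\Z$ and to carry it out by machine, using the combinatorial model developed in Section \ref{section:sd_one_circle}. First I would invoke Proposition \ref{proposition:combinatorial_1_SD}: for fixed $g$ and $m$, the $n$-cells of $\SDs_{g,m}$ are in bijection with the set $C(n,L) = \betterwidetilde{C}(n,L)/_{\Symm(L)}$ of parametrized unenumerated combinatorial $1$-Sullivan diagrams of degree $n$ with $m$ leaves and topological type prescribed by $g$ and $m$. By Remark \ref{remark:top_degree_of_a_SD} the degree of any cell is bounded above by $-2\chi(S_\Sigma)+m = 4g+2m-2$, so the chain complex $\SD_{g,m}$ is finite; enumerating the tuples $(\lambda,S_1,\ldots,S_k)$ subject to conditions \ref{data_partition_par_en}--\ref{data_conditions_leaves_par_en} and then quotienting by the $\Symm(L)$-action (Definition \ref{definition:combinatorial_1_SD_par_unen}) produces an explicit, computer-readable basis in each degree.

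Next I would assemble the boundary operator. The total differential is $d(\Sigma)=\sum_{j=0}^{n}(-1)^j d_j(\Sigma)$, and each face $d_i(\Sigma)$ is computed combinatorially: Lemma \ref{lem:rho_face} gives $\tilde\rho = D_i(\rho)$ and $\tilde\lambda = D_i(\lambda\circ(a\ i))$ with $a=\rho(i)$, and Discussion \ref{disc:faces} (Cases 1, 2(a), 2(b), 3) specifies how the ghost surfaces $\tilde S_r$ are obtained by merging, splitting, or adding a puncture to the blocks $A_r$ according to whether $a=i$, whether $a$ and $i$ lie in the same cycle of $\lambda$, and whether their cycles lie in the same block of the partition. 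Implementing this, reducing the resulting tuple to its canonical representative (Notation \ref{notation:combianatorial_SD}), and recording the sign gives the matrix of $d$ in each degree. As a sanity check I would verify $d\circ d = 0$ on the generated complex and confirm agreement with the by-hand computations of low-degree cells and with Lemma \ref{lemma:1_connectedness} and Theorem \ref{theorem_a} (high connectivity).

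The homology is then read off from the Smith normal forms of the boundary matrices: $H_n \cong \Z^{b_n}\oplus(\text{torsion from elementary divisors})$, where $b_n = \dim\ker d_n - \operatorname{rank} d_{n+1}$. To make this tractable for the larger values of $2g+m$ appearing in Tables \ref{result:para_genus_0}, \ref{result:para_genus_1}, \ref{result:para_genus_2}, I would first pass to the Morse complex of Section \ref{section:vanishing}: by Lemmas \ref{lemma_a_unenumerated} and \ref{lemma_b_unenumerated} every cell with a puncture, a degenerate boundary component, or a positive-genus surface not attached at $0$ is redundant or collapsible, so the essential cells form a much smaller chain complex computing the same homology (Theorem \ref{thm:dmt_homology}), with the Morse differential assembled from the paths in the $F$-inverted graph. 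I expect the main obstacle to be the combinatorial explosion of the cell count as $2g+m$ grows, together with the bookkeeping needed to identify isomorphic combinatorial diagrams and to compute Morse differentials via path enumeration reliably; controlling this (by restricting to the stated range of parameters, by exploiting the Morse reduction, and by cross-validating the output of the computer program against the theoretical constraints above) is the crux of the verification.
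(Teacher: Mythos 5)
Your proposal is essentially the paper's own proof: the authors likewise generate the finite combinatorial chain complex $\SD_{g,m}$ by machine (a Python program based on the description of cells and faces from Section \ref{section:sd_one_circle}) and then compute its integral homology with homology software (a modified CHomP), cross-checking the genus-zero answers against an independent Magma computation via weighted non-crossing partitions, much as you propose cross-checking against Theorem \ref{theorem_a}. Only note the small arithmetic slip in your degree bound: by Remark \ref{remark:top_degree_of_a_SD}, with $\chi(S_\Sigma)=1-2g-m$ the top degree is $-2\chi(S_\Sigma)+m=4g+3m-2$, not $4g+2m-2$; this does not affect the finiteness argument or the rest of the plan.
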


Our program is mainly written in \emph{Python 2.7}.
Given parameters $g$ and $m$ it produces the integral chain complexes $\SD_g^m$ and $\SD_{g,m}$.
Computing its integral homology is by far the most time consuming task.
There is a zoo of programs and libraries for this purpose.
We use a modified version of \emph{The Original CHomP Software} by Pawe\l{} Pilarczyk \cite{chomp_orig}.
Even small parameter $2g+m$ lead to integer overflows in the current version of CHomP and 
we work around this issue by forcing CHomP to use the \emph{GNU Multiple Precision Arithmetic Library} \cite{GMP}.
All results were produced on an \emph{Intel i7-2670QM} and \emph{Intel i5-4570} running \emph{Debian Sid} and \emph{Debian Jessie} with \emph{Linux Kernel 4.2.0-1-amd64}.

There is a separate program by the second author that computes the integral homology of the genus zero case using the fact that every Sullivan diagram $\Sigma \in \SD_0^m$ is the same as a weighted, non-crossing partition.
This is implemented in \emph{Magma} \cite{Magma} and our results agree with these computations.
\begin{table}[ht]
    \centering
    \begin{tabular}{c>{\raggedleft $}m{2ex}<{$}>{\raggedleft $}m{5ex}<{$}>{\raggedleft $}m{5ex}<{$}>{\raggedleft $}m{5ex}<{$}>{\raggedleft $}m{5ex}<{$}>{\raggedleft $}m{5ex}<{$}>{\raggedleft $}m{5ex}<{$}>{\raggedleft $}m{5ex}<{$}>{\raggedleft $}m{5ex}<{$}>{\raggedleft $}m{5ex}<{$}>{\raggedleft\let\newline\\\arraybackslash $}m{5ex}<{$}}
        \hline
        $m$ & H_0 & H_1 & H_2 & H_3 & H_4 & H_5 & H_6 & H_7 & H_8 & H_9 & H_{10}\\ \hline
        $1$ & \Z &  &  &  &  &  &  &  &  &  &  \\
        $2$ & \Z & \Z  &  &  &  &  &  &  &  &  &  \\ 
        $3$ & \Z &  &  & \Z &  &  &  &  &  &  &  \\ 
        $4$ & \Z &  &  & \Z &  &  &  &  &  &  &  \\ 
        $5$ & \Z &  &  &  &  & \Z &  &  &  &  &  \\ 
        $6$ & \Z &  &  &  &  & \Z &  & \Z & \Z  &  & \\ 
        $7$ & \Z &  &  &  &  &  &  & \Z &  &  & \\
        $8$ & \Z &  &  &  &  &  &  & \Z &  & \Z & \Z \\ \hline
    \end{tabular}
    \caption{The integral homology $H_\ast( \mathscr{SD}_0^m; \Z )$.}
    \label{result:genus_0}
\end{table}

\begin{landscape}
\begin{table}[ht]
    \centering
    \begin{tabular}{c>{\raggedleft $}m{2ex}<{$}>{\raggedleft $}m{9ex}<{$}>{\raggedleft $}m{9ex}<{$}>{\raggedleft $}m{9ex}<{$}>{\raggedleft $}m{9ex}<{$}>{\raggedleft $}m{9ex}<{$}>{\raggedleft $}m{9ex}<{$}>{\raggedleft $}m{9ex}<{$}>{\raggedleft $}m{9ex}<{$}>{\raggedleft\let\newline\\\arraybackslash $}m{9ex}<{$}}
        \hline
        $m$ & H_0 & H_1 & H_2 & H_3 & H_4 & H_5 & H_6 & H_7 & H_8 & H_9\\ \hline
        $1$ & \Z &  &  & \Z &  &  &  &  &  &  \\ 
        $2$ & \Z & C_2  &  & \Z &  &  &  &  &  &   \\ 
        $3$ & \Z &  &  & C_3 &  & \Z^2 & \Z &  &  &   \\ 
        $4$ & \Z &  &  & C_2 &  & \Z\oplus C_2 & C_2 & \Z^2 & \Z^2 &   \\ 
        $5$ & \Z &  &  &  &  &  & \Z & \Z^5 & \Z^3 & C_2 \\ \hline
    \end{tabular}
    \caption{The integral homology $H_\ast( \mathscr{SD}_1^m; \Z )$.}
    \label{result:genus_1}
\end{table}

\begin{table}[ht]
    \centering
    \begin{tabular}{c>{\raggedleft $}m{2ex}<{$}>{\raggedleft $}m{9ex}<{$}>{\raggedleft $}m{9ex}<{$}>{\raggedleft $}m{9ex}<{$}>{\raggedleft $}m{9ex}<{$}>{\raggedleft $}m{9ex}<{$}>{\raggedleft $}m{9ex}<{$}>{\raggedleft $}m{9ex}<{$}>{\raggedleft $}m{9ex}<{$}>{\raggedleft $}m{9ex}<{$}>{\raggedleft\let\newline\\\arraybackslash $}m{9ex}<{$}}
        \hline
        $m$ & H_0 & H_1 & H_2 & H_3 & H_4 & H_5 & H_6 & H_7 & H_8 & H_9 & H_{10} \\ \hline
        $1$ & \Z &  & \Z & C_5 &  & \Z^2 & C_3 &  &  &  &  \\ 
        $2$ & \Z & C_2  &  & C_2 &  & \Z \oplus C_2 & \Z\oplus C_2 & \Z^2 & \Z\oplus C_2 & C_2 &  \\ 
        $3$ & \Z &  &  & C_3 & C_2  &  & \Z^4 & \Z^9\oplus C_2 & \Z^4\oplus C_2 \oplus C_3^2 & \Z\oplus C_2 & \Z \\ \hline
    \end{tabular}
    \caption{The integral homology $H_\ast( \mathscr{SD}_2^m; \Z )$.}
    \label{result:genus_2}
\end{table}

\begin{table}[ht!]
    \centering
    \begin{tabular}{c>{\raggedleft $}m{2ex}<{$}>{\raggedleft $}m{9ex}<{$}>{\raggedleft $}m{9ex}<{$}>{\raggedleft $}m{9ex}<{$}>{\raggedleft $}m{9ex}<{$}>{\raggedleft $}m{9ex}<{$}>{\raggedleft $}m{9ex}<{$}>{\raggedleft $}m{9ex}<{$}>{\raggedleft $}m{9ex}<{$}>{\raggedleft $}m{9ex}<{$}>{\raggedleft $}m{9ex}<{$}>{\raggedleft\let\newline\\\arraybackslash $}m{9ex}<{$}}
        \hline 
        $m$ & H_0 & H_1 & H_2 & H_3 & H_4 & H_5 & H_6 & H_7 & H_8 & H_9 & H_{10} & H_{11}\\ \hline
        $1$ & \Z &  & \Z &  & \Z & C_{35} & \Z & \Z^5 & \Z\oplus C_2^2 \oplus C_3 &  & C_2 & C_2 \\ \hline
    \end{tabular}
    \caption{The integral homology $H_\ast( \mathscr{SD}_3^m; \Z )$.}
    \label{result:genus_3}
\end{table}

\begin{table}[ht]
    \centering
    \begin{tabular}{c>{\raggedleft $}m{2ex}<{$}>{\raggedleft $}m{5ex}<{$}>{\raggedleft $}m{5ex}<{$}>{\raggedleft $}m{5ex}<{$}>{\raggedleft $}m{5ex}<{$}>{\raggedleft $}m{5ex}<{$}>{\raggedleft $}m{5ex}<{$}>{\raggedleft $}m{5ex}<{$}>{\raggedleft $}m{5ex}<{$}>{\raggedleft $}m{5ex}<{$}>{\raggedleft\let\newline\\\arraybackslash $}m{5ex}<{$}}
        \hline
        $m$ & H_0 & H_1 & H_2 & H_3 & H_4 & H_5 & H_6 & H_7 & H_8 & H_9 & H_{10}\\ \hline
        $1$ & \Z & \Z &  &  &  &  &  &  &  &  &  \\
        $2$ & \Z & \Z  & \Z & \Z &  &  &  &  &  &  &  \\ 
        $3$ & \Z &  &  & \Z^3 & \Z^2 & \Z & \Z &  &  &  &  \\ 
        $4$ & \Z &  &  & \Z & \Z & \Z^6 & \Z^5 & \Z^2 & \Z^2 &  &  \\ 
        $5$ & \Z &  &  &  &  & \Z^7 & \Z^{10} & \Z^{13} & \Z^{11} & \Z^5 & \Z^3 \\ \hline
    \end{tabular}
    \caption{The integral homology $H_\ast( \SD_{0,m}; \Z )$.}
    \label{result:para_genus_0}
\end{table}

\begin{table}[ht]
    \centering
    \begin{tabular}{c>{\raggedleft $}m{2ex}<{$}>{\raggedleft $}m{5ex}<{$}>{\raggedleft $}m{5ex}<{$}>{\raggedleft $}m{5ex}<{$}>{\raggedleft $}m{5ex}<{$}>{\raggedleft $}m{5ex}<{$}>{\raggedleft $}m{5ex}<{$}>{\raggedleft $}m{5ex}<{$}>{\raggedleft $}m{5ex}<{$}>{\raggedleft $}m{5ex}<{$}>{\raggedleft\let\newline\\\arraybackslash $}m{5ex}<{$}}
        \hline
        $m$ & H_0 & H_1 & H_2 & H_3 & H_4 & H_5 & H_6 & H_7 & H_8 & H_9 & H_{10}\\ \hline
        $1$ & \Z & \Z &  & \Z & \Z &  &  &  &  &  &  \\
        $2$ & \Z &  &  & \Z^2 &  & \Z^3 & \Z^6 & \Z^2 &  &  &  \\ 
        $3$ & \Z &  &  &  & \Z^2 & \Z^{12} & \Z^{11} & \Z^9 & \Z^{14} & \Z^8 & \Z \\ \hline
    \end{tabular}
    \caption{The integral homology $H_\ast( \SD_{1,m}; \Z )$.}
    \label{result:para_genus_1}
\end{table}

\begin{table}[ht]
    \centering
    \begin{tabular}{c>{\raggedleft $}m{2ex}<{$}>{\raggedleft $}m{9ex}<{$}>{\raggedleft $}m{9ex}<{$}>{\raggedleft $}m{9ex}<{$}>{\raggedleft $}m{9ex}<{$}>{\raggedleft $}m{9ex}<{$}>{\raggedleft $}m{9ex}<{$}>{\raggedleft $}m{9ex}<{$}>{\raggedleft $}m{9ex}<{$}>{\raggedleft $}m{9ex}<{$}>{\raggedleft $}m{9ex}<{$}>{\raggedleft\let\newline\\\arraybackslash $}m{9ex}<{$}}
        \hline
        $m$ & H_0 & H_1 & H_2 & H_3 & H_4 & H_5 & H_6 & H_7 & H_8 & H_9 & H_{10} & H_{11} \\ \hline
        $1$ & \Z &  &  & \Z &  & \Z^2 & \Z^2 & C_3 &  &  &  & \\ \hline
        $2$ & \Z &  &  &  & \Z & \Z^3 & \Z^2 & \Z^{12} & \Z^{18} & \Z^{13} \oplus C_3^2 & \Z^{10} & \Z^4 \\ \hline
    \end{tabular}
    \caption{The integral homology $H_\ast( \SD_{2,m}; \Z )$.}
    \label{result:para_genus_2}
\end{table}
\end{landscape}

\clearpage

\bibliographystyle{alphaplus}
\bibliography{stringtopology}

\end{document}